\documentclass[a4paper,11pt]{article}
\usepackage[utf8]{inputenc}
\usepackage{tikz}

\usepackage{amsmath}
\usepackage{amsthm}
\usepackage{amsfonts}
\usepackage{amssymb}
\usepackage{dsfont}
\usepackage{tikz-cd}
\usepackage{import}
\usepackage{yfonts}
\usepackage{graphicx}
\RequirePackage{hypernat}
\usepackage{float}
\usepackage[Symbol]{upgreek}
\usepackage{booktabs}
\usepackage{mathtools}
\usepackage{subcaption}
\usepackage{array}
\usepackage{cclicenses}
\usepackage{geometry}
\usepackage[hypertexnames=false]{hyperref}
\hypersetup{colorlinks=false, pdfborder={0 0 0}}
\usepackage{hhline}

\usepackage{autonum}

\newcommand{\kk}{\underline{k}}
\newcommand{\m}{\mathbf{m}}
\newcommand{\n}{\mathbf{n}}
\newcommand{\p}{\bar p}
\newcommand{\w}{\bar w} 
\newcommand{\0}{\mathbf{0}}
\renewcommand{\geq}{\geqslant}
\renewcommand{\leq}{\leqslant}
\renewcommand{\preceq}{\preccurlyeq}

\newtheorem{theorem}{Theorem}[section]
\newtheorem{lemma}[theorem]{Lemma}
\newtheorem{proposition}[theorem]{Proposition}
\newtheorem{corollary}[theorem]{Corollary}
\newtheorem{definition}[theorem]{Definition}
\newtheorem{assumption}[theorem]{Assumption}
\newtheorem{remark}[theorem]{Remark}

\theoremstyle{definition}
\newtheorem{example}[theorem]{Example}

\newcommand{\RR}{\mathbf{R}}
\newcommand{\NN}{\mathbf{N}}
\newcommand{\ZZ}{\mathbf{Z}}

\newcommand{\EE}{\mathbb{E}}
\newcommand{\QQ}{\mathbf{Q}}

\newcommand{\mE}{\mathcal{E}}

\newcommand{\ve}{\varepsilon}

\title{Weak universality for stochastic reaction-diffusion models\\ with long-range correlated noise}
\author{Simon Gabriel\footnote{Email: \href{mailto:s.gabriel@berkeley.edu}{s.gabriel@berkeley.edu},
Department of Mathematics, University of California, Berkeley, USA.} 
\quad 
and 
\ \ 
Markus Tempelmayr\footnote{Email: \href{mailto:markus.tempelmayr@epfl.ch}{markus.tempelmayr@epfl.ch},
EPFL, Lausanne, Switzerland and Universität Münster, Germany.}}
\date{\today}

\begin{document}

\maketitle

\begin{abstract}
	We study the large-scale behaviour of a family of 
	stochastic reaction-diffusion equations driven by long-range correlated noise in a weakly nonlinear regime. 
	Depending on the decay of correlations of the noise and the strength of the nonlinearity, the universal behaviour is governed by a version of the dynamical $\Phi^p$ model with long-range correlated noise, and with a coupling constant determined by the reaction term of the microscopic model. 
	Our main result establishes the stochastic estimates and convergence of models required in the theory of regularity structures. 
	We adapt the multiindex-based approach to regularity structures using a suitable expansion of the reaction term tailored to the law of the noise. 
	This yields a systematic weak universality result, allowing for a particularly simple identification of the macroscopic limit throughout the full subcritical regime and beyond Gaussian noise.
 The method appears robust and applicable 
 to a broader class of singular stochastic PDEs.
\end{abstract}

\noindent\hspace{0.3cm}{\small\textit{{Keywords.}} Singular SPDEs; Regularity Structures;
	Weak universality; Long-range
	correlated noise.}

\noindent\hspace{.3cm}{\small\textit{{MSC classification.}} Primary: 60H15; 60L30. Secondary: 35R60.}

\tableofcontents

\section{Introduction}\label{sec:intro}

We study the large-scale behaviour of the stochastic
reaction-diffusion equation
\begin{equation}\label{micro}
	(\partial_{x_{0}}-\Delta)\phi = \varepsilon^a F(\phi) + \zeta
\end{equation}
on $\RR_{+} \times \RR^{d}$ in a weakly nonlinear regime ($\ve\to0$ and $a > 0$), with a sufficiently
smooth, odd nonlinearity $F$.
Here and throughout the paper, $ \Delta$ denotes the Laplace operator with respect to
the coordinates $ x_{1, \ldots, d}$.
More precisely, $ x = (x_{0}, x_{1, \ldots , d}) \in \RR^{1 +d}$ represents a time-space
variable (with $ x_{0}$ representing the time variable), and we equip the Euclidean space with
the parabolic distance
\begin{equation}
	\begin{aligned}
		|x| \coloneqq \Big(|x_{0}| + \sum_{i =1}^{d} |x_{i}|^{2}\Big)^{1/2}\,,
	\end{aligned}
\end{equation} 
and set $ D
\coloneqq 2+d$ for the effective dimension of $\RR^{1+d}$ equipped with this
distance.
The driving
field $\zeta$ is a smooth, stationary, centred space--time noise with long-range
correlations: For distant points, the covariance is approximately given by
\begin{equation}\label{eq_cov_heuristic}
	C(x) \coloneqq \EE[ \zeta (x) \zeta(0) ] \sim |  x |^{-
	(D-2s)}\,, \qquad s \in \big( 0, \tfrac{D}{2}\big)\,.
\end{equation}
The precise assumptions on $\zeta$ and $F$ are stated in Section~\ref{sec_ass}.

Our goal is to identify the macroscopic limit of \eqref{micro} under the scaling that preserves this covariance structure. We therefore introduce the rescaled noise
\begin{equation}\label{rescaled_noise}
\xi_\varepsilon(x) \coloneqq \varepsilon^{s-D/2}\zeta(x_{0}/\varepsilon^2,x_{1,
\ldots, d}/\varepsilon) \, ,
\end{equation}
and the rescaled solution
$\phi_{\varepsilon}(x) \coloneqq \varepsilon^{\alpha} \phi (x_{0}/\varepsilon^{2},
x_{1, \ldots, d}/\varepsilon)$, where $\alpha \coloneqq s-D/2+2$ is the expected regularity of the limiting field.
The rescaled field $\phi_{\varepsilon}$ then solves the macroscopic equation:
\begin{equation}\label{macro}
	(\partial_{x_{0}}-\Delta)\phi_\varepsilon = \varepsilon^{a+\alpha-2} F(\varepsilon^{-\alpha} \phi_\varepsilon) + \xi_\varepsilon \, .
\end{equation}
A prototypical choice is $a = 2+ 2 \alpha$, which leaves the cubic nonlinearity $F(\phi) = \phi^3$ 
invariant.
We will focus strictly on the singular regime $\alpha<0$, or equivalently $s<D/2-2$ (together with
$s>0$ this excludes $d\leq2$).

Although \eqref{macro} crucially depends on the exact form of $F$, universality suggests
that only the leading-order behaviour of $F$ (under the scaling with respect to $a$) contributes in the limit as $\varepsilon \to 0$. In other words, the macroscopic limit is expected to be insensitive to the finer structure of the microscopic model. 
For example, a wide class of microscopic models is expected to give rise to the $\Phi^{\kk+1}_d$ equation:
\begin{equation}
	(\partial_{x_{0}}-\Delta)\Phi = - \lambda \Phi^{\kk} + \xi\,,
\end{equation}
modulo suitable renormalisation.
Universality of such discrete microscopic systems was investigated in
\cite{2dkacising,ShenWeber,iberti2017convergence,
3dkacising}, and in the continuum in 
\cite{HX_Phi,FG_Phi,EX_Phi,SX_Phi,ZhuZhu,Duc25}.
Further prominent examples of this phenomenon in the SPDE 
literature are weak universality results for the KPZ equation 
\cite{GJ13,HQ_KPZ,HS_KPZ,HX_KPZ,GHM24,KWX_KPZ,kevin,HMW25}. 
However, to date, most works have restricted 
the microscopic noise $\zeta$ to short-range correlations, inevitably leading to a 
macroscopic equation driven by Gaussian white noise. To the best of our knowledge, 
the present work is the first to establish weak universality for singular SPDEs driven by
long-range correlated noise (not necessarily Gaussian) in the full subcritical regime.

For \eqref{macro} to converge to a probabilistically meaningful limit, first and foremost, $\xi_{\varepsilon}$
must converge to a limiting stochastic noise $\xi$. Since our scaling preserves the correlation decay, 
$\xi$ necessarily possesses long-range correlations. 
To see which part of the reaction term survives macroscopically, we fix an odd integer $\kk\geq3$ and choose $a=2+(\kk-1)\alpha>0$. If $F$ is analytic, we may expand it in a polynomial basis $\{W_k\}_{k=0}^\infty$ as
\begin{equation}
	F(\cdot) = \sum_{k =0}^{\infty} F_k W_k(\cdot)\,,
\end{equation}
where $F_k$ are the corresponding coefficients. With the above choice of $a$, the nonlinear term in \eqref{macro} formally becomes
\begin{equation}\label{eq_taylor_W}
\varepsilon^{a+\alpha-2} F(\varepsilon^{-\alpha}\phi_\varepsilon) = \sum_{k =0}^{\infty}
\varepsilon^{\kk\alpha} F_k W_k(\ve^{-\alpha}\phi_\varepsilon) \, .
\end{equation}
If $W_k(\phi)=\phi^k$ were the standard monomials, this power counting would suggest that
the terms with $k>\kk$ disappear in the limit, while the terms with $k<\kk$ have to be
absorbed into renormalising counterterms. This is almost the right heuristic, but
standard monomials are not adapted to the singular fluctuations of the linear equation
and variance divergences of higher-order monomials may cause contributions to lower-order terms, see Example~\ref{exmpl_H5}.

To simplify the identification of these contributions, we instead use a polynomial basis which is adapted to the law of the noise. 
Let $Z$ denote the unique stationary and centred solution of the linear equation
\begin{equation}
	(\partial_{x_{0}}-\Delta)Z = \zeta \quad \textnormal{on } \RR^{1+d} \, .
\end{equation}
Associated to the law of $Z=Z(0,0)$ is the following sequence of polynomials
\begin{equation}\label{eq:poly}
	W_{k}(\phi) \coloneqq \frac{\partial^{k}}{\partial \tau^{k}} \frac{e^{\tau \phi}}{\EE[ e^{\tau Z}]} \bigg\vert_{\tau = 0}\,.
\end{equation}
Note that $ Z $ possesses all moments as a
consequence of $
\zeta$ possessing all moments, see Remark~\ref{rem_momentsZ}, which is sufficient to
define \eqref{eq:poly}.
The polynomials $ W_{k}$ 
 form an Appell sequence, i.e.~$W_{k}' = k W_{k-1}$, and moreover satisfy\footnote{We stress that the polynomials are
not orthogonal, except when $ Z$ is Gaussian.}
$ \EE [ W_{k}(Z) ] =0$ whenever $k \neq 0 $, see Section~\ref{sec_propAppell}.
In this adapted basis, \eqref{eq_taylor_W} becomes
		\begin{equation}\label{eq_FasWeps}
		\begin{aligned}
			\varepsilon^{a+\alpha-2} F(\varepsilon^{-\alpha}\phi_\varepsilon) = \sum_{k =0}^{\infty}
		\varepsilon^{(\kk- k)\alpha} F_k W_{k, \varepsilon}(\phi_\varepsilon) \,,
	\end{aligned}
	\end{equation}
where $ W_{k , \varepsilon } (\cdot) \coloneqq \varepsilon^{ \alpha k} W_{k} ( \varepsilon^{-
\alpha} \cdot )  $.
When $\zeta$ (and consequently $Z$) is Gaussian, $W_{k}$ reduces to the standard $k$--th
Hermite polynomial
\begin{equation}
	\begin{aligned}
		W_{k , \varepsilon}( \cdot ) 
		= H_{k}(\cdot, \varepsilon^{2 \alpha}\EE[Z^{2}])
		=(\ve^{2\alpha}\EE[Z^2])^{k/2} H_k(\cdot/\sqrt{\ve^{2\alpha}\EE[Z^2]})
		\,.
	\end{aligned}
\end{equation}
This structure has been investigated in several works in regularity structures
\cite{HQ_KPZ,HS_KPZ,HX_KPZ,HX_Phi,EX_Phi,KZ25,KWX_KPZ},
paracontrolled distributions \cite{FG_Phi}, and energy solutions and 
stochastic heat kernel analysis \cite{kevin_ejp,kevin}. 
In the following, we work in the realm of regularity structures.

The theory of regularity structures, introduced in Hairer's seminal work \cite{Hai14},
provides a general framework of a local solution theory for subcritical singular SPDEs.
Its central idea is to replace classical Taylor expansions by expansions in a suitable
collection of abstract symbols that encode both polynomial contributions and singular
stochastic terms arising from the equation. In this way, one separates the probabilistic 
task of constructing and renormalising the
stochastic objects associated with the equation from the analytic task of controlling the
solution and its remainders relative to these objects.

We use the coefficient-based viewpoint introduced in \cite{OSSW25}. The relevant
model components can be viewed as formal derivatives of the solution with respect to the
coefficients $F_k$ of the reaction term and with respect to the coefficients of an analytic
perturbation. These derivatives are naturally indexed by multiindices and are related through a hierarchy of equations. Thus, instead of trees, we parametrise the solution in terms of multiindices $\beta$ over $\{k\in\NN\,:\,k\geqslant\kk\text{ and }k\text{ odd}\}\cup\NN^{1+d}$ of the form
\begin{equation}
	\begin{aligned}
		k \mapsto \beta(k) \textnormal{ for odd }k \geqslant \kk , \quad \text{and} \quad
		\mathbf{n} \mapsto \beta( \mathbf{n}) \textnormal{ for }\n\in\NN^{1+d}\,,
	\end{aligned}
\end{equation}
and make the ansatz that the solution of \eqref{macro} is described around a
given base point $ x \in \RR^{1+d}$ in terms of the formal power series
\begin{equation}\label{eq_formal_powser}
	\begin{aligned}
		\phi_{\varepsilon} (\cdot)
		``="
		\sum_{ \beta} z^{ \beta}[ \mathbf{F}, p] \Pi_{x \beta }^{\varepsilon}(\cdot)\,,
	\end{aligned}
\end{equation}
with the multinomials $ z^{\beta}$ parametrised by the coefficients $ \mathbf{F} \coloneqq \{F_{k}\}_{k
=\kk}^{\infty}$ and an analytic function~$p$:
\begin{equation}
	\begin{aligned}
		z^{\beta}[\mathbf{F}, p] 
		\coloneqq
		\prod_{k \geqslant \kk} F_{k}^{\beta(k)} \prod_{\mathbf{n}}
		\Big(
			\frac{1}{ \mathbf{n}!} 
			\partial^{\mathbf{n}}p (0)
		\Big)^{ \beta(\mathbf{n})}
		\,.
	\end{aligned}
\end{equation}
Part of the problem at hand is to determine the counterterms $F_k = F_{k, \varepsilon} $ for $k<\kk$, in order to obtain convergence of $\phi_\ve$.
For this reason we restrict ourselves to multiindices
with $k \geqslant \kk $ in the representation~\eqref{eq_formal_powser}.
Notice that the extension of the parametrisation to include $p$ is necessary, because already the linear equation \eqref{macro} with $ \mathbf{F} \equiv 0 $ can only be solved
up to analytic perturbations.
Further motivation for the ansatz \eqref{eq_formal_powser} can be found in \cite{BOT}.

The objects $\Pi_{x\beta}^{\varepsilon}$ should be understood as the building blocks of a
local description of the solution near $x$, in analogy with monomials in an ordinary
Taylor expansion, but adapted to the singular stochastic structure of the equation.
In fact, this ansatz leads to a hierarchy of PDEs that is indexed by multiindices $ \beta$.
While the formal power series \eqref{eq_formal_powser} is not expected to converge, the theory of regularity
structures allows to reconstruct the solution $ \phi_{\varepsilon}$ from $ \{ \Pi^\ve_{x
\beta} \}_{x, \beta}$. 
The family of stochastic objects $ \{ \Pi^\ve_{x \beta} \}_{x, \beta}$, together with their
change-of-basepoint maps, is called a \emph{model} in the theory of regularity
structures.

We are now ready to state an informal version of our main result:

\medskip

\noindent \textbf{Theorem.}
\textit{
Let $d\geqslant 3$, let $F$ be odd and analytic, and let $\zeta$ be a smooth,
stationary, centred noise with long-range correlations as in \eqref{eq_cov_heuristic},
with $s\in(0,D/2-2)\setminus\QQ$. Assume that $\zeta$ satisfies a suitable spectral gap
inequality and that the rescaled noises $\xi_\varepsilon$ converge in probability
to a limiting noise $\xi$.
Fix an odd integer $\kk\geqslant3$ such that
$a=2+(\kk-1)\alpha>0$. Then the lower-order coefficients
$F_k=F_{k,\varepsilon}$, $k<\kk$, can be chosen so that the model associated with
the solution $\phi_\varepsilon$ of \eqref{macro} converges in probability. Its
limit is the model associated with the renormalised
$\Phi^{\kk+1}_d$ equation driven by the long-range-correlated noise $\xi$:
\begin{equation}\label{eq_limit}
	( \partial_{x_{0}} - \Delta) \varphi_{\varepsilon} = F_{\kk}
	W_{ \kk , \varepsilon}(\varphi_{\varepsilon}) + \xi_{\varepsilon} + \sum_{k < \kk}
	\varepsilon^{(\kk - k ) \alpha} F_{k} W_{k, \varepsilon}( \varphi_{\varepsilon}) \,.
\end{equation}
Equivalently, the formal power series associated with $\phi_\varepsilon$
converges to the formal power series of this limiting dynamical
$\Phi^{\kk+1}_d$ model.
}

\medskip

The precise statement of our main result and the topology of convergence can be found in
Theorem~\ref{prop_main_reg} and Corollary~\ref{cor:convergence} in Section~\ref{sec_mainresult}.\\

Let us highlight the following observations of our main result: 
\begin{itemize}
	\item The above theorem covers the full subcritical regime $a > 0$. For
		instance, when $d=4$ and $\kk = 3$, then for every $\alpha \in (-1, 0)$
		(equivalently $s \in (0,1)$), the model associated to
		$\phi_{\varepsilon}$ converges to the limit of the following $\Phi^{4}_{4
		- 2s}$ model:
\begin{equation}
	( \partial_{x_{0}} - \Delta) \varphi_{\varepsilon} = F_{3}
	W_{3, \varepsilon}(\varphi_{\varepsilon}) + \xi_{\varepsilon} + \varepsilon^{2
	\alpha}F_{1} \varphi_{\varepsilon}\,.
\end{equation}
Notice that in order to see a non-trivial limit,  $\varepsilon^{2 \alpha} F_{1}$ must absorb divergent counterterms of
different orders (for sufficiently small $\alpha$), thus the parameter $F_{1} =
F_{1}(\varepsilon)$ must be carefully tuned with respect to $\varepsilon$. 

\item 
Related to the previous point and to simplify the bookkeeping later, we define $c^{(k)}=-\ve^{(\kk-k)\alpha}F_k$ for $k<\kk$ and use
\eqref{eq_FasWeps}, so that
\eqref{macro} reads
\begin{equation}\label{eq:spde}
	(\partial_{x_{0}} -\Delta)\phi_\ve 
= \sum_{k\geq\kk} \ve^{(\kk-k)\alpha} F_k W_{k , \varepsilon}(\phi_\ve) 
- \sum_{k<\kk} c^{(k)} W_{k , \varepsilon}(\phi_\ve) + \xi_\ve \, .
\end{equation}
Since we obtain the bound $|c^{(k)}|\lesssim\ve^{(\kk-k)\alpha+a}$,
and thus $|F_k|\lesssim\ve^a$ for $k<\kk$, 
the counterterms have a negligible effect on the level of the microscopic equation \eqref{micro}.
\end{itemize}

Moreover, we notice that the subcriticality assumption 
\begin{equation}\label{subcritical}
a=2+(\kk-1)\alpha>0 \, ,
\end{equation}
combined with the noise being in the long-range regime $s>0$, implies 
\begin{equation}\label{varianceblowup}
	\kk \alpha + \frac{D}{2} > 0 \, ,
\end{equation}
which prevents any variance blowups. 

\begin{remark}[Solution theory]\label{rem_solution}
Our main result is restricted to the construction of the model associated with
$\phi_{\varepsilon}$ in \eqref{macro}, and the identification of its limit with the
model corresponding to the $\Phi^{\kk+1}_{d}$ equation. We do not address here
the subsequent step of constructing solutions relative to these models.
For the $\Phi^{4}_{d}$ equation on the torus with periodic boundary
conditions, this program was recently carried out in \cite{BOS}. The corresponding
Cauchy problem on bounded domains is treated in the forthcoming work \cite{BOS26+},
whose only input is the model constructed in the theorem above.
In particular, when combined with \cite{BOS26+}, our main result also yields convergence
at the level of solutions, at least for polynomial nonlinearities $F$.
The same conclusion is expected to hold for non-polynomial nonlinearities; see
\cite{FG_Phi,HX_Phi,KZ25} for related results in the case of Gaussian noise. However,
this extension requires a refined analysis at the level of the solution theory.
\end{remark}

Weak universality results usually require two separate ingredients. The first is
the convergence of an enhanced noise, or model in the language of regularity
structures. This step is delicate because the relevant nonlinearities are often
formally supercritical unless one keeps track of the positive powers of $\ve$
from the weakly nonlinear scaling. The second ingredient is the identification
of the limiting model, and thus of the macroscopic equation obtained from the
microscopic one.

Early results on weak universality based on regularity structures introduce a
symbol~$\mE$ (cf.~\cite[p.~14]{HQ_KPZ}) in order to separate
the powers of $\varepsilon$ arising from the nonlinearity from the
mollification length-scale of the noise.
Subsequent works based on regularity structures, such as
\cite{HX_Phi,HX_KPZ,SX_Phi,KZ25,KWX_KPZ}, follow a similar strategy, while also
extending \cite{HQ_KPZ} to non-polynomial nonlinearities, non-Gaussian noise, and
smoothing operators other than the Laplacian.
In these works, convergence is obtained through problem-specific
decompositions, rather than through the general stochastic estimate machinery
developed in \cite{CH16,HairerSteele,BH25}.
In \cite{GHM24} the authors avoid introducing the symbol $\mE$ and instead
transform the microscopic equation into a suitable system of equations, which
allows to appeal to the general results of \cite{CH16}.\footnote{Actually, the
microscopic equation considered in \cite{GHM24} is quasilinear and driven by an
inhomogeneous noise, which creates additional difficulties that we do not discuss
here.}
While this makes the convergence result systematic in the full subcritical
regime, the identification of the limit still relies on explicit computations 
that become increasingly complex as one approaches criticality.
Alternatively to regularity structures, the paracontrolled approach of \cite{FG_Phi} based on Malliavin calculus, is
directly applicable to non-polynomial nonlinearities, but it does not cover the
full subcritical regime.
Similarly, using the flow equation approach, weak universality results are
established in the full subcritical regime in \cite{Duc25}, although the
identification of the limiting equation remains somewhat implicit. 

By contrast, in the present work, relying on multiindex-based regularity structures and a spectral gap assumption, estimates and convergence of the model are systematically obtained through an inductive scheme following \cite{LOTT24}.
Also, we do not require an extension of the regularity
structure by a symbol $\mE$, as the multiindex-based approach to the model via a power
series ansatz contains naturally the ``good'' powers of $\varepsilon$. This allows us to control directly model components which would
otherwise be supercritical. 
Moreover, the identification of the limit is particularly simple due to a suitable polynomial basis adapted to the law of the noise, with respect to which the multiindex-based model is built. 
To summarize, the present article provides a general approach to weak universality by
systematically constructing the
model and identifying its limit needed as input for regularity structures in
the
setting of not necessarily Gaussian noise, illustrated here for dynamical
$\Phi^{p}$ models.

\subsection*{Outline}

In Section~\ref{sec_assandmainresult} we discuss the precise assumptions on the microscopic
noise $ \zeta$ and state our main results, Theorem~\ref{prop_main_reg} (model estimates) and
Corollary~\ref{cor:convergence} (convergence to the $ \Phi^{ \kk+1}$--model).
The proofs are outlined in Section~\ref{sec_outline}. 
Thereafter, we provide the algebraic framework for the inductive proof in
Section~\ref{sec:setup}.
Sections~\ref{sec:regular}--\ref{sec:singular} contain the details of the induction
step in the proof of Theorem~\ref{prop_main_reg}, and form the bulk of this article. 
The appendix contains several technical ingredients necessary for the proof: 
Appendix~\ref{sec:alg_proofs} contains the proofs of the algebraic setup presented in
Section~\ref{sec:setup},
Appendix~\ref{sec_analytic} states basic analytic ingredients, which we include for
completeness, and 
Appendix~\ref{sec_propAppell} discusses properties of Appell polynomials.

\subsection*{Acknowledgements}

The authors thank Lucas Broux, Pawel Duch, Felix Otto, and Rhys Steele for helpful discussions.
Both authors acknowledge financial support through 
the Deutsche Forschungsgemeinschaft (DFG, German Research Foundation) under Germany's Excellence Strategy EXC 2044--390685587, Mathematics Münster:~Dynamics--Geometry--Structure, 
and the European Research Council (ERC) under the
European Union’s Horizon 2020 research and innovation programme (Grant agreement No.~101045082).
MT acknowledges financial support by the Deutsche Forschungsgemeinschaft (DFG, German Research Foundation) through a Walter Benjamin fellowship -- 552900305.

\section{Assumptions and main result}\label{sec_assandmainresult}

In this section, we discuss the precise assumptions on the driving noise $ \zeta$
of the microscopic equation \eqref{micro}, and state our main results: model estimates in
the sense of regularity structures, as well as convergence of the model to the
one associated to the $\Phi^{\kk+1}$ equation. 
Proofs are deferred to Section~\ref{sec_outline} below. 

\subsection{Assumptions on the law of the noise}\label{sec_ass}

The main quantitative assumption we impose on $\zeta$ is a spectral gap inequality, 
\begin{equation}\label{eq_sg_L2}
\EE|F-\EE F|^2 \lesssim \EE\Big\|\frac{\partial F}{\partial\zeta}\Big\|_{\dot H^{-s}(\RR^{1+d})}^2 \, ,
\end{equation}
assumed to hold for all integrable (i.e.~$\EE|F|<\infty$) and cylindrical functionals $F$, i.e.~of the form 
$F(\zeta) = f(\zeta(\varphi_1),\dots,\zeta(\varphi_N))$ for a smooth function $f\colon\RR^N\to\RR$ and
Schwartz functions $\varphi_1,\dots,\varphi_N\colon\RR^{1+d}\to\RR$. 
The derivative of such a functional is given by 
\begin{equation}\label{frechet}
\frac{\partial F}{\partial\zeta} [\zeta]
= \sum_{i=1}^N \partial_i f(\zeta(\varphi_1),\dots,\zeta(\varphi_N))\varphi_i \, .
\end{equation}
Moreover, the homogeneous fractional (parabolic) Sobolev norm $\|\cdot\|_{\dot H^s(\RR^{1+d})}$ is defined for $s\in\RR$ by 
\begin{equation}
\|f\|_{\dot H^s(\RR^{1+d})}^2
= \int_{\RR^{1+d}} dq \, |q|^{2s} |\hat f(q)|^2 \, ,
\end{equation}
where $\hat f$ denotes the Fourier transform of $f$. 

Overall, we make the following assumptions on the noise:

\begin{assumption}\label{ass_zeta}
The random field $\zeta$ is
\begin{itemize}
\item centered, stationary, invariant under spatial reflection, and invariant under sign change, 
\item smooth and all its derivatives have all moments, i.e.~$\EE|\partial^\n\zeta(0)|^p<\infty$ for
	all $\n\in\NN^{1+d}$ and $p<\infty$, 
\item and it satisfies the spectral gap inequality \eqref{eq_sg_L2} with $s\in(0,D/2-2)\setminus\QQ$. 
\end{itemize} 
Furthermore, the derivative operator defined on cylindrical functionals by \eqref{frechet}
is closable with respect to the topologies induced by $(\EE|\cdot|^2)^{1/2}$
and $(\EE\|\cdot\|^2_{\dot H^{-s}})^{1/2}$.
\end{assumption}

\begin{remark}\label{rem_momentsZ}
Since $\zeta$ and all its derivatives have moments of every order,
the stationary solution $Z$ of the linear equation also has all moments, see Proposition~\ref{prop_qualitative_pi} 
(for $\beta=0$ and $\ve=1$ such that $\Pi^{-,\ve}_{x\beta}=\zeta$ and $\Pi^\ve_{x\beta}=Z$). 
In particular, this justifies the definition of the polynomials $W_k$ in \eqref{eq:poly}. 
\end{remark}

\begin{example}
A basic example of a random field $\zeta$ satisfying Assumption~\ref{ass_zeta} 
is given by a stationary, centred, Gaussian noise with long-range correlations, e.g.~with covariance given by 
\begin{equation}
C(x-y)
\coloneqq \EE[\zeta(x)\zeta(y)]
= (\rho*|\cdot|^{2s-D})(x-y) \,,
\end{equation}
where $\rho$ is a Schwartz function which is invariant under spatial reflection and has a  non-negative Fourier transform.

We first check the spectral gap assumption.
It is well known that a Gaussian random field satisfies the spectral gap inequality \eqref{eq_sg_L2}, 
where the norm on the right-hand side of \eqref{eq_sg_L2} is replaced by the dual of
the Cameron-Martin norm \cite[Theorem~5.5.1]{Bog98}. 
In turn, the dual norm is given by 
\begin{equation}
\Big(\int_{\RR^{1+d}} dx \int_{\RR^{1+d}} dy\, f(x) C(x-y) f(y) \Big)^{1/2}
= \Big(\int_{\RR^{1+d}} dq \, \hat C(q) |\hat f(q)|^2 \Big)^{1/2} \, .
\end{equation}
Since the Fourier transform of the covariance $C$ is up to a constant given by $\hat\rho |\cdot|^{-2s}$, 
which is (again up to a constant) bounded by $|\cdot|^{-2s}$, 
the random field actually satisfies \eqref{eq_sg_L2} as required. 

It remains to identify the scaling limit.
Under the scaling \eqref{rescaled_noise}, the covariance of $\xi_\ve(x)\coloneqq\ve^{s-D/2}\zeta(x_0/\ve^2,x_{1,\dots,d}/\ve)$
is given by $\ve^{2s-D}C(x_0/\ve^2,x_{1,\dots,d}/\ve)$, 
which converges to $|x|^{2s-D}$ as $\ve\to0$.
Hence $\xi_\ve$ converges to the stationary, centred, Gaussian noise 
with Cameron-Martin space $\dot H^s(\RR^{1+d})$ (which still satisfies \eqref{eq_sg_L2}).
In particular, the assumption of Corollary~\ref{cor:convergence} is satisfied.
\end{example}

\begin{example}\label{exmpl_long_range_ng}
We can also start from a not necessarily Gaussian macroscopic noise $\xi$ which satisfies the spectral gap inequality \eqref{eq_sg_L2} with $s\in(0,D/2-2)\setminus\QQ$. 
Then the ($\ve$--dependent\footnote{If the law of $\xi$ is invariant under the scaling $\ve^{D/2-s}\xi(\ve\cdot)$, then the law of $\zeta$ is independent of $\ve$.}) ``smeared out'' version
$\zeta\coloneqq\rho*\ve^{D/2-s}\xi(\ve\cdot)$ for a Schwartz function $\rho$ satisfies the same spectral gap inequality uniformly in $\ve$.\footnote{If $\xi$ satisfies \eqref{eq_sg_L2} then $\ve^{D/2-s}\xi(\ve\cdot)$ satisfies \eqref{eq_sg_L2} with the same constant, and if $\xi$ satisfies \eqref{eq_sg_L2} then $\rho*\xi$ satisfies \eqref{eq_sg_L2} with the same constant multiplied by $\|\rho\|_{L^1(\RR^{1+d})}$.}
Hence $\zeta$ satisfies Assumption~\ref{ass_zeta}, provided $\xi$ is stationary, centred, and invariant under spatial reflection and sign change, and provided $\rho$ is invariant under spatial reflection.

With this choice, the rescaled noise $\xi_\ve(x)\coloneqq\ve^{s-D/2}\zeta(x_0/\ve^2,x_{1,\dots,d}/\ve)$ coincides with $\rho_\ve*\xi$, where $\rho_\ve(x)\coloneqq\varepsilon^{ - D} \rho ( x_{0}/ \varepsilon^2, x_{1, \ldots , d}/ \varepsilon)$, and thus converges to $\xi$ as $\ve\to0$ provided $\int \rho = 1$.
\end{example}

\subsection{Setup and main result}\label{sec_mainresult}

Before stating the main result, we make rigorous the building blocks $\Pi_{x\beta}^{ \varepsilon}$ appearing in the informal ansatz \eqref{eq_formal_powser}. 
First, recall that we consider multiindices $\beta$ over $\{k\geq\kk\,:\, k
\text{ odd}\}\cup\NN^{1+d}$. 
Motivated by the scaling behaviour of $\Pi_{x\beta}^{\varepsilon}$, see \cite[Section~1.10]{BOT}, 
such multiindices are equipped with a natural notion of homogeneity, which records the expected scaling of the corresponding model component under parabolic rescaling.
Furthermore, only a subset of all multiindices is relevant for the ansatz \eqref{eq_formal_powser}, which motivates the following definition. 

\begin{definition}[Homogeneity and population]
The \emph{homogeneity} of a multiindex is defined by 
\begin{equation}\label{e_def_homo}
| \beta|
\coloneqq
 \alpha \Big(1+ (\kk-1) \sum_{k\geq\kk} \beta (k) - \sum_{ \bf{n}} \beta ( {\bf{n}} ) \Big)
+
2 \sum_{k\geq\kk} \beta (k)
+
\sum_{\bf{n}}|{\bf{n}}| \beta ({ \bf{ n}})\,,
\end{equation}
where 
\begin{equation}
|\n|\coloneqq 2n_0+n_1+\dots+n_d \, .
\end{equation}
Here and in the following, all sums and products over $\n$ are indexed by $\n=(n_0,\dots,n_d)\in\NN^{1+d}$.

A multiindex $\beta$ is called \emph{populated} if 
\begin{equation}
\beta = \delta_\n
\quad\textnormal{or}\quad
\beta=\delta_{\kk}+\delta_{\n_1}+\dots+\delta_{\n_{\kk}}
\quad\textnormal{or}\quad
[\beta]\coloneqq\sum_{k\geq\kk}(k-1)\beta(k)-\sum_\n\beta(\n)
\geq0\,.
\end{equation}
If $\beta=\delta_\n$, we say that $ \beta$ is \emph{purely polynomial}.
\end{definition}

It is convenient to subsume $\Pi_{x\beta}^{\varepsilon}$ for all $\beta$ into a single object
$\Pi_{x}^{\varepsilon}$. 
We therefore work with the space of formal power series in the dummy variables $z_k$ for $k\geq\kk$ and $z_\n$ for $\n\in\NN^{1+d}$, denoted by $ \RR [\![ z_{k},
z_{\n} ]\!]$. An element $ \pi \in  \RR [\![ z_{k},
z_{\n} ]\!]$ is of the form 
\begin{equation}
\begin{aligned}
\pi = \sum_{ \beta} \pi_{ \beta} z^{\beta} \quad \text{with}\quad  z^{ \beta}=
\prod_{k\geq\kk} z_{k}^{\beta (k)} \prod_{\n} z_{\n}^{\beta(\n)}\,.
\end{aligned}
\end{equation}
Since $\Pi_{x\beta}^{\varepsilon}$ is a function/Schwartz distribution, we have to point evaluate/test with Schwartz functions $\mathcal{S}(\RR^{1+d})$ in order to obtain a power series in $\RR [\![ z_{k},z_{\n} ]\!]$.
In the proof it is convenient 
to formulate all estimates in terms of a single test function, which is the semigroup
generated by $(\partial_{x_0}-\Delta)(-\partial_{x_0}-\Delta)$. 
We thus define
\begin{equation}
\begin{aligned}
\Pi^\ve_{x \beta t} (y)\coloneqq \big(\Pi^\ve_{x \beta}  \ast \Psi_{t}\big) (y) \,,
\end{aligned}
\end{equation}
where $\Psi_{t}$ is the unique solution of 
\begin{equation}\label{eq_Psi}
\begin{aligned}
\partial_{t} \Psi_{t}+ (\partial_{x_0}-\Delta)(-\partial_{x_0}-\Delta)\Psi_{t}=0
\quad\textnormal{on}\quad
(0,\infty)\times\RR^{1+d} \,,
\end{aligned}
\end{equation}
with Dirac initial condition at $t=0$. 
Estimating $\Pi^\ve_{x\beta}$ with respect to $\Psi_t$ is no loss of generality, 
as estimates with respect to any other Schwartz function can be obtained \cite[Section~3.4]{BOT}.
Note that $\Psi_t$ is a Schwartz function which satisfies the semigroup property $\Psi_t*\Psi_s=\Psi_{t+s}$ and 
\begin{equation}\label{eq_Psi_scale}
\Psi_t(x) = (\sqrt[4]{t})^{-D}
\Psi_{t=1}\big(\tfrac{x_0}{\sqrt{t}},\tfrac{x_{1,\dots,d}}{\sqrt[4]{t}}\big) \, .
\end{equation}
Hence, for $x,y\in\RR^{1+d}$, $\n\in\NN^{1+d}$, $\eta\in\RR$, and $\ve\geq0$
\begin{equation}\label{eq_momentbound_semigroup}
\int_{\RR^{1+d}} dz \, | \partial^\n \Psi_t(x-z) | (\ve+\sqrt[4]{t}+|x-y|+|x-z|)^{\eta}
\lesssim (\sqrt[4]{t})^{-|\n|} (\ve+\sqrt[4]{t}+|x-y|)^{\eta} \, .
\end{equation}

With these notions we can give the definition of a model as it is required for a solution
theory, see \cite[Definition~1]{BOS} and the forthcoming work \cite{BOS26+}.

\begin{definition}[Model]\label{def_model}
A (random) \emph{model} is a triple $(\Pi,\Pi^-,\Gamma)$, 
where $\Pi_x,\Pi^-_x\colon\mathcal{S}(\RR^{1+d})\to\RR[\![z_k,z_\n]\!]$ are a family of (random) linear maps indexed by $x\in\RR^{1+d}$ 
and $\Gamma_{xy}\in\mathrm{End}(\RR[\![z_k,z_\n]\!])$ is a family of (random) endomorphisms indexed by $x,y\in\RR^{1+d}$ such that the following holds (almost surely).

For all $\beta$ and all $\kappa>0$ 
\begin{align}
\|\Pi\|_\beta &\coloneqq 
\sup_{x\in\RR^{1+d}} \sup_{0<t<1} (\sqrt[4]{t})^{-|\beta|+\kappa} (1+|x|)^{-2\kappa}
|\Pi_{x\beta t}(x)| < \infty \, , \\
\|\Pi^-\|_\beta &\coloneqq 
\sup_{x\in\RR^{1+d}} \sup_{0<t<1} (\sqrt[4]{t})^{-|\beta|+2+\kappa} (1+|x|)^{-2\kappa}
|\Pi^-_{x\beta t}(x)| < \infty \, , \\
\|\Gamma\|_\beta &\coloneqq
\sup_\gamma \sup_{x\in\RR^{1+d}} \sup_{x'\neq y'\in B_1(x)} |x'-y'|^{-|\beta|+|\gamma|+\kappa} (1+|x|)^{-2\kappa} |(\Gamma_{x'y'})_\beta^\gamma| < \infty \, ,
\end{align}
where $(\Gamma_{xy})_\beta^\gamma\coloneqq(\Gamma_{xy}z^\gamma)_\beta$.
The map $\Gamma$ is related to $\Pi$ and $\Pi^-$ by
\begin{equation}\label{model_recentre}
\Gamma_{xy}\Pi_y = \Pi_x \, , \quad
(\Gamma_{xy}\Pi^-_y)_\beta = \Pi^-_{x\beta} + \textnormal{ polynomial of degree }\leq|\beta|-2 \, , 
\end{equation}
it is transitive and triangular with respect to the homogeneity in the sense of
\begin{equation}\label{model_gamma_props}
\Gamma_{xy}\Gamma_{yz} = \Gamma_{xz} \, , \quad
(\Gamma_{xy}-\mathrm{id})_\beta^\gamma \neq 0 \implies |\gamma|<|\beta| \, ,
\end{equation}
and satisfies\footnote{with the understanding that $\binom{\n}{\m}$ vanishes if the componentwise $\m\leq\n$ is violated}
\begin{equation}\label{model_gamma_def}
\Gamma_{xy}(\pi_1\pi_2) = (\Gamma_{xy}\pi_1)(\Gamma_{xy}\pi_2) \, , \quad
\Gamma_{xy} z_k = z_k \, , \quad
(\Gamma_{xy})_{\delta_\n}^\gamma = \sum_\m \tbinom{\n}{\m} (y-x)^{\n-\m} 1_{\gamma=\delta_\m} \, .
\end{equation}
For purely polynomial multiindices $\delta_\n$ and for multiindices of the form
$\beta=\delta_{\kk}+\delta_{\n_1}+\dots+\delta_{\n_{\kk}}$ it holds
\begin{equation}\label{model_poly}
\Pi_{x\delta_\n}=(\cdot-x)^\n \, , \quad
\Pi^-_{x\delta_\n} = 0 \, , \quad
\Pi_{x\beta}=0 \, , \quad
\Pi^-_{x\beta} = \tfrac{\kk!}{\prod_\n \beta(\n)!} (\cdot-x)^{\n_1+\dots+\n_{\kk}} \, .
\end{equation}
Finally, for $\beta$ not populated and $\gamma$ populated, 
\begin{equation}\label{model_pop}
\Pi_{x\beta} = 0 \, , \quad
\Pi^-_{x\beta} = 0 \, , \quad
(\Gamma_{xy})_\beta^\gamma = 0 \, .
\end{equation}
\end{definition}

The definition of a model is stated generally for the case where 
$\Pi_{x\beta}$ and
$\Pi^-_{x\beta}$ are Schwartz distributions. 
In our setting, where we aim to build a model for \eqref{macro} driven by the smooth
noise $\xi_\ve$, both $\Pi_{x\beta}^{\varepsilon}$ and $\Pi_{x\beta}^{-,\varepsilon}$ turn out to be smooth functions
and we have further information available on their relation. 
The hierarchy below is obtained by inserting the formal expansion \eqref{eq_formal_powser} for $\phi_\ve$ and the analogous expansion
\begin{equation}
c^{(k)}=\sum_\beta z^\beta[\mathbf{F}] c^{(k)}_{\beta,\ve} 
\end{equation}
for the counterterms into \eqref{eq:spde}, and then comparing coefficients using Lemma~\ref{lem:bruno}. Although this derivation is only informal, we take the resulting hierarchy as the rigorous definition of the building blocks $\Pi^\ve_{x\beta}$. More precisely, we obtain
\begin{equation}\label{pde_Pi}
(\partial_{x_0}-\Delta)\Pi^\ve_{x \beta}
=
\Pi_{x \beta}^{-,\ve} \,,
\end{equation}
for $[\beta]\geq0$, where 
\begin{equation}\label{hierarchy}
\begin{aligned}
\Pi_{x \beta}^{-,\ve}
&\coloneqq 
 \sum_{k\geq\kk} \ve^{(\kk-k)\alpha} 
\sum_{l\geq0}
\sum_{\substack{\delta_k+\beta_1+\dots+\beta_{l}=\beta\\ \beta_i\neq0\forall i}} 
\tbinom{k}{l}
(\mathrm{id}-T_x^{<|\beta|-2})\big(W_{k-l,\ve}(\Pi^\ve_{x 0})
\Pi^\ve_{x \beta_1}\cdots\Pi^\ve_{x \beta_{l}}\big) \\
&\ - \sum_{k<\kk} \sum_{l\geq0} \sum_{\substack{\beta_0+\dots+\beta_l=\beta \\ \beta_i\neq0\forall i=1,\dots,l}} 
\tbinom{k}{l} c^{(k)}_{\beta_0} W_{k-l,\ve}(\Pi^\ve_{x0}) \Pi^\ve_{x\beta_1}\cdots\Pi^\ve_{x\beta_l}
+ 1_{\beta=0} \xi_\ve \, .
\end{aligned}
\end{equation}
Here, $T_x^{<|\beta|-2}$ denotes the operation of taking a Taylor polynomial of (parabolic) order less than $|\beta|-2$ centred at $x$, i.e.
\begin{equation}\label{eq_taylor}
T_x^{<|\beta|-2} f 
\coloneqq \sum_{|\n|<|\beta|-2} \tfrac{1}{\n!} \partial^\n f(x) (\cdot-x)^\n \, .
\end{equation}
Examples of $\Pi^{-,\ve}_{x\beta}$ for simple multiindices are provided in Example~\ref{ex:hierarchy}.

To shorten some expressions it is convenient to use equalities of power series instead of
their componentwise versions, i.e.~we write
\begin{equation}\label{eq_piminus}
\Pi_x^{-,\ve} 
= \sum_{k\geq\kk} \ve^{(\kk-k)\alpha} (\mathrm{id}-T_x^{<|\cdot|-2}) \big(z_k W_{k,\ve}(\Pi^\ve_x)\big) 
- \sum_{k<\kk} c^{(k)} W_{k,\ve}(\Pi^\ve_x) 
+ \xi_\ve 1 \, ,
\end{equation}
where $1$ is the neutral (with respect to multiplication) element of $\RR [\![ z_{k},
z_{\n} ]\!]$. Here $T_x^{<|\cdot|-2}$ has to be understood as acting on power series with
function-valued coefficients: 
for every $\beta$-coefficient we take the Taylor polynomial centred at $x$ to order $<|\beta|-2$. 

Finally, we are ready to state our main result.

\begin{theorem}[Model estimates]\label{prop_main_reg}
Let the random field $\zeta$ satisfy Assumption~\ref{ass_zeta} and denote its rescaling for $\ve>0$ as in \eqref{rescaled_noise} by $\xi_\ve$.
Let $\kk\geq3$ be an odd integer satisfying \eqref{subcritical}.
Then there exist (unique, deterministic, $\ve$-dependent) $c^{(k)}$ for $k<\kk$ satisfying
\begin{equation}
c^{(k)}_\beta\neq0
\quad\implies\quad 
|\beta|<2+k\alpha \, , \quad
\sum_\n\beta(\n)=0 \, , \quad
\sum_{k\geq\kk}\beta(k)\geq2 \, ,
\end{equation}
such that the following holds almost surely. 

For all $x\in\RR^{1+d}$ and $\beta$ there exist smooth $\Pi^\ve_{x\beta}$ and $\Pi^{-,\ve}_{x\beta}$ satisfying \eqref{model_poly} and the first two items of \eqref{model_pop}, 
$\Pi^{-,\ve}_{x\beta}$ is given by \eqref{hierarchy}, and $\Pi^\ve_{x\beta}$ satisfies \eqref{pde_Pi} for $[\beta]\geq0$. 
For $\kappa>0$ sufficiently small (depending on $\beta$) and for all $1\leq p<\infty$ 
\begin{equation}\label{est_Pi_main}
\begin{aligned}
\EE^{ \frac{1}{p}} \big|
\Pi^\ve_{x \beta t} (x)\big|^{p} 
\lesssim \ve^{\kappa\sum_{k>\kk}\beta(k)} \big(\sqrt[4]{t} \big)^{|\beta|-\kappa\sum_{k>\kk}\beta(k)} \, ,
\end{aligned}
\end{equation}
where the implicit constant only depends on $\alpha$, $ \beta$, $\kappa$, and $p$, 
but is independent of $\ve>0$, $x\in\RR^{1+d}$, and $t>0$.

For all $x,y\in\RR^{1+d}$ and all $\beta,\gamma$ 
there exists $(\Gamma^\ve_{xy})_\beta^\gamma$ satisfying \eqref{model_recentre}, \eqref{model_gamma_props}, \eqref{model_gamma_def}, and the last item of \eqref{model_pop}. 
For $\kappa>0$ sufficiently small (depending on $\beta$) and for all $1\leq p<\infty$ 
\begin{equation}\label{est_Gamma_main}
\begin{aligned}
\EE^{ \frac{1}{p}} \big|
\big(\Gamma^\ve_{xy} \big)_{ \beta}^{\gamma} \big|^{p} 
\lesssim \ve^{\kappa \sum_{k>\kk}(\beta(k)-\gamma(k))} |x-y|^{|\beta|-|\gamma|-\kappa\sum_{k>\kk}(\beta(k)-\gamma(k))} \, ,
\end{aligned}
\end{equation}
where the implicit constant only depends on $\alpha$, $ \beta$, $\kappa$, and $p$,
but is independent of $ \ve>0$, $x, y\in\RR^{1+d}$, and $\gamma$. 
\end{theorem}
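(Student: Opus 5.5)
We will prove the theorem by induction over multiindices, following the scheme of \cite{LOTT24} adapted to the present setting. We fix an ordering that is compatible with the hierarchy \eqref{hierarchy}. A natural choice is to order first by the number $\sum_{k\geq\kk}\beta(k)$ and then by the homogeneity $|\beta|$. With this ordering, $\Pi^{-,\ve}_{x\beta}$ only involves $\Pi^\ve_{x\beta'}$ with $\beta'$ strictly smaller, together with $W_{k-l,\ve}(\Pi^\ve_{x0})$ and the counterterms $c^{(k)}_{\beta_0}$ with $\beta_0$ smaller.

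The base case $\beta=0$ is the stationary solution $\Pi^\ve_{x0}=Z_\ve$, the rescaled version of $Z$. Its moments are controlled through the spectral gap inequality \eqref{eq_sg_L2} together with the scaling \eqref{rescaled_noise}. The purely polynomial components $\delta_\n$ and the components $\delta_{\kk}+\delta_{\n_1}+\dots+\delta_{\n_\kk}$ are given explicitly by \eqref{model_poly}. Each induction step for a general populated $\beta$ then proceeds in the following order:
\begin{enumerate}
\item Estimate $\Gamma^\ve_{xy}$ algebraically from lower-order $\Pi$'s. Its multiplicativity \eqref{model_gamma_def} reduces this to its values on $z_\n$, which are polynomial coefficients determined by \eqref{model_recentre} at lower order.
\item Estimate the Malliavin derivative $\delta\Pi^{-,\ve}_{x\beta}$ and $\delta\Pi^\ve_{x\beta}$ in the weighted $\dot H^{-s}$-dual sense. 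This uses a modelled-distribution ansatz $\delta\Pi_x-\mathrm{d}\Gamma_{x\cdot}\Pi_\cdot$ and an integration / reconstruction argument in which only strictly lower-order objects appear.
\item Apply the spectral gap inequality to bound the fluctuations of $\Pi^{-,\ve}_{x\beta t}(x)$. Fix the expectation by choosing $c^{(k)}_\beta$ so that $\EE\,\Pi^{-,\ve}_{x\beta t}(x)\to0$ for the relevant Taylor coefficients. This requires a BPHZ-type choice for $|\beta|<2$ and, using stationarity and the invariances in Assumption~\ref{ass_zeta}, yields the stated support conditions on $c^{(k)}_\beta$. Irrationality of $s$ rules out integer homogeneities.
\item Integrate $(\partial_{x_0}-\Delta)\Pi=\Pi^-$ by the Schauder-type argument of Appendix~\ref{sec_analytic} to obtain \eqref{est_Pi_main}, selecting the unique solution with the correct vanishing at $x$.
\item Deduce \eqref{est_Gamma_main} from three-point continuity.
\end{enumerate}

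The distinctive ingredient is the bookkeeping of $\ve$. For $k>\kk$ the prefactor $\ve^{(\kk-k)\alpha}$ is a positive power of $\ve$, and the products $W_{k-l,\ve}(\Pi^\ve_{x0})$ must be estimated as if they had the homogeneity $(k-l)\alpha$. Since $\Pi^\ve_{x0}$ is smooth at scale $\ve$, however, one has $\EE^{1/p}|W_{k-l,\ve}(\Pi^\ve_{x0})_t|^p\lesssim(\ve+\sqrt[4]t)^{(k-l)\alpha}$. Trading a factor $\ve^{\kappa}$ for $(\sqrt[4]t)^{-\kappa}$ in the regime $\sqrt[4]t\geq\ve$, and using smoothness below scale $\ve$, produces exactly the loss $\ve^{\kappa}(\sqrt[4]t)^{-\kappa}$ per $k>\kk$. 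This is why the exponent in \eqref{est_Pi_main} involves $\kappa\sum_{k>\kk}\beta(k)$. The Appell property $\EE W_k(Z)=0$ and $W_k'=kW_{k-1}$ (Appendix~\ref{sec_propAppell}) ensures that the Malliavin derivative of $W_{k,\ve}(\Pi_{x0})$ is $kW_{k-1,\ve}(\Pi_{x0})\delta\Pi_{x0}$, so that the hierarchy is preserved under differentiation. The condition \eqref{varianceblowup} guarantees that these Wick-type powers have finite variance uniformly in $\ve$.

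I expect the main obstacle to be step 3 combined with the $\ve$-bookkeeping for the non-Gaussian Appell powers. There, the expectation of $\Pi^{-,\ve}_{x\beta}$ for $|\beta|<2$ must be shown to produce only counterterms of the admissible form. For the remaining $\beta$ with $|\beta|>2$, one must show that it is either killed by the Taylor subtraction $(\mathrm{id}-T_x^{<|\beta|-2})$ or bounded by a positive power of $\ve$. My plan is to prove the required bound $\EE|\Pi^{-}_{x\beta t}(x)|\lesssim \ve^{\kappa\sum_{k>\kk}\beta(k)}(\sqrt[4]t)^{|\beta|-2-\kappa\sum_{k>\kk}\beta(k)}$ by the spectral gap applied to the non-Gaussian $W_k$. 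I would then verify that the $\ve$-dependent limits of the $c^{(k)}$ exist using the scaling covariance of the construction under \eqref{rescaled_noise}.
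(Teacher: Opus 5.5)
Your outline follows the paper's architecture: induction as in \cite{LOTT24}, Malliavin derivative plus spectral gap, a BPHZ choice of $c^{(k)}$, and integration and three-point arguments. However, two of its concrete ingredients fail.

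The first is the induction order. Ordering by $\sum_k\beta(k)$ and then by $|\beta|$ is compatible with \eqref{hierarchy}, but not with the Malliavin step you need for singular $\beta$. The object modelling $\delta\Pi_x$ around $z$ contains the term in which $\delta$ hits $\Pi_{x0}$, frozen at $z$. For instance $(d\Gamma_{xz})_{\delta_k}^{\delta_k+\delta_\0}=d\pi^{(\0)}_{xz0}=\delta\Pi_{x0}(z)\neq0$. So at the step $\beta=\delta_k$ you need $\Pi_{z\,\delta_k+\delta_\0}$ and $\Pi^-_{z\,\delta_k+\delta_\0}=\ve^{(\kk-k)\alpha}kW_{k-1,\ve}(\Pi_{z0})$. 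These have the same number of $k$-entries but strictly larger homogeneity $|\delta_k|-\alpha$. In your order they are therefore not yet available, and your claim that only strictly lower-order objects enter step~2 is false. This is exactly why the paper orders by $|\beta|_\prec=|\beta|+\tfrac{D}{2}(1+[\beta])$. Appending $\delta_\n$ lowers $[\cdot]$ by one, hence lowers the order by $\alpha+\tfrac{D}{2}-|\n|>0$ for every $|\n|<\alpha+D/\bar p$, which makes $d\Gamma$ triangular (Lemma~\ref{lem_dGamma}).

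The second and more serious problem is the estimate your $\ve$-bookkeeping rests on, $\EE^{1/p}|W_{k-l,\ve}(\Pi^\ve_{x0})_t|^p\lesssim(\ve+\sqrt[4]{t})^{(k-l)\alpha}$. It is false as soon as $(k-l)\alpha+D/2<0$, which happens for all large $k$, already with $l=0$. In the Gaussian case the variance at scale $\sqrt[4]{t}\gg\ve$ is then of order $\ve^{2(k-l)\alpha+D}(\sqrt[4]{t})^{-D}$, cf.\ Example~\ref{exmpl_H5}. Condition \eqref{varianceblowup} only covers Appell powers of order at most $\kk$. For higher powers, where the variance does blow up, its role is to guarantee that the prefactor $\ve^{(\kk-k)\alpha}$ still leaves a positive gain, namely $(\ve/\sqrt[4]{t})^{\kk\alpha+D/2}$ relative to $(\sqrt[4]{t})^{\kk\alpha}$. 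This is why $\kappa$ must also be small compared with $\kk\alpha+D/\bar p$.

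The paper never estimates $W_{k-l,\ve}(\Pi_{x0})$ by itself at large scales. It only uses the pointwise bound $\ve^{(k-l)\alpha-|\n|}$ on its derivatives, which yields ultraviolet bounds such as \eqref{est_Pi-_bounded_preliminary}. Large-scale behaviour is obtained only for the full $\Pi^-_{x\beta}$, in which $\delta_k$ carries homogeneity $2+\kk\alpha$. For $|\beta|>2$ it comes from reconstruction based on vanishing at the base point, which is enforced by the by-hand Taylor remainder. As a consequence $\Gamma_{xy}$ recentres $\Pi^-_y$ only up to a polynomial, which must be estimated separately for $\ve\leq\sqrt[4]{t}+|x-y|$ and $\ve>\sqrt[4]{t}+|x-y|$. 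For $|\beta|=2$ it comes from an explicit reduction to $\delta_{k-2}+(\kk-2)\delta_\0$. For $|\beta|<2$ it comes from reconstructing $\delta\Pi^-_x-d\Gamma_{xz}\Pi^-_z$ in a localized $L^{\bar p}$ norm over balls of radius $r\geq\ve$, with $\bar p>2$ satisfying \eqref{p_small}.

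In particular, the spectral gap controls only $F-\EE F$. So it cannot give your bound on $\EE\Pi^-_{x\beta t}$ for $|\beta|>2$, where no counterterm is available; that bound is what the first reconstruction argument supplies. Finally, there are no $\ve\to0$ limits of $c^{(k)}$ to verify: only \eqref{est_d_bounded} is needed, and that bound diverges.
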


In particular, by an application of Kolmogorov's argument (see e.g.~\cite[Proposition~B.3]{HairerSteele} and \cite[Proof of Theorem~1.4]{Tem} for an application to the current setting)
the annealed estimates \eqref{est_Pi_main} and \eqref{est_Gamma_main} can be turned into the quenched estimates
in Definition~\ref{def_model}, to obtain:

\begin{corollary}\label{cor:model}
The triple $(\Pi^\ve,\Pi^{-,\ve},\Gamma^\ve)$ in Theorem~\ref{prop_main_reg} is a model. 
\end{corollary}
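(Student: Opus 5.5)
The statement to prove is Corollary~\ref{cor:model}: the triple $(\Pi^\ve,\Pi^{-,\ve},\Gamma^\ve)$ from Theorem~\ref{prop_main_reg} satisfies all requirements of Definition~\ref{def_model}. The algebraic requirements are already part of the conclusion of Theorem~\ref{prop_main_reg}, namely \eqref{model_recentre}, \eqref{model_gamma_props}, \eqref{model_gamma_def}, \eqref{model_poly} and \eqref{model_pop}, so the only thing left is to turn annealed bounds into almost-sure quenched bounds. Concretely, I must show that $\|\Pi^\ve\|_\beta$, $\|\Pi^{-,\ve}\|_\beta$ and $\|\Gamma^\ve\|_\beta$ are finite almost surely, for every $\beta$ and every $\kappa>0$.

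\emph{Reduction in $\kappa$ and $\ve$.} Since $\ve>0$ is fixed, the prefactors $\ve^{\kappa\sum\beta(k)}$ are harmless constants. Because $t<1$ and $|x'-y'|<2$, the norms are monotone in $\kappa$ up to constants. It therefore suffices to treat small $\kappa$, so that \eqref{est_Pi_main} and \eqref{est_Gamma_main} hold with $\kappa'=\kappa/2$ in the exponent. This leaves a spare loss $(\sqrt[4]{t})^{\kappa/2}$, respectively $|x'-y'|^{\kappa/2}$, for the union bound below.

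\emph{Bound on $\Pi^\ve$.} I would apply a Kolmogorov/Garsia-type argument on a dyadic grid.
\begin{itemize}
\item For $t\in[4^{-n-1},4^{-n}]$ and $x$ in a grid of mesh $2^{-n}$ in the ball $B_R$, the moment bound with large $p$ together with Chebyshev gives
\[
\mathbb{P}\big(|\Pi^\ve_{x\beta t}(x)|>2^{-n(|\beta|-\kappa/2-\kappa/4)}R^{\kappa}\big)\lesssim 2^{-n p\kappa/4}R^{-p\kappa}.
\]
\item Summing over roughly $2^{nD}R^D$ grid points, over $n$, and over dyadic $R$ gives a finite total once $p$ is large.
\item To pass from the grid to all $(x,t)$, I use smoothness. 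The semigroup property gives $\Pi_{x\beta t}=\Pi_{x\beta t/2}*\Psi_{t/2}$, and the recentring identity $\Pi_x=\Gamma_{xy}\Pi_y$ controls the dependence on the base point. The result is a bound on increments at scale $\sqrt[4]{t}$ by a finite sum of terms $(\Gamma_{xy})_\beta^\gamma\,\Pi_{y\gamma}$ with $|\gamma|<|\beta|$, using the triangularity in \eqref{model_gamma_props}.
\end{itemize}
This follows \cite[Proposition~B.3]{HairerSteele} and \cite[Proof of Theorem~1.4]{Tem}, which I cite for the routine details. The weight $(1+|x|)^{-2\kappa}$ absorbs the polynomial growth coming from the union bound over $R$.

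\emph{Bound on $\Pi^{-,\ve}$.} The corresponding annealed estimate for $\Pi^{-,\ve}$ is not stated in Theorem~\ref{prop_main_reg}, so I would derive it. I expect this to be the main obstacle. I see two routes.
\begin{itemize}
\item Read off the estimate for $\Pi^{-,\ve}_{x\beta t}(x)$ from the induction in the proof of Theorem~\ref{prop_main_reg}, where it is established alongside \eqref{est_Pi_main}.
\item Alternatively, use \eqref{pde_Pi}: $\Pi^-_{x\beta t}=(\partial_{x_0}-\Delta)\Pi_{x\beta t}$, and bound the derivatives by moving them onto $\Psi_{t}$ via the semigroup property and \eqref{eq_momentbound_semigroup}. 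This yields the loss $(\sqrt[4]{t})^{-2}$. It covers all $\beta$ with $[\beta]\geq0$; the remaining populated $\beta$ are given explicitly by \eqref{model_poly}.
\end{itemize}
The same Kolmogorov argument then applies.

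\emph{Bound on $\Gamma^\ve$.} Here I apply two-parameter Kolmogorov on pairs of dyadic points in $B_1(x)$, using \eqref{est_Gamma_main} and large $p$. The grid is extended to all pairs $(x',y')$ through the transitivity $\Gamma_{x'y'}=\Gamma_{x'z}\Gamma_{zy'}$ and a chaining argument. Triangularity means only finitely many $\gamma$ with $|\gamma|<|\beta|$ contribute, which keeps the $\sup_\gamma$ finite.

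Finally, intersecting the countably many almost-sure events, over $\beta$ and over $\kappa\in\QQ$, yields a single full-measure event on which $(\Pi^\ve,\Pi^{-,\ve},\Gamma^\ve)$ is a model.
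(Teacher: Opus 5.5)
Your proposal is correct and takes the paper's route. The paper obtains the corollary in one line: it takes the algebraic properties from Theorem~\ref{prop_main_reg} and applies Kolmogorov's argument (citing \cite[Proposition~B.3]{HairerSteele} and \cite[Proof of Theorem~1.4]{Tem}) to turn the annealed bounds \eqref{est_Pi_main} and \eqref{est_Gamma_main} into quenched ones. You are right that the bound on $\Pi^{-,\ve}$ is not among the theorem's stated estimates, and your first route fixes this, since \eqref{est_Pi-} at $y=x$ gives exactly $\ve^{|\beta|-\langle\beta\rangle}(\sqrt[4]{t})^{\langle\beta\rangle-2}$. Two small bookkeeping slips: $\kappa'$ must also absorb the factor $\sum_{k>\kk}\beta(k)$, and the dyadic range should be $t\in[16^{-n-1},16^{-n}]$ to match $\sqrt[4]{t}\sim 2^{-n}$.
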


We also obtain convergence of models provided we assume convergence of the rescaled random field $\xi_\ve$.

\begin{corollary}[Convergence to the dynamical $ \Phi^{\kk+1}$ model]\label{cor:convergence}
Let the random field $\zeta$ satisfy Assumption~\ref{ass_zeta}, 
and assume that its rescaling $\xi_\ve$ defined in \eqref{rescaled_noise} converges in law in the sense of Schwartz distributions to a random field $\xi$. 
Then 
\begin{itemize}
\item the smooth model $(\Pi^\ve,\Pi^{-,\ve},\Gamma^\ve)$ of Theorem~\ref{prop_main_reg} 
converges in law to a model $(\Pi,\Pi^-,\Gamma)$ with respect to the distance induced by
\begin{equation}
\max_{|\beta|\leq2} \big\{\|\Pi\|_\beta+\|\Pi^-\|_\beta+\|\Gamma\|_\beta\big\} \, ,
\end{equation}
\item $\Pi$ and $\Pi^-$ satisfy almost surely \eqref{pde_Pi} for $[\beta]\geq0$.
\end{itemize}
The convergence in the first item also holds in probability (respectively in $L^p$ for all $p<\infty$), provided the underlying sequence of random fields $\xi_\ve$ converges in probability (respectively in $L^p$ for all $p<\infty$) to a random field $\xi$.

Moreover, if we repeat the construction of $(\Pi^\ve,\Pi^{-,\ve},\Gamma^\ve)$ in Theorem~\ref{prop_main_reg} with multiindices only over $\{\kk\}\cup\NN^{1+d}$ 
(in particular all sums and products over $k\geq\kk$ collapse to $k=\kk$)
to obtain a model $(\Pi^\ve_{\kk},\Pi^{-,\ve}_{\kk},\Gamma^\ve_{\kk})$ for \eqref{eq_limit}, 
then $(\Pi^\ve_{\kk},\Pi^{-,\ve}_{\kk},\Gamma^\ve_{\kk})$ converges to a limiting model $(\Pi_{\kk},\Pi^-_{\kk},\Gamma_{\kk})$ which coincides with 
$(\Pi,\Pi^-,\Gamma|_{\RR[\![z_{\kk},z_\n]\!]})$ in the following sense:
For multiindices $\beta$ with $\sum_{k>\kk}\beta(k) = 0$ it holds 
\begin{equation}
\Pi_\beta=\Pi_{\kk,\beta} \, , \quad
\Pi^-_\beta=\Pi^-_{\kk,\beta} \, , \quad
\textnormal{and}\quad (\Gamma)_\beta^\gamma = (\Gamma_{\kk})_\beta^\gamma 
\quad\textnormal{for all $\gamma$ with $\sum_{k>\kk}\gamma(k)=0$} \, , 
\end{equation}
while for $\beta$ satisfying $\sum_{k>\kk}\beta(k)>0$ it holds
\begin{equation}
\Pi_\beta=0 \, , \quad
\Pi^-_\beta=0 \, , \quad
\textnormal{and}\quad (\Gamma)_\beta^\gamma=0 
\quad\textnormal{for all $\gamma$ with $\sum_{k>\kk}\gamma(k)=0$} \, .
\end{equation}
\end{corollary}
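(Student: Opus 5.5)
The plan is to reduce everything to the estimates of Theorem~\ref{prop_main_reg}, applied to differences of models, together with an exact algebraic identification at fixed $\ve>0$. I would carry out four steps.

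\textbf{Step 1: exact identification at fixed $\ve$.} Take $\beta$ with $\sum_{k>\kk}\beta(k)=0$. In the hierarchy \eqref{hierarchy}, every decomposition $\delta_k+\beta_1+\dots+\beta_l=\beta$ or $\beta_0+\dots+\beta_l=\beta$ forces $k=\kk$ and forces all $\beta_i$ to carry no $z_k$ with $k>\kk$. The triangular structure of the induction gives the same property for the constants $c^{(k)}_{\beta_0}$. Hence the equations for $\Pi^\ve_{x\beta}$, $\Pi^{-,\ve}_{x\beta}$ and $c^{(k)}_\beta$ restricted to such $\beta$ coincide verbatim with those defining $(\Pi^\ve_{\kk},\Pi^{-,\ve}_{\kk})$. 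By the uniqueness of the $c^{(k)}$ in Theorem~\ref{prop_main_reg}, the two constructions agree, so $\Pi^\ve_\beta=\Pi^\ve_{\kk,\beta}$ and $\Pi^{-,\ve}_\beta=\Pi^{-,\ve}_{\kk,\beta}$ for every $\ve>0$. The same holds for $(\Gamma^\ve)^\gamma_\beta$ with $\gamma$ of the same type, since $\Gamma$ is built multiplicatively from $\Pi$ via \eqref{model_recentre}--\eqref{model_gamma_def}. For $\beta$ with $\sum_{k>\kk}\beta(k)>0$ and $\gamma$ without higher $k$, the bounds \eqref{est_Pi_main} and \eqref{est_Gamma_main} carry the prefactor $\ve^{\kappa\sum_{k>\kk}\beta(k)}\to0$. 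After the Kolmogorov argument of Corollary~\ref{cor:model}, with $\kappa$ slightly reduced, these components tend to $0$ in the norms $\|\cdot\|_\beta$. This yields the second part of the statement, once convergence of the $\kk$-model is established.

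\textbf{Step 2: convergence in probability.} Suppose $\xi_\ve\to\xi$ in probability. I would rerun the inductive scheme of Theorem~\ref{prop_main_reg} (following \cite{LOTT24}) on the differences $\Pi^\ve-\Pi^{\ve'}$, and likewise for $\Pi^-$, $\Gamma$ and the Malliavin derivatives. The aim is a bound
\begin{equation}
\EE^{\frac1p}\big|(\Pi^\ve_{x\beta t}-\Pi^{\ve'}_{x\beta t})(x)\big|^p\lesssim \theta(\ve,\ve')\,(\sqrt[4]{t})^{|\beta|-\kappa},
\end{equation}
where $\theta\to0$ as $\ve,\ve'\to0$. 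The base case $\beta=0$ is the assumed convergence of the noise, which is upgraded via moment bounds to $L^p$ convergence of $\Pi^\ve_{x0}$ tested against $\Psi_t$. In the induction, the products and Appell-polynomial terms $W_{k-l,\ve}(\Pi^\ve_{x0})$ are handled by telescoping. The $\ve$-dependence of $W_{k,\ve}$ itself, through $\ve^{2\alpha}\EE[Z^2]$ and the higher cumulants, drops out in the renormalised quantities by construction. The differences of the $c^{(k)}_\beta$ are fixed, as in the theorem, by the requirement that the relevant expectations vanish. Kolmogorov then upgrades this to convergence in the norm $\max_{|\beta|\le2}\{\|\Pi\|_\beta+\|\Pi^-\|_\beta+\|\Gamma\|_\beta\}$. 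The model relations and \eqref{pde_Pi} pass to the limit, since they are stable under distributional convergence. The $L^p$ version follows by the same argument. I expect this step to be the main obstacle: propagating smallness through the reconstruction/integration steps of the induction, where the difference of non-smooth model components must be controlled uniformly in $\ve$.

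\textbf{Step 3: convergence in law.} By Skorokhod's representation, choose $\tilde\xi_\ve\to\tilde\xi$ almost surely with $\tilde\xi_\ve\overset{d}{=}\xi_\ve$. The model is a measurable functional of the noise, with deterministic $c^{(k)}$ depending only on the law, so $\tilde\xi_\ve$ still satisfies Assumption~\ref{ass_zeta}. The limit field satisfies \eqref{eq_sg_L2} by passing to the limit for cylindrical functionals. Step 2 then gives almost-sure convergence of the coupled models, hence convergence in law of the original ones.

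\textbf{Step 4: combining.} Applied to the $\kk$-restricted models, Steps 2 and 3 give convergence of $(\Pi^\ve_{\kk},\Pi^{-,\ve}_{\kk},\Gamma^\ve_{\kk})$. Together with the exact identity and the vanishing from Step 1, this shows that the limits coincide in the stated sense.
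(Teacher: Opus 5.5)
Your Steps 1 and 4 agree with the paper, and so does the overall shape of Step 2: a Cauchy argument obtained by rerunning the induction on increments, with telescoping of products and a little regularity traded for a rate. The gap is your treatment of the Appell polynomials in Step 2. You say the terms $W_{k-l,\ve}(\Pi^\ve_{x0})$ are handled by telescoping and that their $\ve$-dependence drops out by construction. Telescoping does not apply to $W_{k-l,\ve}(\Pi^\ve_{x0})-W_{k-l,\ve'}(\Pi^{\ve'}_{x0})$:
\begin{itemize}
\item the polynomial itself changes with $\ve$;
\item its coefficients, e.g.\ $\ve^{2\alpha}\EE[Z^2]$, diverge;
\item $\Pi^\ve_{x0}-\Pi^{\ve'}_{x0}$ is small only as a distribution of negative regularity.
\end{itemize}
So no bound uniform in $\ve,\ve'$ comes out.

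The missing idea is that such increments never need to be estimated. Apart from qualitative properties used at each fixed $\ve$ separately, $W_{k,\ve}(\Pi^\ve)$ enters the increment estimates in only two ways. First, through the already constructed components $\Pi^-_\gamma$, $\gamma\prec\beta$, whose increments are part of the induction hypothesis. Second, through the Taylor terms $\big(A\,T_y^{<|\cdot|-2}(z_kW_{k,\ve}(\Pi^\ve_y))\big)_\beta$, $A\in\{\mathrm{id},\Gamma_{xy},d\Gamma_{xy}\}$, in the base-point continuity steps \eqref{tmp05} and \eqref{eq_germ} of Propositions~\ref{prop:regrec} and~\ref{prop_recIII}. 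These Taylor terms vanish identically when $\sum_{k>\kk}\beta(k)=0$. Indeed, $A_\beta^\gamma=0$ whenever $\gamma$ charges some $k>\kk$, by \eqref{e_def_gamma} and \eqref{e_def_dgamma}. For the remaining $\gamma$, Lemma~\ref{lem:no_taylor_for_phi4} shows the Taylor polynomial is zero.

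For $\beta$ charging higher $k$ these terms are present and their increments do not telescope. The Cauchy argument must therefore be run only on the $\kk$-components, which is consistent by triangularity. The other components are covered by the prefactor $\ve^{\kappa\sum_{k>\kk}\beta(k)}$ from your Step 1. Your Step 2 is phrased for the full model and does not make this restriction.

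A smaller caution on Step 3: the paper does not pass through Skorokhod, and your reduction needs Step 2 to hold for an arbitrary coupling. The increment argument applies the spectral gap to $\Pi^{-,\ve}-\Pi^{-,\ve'}$ and uses Malliavin derivatives of increments. That requires the noises for different $\ve$ to be functionals of a single field satisfying \eqref{eq_sg_L2}, as in Example~\ref{exmpl_long_range_ng}. An abstract Skorokhod coupling provides no such structure. The noise-to-model map is also not continuous, so almost sure convergence of $\tilde\xi_\ve$ does not by itself transfer to the models.
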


\begin{remark}[Positive recentering of $\Pi^{-,\ve}$]\label{rem_pos_recentering}
Let us briefly explain the Taylor remainder $\mathrm{id}-T_x^{<|\beta|-2}$ in the
definition of $\Pi^{-,\ve}_x$ in \eqref{hierarchy}. This term was not present in earlier
works on multiindex-based regularity structures \cite{LOTT24, GT, BOT}. In the present
work it enforces the vanishing of $\Pi^{-,\ve}_{x\beta}$ at $x$ to the order suggested by the expected vanishing of $\Pi^\ve_{x\beta}$.

Indeed, since $\Pi^\ve_{x\beta}$ is expected to vanish at $x$ to order $|\beta|$, 
its right-hand side $\Pi^{-,\ve}_{x\beta}$ should vanish to order $|\beta|-2$. 
This was automatic in \cite{LOTT24, GT, BOT}. 
In the present weak universality setting, the positive powers of $\ve$ in \eqref{hierarchy},
which are accounted for in the homogeneity, create a mismatch between the actual degree of
vanishing $(k-l)\alpha+|\beta_1|+\dots+|\beta_l|$ of
$W_{k-l,\ve}(\Pi^\ve_{x0})\Pi^\ve_{x\beta_1}\cdots\Pi^\ve_{x\beta_l}$ and the expected
degree $|\beta|-2$: the difference is $(\kk-k)\alpha$. We therefore insert the required Taylor remainder ``by hand''. 
Similar adaptations are needed for weak universality results in tree-based regularity structures, 
see e.g.~\cite[Display~(3.23)]{HQ_KPZ}.

By contrast, the counterterm contribution in \eqref{hierarchy} does not involve
positive powers of $\ve$. In view of the previous discussion, no Taylor polynomial has to
be subtracted there, see also \eqref{eq_supp3_regRec} in the proof of Proposition~\ref{prop:regrec}.

The same discussion suggests that the Taylor polynomial is absent for multiindices
only charging $k=\kk$, i.e.~multiindices satisfying $\sum_{k>\kk}\beta(k)=0$, and thus
belonging to the dynamical $\Phi^{\kk+1}$ model (cf. Lemma~\ref{lem:no_taylor_for_phi4}).

We finally note that the presence of the Taylor polynomial has no effect on the solution theory. 
This is due to the fact that the construction of a solution in the forthcoming work \cite{BOS26+} uses only $\beta$-components of the model for $|\beta|\leq2$, 
and for these model components the Taylor polynomial $T_x^{<|\beta|-2}$ is not present.
\end{remark}

\subsection{Outline of the proof of the main results}\label{sec_outline}

The proof of Theorem~\ref{prop_main_reg} proceeds by induction over multiindices. This
induction is carried out in
Sections~\ref{sec:regular},~\ref{sec:intermediate}, and~\ref{sec:singular}, while the
base case is addressed in Section~\ref{sec:inductionstart}.
In fact, Section~\ref{sec:inductionstart} proves more than the base case $ \beta
= \delta_{\n=\mathbf{0}}$: 
it establishes the model estimates of Theorem~\ref{prop_main_reg} for all indices $
\beta$ satisfying $ [ \beta] <0$. Consequently,
Sections~\ref{sec:regular},~\ref{sec:intermediate}, and~\ref{sec:singular} are devoted to
the case $ [ \beta] \geqslant 0$ only.

Before setting up the induction, we must specify an order on multiindices. This order
has to be compatible with the recursive definition of $\Pi^{-,\ve}_{x\beta}$ through the
hierarchy~\eqref{hierarchy}. Namely, at each induction level, the right-hand side of
\eqref{hierarchy} should involve only multiindices belonging to earlier levels.
Similar triangularity conditions are necessary for all auxiliary objects that are defined recursively throughout the induction.
The homogeneity defined in~\eqref{e_def_homo} is not suited to this purpose (the quantity
$ d \Gamma$ introduced in Lemma~\ref{lem_dGamma} below is not triangular with respect to it).
We therefore
introduce the order $|\cdot|_{\prec}$ in~\eqref{e_def_ord}, with respect to which 
all upcoming objects are triangular in the required
sense. 

The estimates stated in Theorem~\ref{prop_main_reg} for one induction level are,
however, not by themselves sufficient to obtain the corresponding estimates at the
next level from~\eqref{hierarchy}. One has to propagate a larger family of auxiliary
estimates. This forms the main part of the paper. Ignoring these auxiliary estimates
for the moment, the basic strategy is to obtain stochastic bounds on
$\Pi^{-,\ve}_{x\beta}$ by a reconstruction argument. The simplest instance of this
argument is explained at the beginning of Section~\ref{sec:regular}.

There are three distinct classes of multiindices, each requiring a different
reconstruction argument.
\begin{itemize}
	\item \emph{Regular multiindices,} which are those of homogeneity $ | \beta|
		>2$. In this
    		case, stochastic estimates for $\Pi^{-,\ve}_{x\beta}$ follow directly from
    		reconstruction. 
		Regular multiindices are treated in Section~\ref{sec:regular}.
    
	\item \emph{Intermediate multiindices}\footnote{In \cite{BOT}, it is not necessary
			to treat these multiindices separately for the dynamical
		$\Phi^{4}_{d}$--model, because $ |
	\beta| = 2 $ implies $ [ \beta] <0 $. Thus, they are already treated in the base case.},
	which are those of homogeneity $ |
		\beta|=2$.
		Here, the reconstruction argument used in the regular case does not apply.
		However, these multiindices have a very restricted structure, which allows the
    		required estimate for $\Pi^{-,\ve}_{x\beta}$ to be verified directly.
		Intermediate multiindices are treated in Section~\ref{sec:intermediate}.
		 
	\item \emph{Singular multiindices,} which are those of homogeneity $ | \beta| <
		2$.  
		In this case,
    		the reconstruction argument requires a more refined analysis. Rather than working
    		directly with $\Pi^{-,\ve}_{x\beta}$, we first reconstruct its Malliavin derivative
    		$\delta \Pi^{-,\ve}_{x\beta}$ and then recover estimates on $\Pi^{-,\ve}_{x\beta}$ by
   		means of the spectral gap inequality. The use of Malliavin calculus in turn
    		requires an additional family of estimates, which must also be propagated through
    		the induction.
		
		The singular case is also where the counterterms $c$ are chosen. These
    		counterterms are fixed so as to ensure the required control of
    		$\Pi^{-,\ve}_{x\beta}$, see Proposition~\ref{prop_bphz_choice}.
		Singular multiindices are treated in Section~\ref{sec:singular}.
\end{itemize}
Once the stochastic estimates for $\Pi^{-,\ve}_{x\beta}$ have been established, they are
lifted to estimates for $\Pi^\ve_{x\beta}$ by an integration argument. The estimates for
$\Gamma^\ve_{xy}$ and $d\Gamma^\ve_{xy}$ are obtained inductively in the same spirit. At the
end of each induction step, the components $\pi^{(\n),\ve}_{xy\beta}$ and
$d\pi^{(\n),\ve}_{xy\beta}$, 
which are introduced in
Lemmas~\ref{lem_def_gamma} and~\ref{lem_dGamma} below, are chosen so as to recentre the newly constructed model
components $\Pi^\ve_{y\beta}$ and $\delta\Pi^\ve_{y\beta}$.

\medskip

Let us point out some key differences to previous works on stochastic estimates and convergence of multiindex-based models \cite{LOTT24, GT, BOT, Tem}. 
The main conceptual difference is that in the present work, the equation under consideration is subcritical only because of the extra factor $\ve^a$. 
Hence, the reconstruction argument (Proposition~\ref{prop_recIII}), which is at the core of the proof, has to be adapted to take this into account. 
This mainly requires to distinguish small scales ($\leq\ve$) and large scales ($\geq\ve$), and to use ultraviolet-divergent bounds (cf.~\eqref{est_Pi_bounded} and \eqref{est_Pi-_bounded}) in a suitable way. 

On a more technical level, the extra factor $\ve^a$ affects the notion of homogeneity, 
which has both an algebraic and an analytic consequence. 
On the one hand, we need to ensure that the algebraic construction of $\Gamma^\ve_{xy}$ and $d\Gamma^\ve_{xy}$ can still be carried out, and that their desired algebraic properties hold. 
This is carried out in Section~\ref{sec:setup}, where the respective proofs, which are straightforward adaptations of the corresponding proofs in \cite{LOTT24, OST23}, are given for completeness in Appendix~\ref{sec:alg_proofs}.
On the analytic side, the modified homogeneity has the effect that products of model components do not necessarily vanish to the desired order. 
As a consequence, the negative part of the model (i.e.~$\Pi^{-,\ve}$) requires additional (positive) recentering, cf.~Remark~\ref{rem_pos_recentering}.
For this reason $\Gamma^\ve_{xy}$ recentres $\Pi^{-,\ve}_y$ only up to a polynomial,
which has to be carried throughout the proof. 
In particular, this polynomial has to be estimated in the proof of Proposition~\ref{prop_2} and in the proofs of the reconstruction arguments Propositions~\ref{prop:regrec} and \ref{prop_recIII}.

A final contribution of the present work is to prove systematically that many model components do not contribute to the limit. Establishing strong enough estimates to ensure this requires a careful analysis which is carried through the entire proof. Vanishing of these model components leads to a particularly simple identification of the limit, which is based on the expansion of the nonlinearity by polynomials adapted to the law of the solution of the linearised equation (in particular depending on the law of the noise and the diffusion $\Delta$). In turn, this requires only minor modifications of the argument compared to the setting with standard monomials.

\medskip

We now turn to the proof of Corollary \ref{cor:convergence}.

\begin{proof}[Proof of Corollary \ref{cor:convergence}]
Convergence (to $0$) of the model components for multiindices $\beta$ with $\sum_{k>\kk}\beta(k)>0$ is already contained in Theorem~\ref{prop_main_reg}.
Convergence of the remaining model components, i.e.~for multiindices $\beta$ with $\sum_{k>\kk}\beta(k)=0$, can be obtained analogously to \cite[Theorem~1.4]{Tem} as follows. 
The main idea is to prove that the model is a Cauchy sequence in $\ve$ by repeating the arguments of the proof of Theorem~\ref{prop_main_reg} for increments in $\ve$.
Compared to the proof of Theorem~\ref{prop_main_reg} this requires mainly two modifications. 
\begin{itemize}
\item First, we rewrite an increment of products as a sum of products of increments by telescoping.
As in \cite[Display (3.14)]{Tem} this is simply done by 
\begin{equation}\label{telescoping}
\pi_1\cdots\pi_k - \pi'_1\cdots\pi'_k
= \sum_{i=1}^k \pi_1\cdots\pi_{i-1}(\pi_i-\pi'_i)\pi'_{i+1}\cdots\pi'_k \, .
\end{equation}
This telescoping preserves algebraic properties like triangularity, 
so that the inductive structure of the proof does not change, 
and also the homogeneities add up in the same way. 
On the right-hand side of \eqref{telescoping} one can therefore apply the usual estimates on the first $i-1$ terms as well as on the last $k-i$ terms,
while the $i$-th term is estimated by (inductively) applying an estimate of increments. 

We emphasize that the identity \eqref{telescoping} is sufficient in all instances of the proof, 
despite the choice of the monomial basis $W_k$. 
To see this, note that potential increments in $\ve$ of $W_{k,\ve}(\Pi^\ve_x)$ would only come up in the two reconstruction arguments, Propositions~\ref{prop:regrec} and \ref{prop_recIII}.
These occur in the respective second steps of the proofs, where continuity in the base point is established,
more precisely in \eqref{tmp05} for the former reconstruction argument
and in \eqref{eq_germ} for the latter. 
However, in both instances it appears in the form of 
\begin{equation}
\Big(A T_y^{<|\cdot|-2}\big(z_kW_{k,\ve}(\Pi^\ve_y)\big)\Big)_\beta 
\quad\textnormal{for } A\in\{\mathrm{id},\Gamma^\ve_{xy},d\Gamma^\ve_{xy}\} \, .
\end{equation}
This term vanishes because we consider $\beta$ with $\sum_{k>\kk}\beta(k)=0$:
any $A\in\{\mathrm{id},\Gamma_{xy},d\Gamma_{xy}\}$ satisfies for such $\beta$ that 
$A_\beta^\gamma=0$ if $\sum_{k>\kk}\gamma(k)>0$ 
(a consequence of \eqref{e_def_gamma} and \eqref{e_def_dgamma}),
and the claim follows from Lemma~\ref{lem:no_taylor_for_phi4}.
\item Second, we ``trade regularity for convergence''.
Actually, the available estimate for the $i$-th term of the right-hand side in \eqref{telescoping} typically comes with an infinitesimally smaller exponent compared to the first $i-1$ and last $k-i$ terms, 
but in return comes with an (infinitesimally small) rate of convergence. 
To propagate this rate of convergence through the inductive proof requires to carry out the same inductive loop of arguments, but for estimates with infinitesimally smaller exponents. 
That this is indeed possible is in complete analogy to the estimates obtained in Theorem~\ref{prop_main_reg} for multiindices with $\sum_{k>\kk}\beta(k)>0$, and we therefore do not repeat the arguments for multiindices with $\sum_{k>\kk}\beta(k)=0$. 
\end{itemize}
This establishes the first part of the corollary concerning convergence of $(\Pi^\ve,\Pi^{-,\ve},\Gamma^\ve)$. 

We turn to the identification of the limit. 
One can repeat Theorem~\ref{prop_main_reg} to construct and estimate a model $(\Pi^\ve_{\kk},\Pi^{-,\ve}_{\kk},\Gamma^\ve_{\kk})$ corresponding to \eqref{eq_limit} by restricting to multiindices over $\{\kk\}\cup\NN^{1+d}$.
Also Corollary~\ref{cor:model} and the first part of Corollary~\ref{cor:convergence} (concerning convergence) hold verbatim for $(\Pi^\ve_{\kk},\Pi^{-,\ve}_{\kk},\Gamma^\ve_{\kk})$.
The identification of the limit is then a simple consequence of the inductive construction, 
since already for $\ve>0$ and multiindices $\beta$ with $\sum_{k>\kk}\beta(k)=0$ it holds
\begin{equation}
\Pi^\ve_\beta=\Pi^\ve_{\kk,\beta} \, , \quad
\Pi^{-,\ve}_\beta=\Pi^{-,\ve}_{\kk,\beta} \, , \quad
\textnormal{and}\quad (\Gamma^\ve)_\beta^\gamma = (\Gamma^\ve_{\kk})_\beta^\gamma 
\quad\textnormal{for all $\gamma$ with $\sum_{k>\kk}\gamma(k)=0$} \, .
\end{equation}
For multiindices $\beta$ with $[\beta]<0$ this is clear from the definitions, cf.~Section~\ref{sec:inductionstart}. 
For multiindices $\beta$ with $[\beta]\geq0$ we can argue by induction: 
Assuming that $\Pi^\ve_{\beta'}=\Pi^\ve_{\kk,\beta'}$ for all $\beta'\prec\beta$ 
yields by Lemma~\ref{lem_dep_pi} that the respective counterterms $c^{(\beta(\0))}_{\beta-\beta(\0)\delta_\0}$ chosen in Proposition~\ref{prop_bphz_choice} coincide, 
and thus also that $\Pi^{-,\ve}_\beta=\Pi^{-,\ve}_{\kk,\beta}$.
The uniqueness part of Proposition~\ref{prop_integration} then yields $\Pi^\ve_\beta=\Pi^\ve_{\kk,\beta}$.
To see that $(\Gamma^\ve)_\beta^\gamma = (\Gamma^\ve_{\kk})_\beta^\gamma$ it suffices to appeal to Lemma~\ref{lem_triang} and the choice of the respective $\pi^{(\n)}_{\beta}$ in Proposition~\ref{prop_3pt}.
That the limiting model $(\Pi,\Pi^-,\Gamma)$ vanishes for multiindices $\beta$ with $\sum_{k>\kk}\beta(k)>0$ follows already from the estimates \eqref{est_Pi_main} and \eqref{est_Gamma_main}.
\end{proof}

\section{Algebraic framework}\label{sec:setup}

Since the remainder of the article is devoted to the proof of Theorem~\ref{prop_main_reg}, which involves $(\Pi^\ve,\Pi^{-,\ve},\Gamma^\ve)$ only for $\ve>0$, we omit the $\ve$--dependence in the notation from now on.

\subsection{Homogeneity and induction order}

The inductive proof is carried out with respect to the \emph{order} of a multiindex, defined by 
\begin{equation}\label{e_def_ord}
\begin{aligned}
| \beta|_{\prec}
\coloneqq
| \beta| + \tfrac{D}{2} \left( 1+ [ \beta] \right)\,.
\end{aligned}
\end{equation}
We write 
\begin{equation}
\begin{aligned}
\beta ' \prec \beta \quad  \iff \quad | \beta'|_{\prec} < | \beta|_{\prec}\,,
\end{aligned}
\end{equation}
and 
\begin{equation}
\begin{aligned}
\beta ' \preceq \beta   \quad  \iff \quad \beta ' \prec \beta \quad \text{or}
\quad \beta' = \beta \,.
\end{aligned}
\end{equation}
It is also useful to define a ``discounted'' homogeneity, 
which discounts multiindices that would not appear when constructing a model for the $\Phi^{\kk+1}$ equation \eqref{eq_limit}.
A suitable candidate is 
\begin{equation}\label{eq_bracket}
\langle\beta\rangle
\coloneqq |\beta| - \kappa \sum_{k>\kk} \beta(k) \,,
\end{equation}
for $0<\kappa<\min\{a,-2\alpha\}$ and such that\footnote{Since $\langle\beta\rangle$ is of the form $\alpha m + \kappa n + l$ for $m,n,l\in\ZZ$ with $(m,n)\neq(0,0)$ for $[\beta]\geq0$, this is ensured e.g.~by choosing $\kappa\in\RR\setminus(\alpha\QQ+\QQ)$.} $\langle\beta\rangle\not\in\NN$ for all $\beta$ with $[\beta]\geq0$. 
At each induction step for a multiindex $\beta$ in Sections~\ref{sec:regular}--\ref{sec:singular}, we choose $\kappa$ small enough that
$n<|\gamma|$ implies $n<\langle\gamma\rangle$ for all populated $\gamma\preceq\beta$ and
$n\in\NN$.
This is possible since by the upcoming Lemma~\ref{lem_ind_legal} there are only finitely many such $\gamma$ and hence also finitely many $n$. 
We anticipate that $\kappa$ is further restricted in the beginning of Section~\ref{sec:singular}.

We then have the following results for the homogeneity, the discounted homogeneity, and the order.

\begin{lemma}\label{lem_hom_props}
The homogeneity defined in \eqref{e_def_homo} has the following properties: 
\begin{itemize}
\item the map $\beta \mapsto | \beta | - |0|$ is additive, 
\item $ | \beta| -|0| \geqslant 0 $,
\item $ | \beta| = |0| $ if and only if $ \beta = 0$.
\end{itemize}
The same properties hold for the discounted homogeneity $\langle\cdot\rangle$.
\end{lemma}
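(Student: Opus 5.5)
The plan is to compute $|\beta|-|0|$ explicitly from \eqref{e_def_homo} and read off all three properties from the resulting formula. Since $|0|=\alpha$, subtracting gives
\begin{equation}
|\beta|-|0| = \sum_{k\geq\kk}\beta(k)\big(2+(\kk-1)\alpha\big) + \sum_\n \beta(\n)\big(|\n|-\alpha\big)
= a\sum_{k\geq\kk}\beta(k) + \sum_\n\beta(\n)\big(|\n|-\alpha\big) \,,
\end{equation}
where we used the definition $a=2+(\kk-1)\alpha$. This expression is a linear function of the multiindex $\beta$ (viewed as a finitely supported function with values in $\NN$), which proves additivity: $|\beta_1+\beta_2|-|0| = (|\beta_1|-|0|)+(|\beta_2|-|0|)$.

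For nonnegativity and the characterisation of equality, we note that all coefficients in the linear expression are strictly positive. Indeed, $a>0$ by the subcriticality assumption \eqref{subcritical}, and $|\n|-\alpha\geq-\alpha>0$ since we work in the singular regime $\alpha<0$. Since $\beta(k),\beta(\n)\in\NN$, the sum is therefore $\geq0$, with equality if and only if every $\beta(k)$ and $\beta(\n)$ vanishes, i.e.\ $\beta=0$.

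For the discounted homogeneity \eqref{eq_bracket}, we observe that $\langle0\rangle=|0|$, and hence
\begin{equation}
\langle\beta\rangle-\langle0\rangle = a\,\beta(\kk) + (a-\kappa)\sum_{k>\kk}\beta(k) + \sum_\n\beta(\n)\big(|\n|-\alpha\big)\,.
\end{equation}
This is again linear in $\beta$, which gives additivity. Its coefficients are strictly positive because of the standing restriction $0<\kappa<\min\{a,-2\alpha\}$, in particular $\kappa<a$. Thus the same argument yields nonnegativity and vanishing exactly at $\beta=0$.

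There is no real obstacle here. The only point requiring care is to recognise that the coefficient of $\beta(k)$ is exactly $a$, which ties positivity to \eqref{subcritical} and to the choice $\kappa<a$.
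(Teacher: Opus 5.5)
Your proof is correct and follows the paper's argument essentially verbatim. Like the paper, you expand $|\beta|-|0|=a\sum_{k}\beta(k)+\sum_\n(|\n|-\alpha)\beta(\n)$, read off additivity from linearity, and obtain nonnegativity and the equality case from the strict positivity of the coefficients: $a>0$ and $\alpha<0$, with $a-\kappa>0$ taking the place of $a$ for $\langle\cdot\rangle$.
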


The proof of this Lemma is given in Appendix~\ref{sec:alg_proofs}, 
where one can also find the proofs of all upcoming Lemmas of this section. 

\begin{lemma}\label{lem_ord_props}
The order $| \cdot|_{\prec}$ defined in \eqref{e_def_ord} has the following
properties: 
\begin{itemize}
\item the map $ \beta \mapsto | \beta|_{\prec} - | 0 |_{\prec} $ is additive,

\item if $ [\beta ] \geqslant 0$, then $ | \beta|_{\prec}- | 0|_{\prec}
\geqslant 0$,

\item if $ [\beta ] \geqslant 0$, then  $ | \beta|_{\prec}= | 0|_{\prec}$ if and only if $ \beta =0$,

\item if $ [ \beta] \geqslant 0$, then $| \beta|_{\prec} \geqslant | \beta| +
\tfrac{D}{2} \geqslant \alpha+ \tfrac{D}{2}$,

\item if $\beta$ is populated, then  $| \beta|_{\prec} \geqslant | \beta|$.
\end{itemize}
\end{lemma}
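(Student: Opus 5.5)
The plan is to reduce every item to Lemma~\ref{lem_hom_props} together with the elementary fact that the bracket $[\cdot]$ is linear. From \eqref{e_def_homo} we have $|0|=\alpha$, and from the definition of $[\cdot]$ we have $[0]=0$. Hence $|0|_\prec=\alpha+\tfrac{D}{2}$, and
\begin{equation}
|\beta|_\prec-|0|_\prec=\big(|\beta|-|0|\big)+\tfrac{D}{2}\,[\beta]\,.
\end{equation}
This identity is the one I will use throughout.

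For the first item, I use that $\beta\mapsto|\beta|-|0|$ is additive by Lemma~\ref{lem_hom_props}. The map $\beta\mapsto[\beta]=\sum_{k\geq\kk}(k-1)\beta(k)-\sum_\n\beta(\n)$ is linear in $\beta$. Their sum is therefore additive.

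For the second and third items, assume $[\beta]\geq0$. Both summands in the identity are then nonnegative, since $|\beta|-|0|\geq0$ by Lemma~\ref{lem_hom_props}; this gives the second item. If $|\beta|_\prec=|0|_\prec$, both summands must vanish. In particular $|\beta|=|0|$, which forces $\beta=0$ by the last item of Lemma~\ref{lem_hom_props}. The converse direction is trivial.

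For the fourth item, $[\beta]\geq0$ gives $|\beta|_\prec=|\beta|+\tfrac{D}{2}+\tfrac{D}{2}[\beta]\geq|\beta|+\tfrac{D}{2}$. Then $|\beta|\geq|0|=\alpha$ yields the second inequality.

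For the last item, I split according to the three cases in the definition of populated. If $[\beta]\geq0$, the fourth item gives $|\beta|_\prec\geq|\beta|+\tfrac{D}{2}\geq|\beta|$. If $\beta=\delta_\n$, then $[\beta]=-1$. If $\beta=\delta_{\kk}+\delta_{\n_1}+\dots+\delta_{\n_{\kk}}$, then $[\beta]=(\kk-1)-\kk=-1$. In both of these cases $1+[\beta]=0$, so $|\beta|_\prec=|\beta|$ exactly.

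There is no real obstacle here. The only point requiring care is checking that the two exceptional populated families have $[\beta]=-1$ exactly. This is precisely what makes the factor $1+[\beta]$ in \eqref{e_def_ord} the right normalisation.
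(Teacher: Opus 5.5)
Your proposal is correct and follows the paper's proof essentially step by step. Both use the identity $|\beta|_\prec-|0|_\prec=(|\beta|-|0|)+\tfrac{D}{2}[\beta]$ together with Lemma~\ref{lem_hom_props} for the first four items, and the observation that the two exceptional populated families have $[\beta]=-1$ for the last item.
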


\begin{lemma}\label{lem_ind_legal} 
The order $\prec$ satisfies the following.
\begin{enumerate}
\item For every $ \beta$ we have 
\begin{equation}
\begin{aligned}
\left| \left\{ \gamma \text{ populated} \,:\, \gamma \prec \beta  \right\}  \right| < \infty\,.
\end{aligned}
\end{equation}
\item If $ \beta $ is populated, then $ \delta_{\n=\0}\preceq \beta $.
\end{enumerate}
\end{lemma}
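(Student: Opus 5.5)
The plan is to compute $|\cdot|_{\prec}$ explicitly on each of the three types of populated multiindices and to use positivity of the resulting coefficients. Both items then reduce to elementary counting.

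\emph{Preliminary formula for $[\gamma]\geq0$.} Using $a=2+(\kk-1)\alpha$ in \eqref{e_def_homo}, I rewrite the homogeneity as
\begin{equation}
|\gamma| = \alpha + a\sum_{k\geq\kk}\gamma(k) + \sum_\n \gamma(\n)\big(|\n|-\alpha\big) \, .
\end{equation}
Since $a>0$ by \eqref{subcritical} and $-\alpha>0$, every coefficient is positive, and the coefficient of $\gamma(\n)$ is at least $|\n|+|\alpha|$. For populated $\gamma$ with $[\gamma]\geq0$, Lemma~\ref{lem_ord_props} gives $|\gamma|\leq|\gamma|_{\prec}-\tfrac D2$.

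\emph{Item 1.} I split the populated $\gamma$ with $\gamma\prec\beta$ into the three types.
\begin{itemize}
\item $\gamma=\delta_\n$. Here $[\delta_\n]=-1$, so $|\delta_\n|_{\prec}=|\delta_\n|=|\n|$. The condition $|\n|<|\beta|_{\prec}$ leaves only finitely many $\n$.
\item $\gamma=\delta_{\kk}+\delta_{\n_1}+\dots+\delta_{\n_{\kk}}$. Again $[\gamma]=(\kk-1)-\kk=-1$, so $|\gamma|_{\prec}=|\gamma|=2+\sum_i|\n_i|$. The condition $|\gamma|_{\prec}<|\beta|_{\prec}$ bounds each $|\n_i|$, hence allows finitely many choices.
\item $[\gamma]\geq0$. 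From the preliminary formula, $|\gamma|<|\beta|_{\prec}-\tfrac D2$. This bounds $\sum_k\gamma(k)$ by a constant divided by $a$, and bounds $\sum_\n\gamma(\n)$ by a constant divided by $|\alpha|$. It also bounds $|\n|$ for every $\n$ in the support of $\gamma$, so $\gamma(k)>0$ forces $k$ to be bounded by the total count. Only finitely many multiindices are compatible with these bounds.
\end{itemize}

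\emph{Item 2.} Since $|\delta_{\0}|_{\prec}=|\0|=0$, it suffices to show $|\beta|_{\prec}>0$ for every populated $\beta\neq\delta_{\0}$. I check the three types in turn.
\begin{itemize}
\item $\beta=\delta_\n$ with $\n\neq\0$. Then $|\beta|_{\prec}=|\n|>0$.
\item The second type. Then $|\beta|_{\prec}=2+\sum_i|\n_i|\geq2$.
\item $[\beta]\geq0$. The fourth item of Lemma~\ref{lem_ord_props} gives $|\beta|_{\prec}\geq\alpha+\tfrac D2=s+2>0$.
\end{itemize}

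The only point needing care is item 1 for $[\gamma]\geq0$. There the order $|\cdot|_{\prec}$ itself assigns the negative weight $-(\alpha+\tfrac D2)$ to $\gamma(\0)$, so it does not directly control the counts. I avoid this by passing to $|\gamma|$, all of whose coefficients are positive; that is exactly why the subcriticality condition $a>0$ and $\alpha<0$ are needed.
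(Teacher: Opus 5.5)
Your Item 2 and the first two types in Item 1 are fine. The case $[\gamma]\geq0$ in Item 1, however, has a genuine gap.

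The homogeneity \eqref{e_def_homo} depends on the $k$-components only through $\sum_{k\geq\kk}\gamma(k)$, not on which $k$ occur. For instance, $|\delta_k|=\alpha+a$ for every odd $k\geq\kk$. So your bound $|\gamma|<|\beta|_\prec-\tfrac D2$ controls three things: the number of $k$-entries, the number of $\n$-entries, and the sizes $|\n|$. It gives no upper bound on the labels $k$ with $\gamma(k)>0$. Your sentence ``$\gamma(k)>0$ forces $k$ to be bounded by the total count'' is therefore unjustified. The infinitely many $\delta_k$ show that no bound on $|\gamma|$ alone can give finiteness. By passing to $|\gamma|\leq|\gamma|_\prec-\tfrac D2$ you discarded exactly the term $\tfrac D2[\gamma]$ that sees the labels; indeed $|\delta_k|_\prec=\alpha+a+\tfrac D2 k$.

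The repair, which is what the paper does, is a second step. Use $|\gamma|\geq\alpha$ (Lemma~\ref{lem_hom_props}) in the definition of the order to get $\alpha+\tfrac D2(1+[\gamma])\leq|\gamma|_\prec<|\beta|_\prec$, i.e.
\[
\tfrac D2\sum_{k\geq\kk}(k-1)\gamma(k)<|\beta|_\prec-\alpha-\tfrac D2\Big(1-\sum_\n\gamma(\n)\Big).
\]
You have already bounded $\sum_\n\gamma(\n)$ via $|\gamma|$, and that first step is correct and matches the paper. So the display bounds every $k$ with $\gamma(k)>0$. Together with your bounds on the $\n$-components, this leaves only finitely many $\gamma$.
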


\subsection{Recentering}

In this subsection we provide an algebraic skeleton which is useful later
to define a recentering structure that is triangular with
respect to the order $| \cdot |_{\prec}$, see Lemma~\ref{lem_triang}
below, and allows for an inductive propagation of stochastic estimates.
We point out that the objects $\Gamma$, $d\Gamma$, $T$, and $\widetilde T$ we introduce in the following, have been denoted by $\Gamma^*$, $d\Gamma^*$, $T^*$, and $\widetilde T^*$, respectively, in earlier works on multiindex-based regularity structures \cite{LOT23}. 
This is because the objects are dual in an appropriate sense to the corresponding objects in \cite{Hai14}. 
Since we never make use of this duality, we choose to drop the star to simplify the notation.

\begin{lemma}[Definition and triangularity of $\Gamma$]\label{lem_def_gamma} 
Let $ (\pi^{(\n)} )_{\n\in\NN^{1+d}} \subset \RR [\![ z_{k}, z_{\n} ]\!]$ such that for all $\beta$ and $\n$ it holds that $\pi^{(\n)}_{ \beta} \neq 0$ implies $ |\n| < | \beta| $.
Then 
\begin{enumerate}
\item the map $ \Gamma :  \RR [\![ z_{k}, z_{\n} ]\!] \to  \RR [\![ z_{k}, z_{\n} ]\!]$ defined by 
\begin{equation}\label{e_def_gamma}
\begin{aligned}
\Gamma z_{k}\coloneqq z_{k}\,, \quad 
\Gamma z_{\n}\coloneqq z_{\n}+ \pi^{(\n)}\,, \quad 
\Gamma 1\coloneqq 1\,, \quad \text{and}\quad 
\Gamma ( \pi_{1} \pi_{2}) = (\Gamma \pi_{1}) ( \Gamma \pi_{2})
\end{aligned}
\end{equation} 
for $\pi_1,\pi_2\in\RR[\![z_k,z_\n]\!]$
is well-defined on $\RR [\![ z_{k}, z_{\n} ]\!]$, 
\item for all $\beta,\gamma$ it holds that $ (\Gamma - \mathrm{id})_{ \beta}^{ \gamma}
\neq 0$ implies $ | \gamma| <
| \beta| $ and $\langle\gamma\rangle<\langle\beta\rangle$, and
\item if $ (\pi^{(\n)} )_{\n\in\NN^{1+d}} $ is such that $ \pi^{(\n)}_{ \beta'} = 0$ for all non populated $\beta'$, then for all $\beta,\gamma$
\begin{equation}
\begin{aligned}
(\Gamma - \mathrm{id})_{ \beta}^{ \gamma} \neq 0 \quad \implies \quad \gamma
\prec \beta\,.
\end{aligned}
\end{equation}
\end{enumerate}
\end{lemma}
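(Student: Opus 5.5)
Write $\Gamma z^\gamma = \prod_{k} z_k^{\gamma(k)} \prod_\n (z_\n+\pi^{(\n)})^{\gamma(\n)}$; this is forced by multiplicativity together with the values on generators. The plan has three steps.

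\emph{Well-definedness.} We must check that for fixed $\beta$ only finitely many terms contribute to $(\Gamma \pi)_\beta = \sum_\gamma (\Gamma)_\beta^\gamma \pi_\gamma$. Expanding the product, $(\Gamma z^\gamma)_\beta$ is a finite sum of products of coefficients $\pi^{(\n_j)}_{\beta_j}$. The multiindices involved satisfy
\begin{equation}
\beta = \sum_k \gamma(k)\delta_k + \sum_{\text{kept } \n}\delta_\n + \sum_j \beta_j ,
\end{equation}
where the sum over $j$ runs over the factors $z_{\n_j}$ that were replaced by $\pi^{(\n_j)}_{\beta_j}$. Since $\pi^{(\n_j)}_{\beta_j}\neq0$ requires $|\n_j|<|\beta_j|$, we get $\beta_j\neq0$ (as $|0|=\alpha<0\le|\n_j|$). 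Hence $\beta_j$ charges some $z_k$ or $z_\m$. To bound the number of $\gamma$, we use $\langle\cdot\rangle-\langle0\rangle$, which is additive and positive (Lemma~\ref{lem_hom_props}):
\begin{equation}
\langle\beta\rangle-\langle0\rangle = \sum_k \gamma(k)(\langle\delta_k\rangle-\langle0\rangle) + \sum_{\text{kept}}(\langle\delta_\n\rangle-\langle0\rangle) + \sum_j(\langle\beta_j\rangle-\langle0\rangle).
\end{equation}
Each summand is positive, and the values are bounded below uniformly: $\langle\delta_\n\rangle-\langle0\rangle = |\n|-\alpha \ge -\alpha >0$ and $\langle\delta_k\rangle-\langle0\rangle \ge (\kk-1)\alpha+2-\kappa \ge a-\kappa>0$. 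So the number of factors is bounded. Moreover $|\n_j|<|\beta_j|\le|\beta|+C$ bounds the $\n_j$. Together these bound $\gamma$ to a finite set.

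\emph{Triangularity.} If $(\Gamma-\mathrm{id})_\beta^\gamma\neq0$, at least one $z_{\n}$ was replaced by $\pi^{(\n)}_{\beta_j}$. By additivity,
\begin{equation}
|\beta|-|\gamma| = \sum_j \big(|\beta_j|-|0| - (|\delta_{\n_j}|-|0|)\big) = \sum_j (|\beta_j|-|\n_j|) >0 ,
\end{equation}
using $|\delta_\n| = |\n|-\alpha+\alpha$ up to the constant, i.e.\ $|\delta_\n|-|0| = |\n|-\alpha$ and $|\beta_j|-|0|$, so the difference per replacement is $|\beta_j|-|\n_j|$. For $\langle\cdot\rangle$ the same computation gives $\sum_j(\langle\beta_j\rangle-|\n_j|)$, since $\delta_\n$ carries no $k>\kk$ discount. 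We then need $|\n_j|<|\beta_j|$ to imply $|\n_j|<\langle\beta_j\rangle$. This is exactly the standing choice of $\kappa$ for populated $\beta_j\preceq\beta$. In general it should hold from $\langle\beta_j\rangle\notin\NN$ with $\kappa$ small, and I would argue it via that non-integrality. This is the delicate point.

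\emph{Order triangularity.} Here $[\cdot]+1$ is also additive. Since $[\delta_\n]=-1$, replacing $z_{\n}$ by $\beta_j$ changes $|\cdot|_\prec$ by
\begin{equation}
(|\beta_j|-|\n_j|) + \tfrac D2\big([\beta_j]+1\big).
\end{equation}
For populated $\beta_j$ with $[\beta_j]\ge0$ this is positive. For the remaining populated cases the bracket is negative. If $\beta_j=\delta_\m$, then $[\beta_j]+1=0$, and $|\m|>|\n_j|$ gives positivity. If $\beta_j=\delta_{\kk}+\sum_i\delta_{\n_i}$, then $[\beta_j]+1=0$ as well, and the term is $|\beta_j|-|\n_j|>0$. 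Hence $|\beta|_\prec>|\gamma|_\prec$. The main obstacle is the bookkeeping in the well-definedness step, and making the $\langle\cdot\rangle$ strictness rigorous via the choice of $\kappa$.
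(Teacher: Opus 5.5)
Your proposal is correct and is essentially the paper's argument: you unroll the paper's induction over the length $\ell(\gamma)$ into the explicit binomial expansion of $\prod_\n(z_\n+\pi^{(\n)})^{\gamma(\n)}$ (the representation recorded in Remark~\ref{rem_exp_form}). You then use the same ingredients, namely $\beta_j\neq0$, additivity of $|\cdot|-|0|$ and $|\cdot|_\prec-|0|_\prec$, and $|\beta_j|_\prec\geq|\beta_j|$ for populated $\beta_j$. For the $\langle\cdot\rangle$-strictness the paper likewise just invokes the standing smallness of $\kappa$ (``$|\n|<|\beta|$ implies $|\n|<\langle\beta\rangle$''); note that non-integrality of $\langle\beta_j\rangle$ alone would not give this, since $|\beta_j|$ slightly above an integer and $\langle\beta_j\rangle$ slightly below it is compatible with non-integrality, so it is really the smallness of $\kappa$ relative to the finitely many relevant $\beta_j$ that does the work.
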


In Proposition~\ref{prop_3pt} we choose specific $(\pi^{(\n)}_{xy})_\n$ such that the corresponding $\Gamma_{xy}$ satisfies also \eqref{model_recentre}, \eqref{model_gamma_props}, \eqref{model_gamma_def}, and \eqref{model_pop}. 
This choice also respects the population conditions assumed in Lemma~\ref{lem_def_gamma} and the upcoming Lemmas~\ref{lem_sets} and \ref{lem_triang}, i.e.
\begin{equation}\label{eq_pop_pi}
\pi^{(\n)}_{xy\beta}\neq0
\quad\implies\quad
|\n|<|\beta| \textnormal{ and either }
[\beta]\geq0 \textnormal{ or } \beta \textnormal{ is purely polynomial.}
\end{equation}

\begin{lemma}[Definition and triangularity of $d\Gamma$]\label{lem_dGamma} 
Let $\p\geq2$ and $ (d\pi^{(\n)} )_{\n\in\NN^{1+d}} \subset \RR [\![ z_{k}, z_{\n} ]\!]$ such that for all $\beta$ and $\n$ it holds that $
d\pi^{(\n)}_{ \beta} \neq 0$ implies $ |\n| < \alpha+ D/\p $.
Then 
\begin{enumerate}
\item the map $ d\Gamma :  \RR [\![ z_{k}, z_{\n} ]\!] \to  \RR [\![ z_{k}, z_{\n} ]\!]$ defined by 
\begin{equation}\label{e_def_dgamma}
\begin{aligned}
d\Gamma z_{k}\coloneqq 0\,, \quad 
d\Gamma z_{\n}\coloneqq d\pi^{(\n)}\,, \quad \text{and}\quad 
d\Gamma ( \pi_{1} \pi_{2}) 
= (d\Gamma \pi_{1}) ( \Gamma \pi_{2}) + (\Gamma\pi_{1}) ( d\Gamma \pi_{2})
\end{aligned}
\end{equation} 
for $\pi_1,\pi_2\in\RR[\![z_k,z_\n]\!]$ is well-defined on $\RR [\![ z_{k}, z_{\n} ]\!]$,
\item for all $\beta,\gamma$ it holds that $ d\Gamma_{ \beta}^{ \gamma} \neq 0$ implies $ | \gamma| <
| \beta| + D/\p $ and $\langle\gamma\rangle<\langle\beta\rangle+D/\p$, and 
\item if $ (d\pi^{(\n)} )_{\n\in\NN^{1+d}} $ is such that $ d\pi^{(\n)}_{ \beta'} = 0$ for all $\beta'$ with $[\beta']<0$, then for all $\beta,\gamma$
\begin{equation}
\begin{aligned}
d\Gamma_{ \beta}^{ \gamma} \neq 0 \quad \implies \quad \gamma
\prec \beta\,.
\end{aligned}
\end{equation}
\end{enumerate}
\end{lemma}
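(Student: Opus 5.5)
The plan is to turn the derivation rule \eqref{e_def_dgamma} into an explicit formula for $d\Gamma$ on monomials, and then read off all three items from the additivity properties in Lemmas~\ref{lem_hom_props} and~\ref{lem_ord_props}. Here $\Gamma$ is the map of Lemma~\ref{lem_def_gamma}, and for item 3 I assume that its $\pi^{(\n)}$ satisfy the population condition \eqref{eq_pop_pi}, as they will in the application.

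\emph{Explicit formula.} Since $d\Gamma$ is a $\Gamma$-derivation with $d\Gamma z_k=0$, induction on the length of a monomial gives
\begin{equation}
d\Gamma z^\gamma=\sum_{\n}\gamma(\n)\, d\pi^{(\n)}\,\Gamma\big(z^{\gamma-\delta_\n}\big).
\end{equation}
I will take this as the definition of $d\Gamma$ on monomials and extend it coefficientwise to power series, $(d\Gamma\pi)_\beta=\sum_\gamma (d\Gamma)_\beta^\gamma\pi_\gamma$. Two things then have to be checked: that for fixed $\beta$ only finitely many $\gamma$ contribute, and that the Leibniz rule holds. The Leibniz rule is a direct check on monomials, using multiplicativity of $\Gamma$. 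The finiteness comes from the bound in item 2, which I therefore prove first.

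\emph{Item 2.} If $(d\Gamma)_\beta^\gamma\neq0$, then there are $\n$ and a splitting $\beta=\beta'+\beta''$ with $d\pi^{(\n)}_{\beta'}\neq0$ and $\Gamma_{\beta''}^{\gamma-\delta_\n}\neq0$. By Lemma~\ref{lem_def_gamma}(2) the second condition gives $|\gamma-\delta_\n|\le|\beta''|$. A direct computation gives $|\delta_\n|-|0|=|\n|-\alpha$. By the hypothesis on $d\pi^{(\n)}$ we have $|\n|-\alpha<D/\p$. By Lemma~\ref{lem_hom_props}, $|\beta'|-|0|\ge0$. Using additivity of $|\cdot|-|0|$ we get
\begin{equation}
|\gamma|-|0| = \big(|\gamma-\delta_\n|-|0|\big)+|\n|-\alpha < |\beta''|-|0|+D/\p \le |\beta|-|0|+D/\p .
\end{equation}
The same chain works for $\langle\cdot\rangle$, because $\langle\delta_\n\rangle=|\delta_\n|$.

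\emph{Finiteness.} Every generator has strictly positive weight: $|\delta_\n|-|0|=|\n|-\alpha>0$ because $\alpha<0$, and $|\delta_k|-|0|=(k-1)\alpha+2\ge a>0$ for $k\ge\kk$. Moreover, for each bound only finitely many $\n$ have $|\n|$ below it. Hence the homogeneity bound from item 2 leaves only finitely many admissible $\gamma$ for each $\beta$. Only finitely many $\n$ satisfy $|\n|<\alpha+D/\p$, and the $\Gamma$-factor is well defined by Lemma~\ref{lem_def_gamma}, so every coefficient is a finite sum. This proves item 1.

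\emph{Item 3.} This is the step I expect to be the main obstacle. The order $|\cdot|_\prec$ is only nonnegative on multiindices with $[\cdot]\ge0$, and it is not monotone under $\Gamma$ unless population is assumed; this is why the hypothesis $d\pi^{(\n)}_{\beta'}=0$ for $[\beta']<0$ is imposed. In the splitting above we now have $[\beta']\ge0$. By Lemma~\ref{lem_def_gamma}(3), applied under \eqref{eq_pop_pi}, the factor $\Gamma_{\beta''}^{\gamma-\delta_\n}\neq0$ gives $\gamma-\delta_\n\preceq\beta''$. Since $[\delta_\n]=-1$, we compute $|\delta_\n|_\prec-|0|_\prec=|\n|-\alpha-D/2$. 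Because $\p\ge2$, this is $<D/\p-D/2\le0$. Lemma~\ref{lem_ord_props} gives $|\beta'|_\prec-|0|_\prec\ge0$. Combining these with additivity of $|\cdot|_\prec-|0|_\prec$,
\begin{equation}
|\gamma|_\prec=|\gamma-\delta_\n|_\prec+|\n|-\alpha-D/2<|\beta''|_\prec\le|\beta''|_\prec+|\beta'|_\prec-|0|_\prec=|\beta|_\prec ,
\end{equation}
that is, $\gamma\prec\beta$.
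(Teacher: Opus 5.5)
Your items 2 and 3 are correct, and your route is slightly more direct than the paper's. You work from the closed formula $d\Gamma z^\gamma=\sum_\n\gamma(\n)\,d\pi^{(\n)}\,\Gamma(z^{\gamma-\delta_\n})$, which the paper only records later as \eqref{eq_dgamma_prod_rep}, whereas the paper inducts on the length of $\gamma$ through the Leibniz rule. You also rightly make explicit that item 3 needs the population condition on $\pi^{(\n)}$ in order to invoke Lemma~\ref{lem_def_gamma}(3). The paper uses this tacitly, and without it item 3 can fail: for $\p=2$, $\pi^{(\0)}_{2\delta_\0}\neq0$ and $d\pi^{(\0)}_0\neq0$ give $(d\Gamma)^{2\delta_\0}_{2\delta_\0}=2\,d\pi^{(\0)}_0\pi^{(\0)}_{2\delta_\0}\neq0$.

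The finiteness argument in item 1, however, does not work. By \eqref{e_def_homo}, $|\delta_k|=2+\kk\alpha$ for every odd $k\geq\kk$, so $|\delta_k|-|0|=a$ independently of $k$. Your expression $(k-1)\alpha+2$ is not the homogeneity of $\delta_k$, and it is $\leq a$ rather than $\geq a$, since $\alpha<0$. The homogeneity therefore does not see which $z_k$ occurs. For instance, all $\gamma=\delta_k+\delta_\0$ with $k\geq\kk$ odd have $|\gamma|=a$. So the bound $|\gamma|<|\beta|+D/\p$ is compatible with infinitely many $\gamma$, and controlling only the $\n$-directions does not close the argument. (Lemma~\ref{lem_ind_legal}, for example, controls the $k$-components through $[\cdot]$, not through the homogeneity.)

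You need an extra input that bounds the $k$-components. The simplest is to argue directly from your formula, essentially as the paper does. For fixed $\beta$:
\begin{itemize}
\item there are finitely many splittings $\beta=\beta'+\beta''$;
\item there are finitely many $\n$ with $|\n|<\alpha+D/\p$;
\item by the proof of Lemma~\ref{lem_def_gamma}(1), there are finitely many $\gamma'$ with $\Gamma^{\gamma'}_{\beta''}\neq0$.
\end{itemize}
Hence only finitely many $\gamma=\gamma'+\delta_\n$ contribute. Alternatively, read off from \eqref{eq_gamma_prod_rep} that $\Gamma^{\gamma'}_{\beta''}\neq0$ forces $\gamma'(k)\leq\beta''(k)$, so $\gamma(k)\leq\beta(k)$ for all $k$. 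Then your homogeneity bound only has to control the polynomial components, which it does because $|\n|-\alpha\geq-\alpha>0$ and only finitely many $\n$ lie below any threshold.
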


As for $\Gamma_{xy}$, we choose in Proposition~\ref{prop_intIII} specific $(d\pi^{(\n)}_{xy})_\n$ such that the corresponding $d\Gamma_{xy}$ models $\delta\Pi_x$ via $\Pi_y$, see \eqref{eq:intIII_qualitative}.
This choice also respects the population conditions assumed in Lemma~\ref{lem_dGamma}, i.e.
\begin{equation}\label{eq_pop_dpi}
d\pi^{(\n)}_{xy\beta}\neq0
\quad\implies\quad
|\n|<\alpha+D/\p 
\quad\textnormal{and}\quad 
[\beta]\geq0\,,
\end{equation}
where $\p$ is suitably chosen in Section~\ref{sec:singular}. 

Next, let us introduce subspaces of formal power series that come in handy
throughout. 
First, we define
\begin{equation}
\begin{aligned}
T 
\coloneqq
\left\{ 
\pi = \sum \pi_{ \beta} z^{ \beta} \in \RR [\![ z_{k}, z_{\n} ]\!] \, :\,
\pi_{\beta} \neq 0 \text{ only if } \beta \text{ is populated}
 \right\}\,,
\end{aligned}
\end{equation}
and the subspace
\begin{equation}
\begin{aligned}
\widetilde{T} & \coloneqq \left\{ 
\pi = \sum \pi_{ \beta} z^{ \beta} \in \RR [\![ z_{k}, z_{\n} ]\!] \, :\,
\pi_{\beta} \neq 0 \text{ only if } [\beta] \geqslant 0   \right\}\,. 
\end{aligned}
\end{equation}
The maps $ \Gamma$ and $ d \Gamma$ satisfy the following mapping properties.

\begin{lemma}[Mapping properties of $\Gamma$ and $d\Gamma$]\label{lem_sets} 
We have
\begin{enumerate}
\item $D^{\n} T \subseteq \widetilde{T}$,

\item $ \Gamma \widetilde{T} \subseteq \widetilde{T}$, 
and if $\pi^{(\n)}_\beta\neq0$ implies $[\beta]\geq0$ or $\beta=\delta_\n$, then $ \Gamma T \subseteq T$,

\item $d \Gamma T \subseteq \widetilde{T}$.
\end{enumerate}
\end{lemma}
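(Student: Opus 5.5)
The plan is to prove all three items by bookkeeping with the additive functional $[\cdot]$. Directly from its definition, $\beta\mapsto[\beta]=\sum_{k\geq\kk}(k-1)\beta(k)-\sum_\n\beta(\n)$ is additive, with
\[
[0]=0,\qquad [\delta_k]=k-1\geq 2\ (k\geq\kk),\qquad [\delta_\n]=-1 .
\]
Populated multiindices then split into three types: $[\beta]\geq0$; $\beta=\delta_\m$ with $[\beta]=-1$; and $\beta=\delta_{\kk}+\delta_{\n_1}+\dots+\delta_{\n_{\kk}}$, also with $[\beta]=\kk-1-\kk=-1$. In particular every populated $\beta$ satisfies $[\beta]\geq-1$. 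For item 2 I work under the population condition \eqref{eq_pop_pi}, i.e.\ $\pi^{(\n)}_{\beta'}\neq0$ implies $[\beta']\geq0$ or $\beta'$ purely polynomial. For item 3 I use \eqref{eq_pop_dpi}, i.e.\ $d\pi^{(\n)}_{\beta'}\neq0$ implies $[\beta']\geq0$. In both cases every nonzero coefficient $\beta'$ of $\pi^{(\n)}$ or $d\pi^{(\n)}$ satisfies $[\beta']+1\geq0$, and this is the one fact driving the argument.

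\emph{Item 1.} I read $D^{\n}$ as the derivation lowering $z_\n$, so that $(D^\n\pi)_\beta\neq0$ forces $\pi_{\beta+\delta_\n}\neq0$, hence $\beta+\delta_\n$ is populated. If the definition in the paper is the shift-type derivation, the same computation applies, since it also removes exactly one polynomial variable. By additivity $[\beta]=[\beta+\delta_\n]+1\geq-1+1=0$, because $[\beta+\delta_\n]\geq-1$ for populated multiindices. Hence $D^\n\pi\in\widetilde T$.

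\emph{Item 2.} Take $\gamma=\gamma'+\delta_{\n_1}+\dots+\delta_{\n_m}$, where $\gamma'$ collects the $z_k$ factors and the polynomial factors left unchanged. By multiplicativity, $\Gamma z^\gamma$ is a sum of terms $z^{\gamma'}\prod_j\pi^{(\n_j)}_{\beta_j}z^{\beta_j}$, contributing to $\beta=\gamma'+\sum_j\beta_j$. Each replaced factor $z_{\n_j}$ carries bracket $-1$ and is replaced by $\beta_j$, so
\[
[\beta]=[\gamma]+\sum_j\big([\beta_j]+1\big)\geq[\gamma].
\]
This immediately gives $\Gamma\widetilde T\subseteq\widetilde T$. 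For $\Gamma T\subseteq T$ it remains to treat populated $\gamma$ with $[\gamma]=-1$.
\begin{itemize}
\item If $\gamma=\delta_\m$, then $\Gamma z_\m=z_\m+\pi^{(\m)}$, and every term is populated by hypothesis.
\item If $\gamma=\delta_{\kk}+\sum_i\delta_{\n_i}$ and at least one factor is replaced: either some $\beta_j$ has $[\beta_j]\geq0$, so $[\beta]\geq-1+1=0$; or all $\beta_j$ are purely polynomial. In the latter case $\beta$ is again of the form $\delta_{\kk}+\kk$ polynomial deltas, hence populated.
\end{itemize}
Well-definedness of these infinite expansions is provided by Lemma~\ref{lem_def_gamma}.

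\emph{Item 3.} The argument is the same, using the Leibniz rule \eqref{e_def_dgamma}. Since $d\Gamma z_k=0$, $d\Gamma z^\gamma$ is a sum of terms in which exactly one polynomial factor $z_{\n_0}$ of $\gamma$ is replaced by $d\pi^{(\n_0)}$, with coefficient $\beta_0$ satisfying $[\beta_0]\geq0$, while the remaining factors are expanded by $\Gamma$ as above. If $\gamma$ has no polynomial factor, the contribution vanishes. Otherwise
\[
[\beta]=[\gamma]+\big([\beta_0]+1\big)+\sum_j\big([\beta_j]+1\big)\geq[\gamma]+1\geq0
\]
for populated $\gamma$. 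Hence $d\Gamma T\subseteq\widetilde T$.

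The only delicate point I expect is the case split for $\Gamma T\subseteq T$ at populated $\gamma$ with $[\gamma]=-1$. There the inequality $[\beta]\geq[\gamma]$ is not enough, and one must use that purely polynomial replacements preserve the special populated shapes.
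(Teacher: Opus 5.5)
Your proof is correct and follows essentially the paper's argument. Additivity of $[\cdot]$ along the expansion \eqref{eq_gamma_prod_rep} gives $[\beta]\geq[\gamma]$, and the two populated shapes with $[\gamma]=-1$ are handled by the same case split. For item 3, the paper argues that $d\pi^{(\n)}$ contributes a coefficient with $[\beta_1]\geq0$ and that $\Gamma D^{\n}T\subseteq\widetilde T$ by items 1--2; this is exactly your count $[\gamma]+([\beta_0]+1)+\sum_j([\beta_j]+1)\geq0$. You are also right to state the population condition \eqref{eq_pop_pi} explicitly, since the paper's proof of $\Gamma\widetilde T\subseteq\widetilde T$ uses it without saying so.
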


Additionally, $\Gamma$ and $d\Gamma$ satisfy the following triangularity properties.

\begin{lemma}[Triangular dependence of $\Gamma$ and $d\Gamma$]\label{lem_triang} 
~
\begin{enumerate}
\item If $\pi^{(\n)}$ is such that $\pi^{(\n)}_{\beta'}=0$ for non populated $\beta'$, then for all $\beta,\gamma$
\begin{equation}
\begin{aligned}
\begin{rcases*}
\text{if } \gamma \text{ is populated} \\
\hphantom{\text{if }\gamma}\text{ and not purely polynomial}
\end{rcases*} \quad \text{ then }
\Gamma_{\beta}^{\gamma} \text{ depends on } \pi_{ \beta'}^{(\n)}
\text{ only if} \quad
\begin{cases} 
\beta' \preceq \beta \,.\\
\beta' \prec \beta \,.
 \end{cases} 
\end{aligned}
\end{equation}

\item For all $\beta,\gamma$
\begin{equation}
\begin{aligned}
\begin{rcases*}
\text{if } \gamma \text{ is populated} \\
\hphantom{\text{if }\gamma} \text{ and not purely polynomial} 
\end{rcases*} \quad \text{ then }
d\Gamma_{\beta}^{\gamma} \text{ depends on } d\pi_{ \beta'}^{(\n)}
\text{ only if} \quad
\begin{cases} 
\beta' \preceq \beta \,.\\
\beta' \prec \beta \,.
 \end{cases} 
\end{aligned}
\end{equation}
\end{enumerate}
\end{lemma}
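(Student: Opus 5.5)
The plan is to write $\Gamma_\beta^\gamma$ and $d\Gamma_\beta^\gamma$ explicitly as sums over decompositions of $\beta$, and then compare orders using the additivity of $|\cdot|_\prec-|0|_\prec$ from Lemma~\ref{lem_ord_props}. Throughout I extend $\beta\mapsto|\beta|_\prec-|0|_\prec$ linearly to integer-valued differences of multiindices.

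\emph{Expansion.} By multiplicativity and \eqref{e_def_gamma},
\begin{equation}
\Gamma z^\gamma=\prod_{k}z_k^{\gamma(k)}\prod_\n\big(z_\n+\pi^{(\n)}\big)^{\gamma(\n)} .
\end{equation}
Hence $\Gamma_\beta^\gamma$ is a finite sum of products of coefficients $\pi^{(\n_j)}_{\beta_j}$. Each such product is indexed by a choice of copies $\delta_{\n_1},\dots,\delta_{\n_m}$ inside $\gamma$ that get replaced, together with multiindices $\beta_j$ satisfying
\begin{equation}
\beta=\gamma-\sum_{j=1}^m\delta_{\n_j}+\sum_{j=1}^m\beta_j .
\end{equation}
Likewise, $d\Gamma z^\gamma$ follows from the Leibniz rule \eqref{e_def_dgamma}. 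Every term contains exactly one factor $d\pi^{(\n_1)}_{\beta_1}$, obtained by replacing one copy $\delta_{\n_1}$ of $\gamma$. The other factors $\prod_{k} z_k^{\gamma(k)}$ and $(z_\n+\pi^{(\n)})$ come from $\Gamma$. (There is no term from $d\Gamma z_k=0$.)

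So $\Gamma_\beta^\gamma$ depends on $\pi^{(\n)}_{\beta'}$ only through such a decomposition with $\beta'=\beta_1$ and $\n=\n_1$. The analogous statement holds for $d\Gamma$. By additivity,
\begin{equation}
|\beta|_\prec-|\beta'|_\prec=\big(|\gamma-\delta_{\n_1}|_\prec-|0|_\prec\big)+\sum_{j\geq2}\big(|\beta_j|_\prec-|\delta_{\n_j}|_\prec\big).
\end{equation}

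\emph{Replacements increase the order.} From \eqref{e_def_homo} and \eqref{e_def_ord} we get $|\delta_\n|_\prec-|0|_\prec=|\n|-\alpha-\tfrac D2$. In case 1, a coefficient $\pi^{(\n)}_{\beta_j}\neq0$ requires $\beta_j$ populated and $|\n|<|\beta_j|$.
\begin{itemize}
\item If $\beta_j=\delta_\m$, then $|\beta_j|_\prec-|\delta_\n|_\prec=|\m|-|\n|>0$.
\item If $[\beta_j]\geq0$, then $|\beta_j|_\prec-|0|_\prec\geq|\beta_j|-\alpha>|\n|-\alpha>|\n|-\alpha-\tfrac D2$.
\item The remaining populated case, $\beta_j=\delta_\kk+\delta_{\n_1}+\dots+\delta_{\n_\kk}$, cannot occur. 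This uses the population hypothesis as in Lemma~\ref{lem_def_gamma}, or directly \eqref{eq_pop_pi}. If needed, I would instead check this case by hand via $[\beta_j]=-1$ and $|\beta_j|>|\n|$.
\end{itemize}
In case 2, the factors from $\Gamma$ are handled identically. The single factor $d\pi^{(\n_1)}_{\beta_1}$ is handled by the following estimate, which is where the first paragraph's decomposition must be adjusted. Since $[\beta_1]\geq0$ and $|\n_1|<\alpha+D/\p\leq\alpha+D/2$ for $\p\geq2$, we have $|\beta_1|_\prec-|0|_\prec\geq|\beta_1|-|0|\geq0>|\n_1|-\alpha-\tfrac D2$. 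So in case 2 one may regroup the sum to show even $|\beta|_\prec-|\beta'|_\prec\geq|\gamma-\delta_{\n_1}|_\prec-|0|_\prec$ plus nonnegative terms. Thus in both cases all $j\geq2$ terms are nonnegative.

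\emph{The key term $|\gamma-\delta_{\n_1}|_\prec-|0|_\prec$.} If $\gamma=\delta_{\n_1}$ is purely polynomial, this term vanishes, which gives $\beta'\preceq\beta$. If $\gamma$ is populated but not purely polynomial, then $[\gamma]\geq-1$: the case $\gamma=\delta_\kk+\sum\delta_{\n_i}$ gives exactly $-1$, and otherwise $[\gamma]\geq0$. Therefore
\begin{equation}
[\gamma-\delta_{\n_1}]=[\gamma]+1\geq0 .
\end{equation}
Moreover $\gamma-\delta_{\n_1}$ is a nonzero multiindex, so Lemma~\ref{lem_hom_props} gives $|\gamma-\delta_{\n_1}|-|0|>0$. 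Together these give the strict inequality $\beta'\prec\beta$.

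The main obstacle is the bookkeeping of the population constraints. One must make sure that every replaced factor raises the order, which needs exactly the case split above. The most delicate point is the $d\pi$ factor, where positivity relies on $\p\geq2$.
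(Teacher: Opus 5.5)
Your proof is correct and is essentially the paper's argument: the same expansion \eqref{eq_gamma_prod_rep}, additivity of $|\cdot|_\prec-|0|_\prec$, strict positivity of each replacement term $|\beta_j|_\prec-|\delta_{\n_j}|_\prec$, and strict positivity of $|\gamma|_\prec-|\delta_{\n_1}|_\prec$ for $\gamma$ populated and not purely polynomial. The only variation is in item~2, where you expand $\Gamma D^{\n_1}z^\gamma$ inline, whereas the paper quotes $\Gamma D^{\n}T\subseteq\widetilde T$ from Lemma~\ref{lem_sets} to get $[\beta_2]\geq0$ and $\beta_2\neq0$.

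Two small corrections. First, the case $\beta_j=\delta_{\kk}+\delta_{\n_1}+\dots+\delta_{\n_{\kk}}$ is not excluded by the lemma's hypothesis; only \eqref{eq_pop_pi} excludes it. Your fallback settles it, and so does the uniform bound $|\beta_j|_\prec\geq|\beta_j|>|\n|=|\delta_\n|_\prec$ for every populated $\beta_j$ (Lemma~\ref{lem_ord_props}). Second, your discussion of the $d\pi$ factor (using $[\beta_1]\geq0$ and $\p\geq2$) is superfluous. Since $\beta'=\beta_1$ drops out of your identity for $|\beta|_\prec-|\beta'|_\prec$, no property of the $d\pi$ index is needed.
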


\subsection{The centred model}

We return to a discussion of $\Pi^-_{x\beta}$ defined in \eqref{hierarchy} and first give a couple of explicit expressions for simple multiindices. 

\begin{example}\label{ex:hierarchy}
For the simplest multiindex $\beta=0$ we have 
\begin{equation}
\Pi^-_{x0} = \xi_\ve \, ,
\end{equation}
where we directly used that $c^{(k')}_{\beta=0}=0$, see Remark~\ref{rem:bphz_inductive}. 
Using also that $c^{(k')}_{\delta_k}=0$, see
\eqref{eq_counterterm_deltak}, we have
\begin{equation}
\Pi^-_{x\delta_k+k'\delta_\0} = \ve^{(\kk-k)\alpha} \tbinom{k}{k'}W_{k-k',\ve}(\Pi_{x0}) \, ,
\end{equation}
with the understanding that the expression vanishes for $k'>k$.
For $\kk=3$
\begin{equation}
\Pi^-_{x2\delta_k+\delta_\0} = \ve^{(3-k)\alpha} \Big( k W_{k-1,\ve}(\Pi_{x0}) \Pi_{x\delta_k+\delta_\0} + k(k-1)W_{k-2,\ve}(\Pi_{x0}) \Pi_{x\delta_k} \Big) 
- c^{(1)}_{2\delta_k} \, ,
\end{equation}
and 
\begin{equation}
\Pi^-_{x2\delta_k} = \ve^{(3-k)\alpha} k W_{k-1,\ve}(\Pi_{x0}) \Pi_{x\delta_k} 
- c^{(1)}_{2\delta_k} W_{1,\ve}(\Pi_{x0}) \, .
\end{equation}
For $\kk=3$ and $k\neq k'$ we have
\begin{align}
\Pi^-_{x\delta_k+\delta_{k'}+\delta_\0} 
&= 
\ve^{(3-k)\alpha} 
\Big(
k
W_{k-1,\ve} ( \Pi_{x0}) \Pi_{x\delta_{k'}+ \delta_{\bf{0}}}
+k (k-1)
W_{k -2}( \Pi_{x0}) \Pi_{x\delta_{k'}} \Big) \\
&\,+ \ve^{(3- k')\alpha}
\Big( 
k'
W_{k'-1} ( \Pi_{x0} ) \Pi_{x\delta_{k}+ \delta_{\bf{0}}}
+k' (k'-1)
W_{k' -2}( \Pi_{x0}) \Pi_{x\delta_{k}} \Big) 
- c^{(1)}_{\delta_k+\delta_{k'}} \, ,
\end{align}
and 
\begin{equation}
\Pi^-_{x\delta_k+\delta_{k'}} =  
\ve^{(3-k)\alpha} k W_{k-1,\ve}(\Pi_{x0}) \Pi_{x\delta_{k'}}
+ \ve^{(3-k')\alpha} k' W_{k'-1,\ve}(\Pi_{x0}) \Pi_{x\delta_k} 
- c^{(1)}_{\delta_k+\delta_{k'}} W_{1,\ve}(\Pi_{x0}) \, .
\end{equation}
\end{example}

\begin{example}\label{exmpl_H5}
To illustrate the advantage of expanding $F$ in $W_k$ instead of the standard monomials, let us consider the Gaussian setting where $W_{k,\ve}(\cdot)$ is given by the standard Hermite polynomial 
$H_k(\cdot,\ve^{2\alpha}\EE[Z^2])$.
For $\alpha=-1/2-$ and $D=5$ one can check that $H_5(\Pi^\ve_{x0},\ve^{2\alpha}\EE[Z^2])$ borderline diverges as $\ve\to0$, since $5\alpha+D/2$ is borderline negative. 
Choosing $\kk=3$, we thus observe that $\Pi^{-,\ve}_{x\delta_5}=\ve^{-2\alpha}H_5(\Pi^\ve_{x0},\ve^{2\alpha}\EE[Z^2])$ converges to $0$. 
Similarly, all model components $\Pi^{-,\ve}_{x\delta_k}$ for $k>5$ vanish as $\ve\to0$. 

In contrast, if we worked with the standard monomials $W_k=\cdot^k$, the corresponding model component $\Pi^{-,\ve}_{x\delta_5}$ would be given by 
\begin{equation}
\ve^{-2\alpha} (\Pi^\ve_{x0})^5 - 15 \ve^{2\alpha} \EE[Z^2]^2 \Pi^\ve_{x0} \, .
\end{equation}
Rewriting 
\begin{equation}
(\Pi^\ve_{x0})^5
= H_5(\Pi^\ve_{x0},\ve^{2\alpha}\EE[Z^2])
+ 10 \ve^{2\alpha}\EE[Z^2] H_3(\Pi^\ve_{x0},\ve^{2\alpha}\EE[Z^2]) 
+ 15 (\ve^{2\alpha}\EE[Z^2])^2 \Pi^\ve_{x0} \, ,
\end{equation}
we see that in this case $\Pi^{-,\ve}_{x\delta_5}$ is not vanishing as $\ve\to0$, but converging to the same limit as $10\EE[Z^2]H_3(\Pi^\ve_{x0},\ve^{2\alpha}\EE[Z^2])$. 
Similarly, all model components $\Pi^{-,\ve}_{x\delta_k}$ for $k>5$ converge to a
multiple of the limit of $H_3(\Pi^\ve_{x0},\ve^{2\alpha}\EE[Z^2])$, making the
identification of the equation that the limiting model corresponds to more involved. 
\end{example} 

As Example~\ref{ex:hierarchy} suggests, the following lemma shows that the system of equations \eqref{pde_Pi} and \eqref{hierarchy} is actually a hierarchy of equations.

\begin{lemma}[Triangular dependence of $\Pi^-$]\label{lem_dep_pi} 
~
\begin{enumerate}
\item For every $ \beta$ populated, if $ \Pi^{-}_{x \beta}$ depends on $ \Pi_{x \beta'}$, then $ \beta' \prec
\beta$.

\item For every $ \beta$ populated with $\beta(\0)=0$ and $k,k'<\kk$, 
if $ \Pi^-_{x \, \beta + k\delta_{\0}}+c^{(k)}_\beta$ depends on $c^{(k')}_{ \beta'}$,
then $ \beta '+k'\delta_\0 \prec \beta+k\delta_\0$.
\end{enumerate}
\end{lemma}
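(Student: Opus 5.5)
The plan is to prove both items by comparing orders directly in \eqref{hierarchy}. Lemma~\ref{lem_ord_props} gives that $\beta\mapsto|\beta|_\prec-|0|_\prec$ is additive, so every decomposition of $\beta$ appearing in \eqref{hierarchy} turns into a sum of order increments. It then suffices to show that the increments we discard add up to something strictly positive. The elementary increments needed are:
\begin{equation}
|\delta_k|_\prec-|0|_\prec = a+\tfrac D2(k-1)>0 ,\qquad
|\delta_\n|_\prec-|0|_\prec = |\n|-\alpha-\tfrac D2\geq -\alpha-\tfrac D2 .
\end{equation}
The first uses $|\delta_k|-|0|=(\kk-1)\alpha+2=a$ and $[\delta_k]=k-1$. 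The second uses $[\delta_\n]=-1$. Moreover, $|\gamma|_\prec-|0|_\prec\geq0$ whenever $[\gamma]\geq0$, with equality only for $\gamma=0$. The Taylor remainder $\mathrm{id}-T_x^{<|\beta|-2}$ is a linear operation applied to a product that has already been formed, so it creates no new dependencies and can be ignored throughout.

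\emph{Item 1.} Take $\beta$ populated. If $[\beta]<0$, then $\beta$ is purely polynomial or of the form $\delta_{\kk}+\sum\delta_{\n_i}$. In that case $\Pi^-_{x\beta}$ is given explicitly by \eqref{model_poly} and depends on no $\Pi_{x\beta'}$, so there is nothing to prove. Now assume $[\beta]\geq0$. There are three kinds of dependence to check.
\begin{itemize}
\item Dependence on $\Pi_{x0}$ through $W_{k-l,\ve}(\Pi_{x0})$. Here $0\prec\beta$ follows from the strict positivity of the order increment when $[\beta]\geq0$ and $\beta\neq0$. The case $\beta=0$ is trivial, since $\Pi^-_{x0}=\xi_\ve$.
\item Dependence on some $\Pi_{x\beta_i}$ through the reaction terms $\beta=\delta_k+\beta_1+\dots+\beta_l$ with $\beta_j\neq0$ and $l\leq k$. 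Only populated $\beta_j$ contribute, by \eqref{model_pop}. The factors with $\beta_j=\delta_{\kk}+\sum\delta_{\n}$ vanish by \eqref{model_poly}. Each remaining $\beta_j$ therefore has $[\beta_j]\geq0$ and a nonnegative increment, or is purely polynomial and has increment at least $-\alpha-D/2$. Writing $m\leq l-1\leq k-1$ for the number of purely polynomial factors among $j\neq i$, we get
\begin{equation}
|\beta|_\prec-|\beta_i|_\prec \geq a+\tfrac D2(k-1)-m\big(\alpha+\tfrac D2\big)\geq a-(k-1)\alpha>0 .
\end{equation}
\item Dependence through the counterterm sum. The same computation applies, with $\delta_k$ replaced by $\beta_0$ and $l\leq k<\kk$. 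The constraints on $c^{(k)}_{\beta_0}$ from Theorem~\ref{prop_main_reg} give $\sum\beta_0(k)\geq2$ and $\beta_0(\n)=0$. Hence $|\beta_0|_\prec-|0|_\prec\geq 2a+D(\kk-1)$, which dominates at most $\kk-1$ polynomial deficits.
\end{itemize}
A subtlety: if $\beta_i$ is itself purely polynomial, then $\Pi_{x\beta_i}=(\cdot-x)^{\n}$ is deterministic, so dependence on it may be disregarded or handled by the same inequality.

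\emph{Item 2.} Now consider $\Pi^-_{x\,\beta+k\delta_\0}+c^{(k)}_\beta$. A coefficient $c^{(k')}_{\beta'}$ enters only through the counterterm sum, where $\beta'$ plays the role of $\beta_0$ and $\beta'+\beta_1+\dots+\beta_l=\beta+k\delta_\0$. By the constraints in Theorem~\ref{prop_main_reg}, $\beta'$ has no polynomial part. The term $(k',\beta',l)=(k,\beta,k)$ with all $\beta_i=\delta_\0$ is exactly $\binom kk c^{(k)}_\beta W_0$, which is the term removed by adding $c^{(k)}_\beta$. For every other term, additivity gives
\begin{equation}
|\beta+k\delta_\0|_\prec-|\beta'+k'\delta_\0|_\prec=\sum_{i}\big(|\beta_i|_\prec-|0|_\prec\big)-k'\big(|\delta_\0|_\prec-|0|_\prec\big).
\end{equation}
Here $|\delta_\0|_\prec-|0|_\prec=-\alpha-D/2<0$. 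I will split the $\beta_i$ into copies of $\delta_\0$, other polynomials $\delta_\n$, and populated $\beta_i$ with $[\beta_i]\geq0$. Each copy of $\delta_\0$ beyond the $k'$ subtracted ones must be compensated, and I expect this to balance because the $\delta_\0$-count satisfies $\#\{\beta_i=\delta_\0\}\leq k'+(\text{contributions from }\beta)$ while $l\leq k'$.

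This case analysis in item~2 is the main obstacle. Matching the $\delta_\0$-counts, and showing that any leftover polynomial factor is paid for by a factor with $[\beta_i]\geq0$ (or by $l<k'$ forcing a genuine nonpolynomial factor), is where the argument is delicate. I would first try to reduce to the identity $\sum_i\beta_i(\0)=\beta(\0)+k$ together with $\beta(\0)=0$ and $l\leq k'$. This reduction shows that either some $\beta_i$ is nonpolynomial, giving a strictly positive increment, or the term is the excluded one.
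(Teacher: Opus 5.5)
\textbf{Item 1.} Your argument is correct and follows the paper's route: additivity of $|\cdot|_\prec-|0|_\prec$ together with lower bounds on the increments of the remaining factors. One fix is needed. Do not cite Theorem~\ref{prop_main_reg} for $\sum_k\beta_0(k)\geq2$, or for $\beta_0$ having no polynomial part, because the theorem's recursive construction of the counterterms relies on this lemma, so that is circular. All you need is $c^{(k)}\in\RR[\![z_k]\!]$ with $c^{(k)}_0=0$ (Remark~\ref{rem:bphz_inductive}). This gives $|\beta_0|_\prec-|0|_\prec\geq a+\frac D2(\kk-1)$, which already dominates the at most $\kk-2$ polynomial deficits.

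\textbf{Item 2.} This item is not proved. You leave the decisive step as an expectation, and the reduction you sketch is false as stated. It is not true that either some $\beta_i$ is nonpolynomial or the term is the cancelled one. Take $\kk\geq5$, $k=1$, $k'=3$, $l=1$, $\beta'=\beta$, $\beta_1=\delta_\0$. The term $3c^{(3)}_\beta W_{2,\ve}(\Pi_{x0})\Pi_{x\delta_\0}$ has only polynomial factors, and it is not the excluded term. Note also that $l<k'$ does not force a nonpolynomial factor here. Conversely, a single factor with $[\beta_i]\geq0$ cannot by itself compensate the negative increments $|\n|-|0|_\prec$ of the other, polynomial, factors.

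The missing observation is $|\delta_\0|_\prec=0$, i.e.\ $|\delta_\0|_\prec-|0|_\prec=-|0|_\prec$. With it, your own identity collapses to
\[
|\beta+k\delta_\0|_\prec-|\beta'+k'\delta_\0|_\prec=\sum_{i=1}^l|\beta_i|_\prec+(k'-l)\,|0|_\prec .
\]
Every term on the right is nonnegative:
\begin{itemize}
\item $l\leq k'$ because of the factor $\binom{k'}{l}$;
\item $|0|_\prec=\alpha+\frac D2>0$;
\item $|\delta_\n|_\prec=|\n|\geq0$;
\item $|\beta_i|_\prec\geq\alpha+\frac D2$ whenever $[\beta_i]\geq0$, while factors $\delta_{\kk}+\sum_j\delta_{\n_j}$ vanish.
\end{itemize}
Equality forces $l=k'$ and $\beta_i=\delta_\0$ for all $i$, hence $\beta'+k'\delta_\0=\beta+k\delta_\0$. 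Comparing $\0$-components and using $\beta'(\0)=\beta(\0)=0$ gives $k'=k$ and $\beta'=\beta$. That is exactly the term removed by $+c^{(k)}_\beta$. No counting of $\delta_\0$'s is needed; this is how the paper concludes.
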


\begin{remark}[Choice of counterterms]\label{rem:bphz_inductive}
In the recursive construction of $\Pi^-_x$ we also have to choose $c^{(k)}$ for $k<\kk$.
Since the linear equation ($\mathbf{F}=0$) is not in need of renormalisation, we can set $c^{(k)}_{\beta=0}=0$.
The counterterms $c^{(k)}$ are power series in $\mathbf{F}=\{F_k\}_{k=\kk}^\infty$ only,
so we set $c^{(k)}_\beta=0$ if $\sum_\n\beta(\n)>0$.
For the remaining multiindices we choose $c^{(k)}_\beta$ to make the ensemble and space-time average of $\Pi^-_{x\,\beta+k\delta_\0}$ vanish,
provided $|\beta+k\delta_\0|<2$. 
If $|\beta+k\delta_\0|\geq2$ we simply set $c^{(k)}_\beta=0$. 
For this to be compatible with the recursive construction of $\Pi^-_x$, 
we need to know all $c^{(k')}_{\beta'}$ that show up in $\Pi^-_{x\,\beta+k\delta_\0} + c^{(k)}_\beta$, which are chosen when looking at the multiindices $\beta'+k'\delta_\0$, which in turn are strictly smaller than $\beta+k\delta_\0$ with respect to the ordering $\prec$ by the above lemma. 
\end{remark}

By definition, $\Pi_{x\beta}$ vanishes unless $[\beta]\geq0$ or $\beta$ is purely polynomial.
The following lemma justifies the notion of being populated: 
\begin{lemma}[Population]\label{lem_pop}
Fix $\beta$ and assume that $\Pi_{x\beta'}$ vanishes for $\beta'\prec\beta$ unless $[\beta']\geq0$ or $\beta'=\delta_\n$ is purely polynomial. 
For $k'<\kk$ assume further that $c^{(k')}\in\RR[\![z_k]\!]$ and $c^{(k')}_{\beta=0}=0$.
Then 
	$\Pi^-_{x\beta}$ given by \eqref{hierarchy} vanishes unless $[\beta]\geq0$ or
	$\beta=\delta_{\kk}+\delta_{\n_1}+\dots+\delta_{\n_{\kk}}$.
\end{lemma}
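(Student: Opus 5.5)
The plan is to verify the claim term by term in the hierarchy \eqref{hierarchy}. We show that every nonvanishing summand forces either $[\beta]\geq0$ or $\beta=\delta_{\kk}+\delta_{\n_1}+\dots+\delta_{\n_{\kk}}$. The basic tool is additivity of $[\cdot]$ over sums of multiindices: $[\beta_1+\beta_2]=[\beta_1]+[\beta_2]$ with $[0]=0$, $[\delta_k]=k-1$ and $[\delta_\n]=-1$. First, the term $1_{\beta=0}\xi_\ve$ is harmless, since $[0]=0$. By Lemma~\ref{lem_dep_pi}, every $\Pi_{x\beta_i}$ that appears satisfies $\beta_i\prec\beta$. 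The hypothesis then applies, so each nonvanishing factor has either $[\beta_i]\geq0$ or $\beta_i=\delta_\n$ with $[\beta_i]=-1$.

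For the first sum, fix $k\geq\kk$, $l\geq0$ and a decomposition $\beta=\delta_k+\beta_1+\dots+\beta_l$ with all $\beta_i\neq0$ and nonvanishing product. The binomial coefficient $\binom{k}{l}$ vanishes for $l>k$, so we may assume $l\leq k$. Let $m$ be the number of purely polynomial $\beta_i$. Then
\begin{equation}
[\beta]=(k-1)+\sum_i[\beta_i]\geq (k-1)-m\geq k-1-l\geq -1 .
\end{equation}
If $[\beta]\geq0$ we are done. Otherwise equality holds throughout: $l=k$, all $\beta_i$ are purely polynomial, and the remaining terms contribute $0$. Hence $\beta=\delta_k+\delta_{\n_1}+\dots+\delta_{\n_k}$, and it remains to exclude $k>\kk$. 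The factor $W_{k-l,\ve}(\Pi_{x0})=W_0=1$ and $\Pi_{x\delta_{\n_i}}=(\cdot-x)^{\n_i}$, so the summand is $(\mathrm{id}-T_x^{<|\beta|-2})$ applied to a polynomial of degree $\sum|\n_i|$. A direct computation from \eqref{e_def_homo} gives
\begin{equation}
|\beta|-2=\alpha\big(1+(\kk-1)-k\big)+\textstyle\sum_i|\n_i|=(\kk-k)\alpha+\sum_i|\n_i| .
\end{equation}
For $k>\kk$ (and $\alpha<0$) this exceeds $\sum_i|\n_i|$, so the Taylor polynomial reproduces the monomial exactly and the summand vanishes. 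This is the one place where the positive recentering of Remark~\ref{rem_pos_recentering} enters.

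For the counterterm sum, we have $\beta=\beta_0+\beta_1+\dots+\beta_l$ with $c^{(k)}_{\beta_0}\neq0$ and $l\leq k<\kk$. Since $c^{(k)}\in\RR[\![z_k]\!]$ and $c^{(k)}_0=0$, the multiindex $\beta_0$ is a nonzero multiindex over $\{k\geq\kk\}$ only. Hence $[\beta_0]\geq\kk-1\geq k$, and
\begin{equation}
[\beta]\geq k-l\geq0 .
\end{equation}

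The main subtlety is the exclusion of $k>\kk$ in the borderline case $[\beta]=-1$. There one must check the degree bookkeeping carefully, including the convention that $T_x^{<|\beta|-2}$ collects all $|\n|<|\beta|-2$, so that the full monomial is subtracted. Everything else is additivity of $[\cdot]$.
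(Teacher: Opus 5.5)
Your proof follows the paper's proof step for step. You split \eqref{hierarchy} into its three contributions and use additivity of $[\cdot]$ to get $[\beta]\geq k-1-l\geq-1$. In the borderline case this forces $l=k$ and all factors purely polynomial. The identity $\sum_i|\n_i|=|\beta|-2+(k-\kk)\alpha$ then makes $(\mathrm{id}-T_x^{<|\beta|-2})$ annihilate the monomial when $k>\kk$, and $[\beta_0]\geq\kk-1$ handles the counterterm sum.

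The one step that fails as written is your justification for applying the hypothesis to the factors, namely the appeal to Lemma~\ref{lem_dep_pi} to get $\beta_i\prec\beta$. That lemma is stated only for populated $\beta$; its proof uses $|\beta_i|_\prec\geq\alpha$, i.e.\ populatedness, of the remaining factors. Lemma~\ref{lem_pop}, however, is precisely about $\beta$ that are not populated, and for those the claim is false. Since $|\delta_\0|_\prec-|0|_\prec=-(\alpha+\tfrac{D}{2})<0$, take $\beta=\delta_k+2m\delta_\0$ with the decomposition $\beta_1=\beta_2=m\delta_\0$. Additivity gives $|\beta|_\prec-|\beta_1|_\prec=2+(\kk-1)\alpha+\tfrac{D}{2}(k-1)-m(\alpha+\tfrac{D}{2})$, which is negative for $m$ large. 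So $\beta_1\not\prec\beta$, and the vanishing of $\Pi_{x\,m\delta_\0}$ cannot be obtained from a hypothesis restricted to $\beta'\prec\beta$.

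What your count actually needs is that $\Pi_{x\beta_i}$ vanishes unless $[\beta_i]\geq0$ or $\beta_i$ is purely polynomial, for every $\beta_i$ that occurs, not only for $\beta_i\prec\beta$. The paper's proof uses exactly this when it writes $[\beta_i]\geq-1$ for each factor. In the construction this holds for all multiindices by definition: components with $[\beta']<0$ are fixed directly by \eqref{model_poly} and \eqref{model_pop} (see Section~\ref{sec:inductionstart} and the sentence preceding the lemma). Replace the sentence invoking Lemma~\ref{lem_dep_pi} by this fact. The rest of your argument, including the counterterm sum where you use the same lower bound on the factors, then goes through unchanged.
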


\subsection{Purely polynomial multiindices and base case}\label{sec:inductionstart}

Before we enter the inductive proof of Theorem~\ref{prop_main_reg} for multiindices $[\beta]\geq0$, 
we treat multiindices $[\beta]<0$, including those of the form $\beta=\delta_\n$ and $\beta=\delta_{\kk}+\delta_{\n_1}+\dots+\delta_{\n_{\kk}}$.
We prove that for such multiindices the statements of Theorem~\ref{prop_main_reg} hold
true.
In particular, this covers
the base case $ \beta = \delta_{\n = \mathbf{0}}$ of the induction, see
Lemma~\ref{lem_ind_legal} Item~2.

First, we consider $\Pi_{x\beta}$ and $\Pi^-_{x\beta}$, when viewing \eqref{model_poly}
and the first two items of \eqref{model_pop} as definition of the quantities. 
We emphasize that this definition of $\Pi_{x\beta}$ and $\Pi^-_{x\beta}$ for $[\beta]<0$ is consistent with \eqref{hierarchy}.
Furthermore, $\Pi_{x\beta}$ satisfies trivially the estimate \eqref{est_Pi_main}.

We turn to $(\Gamma_{xy})_\beta^\gamma$, where we define 
\begin{equation}
\pi_{xy\beta}^{(\m)}
\coloneqq 
\begin{cases}
\tbinom{\n}{\m} (y-x)^{\n-\m} \, 1_{\n\neq\m} & \textnormal{for }\beta=\delta_\n \, , \\
0 & \textnormal{for } \beta=\delta_{\kk}+\delta_{\n_1}+\dots+\delta_{\n_{\kk}} 
\textnormal{ or } \beta \textnormal{ not populated}\, ,
\end{cases}
\end{equation}
which satisfies the population condition \eqref{eq_pop_pi}.
As a consequence of \eqref{e_def_gamma}, 
the first two items of \eqref{model_gamma_def} hold 
and $(\Gamma_{xy})_{\delta_\n}^\gamma = \sum_\m (z_\m+\pi^{(\m)}_{xy})_{\delta_\n} 1_{\gamma=\delta_\m}$, so that by the latter also the last item of \eqref{model_gamma_def} holds. 
This last item of \eqref{model_gamma_def} also ensures that \eqref{model_recentre} and \eqref{model_gamma_props} hold for $\beta=\delta_\n$, where the respective first items rely additionally on the binomial formula.
Similarly, \eqref{model_recentre} and \eqref{model_gamma_props} hold as well for $\beta=\delta_{\kk}+\delta_{\n_1}+\dots+\delta_{\n_{\kk}}$ by the binomial formula, 
since \eqref{e_def_gamma} and the last item of \eqref{model_gamma_def} imply 
for $\gamma$ populated 
\begin{align}
(\Gamma_{xy})_{\delta_{\kk}+\delta_{\n_1}+\dots+\delta_{\n_{\kk}}}^\gamma
=
\sum
\tbinom{\bar\n_1}{\m_1}\cdots\tbinom{\bar\n_{\kk}}{\m_{\kk}} 
(y-x)^{\bar\n_1+\dots+\bar\n_{\kk}-\m_1-\dots-\m_{\kk}}\, 
1_{\gamma=\delta_{\kk}+\delta_{\m_1}+\dots+\delta_{\m_{\kk}}} 
\tfrac{\prod_\m\gamma(\m)!}{\kk!} \, ,
\end{align}
where the sum is taken over $\m_1,\dots,\m_{\kk}$ and $\bar\n_1,\dots,\bar\n_{\kk}$ such that $\delta_{\bar\n_1}+\dots+\delta_{\bar\n_{\kk}}=\delta_{\n_1}+\dots+\delta_{\n_{\kk}}$.
The last item of \eqref{model_pop} is a general consequence of Lemma~\ref{lem_sets}~Item~2, and the estimate \eqref{est_Gamma_main} immediately follows from the explicit expressions of $(\Gamma_{xy})_\beta^\gamma$ given above.

This establishes all statements of Theorem~\ref{prop_main_reg} concerning multiindices $[\beta]<0$. 
However, in the inductive proof of the upcoming Sections~\ref{sec:regular}--\ref{sec:singular} treating multiindices $[\beta]\geq0$, further auxiliary statements are used as an induction hypothesis for all populated multiindices $\beta'\prec\beta$. 
Therefore, we establish the induction hypothesis for multiindices of the form $\beta=\delta_\n$ and $\beta=\delta_{\kk}+\delta_{\n_1}+\dots+\delta_{\n_{\kk}}$ 
(the induction hypothesis is not used for non-populated multiindices).
The necessary statements 
in Sections~\ref{sec:regular} and \ref{sec:intermediate} are:
\begin{itemize}
\item 
estimate \eqref{est_pin} of $\pi^{(\n)}_{xy}$,
\item 
divergent bounds \eqref{est_Pi_bounded} and \eqref{est_Pi_taylorremainder} of $\Pi_x$ 
and \eqref{est_Pi-_bounded}, \eqref{est_Pi-_taylorremainder} of $\Pi^-_x$,
\item 
covariances 
\eqref{Pi_shift},
\eqref{Pi_parity},
\eqref{Pi_reflection}
of $\Pi_x$ and 
\eqref{Pi-_shift},
\eqref{Pi-_parity},
\eqref{Pi-_reflection}
of $\Pi^-_x$,
\item 
estimate \eqref{est_Pi-} of $\Pi^-_x$,
\end{itemize}
and additionally in Section~\ref{sec:singular}:
\begin{itemize}
\item 
estimate \eqref{est_deltapin} of $\delta\pi^{(\n)}_{xy}$ and \eqref{est_deltaGamma} of $\delta\Gamma_{xy}$,
\item 
estimate \eqref{est_dGamma_inc} of $d\Gamma_{xy}-d\Gamma_{xz}\Gamma_{zy}$,
\item 
estimate \eqref{est_dpin} of $d\pi^{(\n)}_{xy}$ and \eqref{est_dGamma} of $d\Gamma_{xy}$,
\item 
divergent bounds \eqref{est_deltaPi_bounded} and \eqref{est_deltaPi_taylorremainder} of $\delta\Pi_x$ 
and \eqref{est_deltaPi-_bounded}, \eqref{est_deltaPi-_taylorremainder} of $\delta\Pi^-_x$,
\item 
estimate 
\eqref{est_deltaPi_inc} of $\delta\Pi_x-d\Gamma_{xz}\Pi_z$,
\item 
estimate \eqref{est_deltaPi-} of $\delta\Pi^-_x$ and \eqref{est_deltaPi} of $\delta\Pi_x$.
\end{itemize}
All statements listed as induction hypothesis for Sections~\ref{sec:regular} and \ref{sec:intermediate} 
are immediate consequences of the respective definitions of $\Pi^-_{x\beta}$, $\Pi_{x\beta}$, and $\pi^{(\n)}_{xy\beta}$ given above. 
All statements listed as induction hypothesis for Section~\ref{sec:singular} 
concerning $\delta\Pi^-_{x\beta}$, $\delta\Pi_{x\beta}$, $\delta\pi^{(\n)}_{xy\beta}$, 
and $(\delta\Gamma_{xy})_\beta^\gamma$ for populated $\gamma$ are trivially satisfied: 
the respective definitions of $\Pi^-_{x\beta}$, $\Pi_{x\beta}$, $\pi^{(\n)}_{xy\beta}$, and $(\Gamma_{xy})_\beta^\gamma$ 
given above are all deterministic so that their Malliavin derivatives vanish. 
Also $d\pi^{(\n)}_{xy\beta}$, $(d\Gamma_{xy})_\beta^\gamma$, and $(d\Gamma_{xy}-d\Gamma_{xz}\Gamma_{zy})_\beta^\gamma$ 
trivially satisfy all listed estimates as they vanish as well: 
for such multiindices we simply define $d\pi^{(\n)}_{xy\beta}\coloneqq0$ (in-line with \eqref{eq_pop_dpi}),
while for $(d\Gamma_{xy})_\beta^\gamma$ and $(d\Gamma_{xy}-d\Gamma_{xz}\Gamma_{zy})_\beta^\gamma$ 
this follows from the mapping properties established in Lemma~\ref{lem_sets} Items~2 and 3.

\section{Regular indices}\label{sec:regular}

In this section, we consider a regular multiindex $\beta$, i.e.~$|\beta|>2$  with
$[\beta]\geq0$, which are the least singular ones.
We assume
that all relevant objects associated with smaller multiindices $\beta' \prec \beta$ (both
regular and singular) have already been constructed and estimated. Our aim is to derive
estimates for the model component $\Pi_{x\beta}$, as well as 
defining $ \Gamma_{xy}$ such that 
\begin{equation}
	\begin{aligned}
		\Pi_{x\beta} = (\Gamma_{xy}\Pi_y)_\beta
		=
		\sum_\gamma (\Gamma_{xy})_\beta^\gamma \Pi_{y\gamma}\,,
	\end{aligned}
\end{equation}
which is a consequence of obtaining sufficient control of $\Pi^-_{x\beta}$.

As a first step, we establish stochastic estimates for $\Gamma_\beta^\gamma$, whenever
the corresponding expression factorises into terms depending only on smaller multiindices
(Proposition~\ref{prop_1}). 
This occurs for $\gamma$ that are not purely polynomial. Fortunately, this is sufficient for
our purposes, since $\Pi^-_{x\gamma}$ 
vanishes for purely polynomial $\gamma$ by \eqref{model_poly}.
We therefore postpone the treatment of $ \Gamma_{ \beta}^{\delta_{\n}}$, which arise only in later induction steps, until the end of the argument.
We then estimate $\Pi^-_{x\beta}$ by combining the hierarchy \eqref{hierarchy} with a
reconstruction argument (Proposition~\ref{prop:regrec}). Once $\Pi^-_{x\beta}$ has been constructed and estimated, the corresponding bounds for
	$\Pi_{x\beta}=(\partial_{x_0}-\Delta)^{-1}\Pi^-_{x\beta}$
follow by integration. It then remains to verify that $\Gamma$ indeed recentres the
integrated component 
$\Pi_\beta$, which is achieved by choosing the missing purely polynomial components
$\Gamma_\beta^{\delta_{\n}}=\pi^{(\n)}_\beta$ appropriately (Proposition~\ref{prop_3pt}).

Finally, we reestablish at the present level the probabilistic symmetries and continuity
properties of the model components on which the argument relies, see
Propositions~\ref{prop_covariance_Pi} and~\ref{prop_qualitative_pi} below.
This provides the structural input required for the next stage of the induction.

\medskip

The steps that we outlined above are summarised in the following table, borrowed from
\cite{LOTT24}, which the reader may use as a guiding aid throughout this section:

\begin{figure}[H]
		\small
		\begin{center}
			\begin{tabular}{|c|c|c|}
					\hline 
					{\bf Step} & {\bf Description} & {\bf Statement}\\  
					\hhline{|===|}
					1 & Algebraic argument I & Proposition
					\ref{prop_1} \\ 
					\hline
					2.~a) & Boundedness and continuity of $
					\Pi^{-}$  & Proposition \ref{prop_2} \\
					\hline
					2.~b) & Symmetries of $ \Pi^{-}_{ \beta }$ & Proposition
					\ref{prop_covariance_Pi-} \\
					\hline
					2.~c) & Recentering of $ \Pi^{-}_{\beta}$ & Proposition
					\ref{prop_recentre_Pi-} \\
					\hline
					3 & Regular reconstruction & Proposition
					\ref{prop:regrec} \\
					\hline
					4.~a) & Integration & Proposition
					\ref{prop_integration} \\
					\hline
					4.~b) & Symmetries of $ \Pi_{ \beta}$ & Proposition
					\ref{prop_covariance_Pi} \\
					\hline 
					5 & Three-point argument & Proposition
					\ref{prop_3pt} \\
					\hline
					6 & Boundedness and continuity of $
					\Pi_{ \beta}$ & Proposition \ref{prop_qualitative_pi} \\
					\hline
			\end{tabular} 
		\end{center}
		\caption{Summary of arguments for regular multiindices.}\label{fig:regular}
	\end{figure}

In the remainder of this section, we state and prove each of the steps listed in the
table, starting with the estimates of $ \Gamma_{\beta}^{\gamma}$ in the case when $
\gamma$ is not purely polynomial.

Throughout this section all necessary conditions and resulting estimates hold for all $x,y\in\RR^{1+d}$, $t\in(0,\infty)$, and $p\in[1,\infty)$. Therefore we drop the corresponding quantifiers for notational simplicity.
Moreover, all implicit constants only depend on $ \alpha$, $\beta$, $\kappa$, and $ p$.

\begin{proposition}[Algebraic argument I, Step 1]\label{prop_1} 
Let $[\beta]\geq0$ and assume that for all populated $ \beta' \prec \beta$ 
\begin{equation}\label{est_pin}
\begin{aligned}
\EE^{\frac{1}{p}} \left|
\pi^{(\n)}_{xy\, \beta'}
\right|^{ p} \lesssim \ve^{|\beta'|-\langle\beta'\rangle} |x-y|^{\langle \beta' \rangle - |\n|}\,.
\end{aligned}
\end{equation}
Then for all \(\gamma\) populated and not purely polynomial, we have
\begin{equation}\label{est_Gamma}
\begin{aligned}
\EE^{\frac{1}{p}} \left|
(\Gamma_{xy})_{ \beta}^{\gamma}
\right|^{ p} \lesssim \ve^{|\beta|-\langle\beta\rangle-|\gamma|+\langle\gamma\rangle} |x-y|^{\langle\beta \rangle - \langle\gamma\rangle}\,,
\end{aligned}
\end{equation}
with the implicit understanding that the exponents on the right-hand side in
\eqref{est_Gamma} are non-negative, unless the left-hand side is zero.
\end{proposition}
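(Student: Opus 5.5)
We derive an explicit formula for $(\Gamma_{xy})_\beta^\gamma$ from the definition \eqref{e_def_gamma}, and then estimate it with Hölder's inequality together with the assumed bound \eqref{est_pin}. Since $\Gamma$ is multiplicative, fixes every $z_k$, and sends $z_\n$ to $z_\n+\pi^{(\n)}$, expanding $\Gamma z^\gamma$ gives, for $\gamma$ populated and not purely polynomial,
\begin{equation}
(\Gamma_{xy})_\beta^\gamma
= \sum \tfrac{\gamma!}{\ldots}\, \prod_{j=1}^{m} \pi^{(\n_j)}_{xy\,\beta_j} ,
\end{equation}
a finite sum with combinatorial coefficients. The sum runs over choices of $\n_1,\dots,\n_m$ with $\delta_{\n_1}+\dots+\delta_{\n_m}\leq\gamma$ (the $z_\n$ factors of $z^\gamma$ that get replaced) and over nonzero $\beta_1,\dots,\beta_m$ with
\begin{equation}
\beta = \gamma - \textstyle\sum_j \delta_{\n_j} + \sum_j \beta_j .
\end{equation}
The sum is finite because the homogeneity is additive, so the $|\beta_j|$ are bounded.

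The first step is to check that every $\beta_j$ appearing here satisfies $\beta_j\prec\beta$ and is populated, so that \eqref{est_pin} applies to it. Population follows from the standing population condition \eqref{eq_pop_pi}, since $\pi^{(\n)}_{\beta_j}\neq0$ forces $[\beta_j]\geq0$ or $\beta_j$ purely polynomial. For the order, I would use additivity of $|\cdot|_\prec-|0|_\prec$ from Lemma~\ref{lem_ord_props}. This gives $|\beta|_\prec-|0|_\prec = (|\gamma|_\prec-|0|_\prec)-\sum_j(|\delta_{\n_j}|_\prec-|0|_\prec)+\sum_j(|\beta_j|_\prec-|0|_\prec)$. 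The claim then needs $|\gamma-\sum_j\delta_{\n_j}|_\prec - |0|_\prec$ plus the contributions of the other $\beta_i$ to be strictly positive.

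This is the \emph{main obstacle}, because $\gamma-\sum_j\delta_{\n_j}$ need not satisfy $[\cdot]\geq0$. The way around it is to use that $\gamma$ is not purely polynomial. Then either $[\gamma]\geq0$, or $\gamma=\delta_{\kk}+\delta_{\m_1}+\dots+\delta_{\m_{\kk}}$, which has $[\gamma]=\kk-1-\kk=-1$. Removing polynomial indices only raises $[\cdot]$, so $[\gamma-\sum_j\delta_{\n_j}]\geq -1+m\geq0$ once $m\geq1$ in the second case; the case $m=0$ is trivial. Combined with the triangularity statement in Lemma~\ref{lem_def_gamma} Item 3 (or Lemma~\ref{lem_triang} Item 1), this yields $\beta_j\prec\beta$. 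I would simply invoke Lemma~\ref{lem_triang} Item 1, the strict case for $\gamma$ not purely polynomial, which is exactly the statement that $\Gamma_\beta^\gamma$ depends on $\pi^{(\n)}_{\beta'}$ only for $\beta'\prec\beta$.

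With this in hand, Hölder's inequality and \eqref{est_pin} bound each product by
\begin{equation}
\ve^{\sum_j(|\beta_j|-\langle\beta_j\rangle)}\,|x-y|^{\sum_j(\langle\beta_j\rangle-|\n_j|)} .
\end{equation}
It remains to identify the exponents. The quantity $|\cdot|-\langle\cdot\rangle=\kappa\sum_{k>\kk}(\cdot)(k)$ is linear and vanishes on $\delta_\n$, so $\sum_j(|\beta_j|-\langle\beta_j\rangle)=|\beta|-\langle\beta\rangle-|\gamma|+\langle\gamma\rangle$. Additivity of $\langle\cdot\rangle-\langle0\rangle$ from Lemma~\ref{lem_hom_props}, together with $\langle\delta_\n\rangle-\langle0\rangle=|\n|-\alpha$, gives $\sum_j(\langle\beta_j\rangle-|\n_j|)=\langle\beta\rangle-\langle\gamma\rangle$. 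The $\alpha$ shifts cancel because each $\beta_j$ comes paired with one $\delta_{\n_j}$.

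Non-negativity of the exponents when the left-hand side is nonzero follows from two facts. The $\ve$ exponent is $\kappa\sum_{k>\kk}(\beta-\gamma)(k)=\kappa\sum_j\sum_{k>\kk}\beta_j(k)\geq0$. The $|x-y|$ exponent is positive by Lemma~\ref{lem_def_gamma} Item 2 (or, termwise, by $\langle\beta_j\rangle>|\n_j|$, which follows from $|\n_j|<|\beta_j|$ in \eqref{eq_pop_pi} and the choice of $\kappa$).
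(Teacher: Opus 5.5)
Your proposal is correct and follows the paper's proof. You expand $(\Gamma_{xy})_\beta^\gamma$ via multiplicativity as in \eqref{eq_gamma_prod_rep}, apply H\"older and \eqref{est_pin}, identify the exponents through additivity of $|\cdot|$ and $\langle\cdot\rangle$, and obtain non-negativity from Lemma~\ref{lem_def_gamma} Item~2 and $\gamma(k)\leq\beta(k)$; your explicit appeal to Lemma~\ref{lem_triang} Item~1 for $\beta_j\prec\beta$ spells out a step the paper leaves implicit. One correction: the sum is finite because the $\n_j$ range over the finitely many polynomial entries of $\gamma$ and $\sum_j\beta_j=\beta-\gamma+\sum_j\delta_{\n_j}$ is a fixed multiindex, not because the $|\beta_j|$ are bounded (every $\delta_k$ has the same homogeneity $2+\kk\alpha$, so bounded homogeneity alone does not give finiteness).
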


\begin{proof}
We recall the representation \eqref{eq_gamma_prod_rep} of $
(\Gamma_{xy})_{\beta}^{\gamma}$ together with \eqref{eq_tri_supp2}, which
yields 
\begin{equation}\label{e:supp1_regStep1}
\begin{aligned}
\EE^{\frac{1}{p}}
\left|
( \Gamma_{xy})_{ \beta}^{\gamma}
\right|^{p} 
\lesssim
\sum_{m =0}^{ \infty}
\sum_{\n_{1}, \ldots, \n_{m}}
\sum_{ \beta_{1}+ \cdots + \beta_{m+1}= \beta}
\EE^{\frac{1}{p}}
\left|
\pi^{( \n_{1 })}_{ \beta_{1}}\cdots \pi^{ ( \n_{m})}_{ \beta_{m}}\right|^{p} 
\delta_{ \beta_{m+1}}^{\gamma - \delta_{\n_{1}}- \cdots - \delta_{\n_{m}}}\,,
\end{aligned}
\end{equation}
where we used the crude estimate $\gamma ( \n_{1}) ( \gamma - \delta_{\n_{1}}) (\n_{2}) 
\cdots ( \gamma - \delta_{\n_{1}} - \cdots - \delta_{\n_{m-1}} )(\n_{m})
\lesssim 1$ with a constant only depending on $\beta$ 
(since there are only finitely many $\gamma\preceq\beta$, see Lemma~\ref{lem_def_gamma} Item~3 and Lemma~\ref{lem_ind_legal}). 
The sums over $m$ and $\n_1,\dots,\n_m$ are finite because the multiindex $ \beta$, and
hence $ \gamma$, has only finitely many nonzero components among the $\delta_{\n}$-variables.

Now we apply H\"olders inequality and use assumption \eqref{est_pin}, to
see that 
\begin{equation}\label{eq_prop1_supp1}
\begin{aligned}
\EE^{\frac{1}{p}}
\left|
( \Gamma_{xy})_{ \beta}^{\gamma}
\right|^{p} 
\lesssim 
\sum_{m =0}^{ \infty}
\sum_{\n_{1}, \ldots, \n_{m}}
\sum_{ \beta_{1}+ \cdots + \beta_{m+1}= \beta}
\ve^{\sum_{i=1}^{m} (|\beta_i|-\langle \beta_{i}\rangle)}
| x-y |^{\sum_{i=1}^{m} \langle \beta_{i}\rangle -|\n_{i}|}
\delta_{ \beta_{m+1}}^{\gamma - \delta_{\n_{1}}- \cdots - \delta_{\n_{m}}}\,,
\end{aligned}
\end{equation}
where the implicit constant depends on $\alpha$, $ \beta$, and $p$.
Because $ \beta_{m+1} = \gamma - \delta_{\n_{1}}- \cdots - \delta_{\n_{m}}$, we
can write
\begin{equation}
\begin{aligned}
| \beta| &= | \beta_{1}| + \cdots +| \beta_{m+1}| - m | 0 |
=| \beta_{1}| + \cdots +| \beta_{m}| + | \gamma - \delta_{\n_{1}}- \cdots -
\delta_{\n_{m}}| - m | 0 |\\
& = 
| \beta_{1}| + \cdots +| \beta_{m}| + | \gamma| - |\n_{1}| - \cdots - |\n_{m}|\,,
\end{aligned}
\end{equation}
where we used that $| \delta_{ \n}|= |\n|$, and analogously
\begin{align}
\langle \beta\rangle 
&= \langle \beta_{1}\rangle + \cdots +\langle \beta_{m+1}\rangle - m | 0 |
=\langle \beta_{1}\rangle + \cdots +\langle \beta_{m}\rangle + \langle \gamma - \delta_{\n_{1}}- \cdots -
\delta_{\n_{m}}\rangle - m | 0 |\\
& = 
\langle \beta_{1}\rangle + \cdots +\langle \beta_{m}\rangle + \langle \gamma\rangle - |\n_{1}| - \cdots - |\n_{m}|\,.
\end{align}
Thus, \eqref{eq_prop1_supp1} reads 
\begin{equation}
\begin{aligned}
\EE^{\frac{1}{p}}
\left|
( \Gamma_{xy})_{ \beta}^{\gamma}
\right|^{p} 
\lesssim 
\left\{\sum_{m =0}^{ \infty}
\sum_{\n_{1}, \ldots, \n_{m}}
\sum_{\substack{ \beta_{1}+ \cdots + \beta_{m+1}= \beta\\ \beta_{m +1}= \gamma - \delta_{\n_{1}}- \cdots - \delta_{\n_{m}}}}
\right\}
\ve^{|\beta|-\langle\beta\rangle-|\gamma|+\langle\gamma\rangle}
| x-y |^{\langle\beta\rangle - \langle\gamma \rangle }\,.
\end{aligned}
\end{equation}
Recall that the sum in front is actually finite, thus, it can be absorbed into
a $ \beta$ dependent constant, which
concludes the proof of \eqref{est_Gamma}.

We finally argue that the exponents in \eqref{est_Gamma} are non-negative unless the left-hand side vanishes. 
For the second exponent $\langle\beta\rangle-\langle\gamma\rangle$ this is an immediate consequence of Lemma~\ref{lem_def_gamma} (Item~2).
For the first exponent we note that $|\beta|-\langle\beta\rangle-|\gamma|+\langle\gamma\rangle=\kappa\sum_{k>\kk}(\beta(k)-\gamma(k))$, 
and that $\gamma(k)\leq\beta(k)$ as can be seen e.g.~from \eqref{e:supp1_regStep1}:
any $ \delta_{k}$ contained in $ \gamma$ must also be part of $
\beta_{m +1}$, and hence $ \beta$.
\end{proof}

In the two subsequent propositions, we prove symmetry and continuity properties of $
\Pi^{-}_{\beta}$ that are necessary in the reconstruction argument. 

\begin{proposition}[Boundedness and continuity of $ \Pi^{-}$, Step 2.a)]\label{prop_2} 
	Let $ [\beta]\geq0 $ and assume that 
	\begin{enumerate}
\item for all $ \beta'$ and $m$ such that $ \beta' + m\delta_{\0}\prec \beta$
\begin{equation}\label{est_d_bounded}
\begin{aligned}
| c^{(m)}_{\beta'}| \lesssim \ve^{| \beta'+ m\delta_{\0}|- 2}=
\ve^{| \beta' | - m\alpha - 2}\,,
\end{aligned}
\end{equation}

\item for all populated $ \beta' \prec \beta$ and $\n$
\begin{equation}\label{est_Pi_bounded}
\begin{aligned}
\EE^{\frac{1}{p}} \left|
\partial^\n \Pi_{x \beta'}(y)
\right|^{ p} \lesssim 
\ve^{\alpha-|\n|}
( \ve + |x-y|)^{| \beta' |- \alpha}\,.
\end{aligned}
\end{equation}
\end{enumerate}
Then for every $k$ and $\n$ 
\begin{equation}\label{est_Pi-_bounded_preliminary}
\begin{aligned}
	\ve^{(\kk-k)\alpha}
\EE^{\frac{1}{p}} \left| \partial^\n \big(z_k W_{k, \varepsilon}(\Pi_x)\big)_\beta(y) 
\right|^{ p} 
\lesssim 
\ve^{\alpha-2-|\n|}
( \ve+ | x-y|)^{| \beta| - \alpha} \, ,
\end{aligned}
\end{equation}
\begin{equation}\label{est_Pi-_bounded}
\begin{aligned}
\EE^{\frac{1}{p}} \left| 
\partial^\n \Pi^-_{x\beta}(y) 
\right|^{ p} 
\lesssim 
\ve^{\alpha-2-|\n|}
( \ve+ | x-y|)^{| \beta| - \alpha} \, .
\end{aligned}
\end{equation}
As well as, for every $l\geq 0$
\begin{equation}\label{est_Pi-_taylorremainder_preliminary}
\begin{aligned}
\EE^{\frac{1}{p}} \! \left| \ve^{(\kk-k)\alpha}
(\mathrm{id}-T_z^{\leq l}) \partial^\n \big(z_k W_{k, \varepsilon}(\Pi_x)\big)_\beta(y) 
\right|^{ p} 
\! \lesssim 
\ve^{\alpha-2-|\n|-1-l}
|y-z|^{1+l} 
( \ve+ | x-y| + |x-z|)^{| \beta| - \alpha} \, ,
\end{aligned}
\end{equation}
and
\begin{equation}\label{est_Pi-_taylorremainder}
\begin{aligned}
\EE^{\frac{1}{p}} \left|
(\mathrm{id}-T_z^{\leq l}) \partial^\n \Pi_{x \beta}^{-}(y) 
\right|^{ p} 
\lesssim 
\ve^{\alpha-2-|\n|-1-l}
|y-z|^{1+l}
( \ve+ | x-y| + |x-z|)^{| \beta| - \alpha} \,.
\end{aligned}
\end{equation}
\end{proposition}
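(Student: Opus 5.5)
We prove all four estimates by expanding the $\beta$-coefficient of $z_kW_{k,\ve}(\Pi_x)$ with the Appell property and bounding each product by Hölder's inequality together with the inductive bounds \eqref{est_Pi_bounded}. Since $W_{k,\ve}'=kW_{k-1,\ve}$, the same Bruno-type expansion that produced \eqref{hierarchy} gives
\begin{equation}
\big(z_kW_{k,\ve}(\Pi_x)\big)_\beta=\sum_{l\geq0}\sum_{\substack{\delta_k+\beta_1+\dots+\beta_l=\beta\\ \beta_i\neq0}}\tbinom{k}{l}W_{k-l,\ve}(\Pi_{x0})\Pi_{x\beta_1}\cdots\Pi_{x\beta_l}\,.
\end{equation}
Here we will use that $\beta_i\prec\beta$ (Lemma~\ref{lem_dep_pi}), and that $\Pi_{x\beta_i}$ vanishes unless $\beta_i$ is populated. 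We then apply Leibniz to $\partial^\n$. The derivatives falling on $W_{k-l,\ve}(\Pi_{x0})$ become, via the chain rule and the Appell property, sums of terms $W_{k-l-j,\ve}(\Pi_{x0})\prod\partial^{\m_i}\Pi_{x0}$.

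Next we need the building blocks. For the Appell factor, $W_{m,\ve}(\Pi_{x0})=\ve^{\alpha m}W_m(\ve^{-\alpha}\Pi_{x0})$ is a polynomial in $\Pi_{x0}$. Stationarity and scaling give $\EE^{1/p}|\partial^\m\Pi_{x0}|^p\lesssim\ve^{\alpha-|\m|}$, where $\Pi_{x0}(\cdot)=\ve^{\alpha}Z(\cdot/\ve)$ in law; this is the case $\beta'=0$ of \eqref{est_Pi_bounded}. All moments of $Z$ exist by Remark~\ref{rem_momentsZ}, so $\EE^{1/p}|W_{m,\ve}(\Pi_{x0})|^p\lesssim\ve^{\alpha m}$.

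Putting these into Hölder's inequality bounds each term by
\begin{equation}
\ve^{(\kk-k)\alpha}\ve^{(k-l)\alpha}\ve^{-|\n|}\prod_{i=1}^l\ve^{\alpha}(\ve+|x-y|)^{|\beta_i|-\alpha}\,,
\end{equation}
since derivatives on any factor cost $\ve^{-|\n_i|}$, the loss being uniform at the microscopic scale. Additivity (Lemma~\ref{lem_hom_props}) gives $|\beta|=|\delta_k|+\sum_i(|\beta_i|-\alpha)$, with $|\delta_k|=\alpha+(k-1)\alpha+2+(\kk-k)\alpha$. The bound becomes $\ve^{\kk\alpha-|\n|}(\ve+|x-y|)^{|\beta|-|\delta_k|}$. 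We then check that $\kk\alpha=\alpha-2+|\delta_k|-\alpha$, i.e.\ $|\delta_k|=\kk\alpha+2$, which holds. Since $|\delta_k|-\alpha=(\kk-1)\alpha+2=a>0$, we have $\ve^{|\delta_k|-\alpha}\leq(\ve+|x-y|)^{|\delta_k|-\alpha}$, and this yields \eqref{est_Pi-_bounded_preliminary}.

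For \eqref{est_Pi-_bounded} there are three contributions in \eqref{hierarchy}. The main part is the preliminary bound. Its Taylor polynomial $T_x^{<|\beta|-2}$ contributes $\partial^{\m}(\cdot)(x)\,(y-x)^{\m-\n}$, and each such term is bounded by \eqref{est_Pi-_bounded_preliminary} at $y=x$: this gives $\ve^{\alpha-2-|\m|}\ve^{|\beta|-\alpha}|y-x|^{|\m|-|\n|}$. Because $|\m|<|\beta|-2$, this is $\lesssim\ve^{\alpha-2-|\n|}(\ve+|x-y|)^{|\beta|-\alpha}$ in both regimes $|x-y|\lessgtr\ve$. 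The counterterm terms are handled like the main part, using assumption \eqref{est_d_bounded} in place of the $\ve^{(\kk-k)\alpha}$ prefactor. The multiindex $\beta_0$ with $c^{(k')}_{\beta_0}$ has $\beta_0+k'\delta_\0\prec\beta$ by Lemma~\ref{lem_dep_pi}, and the exponent bookkeeping is analogous: $|\beta_0|-k'\alpha-2$ plays the role of the prefactor. Both additivity checks reduce to the identity $|\beta|=|\beta_0|+\sum_i(|\beta_i|-\alpha)+$ (the appropriate correction for the $\delta_\0$'s). This is the step I expect to require the most care, because the homogeneity additivity must be matched exactly with the $\ve$-powers. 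The $1_{\beta=0}\xi_\ve$ term is absent since $[\beta]\geq0$ forces $\beta\neq0$ here.

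The Taylor remainder estimates \eqref{est_Pi-_taylorremainder_preliminary} and \eqref{est_Pi-_taylorremainder} follow from the bounds already proven. We write the remainder $(\mathrm{id}-T_z^{\leq l})\partial^\n f(y)$ in integral form as an average of $\partial^{\m}f$ along the segment from $z$ to $y$, with $|\m|\in\{l+1,l+2\}$ in the parabolic sense, times $(y-z)^{\m-\n}$. We then apply the derivative bounds at intermediate points $w$, using $\ve+|x-w|\lesssim\ve+|x-y|+|x-z|$. This gives $\ve^{\alpha-2-|\n|-|\m|}|y-z|^{|\m|}$ for parabolic orders $|\m|\geq l+1$. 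When $|y-z|\leq\ve$ we reduce to the minimal order $1+l$. When $|y-z|>\ve$ we bound the remainder crudely by the sum of the function and its Taylor polynomial, and each of those terms is already dominated by $\ve^{-1-l}|y-z|^{1+l}$ times the stated bound. One subtlety is interchanging expectation with the integral along the segment; Minkowski's inequality handles this.
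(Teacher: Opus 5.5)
Your route is the paper's own. It uses the Fa\`a di Bruno/Leibniz expansion with the Appell property, H\"older with \eqref{est_Pi_bounded}, and additivity of $|\cdot|-\alpha$ together with $\ve^{a}\le(\ve+|x-y|)^{a}$. It then uses the triangle inequality for $T_x^{<|\beta|-2}$ and the interpolation argument of Lemma~\ref{lem_interpolation}. Expanding the counterterm product directly, instead of via $\pi_\beta=(z_k\pi)_{\beta+\delta_k}$, is an immaterial difference. However, two claims you use to discard terms are false, and each leaves a term unestimated.

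First, $[\beta]\geq0$ does not exclude $\beta=0$, since $[0]=0$. The multiindex $\beta=0$ is singular, is treated in Section~\ref{sec:singular}, and Step~8.a applies this proposition to it. There $\Pi^-_{x0}=\xi_\ve$ is the only contribution, because $(z_kW_{k,\ve}(\Pi_x))_0=0$ and $c^{(k)}_0=0$. It must be bounded via $\EE^{1/p}|\partial^\n\xi_\ve(y)|^p=\ve^{\alpha-2-|\n|}\EE^{1/p}|\partial^\n\zeta(0)|^p$, which follows from the definition of $\xi_\ve$, stationarity, and the moment assumption on $\zeta$. This case is not cosmetic. Through Proposition~\ref{prop_qualitative_pi} it produces \eqref{est_Pi_bounded} for $\beta'=0$, and also the moments of $Z$ in Remark~\ref{rem_momentsZ}. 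That is exactly the building block you use for every other $\beta$.

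Second, Lemma~\ref{lem_dep_pi} does not give $\beta_0+k'\delta_\0\prec\beta$ for all counterterms in $\Pi^-_{x\beta}$. Its Item~2 concerns $\Pi^-_{x\beta}+c^{(\beta(\0))}_{\beta-\beta(\0)\delta_\0}$, so for $\Pi^-_{x\beta}$ itself you only get $\preceq$. The borderline case $\beta_0+k'\delta_\0=\beta$ forces $l=k'$ and $\beta_1=\dots=\beta_l=\delta_\0$. It contributes the constant $-c^{(\beta(\0))}_{\beta-\beta(\0)\delta_\0}$, which hypothesis~1 (stated with $\prec$) does not control. For $|\beta|\geq2$ this term vanishes by the convention of Remark~\ref{rem:bphz_inductive}. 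For singular $\beta$ the bound $\ve^{|\beta|-2}\le\ve^{\alpha-2}(\ve+|x-y|)^{|\beta|-\alpha}$ must come from Proposition~\ref{prop_bphz_choice}. Hypothesis~1 therefore has to be extended to the equality case, as the paper points out in its proof.

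A harmless imprecision remains in your remainder formula. With the straight segment and the Euclidean Taylor formula, the parabolic orders are not confined to $\{l+1,l+2\}$. There are also extra terms at $z$ of Euclidean order $\le l$ but $|\m|>l$. Your estimate survives because it only uses $|\m|\geq l+1$.
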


\begin{proof}
\textbf{Step 1 \textnormal{(control of $ W_{k, \varepsilon}( \Pi_{x0})$)}.}
First, we establish that for every $ \n \in \NN^{1+d}$
\begin{equation}\label{eq_est_W}
	\begin{aligned}
		\EE^{\frac{1}{p}}\big| \partial^{\n} W_{k , \varepsilon}(
		\Pi_{x0}(y))\big|^{ p}
		\lesssim \ve^{k \alpha-|\n|} \,.
	\end{aligned}
\end{equation}
To this end, we note that by Fa\`a di Bruno (Lemma~\ref{lem:bruno}) we can write
\begin{equation}
	\begin{aligned}
		\frac{1}{\n !} {\partial}^{\n} W_{k , \varepsilon}(
		\Pi_{x0}(y))
		= \sum_{l = 1}^{|n|} 
		\frac{1}{l!} 
		W^{( l)}_{k , \varepsilon} (\Pi_{x0}(y))
		\sum_{\n_{1}+ \cdots + \n_{l}= \n }
		\frac{1}{\n_{1} !} {\partial}^{\n_{1}} \Pi_{x 0}(y)
		\cdots 
		\frac{1}{\n_{l} !} {\partial}^{\n_{l}} \Pi_{x 0}(y)\,.
	\end{aligned}
\end{equation}
In particular, we can reduce the proof of \eqref{eq_est_W} to the case $\n =0$, since by H\"older's inequality 
and \eqref{est_Pi_bounded} (for $ \beta' = 0 $), we have 
\begin{equation}
	\begin{aligned}
		\EE^{\frac{1}{p}}\big| \partial^{\n} W_{k , \varepsilon}(
		\Pi_{x0}(y))\big|^{ p}
		\lesssim 
		\sum_{l =1}^{k}
		\varepsilon^{l \alpha - |\n|}		
		\EE^{\frac{1}{p'}}\big|  W_{k -l , \varepsilon}(
		\Pi_{x0}(y))\big|^{ p'}\,,
	\end{aligned}
\end{equation}
for some $p' \in (1, \infty) $,
where we also used that $ W_{k , \varepsilon}$ forms an Appell sequence.

Now, by definition of $ W_{k- l , \varepsilon}$ 
\begin{equation}
	\begin{aligned}
		\EE^{\frac{1}{p}}\big| W_{k-l , \varepsilon}(
		\Pi_{x0}(y))\big|^{ p}
		= 
		\varepsilon^{ (k-l)\alpha }
		\EE^{\frac{1}{p}}\big| W_{k -l }(
		 \varepsilon^{- \alpha}\Pi_{x0}(y))\big|^{ p}\,.
	\end{aligned}
\end{equation}
Moreover, recalling that $ W_{k-l}$ is a polynomial of degree $k-l$ with finite coefficients (which are
independent of $ \varepsilon$), the expectation in the previous display is
uniformly bounded using~\eqref{est_Pi_bounded}. 

\textbf{Step 2 \textnormal{(proof of \eqref{est_Pi-_bounded_preliminary})}.}
First, by the Fa\`a  di Bruno formula \eqref{eq_faadibruno_W} and Leibniz rule, we can
write
\begin{equation}\label{tmp09}
	\begin{aligned}
		&\ve^{(\kk-k)\alpha} \tfrac{1}{\n!}\partial^\n\big(z_k W_{k, \varepsilon}(\Pi_x)\big)_\beta \\
&= 
\ve^{(\kk-k)\alpha} 
\tfrac{1}{\n!}\partial^\n
\bigg(
\sum_{l\geq0}	
\sum_{\substack{\delta_k+\beta_1+\dots+\beta_{l}=\beta\\ \beta_i\neq0\forall i}} 
\tbinom{k}{l}
W_{k-l, \varepsilon}(\Pi_{x0  })
\Pi_{x \beta_{1}} \cdots
\Pi_{x \beta_{l} }
\bigg)\\
&= \ve^{(\kk-k)\alpha} \sum_{l\geq0}
\sum_{\substack{\delta_k+\beta_1+\dots+\beta_{l}=\beta\\ \beta_i\neq0\forall i}} 
\sum_{\n_0+\dots+\n_{l}=\n}
\tbinom{k}{l}
\bar\partial^{\n_0}W_{k-l, \varepsilon}(\Pi_{x 0})
\bar\partial^{\n_1}\Pi_{x \beta_1} \cdots
\bar\partial^{\n_{l}}\Pi_{x \beta_{l}} \, , 
	\end{aligned}
\end{equation}
where we abbreviated
$\bar\partial^\n\coloneqq\frac{1}{\n!}\partial^\n$.
Hence, by the assumption \eqref{est_Pi_bounded} and \eqref{eq_est_W}, the left-hand side of \eqref{est_Pi-_bounded_preliminary} is (up to a constant) estimated by a finite sum of terms of the form 
\begin{equation}\label{eq_supp1_reg2a}
\ve^{(\kk-k)\alpha} 
\ve^{(k-l)\alpha-|\n_0|}
\ve^{l\alpha-|\n_1|-\dots-|\n_{l}|} (\ve+|x-y|)^{|\beta_1|+\dots+|\beta_{l}|-l\alpha}
\, ,
\end{equation}
with $l,\beta_1,\dots,\beta_{l},\n_0,\dots,\n_{l}$ as in the sum above. 
Using $\n_0+\dots+\n_{l}=\n$ 
and
that $|\cdot|-\alpha$ is additive, we see that \eqref{eq_supp1_reg2a} 
equals
\begin{equation}\label{eq_supp3_reg2a}
\ve^{\kk\alpha-|\n|} 
(\ve+|x-y|)^{|\beta_1+\dots+\beta_{l}|-\alpha}
 = 
\ve^{\kk\alpha-|\n|} 
(\ve+|x-y|)^{| \beta| -2 - \kk \alpha}
\,,
\end{equation}
where we used in the last step that  $\delta_k+\beta_1+\dots+\beta_{l}=\beta$ implies
$|\beta_1+\dots+\beta_{l}|-\alpha
=|\beta|-2-\kk\alpha$. 
Finally, \eqref{est_Pi-_bounded_preliminary} follows because
$\ve^{\kk\alpha}\leq\ve^{\alpha-2}(\ve+|x-y|)^{2+(\kk-1)\alpha}$, which is
as a consequence of $2+(\kk-1)\alpha=a>0$.

\textbf{Step 3 \textnormal{(proof of \eqref{est_Pi-_bounded})}.}
By the definition \eqref{eq_piminus} of $\Pi^-$ and the fact 
$\partial^\n T_x^{<|\beta|-2}=T_x^{<|\beta|-2-|\n|}\partial^\n$, we have
\begin{align}
\partial^\n\Pi^-_{x\beta} (y)
&= \sum_{k \geqslant \kk} \ve^{(\kk-k)\alpha} (\mathrm{id}-T_x^{<|\beta|-2-|\n|})\partial^\n \big(z_k
W_{k, \varepsilon}(\Pi_x)\big)_\beta (y) \\
&- \sum_{k<\kk} \sum_{\beta_0+\beta_1=\beta} c^{(k)}_{\beta_0} \partial^\n \big(W_{k,
\varepsilon} (\Pi_x)\big)_{\beta_1}(y)
+ \partial^\n\xi_\ve(y) 1_{\beta=0} \, .
\end{align}
We note that $c^{(k)}_{\beta_0}$ is only required for $\beta_0+k\delta_{\0}\prec\beta$
although Lemma~\ref{lem_dep_pi} only yields $\beta_0+k\delta_{\0}\preceq\beta$: whenever $\beta_0+k\delta_{\0}=\beta$ we have $c^{(k)}_{\beta_0}=0$ since we consider $|\beta|>2$, cf.~Remark~\ref{rem:bphz_inductive}.
However, this is not the case when Proposition~\ref{prop_2} is applied again in
Section~\ref{sec:block8}, and the estimate \eqref{est_d_bounded} of $c^{(k)}_{\beta_0}$
for $\beta_0+k\delta_{\0}$ has to be included in the assumption.
We estimate the three terms on the right-hand side separately: 
\begin{itemize}
	\item In the first term, we break up the Taylor remainder using the triangle
		inequality and apply~\eqref{est_Pi-_bounded_preliminary}.
		For the term including $ T^{< | \beta| - 2 - |\n|}_{x}$ we use the
		 bound
		(implied by 
		\eqref{est_Pi-_bounded_preliminary})
		\begin{equation}
			\begin{aligned}
\ve^{(\kk-k)\alpha}
\EE^{\frac{1}{p}} \left| \partial^{\n+\m} \big(z_k W_{k, \varepsilon}(\Pi_x)\big)_\beta(x) 
\right|^{ p} 
\lesssim 
\ve^{\alpha-2-|\n|- | \m| +| \beta| - \alpha}\,,
		\end{aligned}
		\end{equation}
		since we can leverage powers of $x-y$ from the application of $ T^{< |
		\beta| - 2 - |\n|}$, see \eqref{eq_taylor}.

	\item For the second term,  we use \eqref{est_d_bounded} and
$\pi_\beta = (z_k\pi)_{\beta+\delta_k}$, such that
\begin{equation}
	\begin{aligned}
		\EE^{\frac{1}{ p}} \big|
	\partial^\n\big(z_k W_{k,
	\varepsilon}(\Pi_x)\big)_{\beta_{1}+ \delta_{k}}\big|^{p}
	\lesssim 
\ve^{-(\kk-k)\alpha} \ve^{\kk\alpha-|\n|} 
(\ve+|x-y|)^{| \beta_{1}+ \delta_{k}| -2 - \kk \alpha}\,,
	\end{aligned}
\end{equation}
which is a consequence of the bound established in \eqref{eq_supp3_reg2a}. Notice that
the derivation of this bound for $ \beta_{1}+ \delta_{k}$ only depends on
\eqref{est_Pi_bounded} for multiindices $ \beta'
\prec \beta$, 
because \eqref{tmp09} contains the sum over multiindices $ \beta'_{i}$ satisfying 
\begin{equation}
\begin{aligned}
	\delta_{k}+ \beta_{1}'+ \cdots + \beta_{l}' = \beta_{1}+ \delta_{k}\,,
\end{aligned}
\end{equation}
i.e.~$ \beta_{1} = \beta_{1}'+ \cdots +\beta_{l}' $. Hence, for every $i =1, \ldots, l $
\begin{equation}\label{eq_supp10_regStep2}
	\begin{aligned}
		|\beta |_{\prec}
		 = | \beta_{0}|_{\prec} + | \beta_{1}' |_{\prec} + \cdots + |
		\beta_{l}' |_{\prec} - l \alpha
		\geqslant (2+ \kk \alpha- \alpha ) + | \beta_{i}' |_{\prec} \,,
	\end{aligned}
\end{equation}
where we used that $ \beta_{ 0}$ does not contain any polynomial components since
otherwise $ c_{\beta_{0}}^{(k)}$ vanishes, see Remark~\ref{rem:bphz_inductive}, and $ |
\cdot |_{\prec} \geqslant \alpha$. Notice that the parenthesis on the right-hand side in
\eqref{eq_supp10_regStep2} are strictly positive due to subcriticality 
\eqref{subcritical}. Hence, $ |
\beta|_{\prec} > | \beta'_{i}|_{\prec} $.

	\item For the third term, we use
$\EE^\frac1p|\partial^\n\xi_\ve(y)|^p\lesssim\ve^{\alpha-2-|\n|}$ (a consequence of
$\xi_\ve=\ve^{\alpha-2}\zeta(\cdot/\ve^2,\cdot/\ve)$ and the assumption that $\zeta$ is
stationary and all its derivatives have all moments).
\end{itemize}
Overall, we derive the bound 
\begin{equation}\label{eq_supp2_reg2a}
	\begin{aligned}
			\EE^{\frac{1}{p}}\big|\partial^\n\Pi^-_{x\beta} (y)\big|^{p}
	&\lesssim 
\ve^{\alpha-2-|\n|}(\ve+|x-y|)^{|\beta|-\alpha} 
+ \sum_{|\m|<|\beta|-2-|\n|} \ve^{\alpha-2-|\n|-|\m|+|\beta|-\alpha} |x-y|^{|\m|} \\
&+\sum_{k<\kk} \sum_{\beta_0+\beta_1=\beta} 
\ve^{|\beta_0|-k\alpha-2} 
\ve^{k \alpha-|\n|} (\ve+|x-y|)^{|\beta_1+\delta_k|-2-\kk\alpha} 
+ \ve^{\alpha-2-|\n|} 1_{\beta=0} \, .
	\end{aligned}
\end{equation}%
Since the restriction $|\m|<|\beta|-2-|\n|$ implies in particular
$|\beta|-|\m|-\alpha\geq0$ (used for the second term), 
and because $|\cdot|-\alpha$ is non-negative and additive (used for the third term), the
right-hand side \eqref{eq_supp2_reg2a}
implies the desired bound \eqref{est_Pi-_bounded}.

\textbf{Step 4 \textnormal{(proof of \eqref{est_Pi-_taylorremainder_preliminary} and \eqref{est_Pi-_taylorremainder})}.}
Both \eqref{est_Pi-_taylorremainder_preliminary} and \eqref{est_Pi-_taylorremainder} are
an immediate consequence of Lemma~\ref{lem_interpolation} and the already established
bounds \eqref{est_Pi-_bounded_preliminary} and \eqref{est_Pi-_bounded}, respectively. 
\end{proof}

\begin{proposition}[Covariances of $\Pi^-$, Step 2.b]\label{prop_covariance_Pi-}
Let $[\beta]\geq0$ and assume that for all populated $\beta'\prec\beta$
\begin{align}
\Pi_{x\beta'}[\xi(\cdot+y)]
&=
\Pi_{x+y \beta'}[\xi](\cdot+y) 
\quad \textnormal{ for $y \in \RR^{1+ d} $}\, , \label{Pi_shift} \\
\Pi_{x\beta'}[-\xi]
&=
(-1)^{1+[\beta']} \Pi_{x\beta'}[\xi] \, , \label{Pi_parity} \\
\Pi_{x\beta'}[\xi(R\cdot)] 
&= (-1)^{\sum_\n |\n|\beta'(\n)} \Pi_{Rx \beta'}[\xi](R\cdot)
\quad\textnormal{ where $R x\coloneqq ( x_{0}, - x_{1, \ldots, d}) $}\, , \label{Pi_reflection}
\end{align}
where we abbreviated  $ \xi= \xi_{\varepsilon}$.
Then for all $c^{(k')}\in\RR[\![z_k]\!]$ 
\begin{align}
\Pi^-_{x\beta}[\xi(\cdot+y)]
&= \Pi^-_{x+y \beta}[\xi](\cdot+y)
\quad  \textnormal{ for $y\in \RR^{1+d}$}\, , \label{Pi-_shift} \\
\Pi^-_{x\beta}[-\xi]
&= (-1)^{1+[\beta]} \Pi^-_{x\beta}[\xi] \, , \label{Pi-_parity} \\
\Pi^-_{x\beta}[\xi(R\cdot)] 
&= (-1)^{\sum_\n |\n|\beta(\n)} \Pi^-_{Rx \beta}[\xi](R\cdot)
\, . \label{Pi-_reflection}
\end{align}
\end{proposition}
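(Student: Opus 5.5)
The plan is to verify each of the three covariances term by term in the explicit formula \eqref{hierarchy} for $\Pi^-_{x\beta}$. Every term there is built from $\xi_\ve$, from $W_{k-l,\ve}(\Pi_{x0})$, from products $\Pi_{x\beta_1}\cdots\Pi_{x\beta_l}$ with $\beta_i\prec\beta$ (Lemma~\ref{lem_dep_pi}), from deterministic constants $c^{(k)}_{\beta_0}$, and from the Taylor operator $\mathrm{id}-T_x^{<|\beta|-2}$. By Lemma~\ref{lem_pop} only populated $\beta_i$ contribute, and for populated $\beta_i\prec\beta$ the hypotheses \eqref{Pi_shift}--\eqref{Pi_reflection} are available. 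This includes $\beta_i=0$ (which is populated, since $[0]=0$), so $\Pi_{x0}$ itself obeys them as well.

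\textbf{Shift.} Replacing $\xi$ by $\xi(\cdot+y)$ turns each factor $\Pi_{x\beta_i}$ into $\Pi_{x+y\,\beta_i}(\cdot+y)$ by \eqref{Pi_shift}. The nonlinear expression $W_{k-l,\ve}(\Pi_{x0})$ and the products therefore become the same expressions based at $x+y$, evaluated at $\cdot+y$. The source term $1_{\beta=0}\xi$ is shifted trivially. For the Taylor operator, if $f=g(\cdot+y)$ then $\partial^\n f(x)=\partial^\n g(x+y)$, so $T_x^{<|\beta|-2}f=(T_{x+y}^{<|\beta|-2}g)(\cdot+y)$. Summing these gives \eqref{Pi-_shift}.

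\textbf{Parity.} Under $\xi\mapsto-\xi$ we have $\Pi_{x0}\mapsto-\Pi_{x0}$. Since $Z$ is symmetric by the sign-change invariance in Assumption~\ref{ass_zeta}, the definition \eqref{eq:poly} gives $W_j(-\phi)=(-1)^jW_j(\phi)$, hence $W_{k-l,\ve}(\Pi_{x0})$ acquires the sign $(-1)^{k-l}$. Each factor $\Pi_{x\beta_i}$ acquires $(-1)^{1+[\beta_i]}$. Because $k$ is odd, the total sign of a term in the first sum is $(-1)^{k-l+l+\sum_i[\beta_i]}=(-1)^{1+\sum_i[\beta_i]}$. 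Additivity gives $[\beta]=[\delta_k]+\sum_i[\beta_i]=k-1+\sum_i[\beta_i]$, and $k-1$ is even, so this sign equals $(-1)^{1+[\beta]}$. The counterterm terms are handled the same way. For them $k<\kk$ is odd; the coefficient $c^{(k)}_{\beta_0}$ is deterministic and either vanishes or satisfies $\sum_\n\beta_0(\n)=0$, so $[\beta_0]$ is even. One then checks $[\beta]=[\beta_0]+k-1+\sum_i[\beta_i]$, which follows from $\beta=\beta_0+k\delta_\0$-type counting via $[\delta_\0]=-1$: the $k$ is absorbed as $k$ copies of $-1$ offset by the $W_k$ degree. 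The term $1_{\beta=0}\xi$ is odd, matching $(-1)^{1+[0]}=-1$. Finally, the Taylor operator is linear and commutes with the sign flip.

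\textbf{Reflection.} The factors $(\cdot-x)^\n$ pick up $(-1)^{|\n|}$-type signs from the spatial components. I will use \eqref{Pi_reflection} on each factor together with $W_{k-l,\ve}(\Pi_{x0})[\xi(R\cdot)]=W_{k-l,\ve}(\Pi_{Rx0})(R\cdot)$. The sign $(-1)^{\sum_\n|\n|\beta(\n)}$ is multiplicative over $\beta=\delta_k+\sum_i\beta_i$, since $\delta_k$ carries no polynomial components. For the Taylor operator I need $T_x^{<\eta}(g(R\cdot))=(T_{Rx}^{<\eta}g)(R\cdot)$, which holds because $R$ is linear and preserves the parabolic degree $|\n|$.

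The main obstacle is the careful parity bookkeeping for the counterterm sum, namely confirming that the sign produced by $c^{(k)}_{\beta_0}W_{k-l,\ve}(\Pi_{x0})\prod_i\Pi_{x\beta_i}$ agrees with $(-1)^{1+[\beta]}$. I would check the identity $[\beta_0+\beta_1+\dots+\beta_l]$ versus $k-l+\sum_i(1+[\beta_i])$ modulo $2$ directly from the definition of $[\cdot]$ and the support conditions on $c^{(k)}$ from Theorem~\ref{prop_main_reg}. If the parity does not close this way, the fallback is to show inductively that $c^{(k)}_{\beta_0}=0$ whenever the parity is incompatible, using that the expectation of an odd functional vanishes.
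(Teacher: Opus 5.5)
Your proposal follows the paper's own route. It checks each term of \eqref{hierarchy}, using the hypotheses on the factors $\Pi_{x\beta_i}$ with $\beta_i\prec\beta$ (Lemma~\ref{lem_dep_pi}) and the equivariance of $T_x^{<|\beta|-2}$ under shifts and reflections. You also spell out a point the paper leaves implicit: sign-change invariance of $\zeta$ makes $Z$ symmetric, so $W_{j,\ve}(-\phi)=(-1)^jW_{j,\ve}(\phi)$.

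There is one slip in the counterterm parity. Since $\beta=\beta_0+\beta_1+\dots+\beta_l$, additivity of $[\cdot]$ gives $[\beta]=[\beta_0]+\sum_i[\beta_i]$. There is no extra $k-1$; that term arises only in the first sum, through $[\delta_k]=k-1$. With the correct identity the check closes at once. The term $c^{(k)}_{\beta_0}W_{k-l,\ve}(\Pi_{x0})\Pi_{x\beta_1}\cdots\Pi_{x\beta_l}$ picks up the sign $(-1)^{(k-l)+\sum_i(1+[\beta_i])}=(-1)^{k+\sum_i[\beta_i]}$. This equals $(-1)^{1+[\beta]}$ if and only if $k\equiv 1+[\beta_0]\pmod 2$. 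That holds because $k<\kk$ is odd, while $[\beta_0]=\sum_{k'}(k'-1)\beta_0(k')$ is even ($\beta_0$ has no polynomial components and only odd $k'$).

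Your incorrect identity differs from the correct one by the even number $k-1$, so your conclusion survives. Note also that in the first sum the signs agree for every $k$. The oddness assumptions (on $k<\kk$, and on the $k'$ appearing in $\beta_0$) are therefore genuinely used only in the counterterm sum. That is where the paper's remark about $F_k=0$ for even $k$ matters.

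Finally, the fallback you mention is unnecessary, and it would not be admissible. The statement must hold for arbitrary $c^{(k')}\in\RR[\![z_k]\!]$, because \eqref{Pi-_parity} is used in Proposition~\ref{prop_bphz_choice} before $c^{(\beta(\0))}_{\beta-\beta(\0)\delta_\0}$ is chosen.
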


\begin{proof}
	The proof is a consequence of \eqref{hierarchy} and the symmetries of smaller
	multiindices that hold by assumption. 
Similar calculations have been carried out in \cite[Section~1.12]{BOT} and
\cite[Section~5.1]{LOTT24}.
Thus, we only illustrate the argument by 
focusing on summands arising from the Taylor
polynomial in
\eqref{hierarchy}, which are of the form 
\begin{equation}
	\begin{aligned}
		T_x^{<|\beta|-2}\big(W_{k-l, \varepsilon}(\Pi_{x 0})
\Pi_{x \beta_1}\cdots\Pi_{x \beta_{l}}\big) \,,
	\end{aligned}
\end{equation}
with $ \beta = \delta_{k}+ \beta_{1}+ \ldots + \beta_{l}$.
The corresponding identities in \eqref{Pi-_shift}, \eqref{Pi-_parity}, and \eqref{Pi-_reflection} then hold
respectively:
\begin{itemize}
	\item \eqref{Pi-_shift}. Since \eqref{Pi_shift} holds for all indices $ \beta' \prec \beta$, we can
		write 
		\begin{equation}
			\begin{aligned}
				&T_x^{<|\beta|-2}\big(W_{k-l, \varepsilon}(\Pi_{x
					0}[\xi( \cdot +y)])
\Pi_{x \beta_1}[\xi( \cdot +y)]\cdots\Pi_{x \beta_{l}}[\xi( \cdot +y)]\big)\\
				&=				\sum_{ \substack{  | \m| <| \beta| -2 }}
								\frac{1}{\m !} 
				\Big(\partial^{\m}
\big(W_{k-l, \varepsilon}(\Pi_{x+y 0})
\Pi_{x+y \beta_1}\cdots\Pi_{x+y \beta_{l}}\big) (x +y)\Big)\, ( \cdot - x +( y-y) )^{\m}\\
				& = 
				T_{x +y}^{< | \beta| -2}
				\big(W_{k-l, \varepsilon}(\Pi_{x+y 0})
\Pi_{x+y \beta_1}\cdots\Pi_{x+y \beta_{l}}\big) ( \cdot +y)\,,
			\end{aligned}
		\end{equation}
		as desired.

	\item \eqref{Pi-_parity}. Here the Taylor polynomial has no effect. The argument is verbatim to the term
		that corresponds to $ \mathrm{id}$ in \eqref{hierarchy}. 
		Let us point out the importance of $F_{k} = 0 $ for all even $k $, since
otherwise \eqref{Pi-_parity} would not hold true.

	\item \eqref{Pi-_reflection}. Again by evaluating the Taylor polynomial
		explicitly and using the reflection property for smaller multiindices
		\eqref{Pi_reflection}, we see that 
				\begin{equation}
			\begin{aligned}
				&T_x^{<|\beta|-2}\big(W_{k-l, \varepsilon}(\Pi_{x
					0}[\xi(R \cdot)])
\Pi_{x \beta_1}[\xi(R  \cdot)]\cdots\Pi_{x \beta_{l}}[\xi( R \cdot)]\big)\\
& =
( -1)^{ \sum_{i =1}^{l} \sum_{\n} |\n | \beta_{i}( \n )}
\!\!\!\!\!\!\sum_{ \substack{  | \m| <| \beta| -2 }}
\tfrac{1}{\m !} 
				\partial^{\m}
\Big(				\big(W_{k-l, \varepsilon}(\Pi_{Rx0})
					\Pi_{ Rx \beta_1}\cdots\Pi_{
				Rx \beta_{l}}\big) ( R \cdot\,) \Big)(x) (\cdot -
				x)^{\m}\\
			& = (-1)^{ \sum_{\n} | \n | \beta( \n )}
			\!\!\!\!\!\!\sum_{ \substack{  | \m| <| \beta| -2 }}
				\tfrac{1}{\m !} 
				(-1)^{ \sum_{i=1}^{d} m_{i}}
				\partial^{\m}
				\big(W_{k-l, \varepsilon}(\Pi_{Rx0})
					\Pi_{ Rx \beta_1}\cdots\Pi_{
			Rx \beta_{l}}\big) ( R x) (R(R\cdot - Rx))^{\m}\\
			& = 
(-1)^{ \sum_{\n} | \n | \beta( \n )}
T_{Rx}^{< | \beta| -2}
\big(W_{k-l, \varepsilon}(\Pi_{Rx0})
\Pi_{Rx \beta_1}\cdots\Pi_{R x \beta_{l}}\big)
				(R \cdot)\,,
			\end{aligned}
		\end{equation}
		where the $-1$ factors from the chain rule cancel precisely with the ones
		arising from $ ( - ( (R \cdot)_{i} - (R x)_{i})^{m_{i}} $.
\end{itemize}
This finishes the proof.
\end{proof}

The last ingredient for the reconstruction argument is that $\Gamma_{xy}$ recentres $\Pi^-_y$ up to a polynomial.

\begin{proposition}[Recentering of $\Pi^-$, Step 2.c]\label{prop_recentre_Pi-}
Let $[\beta]\geq0$ and assume that $(\Gamma_{xy}\Pi_y)_{\beta'}=\Pi_{x\beta'}$ for all populated $\beta'\prec\beta$.
Then for every $c^{(k')}\in\RR[\![z_k]\!]$, we have
\begin{align}
(\Gamma_{xy}\Pi_y^-)_\beta 
&= \Pi_{x\beta}^- 
+ \Big( 
\sum_{k \geqslant \kk} \ve^{(\kk-k)\alpha} T_x^{<|\cdot|-2}\big(z_k W_{k, \varepsilon}(\Pi_x)\big)\Big)_\beta \label{Pi-_recentre} \\
&\hphantom{= \Pi_{x\beta}^-\ }
- \Big(\Gamma_{xy} 
\sum_{ k \geqslant \kk} \ve^{(\kk-k)\alpha} T_y^{<|\cdot|-2}\big(z_k W_{k, \varepsilon}(\Pi_y)\big)\Big)_\beta \\ 
&=\Pi_{x\beta}^- 
+ \Big(
\sum_{ k \geqslant \kk} \ve^{(\kk-k)\alpha} (T_x^{<|\cdot|-2}-T_y^{<|\cdot|-2}) \big(z_k W_{k,
\varepsilon}(\Pi_x)\big)\Big)_\beta \\
&\hphantom{=\Pi_{x\beta}^-\ }
+ \Big( \Gamma_{xy} 
\sum_{ k \geqslant \kk} \ve^{(\kk-k)\alpha} (T_y^{<|\beta|-2}-T_y^{<|\cdot|-2})
\big(z_k W_{k, \varepsilon}(\Pi_y)\big) \Big)_\beta \, . 
\end{align}
In particular, $(\Gamma_{xy}\Pi^-_y-\Pi^-_x)_\beta$ is a (random) polynomial of degree $<|\beta|-2$. 
\end{proposition}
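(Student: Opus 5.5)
The proof is algebraic: we apply $\Gamma_{xy}$ to the power-series identity \eqref{eq_piminus} at base point $y$ and use that $\Gamma_{xy}$ is multiplicative and fixes the $z_k$. Write \eqref{eq_piminus} as
\begin{equation}
\Pi^-_y = \sum_{k\geq\kk}\ve^{(\kk-k)\alpha} z_k W_{k,\ve}(\Pi_y) - \sum_{k\geq\kk}\ve^{(\kk-k)\alpha} T_y^{<|\cdot|-2}\big(z_kW_{k,\ve}(\Pi_y)\big) - \sum_{k<\kk} c^{(k)}W_{k,\ve}(\Pi_y) + \xi_\ve 1 .
\end{equation}
By \eqref{e_def_gamma}, $\Gamma_{xy} z_k = z_k$ and $\Gamma_{xy}1=1$. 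Also $\Gamma_{xy}c^{(k)}=c^{(k)}$, since $c^{(k)}\in\RR[\![z_k]\!]$. Multiplicativity of $\Gamma_{xy}$ gives $\Gamma_{xy}W_{k,\ve}(\Pi_y)=W_{k,\ve}(\Gamma_{xy}\Pi_y)$, because $W_{k,\ve}$ is a polynomial with deterministic coefficients; for formal power series this holds coefficientwise.

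The key point is to project onto the $\beta$-component and check that only components $\Gamma_{xy}\Pi_y$ at $\beta'\prec\beta$ enter. There we may replace $(\Gamma_{xy}\Pi_y)_{\beta'}$ by $\Pi_{x\beta'}$ using the hypothesis. This is exactly the triangularity of Lemma~\ref{lem_dep_pi} Item~1: the $\beta$-coefficient of $z_kW_{k,\ve}(\pi)$ and of $c^{(k)}W_{k,\ve}(\pi)$ depends only on $\pi_{\beta'}$ with $\beta'\prec\beta$. The same dependence holds after applying $\Gamma_{xy}$, with $\pi=\Gamma_{xy}\Pi_y$.

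One subtlety: we also need $(\Gamma_{xy}\Pi_y)_{\beta'}=\Pi_{x\beta'}$ for non-populated $\beta'\prec\beta$. We get this from the mapping property Lemma~\ref{lem_sets} Item~2 ($\Gamma T\subseteq T$), which makes both sides vanish. Purely polynomial components hold by \eqref{model_poly} and the last item of \eqref{model_gamma_def}.

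Hence
\begin{equation}
\Big(\Gamma_{xy}\big(\textstyle\sum_k\ve^{(\kk-k)\alpha}z_kW_{k,\ve}(\Pi_y)-\sum_{k<\kk}c^{(k)}W_{k,\ve}(\Pi_y)+\xi_\ve 1\big)\Big)_\beta
\end{equation}
coincides with the same expression at base point $x$. By \eqref{eq_piminus} at $x$, that expression equals $\Pi^-_{x\beta}+\big(\sum_k\ve^{(\kk-k)\alpha}T_x^{<|\cdot|-2}(z_kW_{k,\ve}(\Pi_x))\big)_\beta$. Subtracting the $\Gamma_{xy}$-image of the Taylor term at $y$ gives the first identity in \eqref{Pi-_recentre}.

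For the second form, we insert and subtract $\big(\Gamma_{xy}\sum_k\ve^{(\kk-k)\alpha}T_y^{<|\beta|-2}(z_kW_{k,\ve}(\Pi_y))\big)_\beta$. Here $T_y^{<|\beta|-2}$ is applied with the fixed order $|\beta|-2$ to every component. Since a fixed-order Taylor operator acts coefficientwise and commutes with $\Gamma_{xy}$ (its entries are constants in the active variable), we get
\begin{equation}
\Gamma_{xy}T_y^{<|\beta|-2}(\cdot)=T_y^{<|\beta|-2}\Gamma_{xy}(\cdot).
\end{equation}
Combining this with the multiplicativity and triangularity argument above gives
\begin{equation}
\big(\Gamma_{xy}T_y^{<|\beta|-2}(z_kW_{k,\ve}(\Pi_y))\big)_\beta=T_y^{<|\beta|-2}\big(z_kW_{k,\ve}(\Pi_x)\big)_\beta .
\end{equation}
Regrouping produces the difference $(T_x^{<|\cdot|-2}-T_y^{<|\cdot|-2})$ applied to the $x$-based product at component $\beta$, where both orders equal $|\beta|-2$, together with the $\Gamma_{xy}$-term containing $T_y^{<|\beta|-2}-T_y^{<|\cdot|-2}$.

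Finally, each correction term is a polynomial of degree $<|\beta|-2$. The first is a difference of Taylor polynomials of order $<|\beta|-2$. For the second, $(\Gamma_{xy})_\beta^\gamma\neq0$ forces $|\gamma|\leq|\beta|$ by \eqref{model_gamma_props}. So the $\gamma$-component of $T_y^{<|\beta|-2}-T_y^{<|\gamma|-2}$ is a polynomial of degree $<|\beta|-2$, and it vanishes when $|\gamma|=|\beta|$.

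The main obstacle is the bookkeeping that justifies commuting $\Gamma_{xy}$ with the fixed-order Taylor operator and replacing $\Gamma_{xy}\Pi_y$ by $\Pi_x$ only at strictly smaller multiindices, in the presence of the non-homogeneous $\ve$-weights.
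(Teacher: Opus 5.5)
Your proposal is correct and follows essentially the paper's proof. You apply $\Gamma_{xy}$ to \eqref{eq_piminus} and use multiplicativity, $\Gamma_{xy}z_k=z_k$, $\Gamma_{xy}c^{(k)}=c^{(k)}$ and the triangularity of Lemma~\ref{lem_dep_pi} to obtain $(\Gamma_{xy}z_kW_{k,\varepsilon}(\Pi_y))_\beta=(z_kW_{k,\varepsilon}(\Pi_x))_\beta$; you then trade $T_y^{<|\cdot|-2}$ for the fixed-order $T_y^{<|\beta|-2}$, which commutes with $\Gamma_{xy}$, and read off the degree bound from the triangularity of $\Gamma_{xy}$ in $|\cdot|$. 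Your explicit handling of non-populated components via $\Gamma T\subseteq T$ is a useful detail the paper leaves implicit; the one correction is that $|\gamma|\leq|\beta|$ should be cited from Lemma~\ref{lem_def_gamma}~(2), not from \eqref{model_gamma_props}, since the latter is part of what is being established.
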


\begin{proof}
\textbf{Step 1.}
The first equality follows from the definition \eqref{eq_piminus} of $\Pi^-_x$ once we establish 
\begin{equation}\label{tmp04}
\big(\Gamma_{xy} 
z_k W_{k, \varepsilon}(\Pi_y) \big)_\beta
= \big(
	z_k W_{k, \varepsilon}(\Pi_x) 
\big)_\beta 
\quad\textnormal{and}\quad
\big(\Gamma_{xy} c^{(k)} W_{k, \varepsilon}(\Pi_y) \big)_\beta
= \big( c^{(k)} W_{k, \varepsilon} (\Pi_x) \big)_\beta 
\, .
\end{equation}
To see these identities we use that $W_{k, \varepsilon}(\Pi_y)$ is a polynomial in $\Pi_y$,
and that $\Gamma_{xy}$ is multiplicative and coincides with the identity on $\RR[\![z_k]\!]$ 
(in particular $\Gamma_{xy}c^{(k)}=c^{(k)}$), see~Lemma~\ref{lem_def_gamma}. 
Thus, \eqref{tmp04} follows
by the triangular dependence established in Lemma~\ref{lem_dep_pi}~(1) and the assumption.

\textbf{Step 2.} To prove the second equality we spell out 
\begin{align}
&\Big(\Gamma_{xy} 
T_y^{<|\cdot|-2}\big(z_k W_{k, \varepsilon}(\Pi_y)\big)\Big)_\beta \\
&= \sum_\gamma (\Gamma_{xy})_\beta^\gamma 
\sum_{|\n|<|\gamma|-2} \tfrac{1}{\n!} \partial^\n \big(z_k W_{k, \varepsilon}(\Pi_y)\big)_\gamma(y) (\cdot-y)^\n \\
&= \sum_\gamma (\Gamma_{xy})_\beta^\gamma 
\sum_{|\n|<|\beta|-2} \tfrac{1}{\n!} \partial^\n \big(z_k W_{k, \varepsilon}(\Pi_y)\big)_\gamma(y) (\cdot-y)^\n \\
&\,-\sum_\gamma (\Gamma_{xy})_\beta^\gamma 
\sum_{|\gamma|-2\leq|\n|<|\beta|-2} \tfrac{1}{\n!} \partial^\n \big(z_k W_{k,
\varepsilon}(\Pi_y)\big)_\gamma(y) (\cdot-y)^\n \, ,
\end{align}
where in the second inequality we used that $(\Gamma_{xy})_\beta^\gamma$ 
vanishes unless $|\gamma| \leqslant |\beta|$ by Lemma~\ref{lem_def_gamma}~(2). 
The first term on the right-hand side coincides with 
\begin{equation}
\Big(T_y^{<|\cdot|-2} \big(\Gamma_{xy} 
z_k W_{k, \varepsilon}(\Pi_y)\big) \Big)_\beta 
\stackrel{\eqref{tmp04}}{=} 
\Big( T_y^{<|\cdot|-2} 
\big(
z_k W_{k, \varepsilon}(\Pi_x) 
\big) \Big)_\beta 
\, ,
\end{equation}
while the second term coincides with 
\begin{equation}
- \Big( \Gamma_{xy} 
(T_y^{<|\beta|-2}-T_y^{<|\cdot|-2})\big(z_k W_{k, \varepsilon}(\Pi_y)\big) \Big)_\beta \, ,
\end{equation}
and the claim follows. 

\textbf{Step 3.} 
We finally argue that $(\Gamma_{xy}\Pi^-_y-\Pi^-_x)_\beta$ is a polynomial of degree $<|\beta|-2$. 
This is an immediate consequence of the triangularity of $\Gamma_{xy}$ with respect to the homogeneity $|\cdot|$, see Lemma~\ref{lem_def_gamma}~(2), 
and the presence of $T_x^{<|\cdot|-2}$ respectively $T_y^{<|\cdot|-2}$.
\end{proof}

Now, we can finally present the reconstruction argument for regular indices. 

\begin{proposition}[Regular reconstruction, Step 3]\label{prop:regrec}
	Let $ [\beta]\geq0$ be a regular multiindex, i.e.~$| \beta| > 2 $,
	such that
	$\eqref{est_Pi-_bounded_preliminary}_{ \beta}$,
	$\eqref{est_Pi-_taylorremainder_preliminary}_{\beta}$,
	$\eqref{est_Pi-_taylorremainder}_{ \beta}$, and $\eqref{est_Gamma}_{ \beta }^{\gamma}$ hold for all $\gamma$ populated and not purely polynomial. 
	If all populated $ \beta' \prec \beta$ satisfy, $\eqref{est_Pi}_{ \beta' }$ and $\eqref{est_Pi-}_{ \beta' }$,
then 
\begin{equation}\label{est_Pi-}
\EE^\frac{1}{p} \left|
\Pi_{x\beta t}^-(y) \right|^p
\lesssim 
\ve^{|\beta|-\langle\beta\rangle} 
\big(\sqrt[4]{t}\big)^{\alpha-2 } (\sqrt[4]{t}+|x-y|)^{\langle\beta\rangle-\alpha} \,.
\end{equation}
\end{proposition}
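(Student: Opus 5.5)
The estimate \eqref{est_Pi-} will come from a reconstruction argument in the spirit of \cite{LOTT24}. Since $|\beta|>2$, the germ $\Pi^-_{x\beta}$ vanishes to positive order at $x$, and so it should be recoverable from its local approximations. The key tool is the semigroup telescoping identity
\begin{equation}
\Pi^-_{x\beta t} = \lim_{T\to\infty}\Pi^-_{x\beta T} - \int_t^\infty ds\, \partial_s \Pi^-_{x\beta s} \, ,
\end{equation}
written via $\Psi_{s}=\Psi_{s/2}*\Psi_{s/2}$ in terms of the commutator quantity $\Pi^-_{x\beta s}-(\Gamma_{xz}\Pi^-_{z})_{\beta s}$ evaluated at nearby base points $z$. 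I will split into small scales $\sqrt[4]{t}\leq\ve$ and large scales $\sqrt[4]{t}\geq\ve$.

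\textbf{Small scales.} For $\sqrt[4]{t}\leq\ve$ I use the divergent bounds directly. From $\eqref{est_Pi-_taylorremainder}_\beta$ with $l$ chosen so that $1+l>|\beta|-2$, and from the moment bound \eqref{eq_momentbound_semigroup}, the vanishing of $\Psi_t$ against polynomials is exploited after subtracting $T_y^{\leq l}$. Together with $\ve^{\alpha-2}(\ve+|x-y|)^{|\beta|-\alpha}$ from \eqref{est_Pi-_bounded}, this gives $\ve^{\alpha-2}(\ve+|x-y|)^{|\beta|-\alpha}$. Because $|\beta|-\langle\beta\rangle=\kappa\sum_{k>\kk}\beta(k)\ge0$ and $\sqrt[4]{t}\le\ve$, this is dominated by the right-hand side of \eqref{est_Pi-}. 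The required comparison is $\ve^{\alpha-2}\lesssim(\sqrt[4]{t})^{\alpha-2}$ together with $(\ve+|x-y|)^{|\beta|-\alpha}\le \ve^{|\beta|-\langle\beta\rangle}(\ve+|x-y|)^{\langle\beta\rangle-\alpha}$. If this comparison leaves a factor $(\ve/\sqrt[4]{t})^{\cdots}$ with the wrong sign, I will instead combine it with the Taylor-remainder version at scale $\ve$.

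\textbf{Large scales and continuity in the base point.} For $\sqrt[4]{t}\ge\ve$ I first bound $\Pi^-_{x\beta}-\Gamma_{xy}\Pi^-_{y\beta}$ restricted to the germ at $y$. By Proposition~\ref{prop_recentre_Pi-}, $(\Gamma_{xy}\Pi^-_y)_\beta-\Pi^-_{x\beta}$ equals $\sum_{\gamma\neq\beta}(\Gamma_{xy})_\beta^\gamma\Pi^-_{y\gamma}$ plus the explicit Taylor-polynomial terms. The first part is estimated by H\"older, using $\eqref{est_Gamma}_\beta^\gamma$ (with $\gamma$ not purely polynomial, since $\Pi^-_{\delta_\n}=0$) and $\eqref{est_Pi-}_\gamma$ for $\gamma\prec\beta$. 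This gives $\ve^{|\beta|-\langle\beta\rangle}(\sqrt[4]{t})^{\alpha-2}(\sqrt[4]{t}+|x-y|)^{\langle\beta\rangle-\langle\gamma\rangle}(\sqrt[4]{t})^{\langle\gamma\rangle-\alpha}$, and each $\langle\gamma\rangle<\langle\beta\rangle$ by Lemma~\ref{lem_def_gamma}. The Taylor-polynomial terms, namely $(T_x^{<|\cdot|-2}-T_y^{<|\cdot|-2})$ and $\Gamma_{xy}(T_y^{<|\beta|-2}-T_y^{<|\cdot|-2})$, are bounded pointwise by $\eqref{est_Pi-_bounded_preliminary}_\beta$ and $\eqref{est_Pi-_taylorremainder_preliminary}_\beta$. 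Here $\ve\le\sqrt[4]{t}$ allows the factors $\ve^{\alpha-2-|\n|}$ to be traded for powers of $\sqrt[4]{t}$. This trade is admissible because in these terms only derivatives of order $|\n|<|\beta|-2$ appear, which is exactly the mismatch $(\kk-k)\alpha$ absorbing the gap $|\beta|-\langle\beta\rangle$. I expect this step to be the main obstacle: one must check that every such term carries enough positive powers of $\ve$, using $\kappa<a$ and the genericity $\langle\beta\rangle\notin\NN$, to be bounded by $\ve^{|\beta|-\langle\beta\rangle}(\sqrt[4]{t})^{\alpha-2}(\sqrt[4]{t}+|x-y|)^{\langle\beta\rangle-\alpha}$.

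\textbf{Integration in scale.} Next, writing $\partial_s\Pi^-_{x\beta s}(x)=-\int dz\,(\mathcal L\Psi_{s/2})(x-z)\,\Pi^-_{x\beta s/2}(z)$ with $\mathcal{L}=(\partial_{x_0}-\Delta)(-\partial_{x_0}-\Delta)$, I insert $(\Gamma_{xz}\Pi^-_z)_\beta$ and drop its contribution up to the polynomial of degree $<|\beta|-2$. The polynomial is annihilated by $\mathcal L\Psi$ only partially, so its remaining part must be absorbed in the same way as the Taylor terms above. The remaining increment is bounded by the continuity estimate, which yields an integrand of size $\ve^{|\beta|-\langle\beta\rangle}(\sqrt[4]{s})^{\langle\beta\rangle-2}/s$. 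Since $\langle\beta\rangle>2$, the $ds$-integral converges at $\infty$ and the limit $T\to\infty$ vanishes. Recentering from $x$ to a general point $y$ then produces the factor $(\sqrt[4]{t}+|x-y|)^{\langle\beta\rangle-\alpha}$ via the same $\Gamma$-estimate. This yields \eqref{est_Pi-}.
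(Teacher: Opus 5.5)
Your bound on $(\Gamma_{xy}-\mathrm{id})\Pi^-_y$ and on the Taylor-polynomial terms for $\ve\le\sqrt[4]{t}$ matches the paper's Steps~2a--2b. The step that turns this continuity into \eqref{est_Pi-} fails, because your scale integration runs in the wrong direction. For $\langle\beta\rangle>2$, an integrand of size $(\sqrt[4]{s})^{\langle\beta\rangle-2}/s$ is integrable at $s=0$, not at $s=\infty$. Nothing makes $\lim_{T\to\infty}\Pi^-_{x\beta T}(x)$ vanish: no BPHZ condition is imposed for regular $\beta$, and \eqref{est_Pi-} itself allows growth like $(\sqrt[4]{T})^{\langle\beta\rangle-2}$. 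The anchor must instead be at $t\to0$. You need vanishing at the base point, $\Pi^-_{x\beta}(x)=0$, followed by the dyadic telescoping over scales $s<t$; that geometric sum converges precisely because $\langle\beta\rangle-2>0$. You assert this vanishing in your first sentence but never prove it, and it is not automatic: the counterterm part of \eqref{hierarchy} carries no Taylor remainder. The paper's Step~1 handles it as follows. If $c^{(k)}_{\beta_0}\neq0$, then $|\beta_0+k\delta_\0|<2$, which gives $|\n|<\langle\beta_1\rangle+\dots+\langle\beta_l\rangle$. Hence some factor $\partial^{\n_i}\Pi_{x\beta_i}(x)$ with $|\n_i|<\langle\beta_i\rangle$ appears, and it vanishes by \eqref{est_Pi} and \eqref{est_Pi_taylorremainder}; see \eqref{eq_supp3_regRec}.

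Your small-scale step does not work either. The divergent bounds give at best $\ve^{\alpha-2}(\ve+\sqrt[4]{t}+|x-y|)^{|\beta|-\alpha}$. On the diagonal this is $\ve^{|\beta|-2}$, whereas \eqref{est_Pi-} demands $\ve^{|\beta|-\langle\beta\rangle}(\sqrt[4]{t})^{\langle\beta\rangle-2}$, which tends to $0$ as $t\to0$. For $|x-y|\gg\ve$, the growth $|x-y|^{|\beta|-\alpha}$ exceeds the admissible $|x-y|^{\langle\beta\rangle-\alpha}$ by the unbounded factor $(|x-y|/\ve)^{|\beta|-\langle\beta\rangle}$. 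So your comparison $(\ve+|x-y|)^{|\beta|-\alpha}\le\ve^{|\beta|-\langle\beta\rangle}(\sqrt[4]{t}+|x-y|)^{\langle\beta\rangle-\alpha}$ is false. Subtracting $T_y^{\le l}$ gains nothing, since $\Psi_t$ does not annihilate polynomials. A gain could only come from Taylor expansion at $x$ together with the vanishing of $\partial^\n\Pi^-_{x\beta}(x)$ for $|\n|<|\beta|-2$, which is again the missing Step~1. Consequently the continuity estimate \eqref{eq_supp2_regRec} is needed at all scales, including $\sqrt[4]{t}<\ve$, not only for $\sqrt[4]{t}\ge\ve$. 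The paper splits according to whether $\ve\le\sqrt[4]{t}+|x-y|$, and in the complementary regime switches to the representation \eqref{tmp06}. There, writing $T_x^{<|\beta|-2}-T_y^{<|\beta|-2}=T_x^{<|\beta|-2}(\mathrm{id}-T_y^{<|\beta|-2})$ and applying \eqref{est_Pi-_taylorremainder_preliminary} produces the factor $|x-y|^{\lceil|\beta|-2-|\n|\rceil}$. In $\Gamma_{xy}(T_y^{<|\beta|-2}-T_y^{<|\cdot|-2})$ only $|\n|\ge|\gamma|-2$ occurs. The powers $\ve^{|\gamma|-2-|\n|}$ therefore have non-positive exponent and can be traded for powers of $\sqrt[4]{t}+|x-y|<\ve$.
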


Before turning to the proof, recall that reconstruction is, in general, the task of
constructing a distribution that is well approximated by a family of distributions $\{F_x\}_{x\in\RR^{1+d}}$ when evaluated near $ x $. Since $ F_x=\Pi_{x\beta}^{-} $ vanishes on the
diagonal, see \eqref{bphz} below, estimate \eqref{est_Pi-} is precisely the
``reconstruction bound'' of~\cite[Display (3.2)]{BrouxCaravennaZambotti} for reconstructing the zero function.

\begin{proof}
As is typical for a reconstruction argument, 
the proof is based on ``\emph{vanishing at the base point}'' in the form of 
\begin{equation}\label{bphz}
\lim_{t\searrow0}
\EE^\frac{1}{p} \left|
\Pi_{x\beta t}^- (x) \right|^p
=0 \, ,
\end{equation}
which we establish in Step~1 below, 
and ``\emph{continuity in the base point}'' in the form of 
\begin{equation}\label{eq_supp2_regRec}
\EE^\frac{1}{p} \left|
\big(\Pi_{x\beta}^- -\Pi_{y\beta}^-\big)_t(y) \right|^p
\lesssim 
\ve^{|\beta|-\langle\beta\rangle}
\big(\sqrt[4]{t}\big)^{\alpha-2} (\sqrt[4]{t}+|x-y|)^{\langle\beta\rangle-\alpha} \, ,
\end{equation}
which we establish in Step~2 below. 
By a standard reconstruction argument which requires $\langle\beta\rangle-2>0$, see e.g.~\cite[Lemma~4.8]{LO22}, 
\eqref{bphz} and \eqref{eq_supp2_regRec} imply the desired bound \eqref{est_Pi-}. 

\textbf{Step 1 \textnormal{(vanishing at the base point)}.}
We prove that $  \Pi^-_{x\beta}(x)=0$. 
Together with the continuity of $\Pi^-_{x\beta}$ established in
\eqref{est_Pi-_taylorremainder}, this implies \eqref{bphz}.
To see that $\Pi^-_{x\beta}(x)$ vanishes, we recall definition \eqref{hierarchy} and
first note that
the Taylor remainder $
(\mathrm{id}-T_x^{<|\beta|-2}) $ clearly vanishes on
the diagonal (since $ | \beta| >2$).
Moreover, for $\beta_0+\dots+\beta_l=\beta$ and\footnote{The proof only requires $ \n=0$,
however, in view of Remark~\ref{rem_pos_recentering} we prove \eqref{eq_supp3_regRec} more generally.} $\n_0+\dots+\n_l=\n$
\begin{equation}\label{eq_supp3_regRec}
	\begin{aligned}
		c^{(k)}_{\beta_0}\partial^{\n_0} W_{k-l,\ve}(\Pi_{x0})(x)
\partial^{\n_1}\Pi_{x\beta_1}(x)\cdots\partial^{\n_l}\Pi_{x\beta_l}(x)
		= 0\,,
	\end{aligned}
\end{equation}
because 
Remark~\ref{rem:bphz_inductive} implies that $c^{(k)}_{\beta_0}\neq0\implies|\beta_0+k\delta_\0|<2$, 
where the latter condition is equivalent to $|\beta_0|<2+k\alpha$. 
Now, $|\n|<|\beta|-2$ implies 
\begin{equation}
	\begin{aligned}
		|\n|
<\langle\beta\rangle-2
&=\langle\beta_0\rangle+\dots+\langle\beta_l\rangle-l\alpha-2\\
&<\langle\beta_1\rangle+\dots+\langle\beta_l\rangle+(k-l)\alpha
\leq\langle\beta_1\rangle+\dots+\langle\beta_l\rangle\,.
	\end{aligned}
\end{equation}
Thus, for $\partial^{\n_1}\Pi_{x\beta_1}(x)\cdots\partial^{\n_l}\Pi_{x\beta_l}(x)$ not to vanish, 
we need $\langle\beta_i\rangle\leq|\n_i|$ for all $i=1,\dots,l$ 
as a consequence of continuity \eqref{est_Pi_taylorremainder} and the model estimate \eqref{est_Pi},
which yields the contradiction
$$|\n|<\langle\beta_1\rangle+\dots+\langle\beta_l\rangle
\leq|\n_1|+\dots+|\n_l|
\leq|\n|.$$
Hence, the sum over counterterms in \eqref{hierarchy} of the form \eqref{eq_supp3_regRec}
also vanishes.

\textbf{Step 2} (continuity in the base point)\textbf{.}
Proposition~\ref{prop_recentre_Pi-}
implies that 
\begin{align}
	\Pi^-_{x \beta}-\Pi^-_{y \beta} 
&=\big((\Gamma_{xy}-\mathrm{id})\Pi^-_y \big)_{ \beta}
- 
\Big(
\sum_{ k \geqslant \kk} \ve^{(\kk-k)\alpha} T_x^{<|\cdot|-2} \big(z_k W_{k, \varepsilon} (\Pi_x)\big)\Big)_{ \beta}
 \notag \\
&\hphantom{=(\Gamma_{xy}-\mathrm{id})\Pi^-_y\ }
+ \Big(\Gamma_{xy} 
\sum_{ k \geqslant \kk} \ve^{(\kk-k)\alpha} T_y^{<|\cdot|-2}
\big(z_k W_{k, \varepsilon}(\Pi_y)\big)\Big)_{ \beta}
 \label{tmp05} \\
&= \big((\Gamma_{xy}-\mathrm{id})\Pi^-_y \big)_{\beta}
- 
\Big(\sum_{ k \geqslant \kk} \ve^{(\kk-k)\alpha} (T_x^{<|\cdot|-2}-T_y^{<|\cdot|-2}) \big(z_k W_{k,
\varepsilon}(\Pi_x)\big)\Big)_{ \beta}
 \notag \\
&\hphantom{=(\Gamma_{xy}-\mathrm{id})\Pi^-_y\ }
- \Big(\Gamma_{xy} 
\sum_{ k \geqslant \kk} \ve^{(\kk-k)\alpha} (T_y^{<|\beta|-2}-T_y^{<|\cdot|-2})
\big(z_k W_{k, \varepsilon}(\Pi_y)\big)\Big)_{ \beta}
 \label{tmp06}\,.
\end{align}
We proceed by estimating each term separately, starting with
$(\Gamma_{xy}-\mathrm{id})\Pi^-_y$ in Step~2a below.
Thereafter, we distinguish two regimes $\ve\leq\sqrt[4]{t}+|x-y|$ and $\ve>\sqrt[4]{t}+|x-y|$:
in the first case, we estimate \eqref{tmp05} (Step~2b below), while in the latter case we
estimate \eqref{tmp06} (Step~2c below). 

\textbf{Step 2a.} 
Using H\"older's inequality, we deduce
\begin{equation}\label{eq_supp4_regRec}
\EE^\frac{1}{p}|\big((\Gamma_{xy}-\mathrm{id})\Pi_y^-\big)_{\beta t}(y) |^p 
\leq \sum_\gamma 
\EE^\frac{1}{q}|(\Gamma_{xy}-\mathrm{id})_\beta^\gamma|^q \, 
\EE^\frac{1}{r}|\Pi^-_{y\gamma t}(y) |^r\,,
\end{equation}
for $1/p=1/q+1/r$. 
Now, the strict triangularity of $\Gamma_{xy}-\mathrm{id}$ with respect to~$\prec$ (Lemma~\ref{lem_def_gamma}~(3)) implies
that the sum over $\gamma$ is effectively finite
(Lemma~\ref{lem_ind_legal}).
Hence, we may apply \eqref{est_Pi-} for every $ \gamma \prec \beta$ in the sum. 
Since $\Pi^-\in\widetilde T$ by definition, we have that 
$[\gamma]\geq0$ which allows us to use \eqref{est_Gamma}.
Altogether, the right-hand side of \eqref{eq_supp4_regRec} is bounded (up to a constant) by 
\begin{equation}\label{eq_supp5_regRec}
\sum_{\gamma\prec\beta}
\ve^{|\beta|-\langle\beta\rangle-|\gamma|+\langle\gamma\rangle} 
|x-y|^{\langle\beta\rangle-\langle\gamma\rangle} 
\ve^{|\gamma|-\langle\gamma\rangle} (\sqrt[4]{t})^{\langle\gamma\rangle-2} 
\lesssim 
\ve^{|\beta|-\langle\beta\rangle} (\sqrt[4]{t})^{\alpha-2}
(\sqrt[4]{t}+|x-y|)^{\langle\beta\rangle-\alpha}\, ,
\end{equation}
where we used that the sum is actually also restricted to
$\langle\beta\rangle-\langle\gamma\rangle\geq0$ (Lemma~\ref{lem_def_gamma}~(2)), 
and that $\langle\cdot\rangle-\alpha\geq0$ (Lemma~\ref{lem_hom_props}).
This yields the desired bound. 

\textbf{Step 2b.} 
We assume $\ve\leq\sqrt[4]{t}+|x-y|$ and estimate the second and third contributions of \eqref{tmp05}. 
For the second contribution we use the bound
\begin{equation}\label{tmp07}
	\EE^\frac1p|\ve^{(\kk-k)\alpha} \partial^\n\big(z_k W_{k, \varepsilon}(\Pi_x)\big)_\beta(x) \big((\cdot-x)^\n\big)_t(y)|^p
\lesssim \ve^{|\beta|-2-|\n|}
(\sqrt[4]{t}+|x-y|)^{|\n|} 
\end{equation}
for $|\n|<|\beta|-2$, 
which in turn is a direct consequence of \eqref{est_Pi-_bounded_preliminary} and the semigroup $\Psi_t$. 
Since $\langle\beta\rangle-2-|\n|\geq0$, 
the right-hand side is bounded by $\ve^{|\beta|-\langle\beta\rangle}(\sqrt[4]{t}+|x-y|)^{\langle\beta\rangle-2}$, 
which in turn can again be estimated by 
$\ve^{|\beta|-\langle\beta\rangle}(\sqrt[4]{t})^{\alpha-2}(\sqrt[4]{t}+|x-y|)^{\langle\beta\rangle-\alpha}$
(using that $\langle\cdot\rangle-\alpha\geq0$). 

To estimate the third contribution of \eqref{tmp05} we proceed similarly. 
Using H\"older's inequality in probability, together with $ \eqref{est_Gamma}$
and \eqref{tmp07}, yields
\begin{equation}\label{tmp08}
	\begin{aligned}
&\EE^\frac1p| (\Gamma_{xy})_\beta^\gamma \ve^{(\kk-k)\alpha} \partial^\n\big(z_k W_{k,
\varepsilon}(\Pi_y)\big)_\gamma(y) \big((\cdot-y)^\n\big)_t(y)|^p \\
&\lesssim 
\ve^{|\beta|-\langle\beta\rangle-|\gamma|+\langle\gamma\rangle}
|x-y|^{\langle\beta\rangle-\langle\gamma\rangle} \ve^{|\gamma|-2-|\n|}
(\sqrt[4]{t})^{|\n|} 
\leqslant
\ve^{|\beta|-\langle\beta\rangle} (\sqrt[4]{t}+|x-y|)^{\langle\beta\rangle-2}\, .
	\end{aligned}
\end{equation}
In the second inequality we used that $\langle\beta\rangle-\langle\gamma\rangle\geq0$ 
(Lemma~\ref{lem_def_gamma}~(2)) and that $|\n|\leq\langle\gamma\rangle-2$ (which follows
from the restriction $|\n|<|\gamma|-2$ in \eqref{tmp05}), together with the fact that we
restricted ourselves to
$\ve\leq\sqrt[4]{t}+|x-y|$.
Finally, the right-hand side of \eqref{tmp08} can be trivially bounded by
$\ve^{|\beta|-\langle\beta\rangle}
(\sqrt[4]{t})^{\alpha-2}(\sqrt[4]{t}+|x-y|)^{\langle\beta\rangle-\alpha}$.

\textbf{Step 2c.}
We turn to the case $\ve>\sqrt[4]{t}+|x-y|$ and estimate the second and third contributions of \eqref{tmp06}.
For the third contribution we use the first inequality of \eqref{tmp08}, 
which we now only need for $\n$ with $|\gamma|-2\leq|\n|<|\beta|-2$. 
Then, using $\ve>\sqrt[4]{t}+|x-y|$, we have  
\begin{equation}
	\begin{aligned}
		\EE^\frac1p| (\Gamma_{xy})_\beta^\gamma \ve^{(\kk-k)\alpha} \partial^\n\big(z_k W_{k,
\varepsilon}(\Pi_y)\big)_\gamma(y) \big((\cdot-y)^\n\big)_t(y)|^p
&\lesssim 
\ve^{|\beta|-\langle\beta\rangle}
(\sqrt[4]{t}+|x-y|)^{\langle\beta\rangle-2}\,,
	\end{aligned}
\end{equation}
where now we used that 
$\ve^{|\gamma|-2-|\n|}
\leq \ve^{|\gamma|-\langle\gamma\rangle} 
(\sqrt[4]{t}+|x-y|)^{\langle\gamma\rangle-2-|\n|}$. 

For the second contribution of \eqref{tmp06} we combine 
$T_x^{<|\beta|-2}-T_y^{<|\beta|-2}
=T_x^{<|\beta|-2}(\mathrm{id}-T_y^{<|\beta|-2})$ and
$\partial^\n T_y^{<|\beta|-2} = T_y^{<|\beta|-2-|\n|}\partial^\n$ 
to deduce
\begin{equation}
(T_x^{<|\beta|-2}-T_y^{<|\beta|-2})f
= \sum_{|\n|<|\beta|-2}\tfrac{1}{\n!} (\mathrm{id}-T_y^{<|\beta|-2-|\n|}) \partial^\n f(x) (\cdot-x)^\n \, .
\end{equation}
Thus, the claim follows as a direct consequence of
\eqref{est_Pi-_taylorremainder_preliminary}: 
\begin{align}
&\EE^\frac1p|\ve^{(\kk-k)\alpha} (\mathrm{id}-T_y^{<|\beta|-2-|\n|}) \partial^\n \big(z_k
W_{k, \varepsilon}(\Pi_x)\big)_\beta(x) \big((\cdot-x)^\n\big)_t(y) |^p \\
&\lesssim \ve^{\alpha-2-|\n|-\lceil|\beta|-2-|\n|\rceil} |x-y|^{\lceil|\beta|-2-|\n|\rceil} (\ve+|x-y|)^{|\beta|-\alpha} (\sqrt[4]{t}+|x-y|)^{|\n|} \, .
\end{align}
Since $|x-y|\leq\sqrt[4]{t}+|x-y|\leq\ve$ and $|\beta|-\alpha\geq0$ this right-hand side is estimated by 
\begin{equation}
\ve^{|\beta|-\langle\beta\rangle}
\ve^{\langle\beta\rangle-2-|\n|-\lceil|\beta|-2-|\n|\rceil} 
|x-y|^{\lceil|\beta|-2-|\n|\rceil} 
(\sqrt[4]{t}+|x-y|)^{|\n|} \, .
\end{equation}
Since 
$\langle\beta\rangle-2-|\n|-\lceil|\beta|-2-|\n|\rceil\leq0$, this is further bounded by 
$\ve^{|\beta|-\langle\beta\rangle}(\sqrt[4]{t}+|x-y|)^{\langle\beta\rangle-2}$
and thus again by 
$\ve^{|\beta|-\langle\beta\rangle}
(\sqrt[4]{t})^{\alpha-2}(\sqrt[4]{t}+|x-y|)^{\langle\beta\rangle-\alpha}$.
\end{proof}

\begin{remark}[Upgrading to bounds with $ \partial^{\n}$]\label{rem_improve_n}
	We notice that the bound $\eqref{est_Pi-}_{ \beta}$ immediately implies 
	\begin{equation}\label{est_Pi-n}
		\begin{aligned}
			\EE^\frac{1}{p} \left|
\partial^{\n}
\Pi_{x\beta t}^-(y) \right|^p
\lesssim 
\ve^{|\beta|-\langle\beta\rangle} 
\big(\sqrt[4]{t}\big)^{\alpha-2 - |\n|} (\sqrt[4]{t}+|x-y|)^{\langle\beta\rangle-\alpha} \, ,
		\end{aligned}
	\end{equation}
	for any $ \n \in \NN^{1+d}$.
	To see this, we write 
\begin{equation}
	\begin{aligned}
		\EE^{\frac{1}{p}}\big| \partial^{\n} \Pi_{x\beta t}^-(y)
		\big|^{p}
		& \leqslant 
		\int dz \, \big|\partial^{\n}  \Psi_{t/2}(y-z)\big| \EE^{\frac{1}{p}}\big|  \Pi_{x\beta
		\frac{t}{2} }^-(z) \big|^{p} \\
		& \lesssim 
		\ve^{|\beta|-\langle\beta\rangle} \big(\sqrt[4]{t}\big)^{\alpha-2}
		\int dz\, \big| \partial^{\n}  \Psi_{t/2}(y-z)\big|
		(\sqrt[4]{t}+|x-z|)^{\langle\beta\rangle-\alpha} \,,
	\end{aligned}
\end{equation}
where we used \eqref{est_Pi-}. Then, using the moment bound
\eqref{eq_momentbound_semigroup} with $ \varepsilon = 0 $, we conclude
that 
\begin{equation}
	\begin{aligned}
			\EE^{\frac{1}{p}}\big| \partial^{\n} \Pi_{x\beta t}^-(y)
		\big|^{p}
\lesssim \ve^{|\beta|-\langle\beta\rangle} \big(\sqrt[4]{t}\big)^{\alpha-2- | \n|}
 (\sqrt[4]{t}+|x-y|)^{\langle\beta\rangle-\alpha}\,.
	\end{aligned}
\end{equation}
	The above argument applies more generally. 
	Therefore, throughout the paper, we state only the bounds for $ \n = \0 $ and
	use without further proof their corresponding extensions to general $ \n $
	whenever this argument applies.
\end{remark}

Finally, we can construct and estimate the model component $ \Pi_{\beta}$ using an
integration argument.

\begin{proposition}[Integration, Step 4.a]\label{prop_integration}
Let $[\beta]\geq0$ and assume that \eqref{est_Pi-} holds for $\Pi^-_{x\beta}$. Then
\begin{equation}\label{eq_supp1_regInt}
\Pi_{x\beta} \coloneqq \int_0^\infty ds\, (\mathrm{id}-T_x^{<|\beta|}) ( -\partial_{x_{0}}- \Delta) \Pi^-_{x\beta s}
\end{equation}
is the unique solution of $(\partial_{x_{0}}-\Delta)\Pi_{x\beta}=\Pi^-_{x\beta}$ satisfying 
\begin{equation}\label{est_Pi}
\EE^\frac1p\left| \Pi_{x\beta t}(y) \right|^p
\lesssim 
\ve^{|\beta|-\langle\beta\rangle}
(\sqrt[4]{t})^\alpha (\sqrt[4]{t}+|x-y|)^{\langle\beta\rangle-\alpha} \,.
\end{equation}
\end{proposition}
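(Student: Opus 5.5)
The plan is the standard integration argument of \cite{LOTT24}: use the semigroup representation and split the $s$-integral at the scale $\sqrt[4]{s}\sim|x-y|$. The first task is to justify the formula \eqref{eq_supp1_regInt}. By \eqref{eq_Psi}, $\partial_s\Pi^-_{x\beta s}=-(\partial_{x_0}-\Delta)(-\partial_{x_0}-\Delta)\Pi^-_{x\beta s}$. Hence, formally,
\begin{equation}
(\partial_{x_0}-\Delta)\int_0^\infty ds\,(-\partial_{x_0}-\Delta)\Pi^-_{x\beta s}=\Pi^-_{x\beta}-\lim_{s\to\infty}\Pi^-_{x\beta s}.
\end{equation}
The Taylor polynomial $T_x^{<|\beta|}$ is annihilated up to a polynomial, which is harmless since $T_x^{<|\beta|}$ of the integrand is a polynomial of degree $<|\beta|$ and $(\partial_{x_0}-\Delta)$ maps it to a polynomial of degree $<|\beta|-2$. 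That polynomial in fact vanishes once we verify that $\Pi^-_{x\beta s}$ and its Taylor coefficients at $x$ of order $<|\beta|-2$ decay.

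The main work is the bound \eqref{est_Pi}, which I would derive from the upgraded estimate \eqref{est_Pi-n}. For the integrand at time $s$, tested against $\Psi_t$ and using the semigroup property, we have
\begin{equation}
\EE^{\frac1p}\big|\partial^{\n}(-\partial_{x_0}-\Delta)\Pi^-_{x\beta\, s+t}(y)\big|^p\lesssim\ve^{|\beta|-\langle\beta\rangle}(\sqrt[4]{s+t})^{\alpha-4-|\n|}(\sqrt[4]{s+t}+|x-y|)^{\langle\beta\rangle-\alpha}.
\end{equation}
I would split into two regimes.
\begin{itemize}
\item \emph{Far field}, $\sqrt[4]{s}\geq\sqrt[4]{t}+|x-y|$. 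Bound $(\mathrm{id}-T_x^{<|\beta|})$ via the Taylor remainder estimate, i.e.\ through derivatives of order $\sim\lceil|\beta|\rceil$ (parabolic), measured along the segment between $x$ and $y$. This gives an integrand $\lesssim\ve^{|\beta|-\langle\beta\rangle}(\sqrt[4]{s})^{\langle\beta\rangle-4-\lceil\cdot\rceil}|x-y|^{\lceil\cdot\rceil}$. It is integrable at infinity because $\langle\beta\rangle$ is chosen non-integer with $\langle\beta\rangle<$ the next admissible order; this is where $\langle\beta\rangle\notin\NN$ and the choice of $\kappa$ (so that $n<|\beta|\Leftrightarrow n<\langle\beta\rangle$) enter.
\item \emph{Near field}, $\sqrt[4]{s}<\sqrt[4]{t}+|x-y|$. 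Estimate the identity part and each Taylor monomial $\partial^\n(\cdot)(x)(\cdot-x)^\n$, $|\n|<|\beta|$, separately. The identity part gives $\int_0^{(\sqrt[4]t+|x-y|)^4}ds\,(\sqrt[4]{s+t})^{\alpha-4}(\ldots)^{\langle\beta\rangle-\alpha}\lesssim(\sqrt[4]t)^\alpha(\sqrt[4]t+|x-y|)^{\langle\beta\rangle-\alpha}$, since $\alpha<0$. The Taylor terms give powers $(\sqrt[4]{s})^{\langle\beta\rangle-4-|\n|}$, integrable at $0$ because $|\n|<\langle\beta\rangle$, times $|x-y|^{|\n|}$.
\end{itemize}
Summing the two regimes yields \eqref{est_Pi}. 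The factor $\ve^{|\beta|-\langle\beta\rangle}$ simply passes through.

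For uniqueness, I would take the difference $P$ of two solutions. It is caloric, and \eqref{est_Pi} forces its tempered growth of order $<\langle\beta\rangle$ at large scales together with vanishing to order $\langle\beta\rangle$ at $x$ at small scales (uniformly in $t$). A Liouville argument for the heat operator shows $P$ is a polynomial of degree $<\langle\beta\rangle$. Letting $t\to0$, $y\to x$ and using the vanishing order kills all its coefficients, since each degree $\le\langle\beta\rangle$ is non-integer-separated.

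The step I expect to be the main obstacle is the far-field convergence and the exact matching of exponents near integer degrees. The cutoff $T^{<|\beta|}$ is phrased with $|\beta|$, while the bounds carry $\langle\beta\rangle$. One must use the $\kappa$-choice to ensure no integer $n$ lies in $[\langle\beta\rangle,|\beta|)$; otherwise borderline logarithmic divergences appear.
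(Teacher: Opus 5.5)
Your overall strategy matches the paper's. The paper refers to the integration argument of \cite[Section~3.7]{BOT}, and its only added observation is the one you make: the choice of $\kappa$ gives $|\n|<|\beta|\iff|\n|<\langle\beta\rangle$ with $\langle\beta\rangle\notin\NN$. The step that fails as written is your check that \eqref{eq_supp1_regInt} solves $(\partial_{x_0}-\Delta)\Pi_{x\beta}=\Pi^-_{x\beta}$. You treat the identity part and the Taylor part of the integrand separately, and then want $\Pi^-_{x\beta s}$ and its Taylor coefficients $\partial^\n\Pi^-_{x\beta s}(x)$, $|\n|<|\beta|-2$, to decay as $s\to\infty$. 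For regular $\beta$ (where $\langle\beta\rangle>2$) this is false. By \eqref{est_Pi-n} these quantities are only bounded by a multiple of $(\sqrt[4]{s})^{\langle\beta\rangle-2-|\n|}$, which grows, and generically they do grow. Correspondingly, the two pieces of the $s$-integral are not separately convergent at $s=\infty$: the available bounds $(\sqrt[4]{s})^{\langle\beta\rangle-4-|\n|}$ are not integrable there once $|\n|<\langle\beta\rangle$.

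The fix is to keep the remainder together. Since $\partial^\n T_x^{<\eta}=T_x^{<\eta-|\n|}\partial^\n$, one has $(\partial_{x_0}-\Delta)(\mathrm{id}-T_x^{<|\beta|})=(\mathrm{id}-T_x^{<|\beta|-2})(\partial_{x_0}-\Delta)$, hence
\begin{align*}
(\partial_{x_0}-\Delta)\Pi_{x\beta}
&=-\int_0^\infty ds\,\partial_s(\mathrm{id}-T_x^{<|\beta|-2})\Pi^-_{x\beta s}\\
&=(\mathrm{id}-T_x^{<|\beta|-2})\Pi^-_{x\beta}-\lim_{s\to\infty}(\mathrm{id}-T_x^{<|\beta|-2})\Pi^-_{x\beta s}\,.
\end{align*}
The limit vanishes because the remainder only involves $\partial^\m\Pi^-_{x\beta s}$ with $|\m|\geq|\beta|-2$, hence $|\m|>\langle\beta\rangle-2$. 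At the other endpoint you need an input your sketch does not isolate: $T_x^{<|\beta|-2}\Pi^-_{x\beta}=0$, i.e.\ $\Pi^-_{x\beta}$ vanishes to order $|\beta|-2$ at $x$. This follows from \eqref{est_Pi-n} at $y=x$ as $t\to0$, using smoothness of $\Pi^-_{x\beta}$ and the $\kappa$-choice. It is exactly what the positive recentering in \eqref{hierarchy} is designed to guarantee, cf.~Remark~\ref{rem_pos_recentering}.

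There are two smaller corrections. First, $T_x$ does not commute with convolution by $\Psi_t$. Testing \eqref{eq_supp1_regInt} against $\Psi_t$ gives $\int dz\,\Psi_t(y-z)(\mathrm{id}-T_x^{<|\beta|})(-\partial_{x_0}-\Delta)\Pi^-_{x\beta s}(z)$. So in the far field the Taylor remainder must be taken along the segment from $x$ to $z$ and then integrated against $|\Psi_t(y-z)|$ via \eqref{eq_momentbound_semigroup}. This, like the convolved monomials $((\cdot-x)^\n)_t(y)$ in your near field, replaces $|x-y|$ by $\sqrt[4]{t}+|x-y|$ and is harmless. Second, within the estimate the $\kappa$-choice is needed for the near-field Taylor terms, not the far field. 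There an integer $n\in(\langle\beta\rangle,|\beta|)$ would make $\int_0^1 ds\,(\sqrt[4]{s})^{\langle\beta\rangle-4-n}$ diverge like a power, not logarithmically; at $s=\infty$ only $\langle\beta\rangle\leq|\beta|$ and $\langle\beta\rangle\notin\NN$ are used. With these repairs your argument, including the Liouville uniqueness step, goes through.
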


Let us recall once more that we implicitly understand the statement of
Proposition~\ref{prop_integration} only for suitable populated multiindices that satisfy  $ [ \beta] \geqslant 0 $. 
In general, we have $(\partial_{x_{0}}-\Delta)\Pi_{x \delta_{\n}}\neq \Pi^-_{x
\delta_{\n}}=0$ for purely polynomial indices, and 
$ 0 = (\partial_{x_{0}}-\Delta)\Pi_{x \beta} \neq \Pi_{x \beta}^{-}$ for multiindices 
$ \beta = \delta_{\kk} + \delta_{\n_{1}}+ \cdots \delta_{\n_{\kk}}$,
see \eqref{model_poly}.
\\

The proof of Proposition~\ref{prop_integration} is almost verbatim to
\cite[Section~3.7]{BOT}. 
We only note that $T_x^{<|\beta|}$ in \eqref{eq_supp1_regInt} can be replaced by
$T_x^{<\langle\beta\rangle}$, since 
our choice of $\langle\cdot\rangle$ implies that 
$|\n|<|\beta|$ if and only if $|\n|<\langle\beta\rangle$. 
Hence, the order of the Taylor remainder is compatible with the order of growth/vanishing
of the input \eqref{est_Pi-} and the output \eqref{est_Pi},
and furthermore $\langle\beta\rangle$ is not an integer for $[\beta]\geq0$.  

\medskip

It is only left to reestablish the necessary symmetries and continuity properties of $
\Pi_{x \beta}$, so it can be used in the next induction step. Moreover, we have to define
$ \Gamma_{\beta}^{\gamma}$ appropriately for all $ \gamma$ that are purely polynomial,
which we do in a three-point argument below. 

\begin{proposition}[Covariances of $\Pi$, Step 4.b]\label{prop_covariance_Pi}
Let $[\beta]\geq0$ and assume that the covariances of Proposition~\ref{prop_covariance_Pi-} hold for
$\Pi^-_{x\beta}$, i.e.~we have \eqref{Pi-_shift}, \eqref{Pi-_parity}, and \eqref{Pi-_reflection}.
Then the covariances \eqref{Pi_shift}, \eqref{Pi_parity}, and \eqref{Pi_reflection} hold for $\Pi_{x\beta}$.
\end{proposition}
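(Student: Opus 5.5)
The plan is to read the three covariances off the integral representation \eqref{eq_supp1_regInt}, using the uniqueness part of Proposition~\ref{prop_integration} as a safety net. Write $\Pi_{x\beta}[\xi]=\int_0^\infty ds\,(\mathrm{id}-T_x^{<|\beta|})(-\partial_{x_0}-\Delta)\Psi_s*\Pi^-_{x\beta}[\xi]$. Each of the three transformations acts linearly on $\Pi^-_{x\beta}$ through \eqref{Pi-_shift}, \eqref{Pi-_parity}, \eqref{Pi-_reflection}. It therefore suffices to check that the operations $f\mapsto(-\partial_{x_0}-\Delta)\Psi_s*f$ and $f\mapsto(\mathrm{id}-T_x^{<|\beta|})f$ intertwine appropriately with shifts and reflections.

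\textbf{Parity.} This is immediate. By \eqref{Pi-_parity}, $\Pi^-_{x\beta}[-\xi]=(-1)^{1+[\beta]}\Pi^-_{x\beta}[\xi]$, and the map $\Pi^-_{x\beta}\mapsto\Pi_{x\beta}$ is linear and deterministic. Hence \eqref{Pi_parity} holds with the same sign.

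\textbf{Shift.} Let $\tau_y f\coloneqq f(\cdot+y)$. Convolution with $\Psi_s$ and application of the constant-coefficient operator $-\partial_{x_0}-\Delta$ both commute with $\tau_y$. For the Taylor polynomial one has $T_x^{<|\beta|}(\tau_y f)=\tau_y(T_{x+y}^{<|\beta|}f)$, which is exactly the computation carried out in the proof of Proposition~\ref{prop_covariance_Pi-}. Inserting \eqref{Pi-_shift} then gives $\Pi_{x\beta}[\xi(\cdot+y)]=\tau_y\Pi_{x+y\,\beta}[\xi]$.

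\textbf{Reflection.} Let $Rf\coloneqq f(R\cdot)$. Since $\Psi_s$ is built from $(\partial_{x_0}-\Delta)(-\partial_{x_0}-\Delta)$, it is even in the spatial variables, so $\Psi_s*Rf=R(\Psi_s*f)$. Likewise $(-\partial_{x_0}-\Delta)$ commutes with $R$. For the Taylor part, $T_x^{<|\beta|}(Rf)=R(T_{Rx}^{<|\beta|}f)$. Here the chain-rule signs $(-1)^{m_1+\dots+m_d}$ cancel against $((R\cdot)-Rx)^{\m}$, exactly as in the reflection computation of Proposition~\ref{prop_covariance_Pi-}. Combined with \eqref{Pi-_reflection}, this yields \eqref{Pi_reflection} with the same factor $(-1)^{\sum_\n|\n|\beta(\n)}$.

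\textbf{Convergence of the integral.} The only genuine obstacle is to make sure the formal manipulations under $\int_0^\infty ds$ are legitimate, i.e.\ that the transformed integrals converge in the same sense as in Proposition~\ref{prop_integration}. I would handle this through uniqueness rather than by re-examining convergence. Each candidate right-hand side, for instance $\tau_y\Pi_{x+y\,\beta}[\xi]$, solves $(\partial_{x_0}-\Delta)u=\tau_y\Pi^-_{x+y\,\beta}[\xi]=\Pi^-_{x\beta}[\xi(\cdot+y)]$. Moreover, it satisfies the bound \eqref{est_Pi} relative to the base point $x$, because \eqref{est_Pi} is invariant under the corresponding relabelling of $x$ and $y$ (respectively $Rx$ and $Ry$). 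Since $\tau_y\xi_\ve$ and $R\xi_\ve$ have the same law as $\xi_\ve$, the stochastic bound \eqref{est_Pi-} also holds for the transformed noise. The uniqueness part of Proposition~\ref{prop_integration} then forces equality with $\Pi_{x\beta}$ evaluated at the transformed noise.
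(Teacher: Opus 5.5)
Your proposal is correct and follows the paper's proof, which simply reads the three covariances off the integral representation \eqref{eq_supp1_regInt}; your intertwining identities for $\Psi_s$, $-\partial_{x_0}-\Delta$, and $T_x^{<|\beta|}$ under shifts and reflections are exactly what makes this immediate. The uniqueness argument in your last paragraph is valid but unnecessary: for every fixed $s$ and every realization the transformed integrand coincides with the original one, so convergence of one integral is the same statement as convergence of the other.
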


\begin{proof}
This is an immediate consequence of the integral representation of $\Pi_{x\beta}$ in
\eqref{eq_supp1_regInt}.
\end{proof}
 
\begin{proposition}[Three-point argument, Steps 5.a--c)]\label{prop_3pt}
Let $[\beta]\geq0$ and assume 
that \eqref{est_Pi} holds for $\Pi_{x\beta'}$ for all populated $\beta'\preceq\beta$, 
that $(\Gamma_{xy}\Pi^-_y-\Pi^-_x)_\beta$ is a (random) polynomial of degree $<|\beta|-2$, 
and that \eqref{est_Gamma} holds for $(\Gamma_{xy})_\beta^\gamma$ for all $\gamma$ populated and not purely polynomial.
Then
\begin{enumerate}
\item there exist $\pi^{(\n)}_{xy\beta}$ for $|\n|<|\beta|$ such that 
\begin{equation}\label{eq_recentre_Pi}
(\Gamma_{xy}\Pi_y)_\beta = \Pi_{x\beta} \, ,
\end{equation}
\item $(\Gamma_{xy}\Gamma_{yz})_\beta^\gamma = (\Gamma_{xz})_\beta^\gamma$ 
and $(\Gamma_{xx})_\beta^\gamma = (\mathrm{id})_\beta^\gamma$ for all $\gamma$, 
provided both hold also for all populated $\beta'\prec\beta$, 
and provided $(\Gamma_{xy}\Pi_y)_{\beta'}=\Pi_{x\beta'}$ holds for all populated $\beta'\prec\beta$,
\item \eqref{est_pin} holds for $\pi^{(\n)}_{xy\beta}$ and thus 
\eqref{est_Gamma} holds for $(\Gamma_{xy})_\beta^\gamma$ for all $\gamma$. 
\end{enumerate}
\end{proposition}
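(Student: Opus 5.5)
The plan follows the three-point argument of \cite[Section~5]{LOTT24} and \cite[Section~3.8]{BOT}, with $\langle\cdot\rangle$ in place of $|\cdot|$ in all analytic bounds.

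\textbf{Item 1.} The $\beta$-component of $\Gamma_{xy}\Pi_y$ splits as
\begin{equation}
(\Gamma_{xy}\Pi_y)_\beta=\Pi_{y\beta}+\sum_{\gamma\ \mathrm{not\ purely\ polynomial},\,\gamma\neq\beta}(\Gamma_{xy})_\beta^\gamma\Pi_{y\gamma}+\sum_{\n}\pi^{(\n)}_{xy\beta}(\cdot-y)^\n ,
\end{equation}
where the last sum collects the purely polynomial $\gamma=\delta_\n$. Here I use that $(\Gamma_{xy})_\beta^{\delta_\n}$ is affine in $\pi^{(\n)}_{xy\beta}$, with the remaining part determined by $\pi_{\beta'}$ for $\beta'\prec\beta$ (Lemma~\ref{lem_triang}); I absorb that remaining part into the middle sum. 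The first two sums are already known. I then define $\pi^{(\n)}_{xy\beta}$ for $|\n|<|\beta|$ so that
\begin{equation}
P\coloneqq\Pi_{x\beta}-\Pi_{y\beta}-\sum_{\gamma}(\Gamma_{xy}-\mathrm{id})_\beta^\gamma\Pi_{y\gamma}
\end{equation}
coincides with $\sum_\n\pi^{(\n)}_{xy\beta}(\cdot-y)^\n$.

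To see that this is possible, apply $(\partial_{x_0}-\Delta)$. Using \eqref{pde_Pi}, and the fact that $\Pi^-_{\delta_\n}=0$ while $(\partial_{x_0}-\Delta)$ of a polynomial is a polynomial, I get that $(\partial_{x_0}-\Delta)P$ equals $(\Pi^-_x-\Gamma_{xy}\Pi^-_y)_\beta$ up to polynomial terms. By assumption this is a polynomial of degree $<|\beta|-2$, so $P$ is caloric up to a polynomial. The growth bound \eqref{est_Pi} for $\Pi_x$ and $\Pi_y$, combined with \eqref{est_Gamma} and \eqref{est_Pi} for $\gamma\prec\beta$, gives polynomial growth of degree $<\langle\beta\rangle$, where I use that $|\n|<|\beta|$ iff $|\n|<\langle\beta\rangle$. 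A Liouville argument on $\Psi_t$-convolutions, letting $t\to\infty$, then shows $P$ is a polynomial of degree $<|\beta|$. This yields \eqref{eq_recentre_Pi}.

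\textbf{Item 3.} To estimate $\pi^{(\n)}_{xy\beta}$, I test $P$ against $\Psi_t$ at $y$ with $\sqrt[4]{t}=|x-y|$ and extract coefficients via $\partial^\n$ at $y$; this is the usual equivalence of norms on finite-dimensional polynomial spaces, after scaling. Each term is bounded by $\ve^{|\beta|-\langle\beta\rangle}|x-y|^{\langle\beta\rangle-|\n|}$. For $\Pi_{x\beta t}(y)$ and $\Pi_{y\beta t}(y)$ this follows from \eqref{est_Pi} together with Remark~\ref{rem_improve_n}. For the terms $(\Gamma_{xy})_\beta^\gamma\Pi_{y\gamma t}(y)$, Hölder combined with \eqref{est_Gamma} and \eqref{est_Pi} gives the product of $\ve^{|\beta|-\langle\beta\rangle-|\gamma|+\langle\gamma\rangle}|x-y|^{\langle\beta\rangle-\langle\gamma\rangle}$ and $\ve^{|\gamma|-\langle\gamma\rangle}|x-y|^{\langle\gamma\rangle}$, and the $\ve$-exponents add up correctly. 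This proves \eqref{est_pin} at level $\beta$. Rerunning Proposition~\ref{prop_1}, now with $\pi^{(\n)}_{\beta}$ included, gives \eqref{est_Gamma} for all $\gamma$, including purely polynomial ones. Lemma~\ref{lem_def_gamma} shows that the triangularity and population conditions of \eqref{eq_pop_pi} hold.

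\textbf{Item 2.} For $\Gamma_{xx}=\mathrm{id}$: taking $y=x$ makes $P=0$, so $\pi^{(\n)}_{xx\beta}=0$. For transitivity, by multiplicativity and the inductive hypothesis the components $(\Gamma_{xy}\Gamma_{yz}-\Gamma_{xz})_\beta^\gamma$ can differ only in the new polynomial coefficients $(\cdot)_\beta^{\delta_\n}$. Applying both sides to $\Pi_z$ and using item~1 at $\beta$ and below gives
\begin{equation}
\big((\Gamma_{xy}\Gamma_{yz}-\Gamma_{xz})\Pi_z\big)_\beta=0 ,
\end{equation}
i.e.\ $\sum_\n c_\n(\cdot-z)^\n=0$, so every $c_\n$ vanishes.

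\textbf{Main obstacle.} The delicate step is the Liouville/degree argument in item~1. Because of the extra Taylor polynomials, $\Gamma$ recentres $\Pi^-$ only up to a polynomial, and I must make sure this polynomial does not spoil the degree bound $<|\beta|$. The strict vanishing order $\langle\beta\rangle\notin\NN$ and the equivalence $|\n|<|\beta|\iff|\n|<\langle\beta\rangle$ are exactly what is needed here.
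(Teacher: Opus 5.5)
Your proposal is correct and follows the paper's route. The paper likewise uses the Liouville argument of \cite[Proposition~5.3]{LOTT24}, with $\langle\cdot\rangle$ in place of $|\cdot|$ and using $\langle\beta\rangle\notin\NN$, to show that $(\Pi_x-\Gamma_{xy}P_{\widetilde T}\Pi_y)_\beta$ (exactly your $P$) is a polynomial of degree $<\langle\beta\rangle$, defines $\pi^{(\n)}_{xy\beta}$ as its coefficients around $y$, and gets items~2 and~3 from \cite[Propositions~5.4 and~4.4]{LOTT24} as you sketch. Two small precisions: $(\Gamma_{xy})^{\delta_\n}_\beta=\pi^{(\n)}_{xy\beta}$ holds exactly for $\beta\neq\delta_\n$, so there is no remaining part to absorb; and the polynomial discrepancy in $(\partial_{x_0}-\Delta)P$ comes from the components $\gamma=\delta_{\kk}+\delta_{\n_1}+\dots+\delta_{\n_{\kk}}$, where $\Pi_{y\gamma}=0$ but $\Pi^-_{y\gamma}$ is a nonzero polynomial of degree $|\gamma|-2<|\beta|-2$.
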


\begin{proof}
By the choice of $\langle\cdot\rangle$, 
the assumption yields that $(\Pi^-_x-\Gamma_{xy}\Pi^-_y)_\beta$ is a random polynomial of degree $<\langle\beta\rangle-2$.
Since also $\langle\beta\rangle\not\in\NN$ for $[\beta]\geq0$, 
we can follow the argument of \cite[Proposition~5.3]{LOTT24}, 
just replacing the homogeneity by $\langle\cdot\rangle$, 
to deduce that $(\Pi_x-\Gamma_{xy}P_{\widetilde{T}}\Pi_y)_\beta$ is a random polynomial of degree $<\langle\beta\rangle\leq|\beta|$.
Here $P_{\widetilde{T}}$ denotes the canonical projection from $\RR[\![z_k,z_\n]\!]$ to $\widetilde{T}$ defined just before Lemma~\ref{lem_sets}. 
One can therefore define $\pi^{(\n)}_{xy\beta}$ by 
\begin{equation}\label{eq_supp1_regstep5}
\sum_{|\n|<|\beta|} \pi^{(\n)}_{xy\beta} (\cdot-y)^\n 
\coloneqq (\Pi_x - \Gamma_{xy}P_{\widetilde{T}}\Pi_y)_\beta \, .
\end{equation}
Now, because $\Pi_{y\delta_\n}=(\cdot-y)^\n$ and 
$(\Gamma_{xy})_\beta^{\delta_\n}=\pi^{(\n)}_{xy\beta}$ for $\beta$ not purely polynomial
(see \eqref{e_def_gamma}), we can write the left-hand side of \eqref{eq_supp1_regstep5} in terms of 
$(\Gamma_{xy}(\mathrm{id}-P_{\widetilde{T}})\Pi_y)_\beta$, and the desired recentering of $\Pi$ follows.
We emphasize that the choice of $\pi^{(\n)}_{xy}$ via \eqref{eq_supp1_regstep5} satisfies \eqref{eq_pop_pi}. 

The second and third items of the proposition follow along the lines of \cite[Propostion~5.4]{LOTT24} 
and \cite[Proposition~4.4]{LOTT24}, respectively. 
\end{proof}

\begin{proposition}[Boundedness and continuity of $\Pi$, Step 6]\label{prop_qualitative_pi}
Let $[\beta]\geq0$ and assume that \eqref{est_Pi-_bounded} 
and \eqref{est_Pi-}
hold for $\Pi^-_{x\beta}$. 
Then \eqref{est_Pi_bounded} holds for $\Pi_{x\beta}$ as well as 
\begin{equation}\label{est_Pi_taylorremainder}
\begin{aligned}
\EE^{\frac{1}{p}} \left|
(\mathrm{id}-T_z^{\leq l})\partial^\n\Pi_{x \beta}(y) 
\right|^{ p} 
\lesssim
\ve^{\alpha-|\n|-1-l} |y-z|^{1+l}
( \ve + |x-y| +|x-z|)^{| \beta | - \alpha} \, ,
\end{aligned}
\end{equation}
for all $\n$ and $l\geq0$.
\end{proposition}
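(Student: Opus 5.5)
We want \eqref{est_Pi_bounded} and \eqref{est_Pi_taylorremainder} for $\Pi_{x\beta}$. As in Step~4 of Proposition~\ref{prop_2}, the second bound follows from the first via Lemma~\ref{lem_interpolation}. It therefore suffices to prove
\begin{equation}
\EE^{\frac1p}|\partial^\n\Pi_{x\beta}(y)|^p\lesssim \ve^{\alpha-|\n|}(\ve+|x-y|)^{|\beta|-\alpha}
\end{equation}
for all $\n$. The plan is to use the representation \eqref{eq_supp1_regInt},
\begin{equation}
\Pi_{x\beta}=\int_0^\infty ds\,(\mathrm{id}-T_x^{<|\beta|})(-\partial_{x_0}-\Delta)\Pi^-_{x\beta s},
\end{equation}
and split the $s$-integral at the scale $s=\ve^4$.

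For the small-scale part $s<\ve^4$ we write $\Pi^-_{x\beta s}=\Pi^-_{x\beta}*\Psi_s$. Moving $(-\partial_{x_0}-\Delta)\partial^\n$ onto $\Pi^-_{x\beta}$ and using \eqref{est_Pi-_bounded} with $|\n|+2$ derivatives, the integrand is bounded by $\ve^{\alpha-2-|\n|-2}(\ve+|x-y|)^{|\beta|-\alpha}$, using the moment bound \eqref{eq_momentbound_semigroup}. Integrating over $s<\ve^4$ gains a factor $\ve^4$, which gives the desired bound. The Taylor polynomial $T_x^{<|\beta|}$ is handled the same way: evaluating at $x$ gives $\ve^{\alpha-|\m|-2}\ve^{|\beta|-\alpha}|x-y|^{|\m|-|\n|}$, which is bounded by the target since $|\m|<|\beta|$.

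For the large-scale part $s>\ve^4$ we use the quantitative bound \eqref{est_Pi-n}, which holds pointwise (with $t$ replaced by $s$) since $\Pi^-_{x\beta s}(y)$ is exactly the quantity estimated there. Since $\ve\le\sqrt[4]{s}$, we have $\ve^{|\beta|-\langle\beta\rangle}(\sqrt[4]{s}+|x-y|)^{\langle\beta\rangle-\alpha}\le(\sqrt[4]{s}+|x-y|)^{|\beta|-\alpha}$. Here we use $|\beta|-\langle\beta\rangle\ge0$ and $\langle\beta\rangle-\alpha\ge0$. Consequently the integrand without the Taylor polynomial is bounded by $(\sqrt[4]{s})^{\alpha-4-|\n|}(\sqrt[4]{s}+|x-y|)^{|\beta|-\alpha}$. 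The Taylor-remainder part is handled exactly as in the integration argument of \cite[Section~3.7]{BOT}. We split into $s$ above and below $|x-y|^4$: for $\sqrt[4]{s}\ge|x-y|$ we Taylor-expand the remainder to gain $(|x-y|/\sqrt[4]{s})^{\lceil|\beta|\rceil-|\n|}$, and for $\sqrt[4]{s}<|x-y|$ we bound the two terms separately. The upshot is an integrand of the form $(\sqrt[4]{s})^{\alpha-4-|\n|}(\sqrt[4]{s}+|x-y|)^{|\beta|-\alpha}$ in the regime $\sqrt[4]{s}\lesssim|x-y|$, and a decaying power at large $s$. Integrating over $s>\ve^4$, the lower endpoint contributes $(\ve^4)^{(\alpha-|\n|)/4}=\ve^{\alpha-|\n|}$ times $(\ve+|x-y|)^{|\beta|-\alpha}$. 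This step requires $\alpha-|\n|<0$, which holds since $\alpha<0$.

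The main obstacle is the large-$s$ convergence: the Taylor remainder must gain enough decay so that the exponent exceeds the growth $(|\beta|-\alpha)$ appearing in \eqref{est_Pi-}. This is precisely where $\langle\beta\rangle\notin\NN$ and the compatibility $|\m|<|\beta|\iff|\m|<\langle\beta\rangle$ enter. I would therefore run the large-scale estimate with the sharper weight $\langle\beta\rangle$ throughout, as in the proof of \eqref{est_Pi}, and convert to $|\beta|$ only at the end using $\ve\le\sqrt[4]{s}$.
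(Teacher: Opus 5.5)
Your proposal is correct and is essentially the paper's proof: \eqref{est_Pi_taylorremainder} is reduced to \eqref{est_Pi_bounded} via Lemma~\ref{lem_interpolation}, and the integral representation is split into three ranges. These are $s\le\ve^4$ (using \eqref{est_Pi-_bounded}), an intermediate range up to $(\ve+|x-y|)^4$ (your cut at $\max(\ve,|x-y|)^4$ is equivalent), and large $s$, where the Taylor remainder together with \eqref{est_Pi-n} and $\langle\beta\rangle\notin\NN$ gives integrable decay, with $\ve^{|\beta|-\langle\beta\rangle}$ converted only after integrating, exactly as you insist in your last paragraph. The differences are cosmetic: the paper lets $(-\partial_{x_0}-\Delta)$ fall on $\Psi_s$ rather than on $\Pi^-_{x\beta}$ for $s\le\ve^4$, and your small-scale Taylor-polynomial bound should read $\ve^{|\beta|-|\m|}|x-y|^{|\m|-|\n|}$ after the $s$-integration (the extra factor $\ve^{-2}$ in your expression is a bookkeeping slip).
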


\begin{proof}
Again, \eqref{est_Pi_taylorremainder} is a simple consequence of \eqref{est_Pi_bounded}
and Lemma~\ref{lem_interpolation}, so it is only left to prove \eqref{est_Pi_bounded}.
To this end, we use the representation \eqref{eq_supp1_regInt} (which is a consequence of
Proposition~\ref{prop_integration}) in the form of 
\begin{equation}\label{eq_supp1_regStep6}
\partial^\n \Pi_{x\beta}(y) 
= \int_0^\infty dt \, (\mathrm{id}-T_x^{<|\beta|-|\n|})\partial^\n ( -\partial_{x_{0}}- \Delta) \Pi^-_{x\beta t}(y) \, .
\end{equation}
Note that it is part of the following proof that this expression is indeed well-defined. 
In the following three steps we estimate the contributions of the integral over the
regions $0\leq t\leq\ve^4$, $\ve^4\leq t\leq(\ve+|x-y|)^4$, and $ (\ve+|x-y|)^4\leq t$, respectively. 

\textbf{Step 1 \textnormal{($0\leq t\leq\ve^4$)}.}
First, we notice that
\begin{align}
\EE^\frac1p| \partial^\n ( -\partial_{x_{0}}- \Delta)  \Pi^-_{x\beta t}(y) |^p
&= \EE^\frac1p\Big| \int dz\, ( -\partial_{x_{0}}- \Delta) \Psi_t(y-z)\partial^\n\Pi^-_{x\beta}(z)\Big|^p \\
&\lesssim \int dz\, |( -\partial_{x_{0}}- \Delta) \Psi_t(y-z)| \ve^{\alpha-2-|\n|}(\ve+|x-z|)^{|\beta|-\alpha} \\
&\lesssim \ve^{\alpha-2-|\n|} (\ve+\sqrt[4]{t}+|x-y|)^{|\beta|-\alpha} (\sqrt[4]{t})^{-2} \, ,
\end{align}
where in the first inequality we used \eqref{est_Pi-_bounded} and in the second
inequality we used the moment bound \eqref{eq_momentbound_semigroup}. 
Thus, estimating the Taylor remainder in \eqref{eq_supp1_regStep6}  by the triangle inequality yields 
\begin{equation}\label{eq_supp2_regStep6}
\begin{aligned}
&\EE^\frac1p\Big|
\int_0^{\ve^4} dt \, (\mathrm{id}-T_x^{<|\beta|-|\n|})\partial^\n ( -\partial_{x_{0}}- \Delta) \Pi^-_{x\beta t}(y) \Big|^p \\
&\lesssim 
\int_0^{\ve^4} dt\, \ve^{\alpha-2-|\n|}(\ve+\sqrt[4]{t}+|x-y|)^{|\beta|-\alpha} (\sqrt[4]{t})^{-2} \\
&\,+\int_0^{\ve^4} dt\, \sum_{|\m|<|\beta|-|\n|} \ve^{\alpha-2-|\n|-|\m|}(\ve+\sqrt[4]{t})^{|\beta|-\alpha} (\sqrt[4]{t})^{-2} |x-y|^{|\m|} \, .
\end{aligned}
\end{equation}
Now, using the restriction $\sqrt[4]{t}\leq\ve$, which in particular implies
$\int_0^{\ve^4}dt\,(\sqrt[4]{t})^{-2}\lesssim\ve^2$, the right-hand side of
\eqref{eq_supp2_regStep6} is (up to a constant) bounded by 
\begin{equation}
\ve^{\alpha-|\n|} (\ve+|x-y|)^{|\beta|-\alpha} 
+ \sum_{|\m|<|\beta|-|\n|} \ve^{|\beta|-|\n|-|\m|} |x-y|^{|\m|} 
\lesssim \ve^{\alpha-|\n|} (\ve+|x-y|)^{|\beta|-\alpha} \, ,
\end{equation}
which is the estimate we desired. 

\textbf{Step 2 \textnormal{($\ve^4\leq t\leq(\ve+|x-y|)^4$)}.}
We still break up the Taylor remainder in \eqref{eq_supp1_regStep6}, but use
\eqref{est_Pi-n} to see
\begin{align}
&\EE^\frac1p\Big|
\int_{\ve^4}^{(\ve+|x-y|)^4} dt \, (\mathrm{id}-T_x^{<|\beta|-|\n|})\partial^\n ( -\partial_{x_{0}}- \Delta) \Pi^-_{x\beta t}(y) \Big|^p \\
&\lesssim 
\int_{\ve^4}^{(\ve+|x-y|)^4} dt\, 
\ve^{|\beta|-\langle\beta\rangle}
(\sqrt[4]{t})^{\alpha-4-|\n|}(\sqrt[4]{t}+|x-y|)^{\langle\beta\rangle-\alpha} \\
&\,+\int_{\ve^4}^{(\ve+|x-y|)^4} dt\, \sum_{|\m|<|\beta|-|\n|} 
\ve^{|\beta|-\langle\beta\rangle}
(\sqrt[4]{t})^{\alpha-4-|\n|-|\m|+\langle\beta\rangle-\alpha} |x-y|^{|\m|} \, .
\end{align}
Using $\sqrt[4]{t}\leq\ve+|x-y|$ and $\int_{\ve^4}^{(\ve+|x-y|)^4} dt\,(\sqrt[4]{t})^{\alpha-4-|\n|}\lesssim (\ve+|x-y|)^{\alpha-|\n|}+\ve^{\alpha-|\n|} $ this is (up to a constant) estimated by 
\begin{equation}
\ve^{|\beta|-\langle\beta\rangle}
\big((\ve+|x-y|)^{\alpha-|\n|}+\ve^{\alpha-|\n|}\big) (\ve+|x-y|)^{\langle\beta\rangle-\alpha} 
\lesssim \ve^{\alpha-|\n|} (\ve+|x-y|)^{|\beta|-\alpha} \, ,
\end{equation}
as desired. 

\textbf{Step 3 \textnormal{($(\ve+|x-y|)^4\leq t$)}.}
We use the Taylor remainder as in the proof of Lemma~\ref{lem_interpolation} and appeal
again to \eqref{est_Pi-n} to deduce
\begin{equation}\label{eq_supp01_regStep6}
\begin{aligned}
&\EE^\frac1p\Big|
\int_{(\ve+|x-y|)^4}^\infty dt \, (\mathrm{id}-T_x^{<|\beta|-|\n|})\partial^\n ( -\partial_{x_{0}}- \Delta) \Pi^-_{x\beta t}(y) \Big|^p \\
&\lesssim \int_{(\ve+|x-y|)^4}^\infty dt \sum_{\substack{|\m|\geq|\beta|-|\n|\colon\\ m_0+\dots+m_d\leq |\beta|-|\n|+1}} 
\ve^{|\beta|-\langle\beta\rangle}
(\sqrt[4]{t})^{\alpha-4-|\n|-|\m|} (\sqrt[4]{t}+|x-y|)^{\langle\beta\rangle-\alpha} |x-y|^{|\m|} \, .
\end{aligned}
\end{equation}
Using $|x-y|\leq\sqrt[4]{t}$ and 
$\int_{(\ve+|x-y|)^4}^\infty dt \, (\sqrt[4]{t})^{\langle\beta\rangle-4-|\n|-|\m|} 
\lesssim (\ve+|x-y|)^{\langle\beta\rangle-|\n|-|\m|}$ since $\langle\beta\rangle-|\n|-|\m|<0$, \eqref{eq_supp01_regStep6} is bounded by (up to a constant) 
\begin{equation}
\sum_{|\m|\geq|\beta|-|\n|\colon m_0+\dots+m_d\leq |\beta|-|\n|+1} 
\ve^{|\beta|-\langle\beta\rangle} (\ve+|x-y|)^{\langle\beta\rangle-|\n|-|\m|} |x-y|^{|\m|}
\lesssim (\ve+|x-y|)^{|\beta|-|\n|} \, ,
\end{equation}
which is again the desired estimate. 
\end{proof}

\section{Intermediate indices}\label{sec:intermediate}

In this section we consider multiindices $\beta$ with homogeneity $|\beta| = 2$, which
we call \emph{intermediate multiindices}. 
Almost all arguments from Section~\ref{sec:regular} for regular multiindices carry over directly to this setting. 
The only exception is the reconstruction argument in Proposition~\ref{prop:regrec}, 
which relies essentially on the assumption that $|\beta| > 2$.
We therefore require a separate argument for the estimate of $\Pi^-_{x\beta}$. 

First, we notice the following classification of multiindices with homogeneity $2$. 

\begin{lemma}\label{lem:classification}
If $\beta$ is populated and $|\beta|=2$, then
\begin{enumerate}
	\item either $\beta$ is purely polynomial, 
i.e.~$\beta=\delta_\n$ for some $|\n|=2$,
	\item or $\beta=\delta_k+\kk\delta_{\0}$ for some $k\geq\kk$.		
\end{enumerate}
\end{lemma}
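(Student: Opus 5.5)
The classification follows from the formula \eqref{e_def_homo} for $|\beta|$ together with the irrationality of $\alpha$. Since $s\notin\QQ$ by Assumption~\ref{ass_zeta}, also $\alpha=s-D/2+2\notin\QQ$. I would write the homogeneity as
\begin{equation}
|\beta| = \alpha\, m(\beta) + n(\beta)\,, \qquad
m(\beta)\coloneqq 1+(\kk-1)\sum_{k\geq\kk}\beta(k)-\sum_\n\beta(\n)\,, \qquad
n(\beta)\coloneqq 2\sum_{k\geq\kk}\beta(k)+\sum_\n|\n|\beta(\n)\,,
\end{equation}
where $m(\beta)\in\ZZ$ and $n(\beta)\in\NN$. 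Because $\alpha$ is irrational, $|\beta|=2$ holds if and only if $m(\beta)=0$ and $n(\beta)=2$. The proof then reduces to solving these two integer equations, with the population condition used as an additional constraint.

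I would first exploit $n(\beta)=2$. Since both sums in $n(\beta)$ are non-negative, only two cases are possible.
\begin{enumerate}
\item In the first case $\sum_{k\geq\kk}\beta(k)=0$ and $\sum_\n|\n|\beta(\n)=2$. Then $m(\beta)=0$ forces $\sum_\n\beta(\n)=1$, so $\beta=\delta_\n$ with $|\n|=2$. This is the purely polynomial alternative.
\item In the second case $\sum_{k\geq\kk}\beta(k)=1$ and $\sum_\n|\n|\beta(\n)=0$. Then $\beta=\delta_k+j\delta_\0$ for some $k\geq\kk$ and $j\in\NN$, and $m(\beta)=1+(\kk-1)-j=\kk-j$. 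Hence $m(\beta)=0$ gives $j=\kk$, that is, $\beta=\delta_k+\kk\delta_\0$.
\end{enumerate}

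Finally I would check that the population assumption is consistent with this dichotomy and excludes nothing else. A populated $\beta$ is either of the form $\delta_\n$, or of the form $\delta_\kk+\delta_{\n_1}+\dots+\delta_{\n_\kk}$, or satisfies $[\beta]\geq0$. The computation above did not use population beyond the definition of $|\beta|$, so every populated $\beta$ with $|\beta|=2$ falls into one of the two listed forms. For $\delta_\kk+\delta_{\n_1}+\dots+\delta_{\n_\kk}$ one can also verify directly that $|\beta|=2+\sum_i|\n_i|$, which equals $2$ exactly when all $\n_i=\0$; this recovers the case $k=\kk$. There is no real obstacle in this argument. The only point requiring care is to invoke $s\notin\QQ$ explicitly, since it is what separates the $\alpha$-coefficient from the integer part.
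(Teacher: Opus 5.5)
Your proof is correct, but it is organised differently from the paper's.

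The paper splits according to the three ways of being populated. For $\beta=\delta_\n$ and for $\beta=\delta_\kk+\delta_{\n_1}+\dots+\delta_{\n_\kk}$ it reads off $|\beta|$ directly; the latter gives $|\beta|=2+\sum_i|\n_i|$ and hence the case $k=\kk$. Only in the case $[\beta]\geq0$ does it use the irrationality of $\alpha$, to get $1+(\kk-1)\sum_k\beta(k)-\sum_\n\beta(\n)=0$. It then uses $[\beta]\geq0$ to force $\sum_k\beta(k)\geq1$, hence $\sum_k\beta(k)=1$ and $\beta=\delta_k+\kk\delta_\0$ with $k>\kk$.

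You instead split $|\beta|$ into its $\alpha$-part and integer part for an arbitrary multiindex and solve $m(\beta)=0$, $n(\beta)=2$ directly. The case $\sum_k\beta(k)=0$, which the paper excludes via $[\beta]\geq0$, simply produces the purely polynomial alternative in your argument.

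Your route is shorter, and it shows the population hypothesis is superfluous. Every multiindex of homogeneity $2$ has one of the two forms, and both forms are automatically populated, since $[\delta_k+\kk\delta_\0]=k-1-\kk\geq0$ for $k>\kk$. The paper's case split instead records which population type each solution comes from. In particular it shows that $[\beta]\geq0$ forces $k>\kk$, which is the situation in which the lemma is used in the intermediate reconstruction.
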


\begin{proof}
Since $ \beta$ is populated, we need to consider three cases. 
First, if $\beta=\delta_\n$ then the claim is immediate because $|\beta|=|\n|$. 
Likewise, if $\beta=\delta_{\kk}+\delta_{\n_1}+\dots+\delta_{\n_{\kk}}$ then
$|\beta|=2+|\n_1|+\dots+|\n_{\kk}|$, which implies that $ \n_{i} = \0 $ necessarily.

Next, we turn to $[\beta]\geq0$.
Since $\alpha$ is irrational, the homogeneity defined in \eqref{e_def_homo} can only be an integer provided $1+(\kk-1)\sum_k\beta(k)-\sum_\n\beta(\n)=0$, 
in particular $\sum_\n\beta(\n)\geq1$.
On the other hand, $0\leq[\beta]=\sum_{k \geqslant \kk} (k-1)\beta(k)-\sum_\n\beta(\n)$ then implies $\sum_k\beta(k)\geq1$. 
In turn, this implies (since $|\beta|=2$) that $\sum_k\beta(k)=1$ and $\sum_\n|\n|\beta(\n)=0$. 
Putting this back into $1+(\kk-1)\sum_k\beta(k)-\sum_\n\beta(\n)=0$ yields $\beta=\delta_k+\kk\delta_\0$, 
and by $[\beta]\geq0$ we must have $k > \kk$.
\end{proof}

Lemma~\ref{lem:classification} greatly simplifies our task in extending the
reconstruction argument, because $ \Pi^{-}_{ \delta_{\n}} =0$ by definition. Therefore,
we only need to provide a reconstruction argument for indices $ \beta = \delta_{k}  + \kk
\delta_{\bf 0}$:

\begin{lemma}[Intermediate reconstruction]
Assume that $[\beta]\geq0$ and $|\beta|=2$, and that $\eqref{est_Pi-}_{\beta'}$ and $\eqref{est_Pi-_bounded}_{\beta'}$ hold for all populated $\beta'\prec\beta$. 
Then $\eqref{est_Pi-}_\beta$ holds.
\end{lemma}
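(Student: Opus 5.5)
Throughout, $\beta=\delta_k+\kk\delta_\0$ with $k>\kk$, by Lemma~\ref{lem:classification}, since $\Pi^-_{x\delta_\n}=0$. The plan is to compute $\Pi^-_{x\beta}$ explicitly and then estimate it directly, distinguishing small and large scales, without any reconstruction.

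\textbf{Step 1: explicit form.} I would simplify \eqref{hierarchy} for this $\beta$. Since $|\beta|-2=0$, the Taylor polynomial $T_x^{<0}$ is empty. The counterterm sum vanishes: by Remark~\ref{rem:bphz_inductive} any nonvanishing $c^{(k')}_{\beta_0}$ needs $\sum_{k\geq\kk}\beta_0(k)\geq2$, but $\beta$ contains only one $\delta_k$. In the remaining sum the $\beta_i\neq0$ must add up to $\kk\delta_\0$. Among the $m\delta_\0$ only $\delta_\0$ is populated, because $[m\delta_\0]=-m<0$, and $\Pi_{x\delta_\0}=1$. Hence $l=\kk$ and all $\beta_i=\delta_\0$, so
\begin{equation}
\Pi^-_{x\beta}=\ve^{(\kk-k)\alpha}\tbinom{k}{\kk}W_{m,\ve}(\Pi_{x0}),\qquad m\coloneqq k-\kk\geq2 \textnormal{ even}.
\end{equation}
This is independent of $x$, because $\Pi_{x0}$ is. Since $|\beta|=2$ and $\langle\beta\rangle=2-\kappa$, the target \eqref{est_Pi-} reads $\ve^{\kappa}(\sqrt[4]{t})^{\alpha-2}(\sqrt[4]{t}+|x-y|)^{2-\kappa-\alpha}$. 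As $2-\kappa-\alpha>0$, it suffices to show
\begin{equation}
\EE^{\frac1p}|W_{m,\ve}(\Pi_{x0})_t(y)|^p\lesssim \ve^{m\alpha}\min\{1,(\ve/\sqrt[4]{t})^{\kappa}\}.
\end{equation}

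\textbf{Step 2: small scales $\sqrt[4]{t}\leq\ve$.} Here I would use \eqref{eq_est_W} (Step~1 of Proposition~\ref{prop_2}, resting on \eqref{est_Pi_bounded} for $\beta'=0$) together with the fact that $\Psi_t$ is an $L^1$-normalised kernel. This gives the bound $\ve^{m\alpha}$, which is all that is needed in this regime.

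\textbf{Step 3: large scales $\sqrt[4]{t}\geq\ve$.} This is the main step, and I would use the spectral gap inequality. First, $\EE W_{m,\ve}(\Pi_{x0}(y))=0$, by stationarity and the defining property $\EE W_m(Z)=0$ for $m\neq0$ (Section~\ref{sec_propAppell}). Second, I need an $L^p$ version of \eqref{eq_sg_L2}, which follows from the $L^2$ version by the standard argument of applying it to $|F|^{p/2}$. With both, it remains to bound $\EE^{1/p}\|\partial F/\partial\zeta\|^p_{\dot H^{-s}}$ for $F=W_{m,\ve}(\Pi_{x0})_t(y)$, expressed in terms of the rescaled noise $\xi_\ve$, whose spectral gap holds uniformly in $\ve$.

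By the Appell property the derivative is $m\,(\partial_{x_0}-\Delta)^{-*}\big(\Psi_t(y-\cdot)W_{m-1,\ve}(\Pi_{x0})\big)$. The dual $\dot H^{-s}$ norm of this is the $\dot H^{-s-2}$ norm of $g\coloneqq\Psi_t(y-\cdot)W_{m-1,\ve}(\Pi_{x0})$. I would control it via the Sobolev/HLS embedding $L^q\subset\dot H^{-s-2}$ with $1/q=1/2+(s+2)/D$, admissible since $s+2<D/2$. Minkowski's inequality and \eqref{eq_est_W} then give
\begin{equation}
\EE^{1/p}\|g\|_{L^q}^p\lesssim\ve^{(m-1)\alpha}\|\Psi_t\|_{L^q}\sim\ve^{(m-1)\alpha}(\sqrt[4]{t})^{2+s-D/2}=\ve^{(m-1)\alpha}(\sqrt[4]{t})^{\alpha}.
\end{equation}
This yields $\EE^{1/p}|F|^p\lesssim\ve^{m\alpha}(\ve/\sqrt[4]{t})^{-\alpha}$. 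Because $\kappa<-2\alpha<-\alpha$ and $\ve/\sqrt[4]{t}\leq1$, this implies the required $\ve^{m\alpha}(\ve/\sqrt[4]{t})^{\kappa}$, and combined with Step~2 gives \eqref{est_Pi-}.

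The main obstacle I expect is the bookkeeping in Step~3. It requires the $L^p$ spectral gap, the correct handling of the random weight $W_{m-1,\ve}(\Pi_{x0})$ inside the dual norm (moving the expectation inside the $L^q$ norm before using \eqref{eq_est_W}), and checking that the $\ve$-rescaling of the noise leaves the spectral gap constant unchanged. If the Sobolev route proved awkward, I would instead bound the $\dot H^{-s-2}$ norm by splitting the kernel $|\cdot|^{2s+4-D}$ at scale $\sqrt[4]{t}$, in the spirit of \eqref{eq_momentbound_semigroup}.
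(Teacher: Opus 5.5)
Your proof is correct, but it takes a different route from the paper. You and the paper start from the same identification $\Pi^-_{x\beta}=\ve^{(\kk-k)\alpha}\tbinom{k}{\kk}W_{k-\kk,\ve}(\Pi_{x0})$ with $k>\kk$.

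\textbf{The paper's route.} The paper never touches the noise. It uses the algebraic identity $\Pi^-_{x\beta}=\ve^{-2\alpha}\tbinom{k}{\kk}\tbinom{k-2}{\kk-2}^{-1}\Pi^-_{x\,\delta_{k-2}+(\kk-2)\delta_\0}$. It then checks that $\delta_{k-2}+(\kk-2)\delta_\0\prec\beta$; this is a populated singular multiindex of homogeneity $2+2\alpha$. Finally it imports the bounds for that smaller multiindex, in two regimes:
\begin{itemize}
\item $\eqref{est_Pi-}$ when $\ve\leq\sqrt[4]{t}+|x-y|$;
\item $\eqref{est_Pi-_bounded}$ when $\ve>\sqrt[4]{t}+|x-y|$.
\end{itemize}
So it uses exactly the hypotheses of the lemma and nothing else. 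The probabilistic work (spectral gap, BPHZ choice, Reconstruction III) was already done one level down. It even yields the smaller factor $(\ve/\sqrt[4]{t})^{-2\alpha}$ instead of your $(\ve/\sqrt[4]{t})^{-\alpha}$.

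\textbf{Your route.} You re-derive the bound directly for the explicit object $W_{m,\ve}(Z_\ve)$:
\begin{itemize}
\item at small scales, the moment bound $\eqref{eq_est_W}$;
\item at large scales, a first-order spectral-gap estimate built on the exact centring $\EE W_m(Z)=0$, the scale invariance of $\dot H^s$, and the parabolic Sobolev/HLS embedding $L^q\subset\dot H^{-s-2}$ (which needs $s+2<D/2$).
\end{itemize}
The resulting decay $(\ve/\sqrt[4]{t})^{-\alpha}$ suffices because $\kappa<-2\alpha<-\alpha$.

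\textbf{Comparison.} Your argument is more self-contained and shows directly why these intermediate components are harmless. The price is that it relies on inputs outside the lemma's stated hypotheses: the spectral gap inequality itself, $\eqref{est_Pi_bounded}$ for $\beta'=0$, and the centring of the Appell polynomials. All of these are available in the paper's setting, so this is a difference in bookkeeping, not a gap.
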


\begin{proof}
	By Lemma~\ref{lem:classification} 
	it suffices to only consider $\beta=\delta_k+\kk\delta_\0$ for some $k\geq\kk$,
	since for purely polynomial components $ \Pi^{-}_{\delta_{\n}} = 0 $ by definition. 
Thus, by the definition of  $\Pi^-$ via the hierarchy \eqref{hierarchy}, we have 
\begin{equation}
\Pi^-_{x\beta} 
= \ve^{(\kk-k)\alpha} \tbinom{k}{\kk} W_{k-\kk, \varepsilon}(\Pi_{x0}) \, .
\end{equation}
The case $k=\kk$ is clear, we thus focus on odd $k>\kk$ and write
\begin{equation}\label{eq_supp10_intermed}
\Pi^-_{x\beta} 
= \ve^{-2\alpha} \ve^{(\kk-(k-2))\alpha} \tbinom{k}{\kk} W_{k-2-(\kk-2), \varepsilon}(\Pi_{x0}) 
= \ve^{-2\alpha} \tbinom{k}{\kk}/\tbinom{k-2}{\kk-2} \Pi^-_{x \delta_{k-2}+(\kk-2)\delta_\0} \, .
\end{equation}
Now, because
\begin{equation}
|\delta_{k-2}+(\kk-2)\delta_\0|_\prec
= 2+2\alpha+\tfrac{D}{2}(k-\kk)
< 2+\tfrac{D}{2}(k-\kk)
= |\delta_k+\kk\delta_\0|_\prec \, ,
\end{equation}
we may appeal to the assumption in the form of
$\eqref{est_Pi-}_{\delta_{k-2}+(\kk-2)\delta_\0}$ and
$\eqref{est_Pi-_bounded}_{\delta_{k-2}+(\kk-2)\delta_\0}$ in order to
 estimate \eqref{eq_supp10_intermed}. Once more, we split the estimate into two cases. 

Whenever $\ve\leq \sqrt[4]{t}+|x-y|$, we use
$\eqref{est_Pi-}_{\delta_{k-2}+(\kk-2)\delta_\0}$  and obtain 
\begin{align}
\EE^\frac{1}{p}|\Pi^-_{x\beta t}(y)|^p 
&\lesssim \ve^{-2\alpha} \ve^{|\delta_{k-2}+(\kk-2)\delta_\0|-\langle\delta_{k-2}+(\kk-2)\delta_\0\rangle}
(\sqrt[4]{t})^{\alpha-2}(\sqrt[4]{t}+|x-y|)^{\langle\delta_{k-2}+(\kk-2)\delta_\0\rangle-\alpha} \\
&\leq \ve^{\kappa} (\sqrt[4]{t})^{\alpha-2}(\sqrt[4]{t}+|x-y|)^{|\delta_{k-2}+(\kk-2)\delta_\0|-\alpha-2\alpha-\kappa} \, ,
\end{align}
where in the second inequality we used additionally that $|\cdot|-\langle\cdot\rangle\geq0$ and that $\kappa$ is chosen small enough so that $\kappa<-2\alpha$. 
This is the desired estimate since $\kappa=|\beta|-\langle\beta\rangle$ and $|\delta_{k-2}+(\kk-2)\delta_\0|-2\alpha=|\beta|$.

For the case $\ve\geq\sqrt[4]{t}+|x-y|$ we appeal to
$\eqref{est_Pi-_bounded}_{\delta_{k-2}+(\kk-2)\delta_\0}$ which yields 
\begin{equation}
	\begin{aligned}
\EE^\frac{1}{p}|\Pi^-_{x\beta t}(y)|^p 
& \lesssim 
\int dz\, |\Psi_{t}(y-z)| 
\ve^{-2\alpha} \ve^{\alpha-2}
( \ve+ | x-z|)^{|\delta_{k-2}+(\kk-2)\delta_\0| - \alpha}
\\
&\lesssim \ve^{-2\alpha} \ve^{\alpha-2}
(\ve+\sqrt[4]{t}+|x-y|)^{|\delta_{k-2}+(\kk-2)\delta_\0|-\alpha} 
\lesssim \ve^{|\delta_{k-2}+(\kk-2)\delta_\0|-2-2\alpha} 
=1 \, ,
	\end{aligned}
\end{equation}
where in the second inequality we used \eqref{eq_momentbound_semigroup}, and in the third inequality we used additionally that $|\cdot|-\alpha\geq0$. 
This turns into the desired estimate by noting that
\begin{equation}
1
\leq\ve^{|\beta|-\langle\beta\rangle} (\sqrt[4]{t}+|x-y|)^{\langle\beta\rangle-|\beta|}
\leq\ve^{|\beta|-\langle\beta\rangle} (\sqrt[4]{t})^{\alpha-2} (\sqrt[4]{t}+|x-y|)^{\langle\beta\rangle-\alpha} \, ,
\end{equation}
where in the first inequality we used again that $|\cdot|-\langle\cdot\rangle\geq0$, 
and in the second inequality we used that $|\beta|=2$.
\end{proof}

\section{Singular indices}\label{sec:singular}

We now repeat the strategy of Section~\ref{sec:regular} for singular multiindices $\beta$, i.e.~$|\beta|<2$, satisfying $[\beta]\geq0$. 
As in Section~\ref{sec:intermediate},
we must replace the reconstruction argument of Proposition~\ref{prop:regrec}.
In the singular setting, this step is substantially more involved and requires the
construction and estimation of auxiliary quantities and the use of Malliavin calculus.

The key ingredient of this section is the spectral gap inequality \eqref{eq_sg_L2}.
We apply it in the form of 
\begin{equation}\label{eq:sg}
\EE^\frac{1}{p}|F|^p
\lesssim_p |\EE F|
+\EE^\frac{1}{p}\Big\|\frac{\partial F}{\partial\zeta}\Big\|^p_{\dot{H}^{-s}}
=|\EE F|
+ \sup_{\delta\zeta\neq0} \frac{\EE\delta F}{\EE^\frac{1}{p^*}\|\delta\zeta\|^{p^*}_{\dot{H}^s}} 
\quad\textnormal{for}\quad
p\geq2
\quad\textnormal{and}\quad
\tfrac{1}{p}+\tfrac{1}{p^*}=1 \, .
\end{equation}
For $p>2$ the above inequality follows 
by inserting (a suitable approximation of) $|F|^{p/2}$ into \eqref{eq_sg_L2}, see \cite[Proposition~5.1]{IORT23} for details, 
and the equality follows by duality. 
Here $\delta F$ is a shorthand for 
\begin{equation}\label{gateaux}
\delta F(\zeta;\delta\zeta)
= \Big( \frac{\partial F}{\partial\zeta}[\zeta],\delta\zeta\Big)
= \int_{\RR^{1+d}} dx\, \frac{\partial F}{\partial\zeta}[\zeta](x)\delta\zeta(x) \, .
\end{equation}
Throughout this section we choose $2\leq p<\infty$ such that $1<p^*\leq2$, 
and to shorten the notation we introduce 
\begin{equation}\label{eq:weight}
\w\coloneqq\EE^\frac{1}{p^*}\|\delta\zeta\|_{\dot{H}^s}^{p^*}
\quad\textnormal{for}\quad\tfrac{1}{p}+\tfrac{1}{p^*}=1 \, .
\end{equation}
Therefore, in view of \eqref{eq:sg}, we always estimate quantities of the form 
\begin{equation}
\EE^\frac{1}{q}|\delta F|^q 
\quad\textnormal{for}\quad 1\leq q < p^*\leq2\leq p \, ,
\end{equation}
which allows to appeal in the induction recursively to estimates with exponents $q'\in(q,p^*)$ or $p'>p$ when applying Hölder's inequality.
Thus, all estimates acquire a dependence on $p$ and $q$. \\

Next, we outline the general strategy for this section. 
Our goal is to apply \eqref{eq:sg} to $F=\Pi^-_{x\beta t}(y)$, see
Subsection~\ref{sec:block5} below. Hence, it is necessary to establish estimates for both
\begin{itemize}
	\item $ \EE \Pi^{-}_{ x \beta t} (y) $, 
		which relies on a suitable choice of the counterterms $c^{(k')}$ for $k'<\kk$, 
and is carried out in Subsection~\ref{sec:block3}, 
	and

	\item  $\EE^\frac{1}{q}|\delta\Pi^-_{x\beta t}(y)|^q$, which is established by
		``higher order modelledness'' of $ \delta \Pi^{-}_{x\beta}$.
\end{itemize}
Let us be more precise on the latter. 
Informally, one may think of $\delta\Pi^-_{x\beta}$ as the multilinear (in $\xi_\ve$) $\Pi^-_{x\beta}$ where one instance of $\xi_\ve$ is replaced by $\delta\zeta_\ve\coloneqq\ve^{s-D/2}\delta\zeta(\cdot/\ve^2,\cdot/\ve)$.
While $\xi_\ve$ only has (annealed) Hölder regularity $s-D/2$, the field $\delta\zeta_\ve\in\dot
H^s(\RR^{1+d})$ has $D/2$ more derivatives (although only in $L^2(\RR^{1+d})$).
Consequently, for some modelled distribution $d\Gamma_{xz}$, the Malliavin derivative $\delta\Pi^-_{x\beta}$ is modelled around  $z\in\RR^{1+d}$ by
$(d\Gamma_{xz}\Pi^-_z)_\beta$ to order $|\beta|-2+D/2$:
in an annealed $L^2(\RR^{1+d})$--sense
\begin{equation}\label{eq_informal_singRec}
	\begin{aligned}
	\big|\big( \delta\Pi^-_x - d\Gamma_{xz}\Pi^-_z\big)_{\beta t}(y+z) \big|
	\lesssim \ve^{|\beta|-\langle\beta\rangle}
(\sqrt[4]{t})^{\alpha-2} (\sqrt[4]{t}+|y|)^{\langle\beta\rangle+D/2- \alpha} \, ,
	\end{aligned}
\end{equation}
which should be compared to \eqref{est_Pi-},
see Proposition~\ref{prop_recIII} below for the precise statement.
This point of view is  carried out rigorously in Subsection~\ref{sec:block4}.
Introducing this enhanced structure and establishing \eqref{eq_informal_singRec} is the core
replacement, when comparing to the regular reconstruction argument in Proposition~\ref{prop:regrec}.
Therefore, we require another set of algebraic arguments to be carried out for the
Malliavin derivative $\delta\Gamma_{xy}$, the new 
$d\Gamma_{xy}$, and their interplay via $d\Gamma_{xy}-d\Gamma_{xz}\Gamma_{zy}$ (the latter is convenient to track the continuity of $d\Gamma_{xy}\Pi_y$ in $y$), see Subsection~\ref{sec:block1} below. 

Finally, having estimated $\Pi^-_x$ via the spectral gap inequality, we can continue as in Section~\ref{sec:regular} with integration and three-point argument for $\Pi_x$ and $\Gamma_{xy}$, see Subsection~\ref{sec:block6} below.
However, several additional steps have to be carried out in order to provide the input for the next stage of the induction. 
As for the algebraic arguments, we do a second round of integration and three-point
argument for $\delta\Pi_x$ and $\delta\Gamma_{xy}$, which is summarized in
Subsection~\ref{sec:block7} below. 
Moreover, we perform a third round of integration and three-point argument for
$\delta\Pi_x-d\Gamma_{xy}\Pi_y$ and $d\Gamma_{xy}-d\Gamma_{xz}\Gamma_{zy}$.
Additionally, a fourth round of three-point argument for $d\Gamma_{xy}$ is necessary.
The latter two are established in Subsection~\ref{sec:block9} below. 
Lastly, in Subsection~\ref{sec:block8} we establish qualitative continuity of $\Pi^-_x$, $\Pi_x$,
and $\delta\Pi_x$.

\medskip

The steps that we outlined above are summarised in the following table, borrowed from
\cite{LOTT24}, which the reader may use as a guiding aid throughout this section:

\begin{figure}[H]
		\small
		\begin{center}
			\begin{tabular}{|c|c|c|}
					\hline 
					{\bf Step} & {\bf Description} & {\bf Statement}\\  
					\hhline{|===|}
					1.~a) & Algebraic argument I  & see Proposition \ref{prop_1} \\ 
					\hline
					1.~b) -- c) & Algebraic argument II  & Proposition
					\ref{prop1bc}\\
					\hline
					1.~d) & Algebraic argument IV  & Proposition
					\ref{prop:alg4} \\
					\hline
					1.~e) & Algebraic argument III & Proposition \ref{prop:alg3}\\
					\hline
					2.~a) & Recentering of $ \Pi^{-}_{\beta}$ & see Proposition
					\ref{prop_recentre_Pi-}\\
					\hline
					2.~b) & Symmetries of $ \Pi^{-}_{\beta}$ & see Proposition
					\ref{prop_covariance_Pi-} \\
					\hline
					2.~c) & Annealed space-time average exists  & Proposition \ref{prop:lim_exists}\\
					\hline
					3.~a) -- b) & Renormalisation (BPHZ)  & Proposition \ref{prop_bphz_choice}\\
					\hline
					3.~c) & Expectation & Proposition \ref{prop:expectation}\\
					\hline
					4.~a) -- b) &  Boundedness and continuity of $
					\delta\Pi^{-}_{\beta} $  & Proposition
					\ref{prop:bdd_cont_delta_Pi-} \\
					\hline
					4.~c) & Reconstruction III & Proposition \ref{prop_recIII}\\
					\hline 
					4.~d) & Sobolev & Proposition \ref{prop_sobolev}\\
					\hline
					5 &  SG inequality & Proposition~\ref{prop_SG} \\
					\hline
					6.~a) & Integration I  & see Proposition \ref{prop_integration}\\
					\hline
					6.~b) &  Symmetries of $ \Pi_{\beta}$ &  see Proposition \ref{prop_covariance_Pi}\\
					\hline
					6.~c) -- e) & Three-point argument I & see Proposition \ref{prop_3pt}\\
					\hline
					7.~a) -- b) & Integration II & Proposition \ref{prop_intII}\\
					\hline 
					7.~c) -- d) & Three-point argument II  & Proposition \ref{prop_3ptII}\\
					\hline
					8.~a) & Boundedness and continuity of $
					\Pi^{-}_{ \beta}$   & see Proposition \ref{prop_2}\\
					\hline
					8.~b) & Boundedness and continuity of $
					\Pi_{\beta}$ & see Proposition \ref{prop_qualitative_pi}\\
					\hline
					8.~c) & Boundedness and continuity of $\delta\Pi_{ \beta}$  & Proposition \ref{prop_qualitative_deltaPi}\\
					\hline
					9.~a) --b) & Integration III  & Proposition \ref{prop_intIII}\\
					\hline
					9.~c) & Three-point argument III & Proposition
					\ref{prop_3ptIII}\\
					\hline
					9.~d) & Three-point argument IV & Proposition \ref{prop_3ptIV}\\
					\hline
			\end{tabular} 
		\end{center}
		\caption{Summary of arguments for singular multiindices.}\label{fig:singular}
\end{figure}

In order to avoid unnecessary case distinctions throughout this section, we track only a gain of modelledness by
$D/\p$ in $L^{\p}(\RR^{1+d})$, for $\p>2$ chosen sufficiently large such that
\begin{equation}\label{p_large2}
\alpha-2+\frac{D}{\p}<0 \, .
\end{equation}
On the other hand, the reconstruction argument requires choosing $\p$ sufficiently small so that
\begin{equation}\label{p_small}
\kk\alpha+\frac{D}{\p}>0 \, ,
\end{equation}
which is compatible with $\p>2$ by \eqref{varianceblowup}, 
and compatible with \eqref{p_large2} by \eqref{subcritical}.

As already mentioned in \eqref{eq_informal_singRec} we have to establish control in an $L^{\p}$--sense. 
In fact, we introduce a
localisation length-scale $ r \geqslant \varepsilon$ and perform all relevant estimates 
with respect to 
\begin{equation}
	\begin{aligned}
	\bigg(\int_{B_{r}(x)} dz \, \big| \cdot \big|^{\p}\bigg)^{1/\p}\,.
	\end{aligned}
\end{equation}
As can be seen in \eqref{est_dGamma} below, this localisation is necessary since the
established estimate diverges as $r\to\infty$.
We also refer to \cite[Section~2.12]{BOT} for a detailed discussion.

Depending on $\p$ and the current multiindex $\beta$ of the induction step, we also choose $\kappa$ from the definition \eqref{eq_bracket} of the discounted homogeneity sufficiently small such that for all populated $\gamma\preceq\beta$ it holds
$\kk\alpha+D/\p+\langle\gamma\rangle-|\gamma|+\langle\beta\rangle-|\beta|>0$, 
and such that $|\gamma|>\alpha+D/\p$ implies $\langle\gamma\rangle>\alpha+D/\p$.
This is possible by \eqref{p_small} and since there are only finitely many such $\gamma$ by Lemma~\ref{lem_ind_legal}. 
We emphasize that \eqref{p_large2} is equivalent to $2+(\kk-1)\alpha>\kk\alpha+D/\p$, so that also $2+(\kk-1)\alpha+\langle\gamma\rangle-|\gamma|+\langle\beta\rangle-|\beta|>0$ for all populated $\gamma\preceq\beta$.

As in Section~\ref{sec:regular} all necessary conditions and resulting estimates hold for all $x,y\in\RR^{1+d}$, $t\in(0,\infty)$, and $p\in[1,\infty)$, and additionally on $q\in[1,2)$ and $\p>2$. Again we drop the corresponding quantifiers for notational simplicity.
Moreover, all implicit constants only depend on $ \alpha$, $\beta$, $\kappa$, $ p$, $\p$, and $q$.

\subsection{Block 1: Algebraic arguments}\label{sec:block1}

As in the case of regular indices, we begin by estimating $ \Gamma_{\beta}^{\gamma}$ 
	in the case where
$ \gamma$ is not purely polynomial.
Indeed, 
Step~1.a) is identical to Proposition~\ref{prop_1}, and refer to the statement there. 
Next, similar estimates are established for the Malliavin derivative $ \delta \Gamma$ as
well as $ d \Gamma$.

\begin{proposition}[Algebraic argument II, Steps 1.b) and 1.c)]\label{prop1bc}
Let $[\beta]\geq0$ and assume 
that for all populated $\beta'\prec\beta$ we have $\eqref{est_pin}_{\beta'}$ and 
\begin{equation}\label{est_deltapin}
\EE^{\frac{1}{q}} \left|
\delta\pi^{(\n)}_{xy\, \beta'}\right|^{ q} 
\lesssim \ve^{|\beta'|-\langle\beta'\rangle} |x-y|^{\langle \beta' \rangle - |\n|} \w \, .
\end{equation}
Then for all \(\gamma\) populated and not purely polynomial 
\begin{equation}\label{est_deltaGamma}
\EE^{\frac{1}{q}} \left|
(\delta\Gamma_{xy})_{ \beta}^{\gamma}
\right|^{ q} 
\lesssim \ve^{|\beta|-\langle\beta\rangle-|\gamma|+\langle\gamma\rangle} |x-y|^{\langle \beta\rangle - \langle\gamma\rangle} \w \,.
\end{equation}
\end{proposition}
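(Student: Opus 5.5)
The plan is to mimic the proof of Proposition~\ref{prop_1}, using the product representation \eqref{eq_gamma_prod_rep} of $(\Gamma_{xy})_\beta^\gamma$ together with \eqref{eq_tri_supp2}. This representation writes $(\Gamma_{xy})_\beta^\gamma$, for $\gamma$ populated and not purely polynomial, as a finite combinatorial sum of products $\pi^{(\n_1)}_{xy\beta_1}\cdots\pi^{(\n_m)}_{xy\beta_m}$. The indices satisfy $\beta_1+\dots+\beta_{m+1}=\beta$ with $\beta_{m+1}=\gamma-\delta_{\n_1}-\dots-\delta_{\n_m}$, and the coefficients are deterministic and bounded in terms of $\beta$. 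Since the coefficients do not depend on $\zeta$, the Malliavin derivative acts only on the $\pi$-factors. By the Leibniz rule,
\begin{equation}
\delta\big(\pi^{(\n_1)}_{\beta_1}\cdots\pi^{(\n_m)}_{\beta_m}\big)=\sum_{j=1}^m \pi^{(\n_1)}_{\beta_1}\cdots\delta\pi^{(\n_j)}_{\beta_j}\cdots\pi^{(\n_m)}_{\beta_m}\,.
\end{equation}
The sums over $m$, $\n_i$ and $\beta_i$ are finite, as in Proposition~\ref{prop_1}.

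Next I check that every factor involves only multiindices $\beta_i\prec\beta$, so that the induction hypotheses \eqref{est_pin} and \eqref{est_deltapin} apply. By Lemma~\ref{lem_triang} Item~1, $\Gamma_\beta^\gamma$ for $\gamma$ populated and not purely polynomial depends only on $\pi^{(\n)}_{\beta'}$ with $\beta'\prec\beta$. Moreover, by \eqref{eq_pop_pi} only populated $\beta_i$ contribute.

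Each product is then estimated by H\"older's inequality in probability, with $1/q=1/q_j+\sum_{i\neq j}1/p_i$. Here $q_j\in(q,2)$ is used for the factor carrying $\delta$, and large $p_i$ are used for the remaining factors. This is admissible because \eqref{est_deltapin} is assumed for all $q<2$ and \eqref{est_pin} for all $p<\infty$; the weight $\w$ is taken with the same fixed $p^*$ throughout.

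The result is the bound $\ve^{\sum_i(|\beta_i|-\langle\beta_i\rangle)}|x-y|^{\sum_i(\langle\beta_i\rangle-|\n_i|)}\w$, which is exactly the bound of \eqref{eq_prop1_supp1} multiplied by $\w$. The additivity computation from Proposition~\ref{prop_1}, based on Lemma~\ref{lem_hom_props}, turns the exponents into $|\beta|-\langle\beta\rangle-|\gamma|+\langle\gamma\rangle$ and $\langle\beta\rangle-\langle\gamma\rangle$. This gives \eqref{est_deltaGamma}.

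The only delicate point is the bookkeeping of integrability exponents. Since $\w$ is defined with a fixed $p^*$, I must verify that the hypothesis \eqref{est_deltapin} can be used with $q_j>q$ for the same $\w$; I expect this to be consistent with the convention announced at the start of Section~\ref{sec:singular}. Everything else is a verbatim repetition of Proposition~\ref{prop_1}.
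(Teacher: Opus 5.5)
Your proposal is correct and is the paper's argument: the paper also applies the Malliavin derivative (Leibniz rule) to the product representation \eqref{eq_gamma_prod_rep} and then repeats the H\"older and additivity bookkeeping of Proposition~\ref{prop_1}, with Lemma~\ref{lem_triang} ensuring that only populated $\beta_i\prec\beta$ enter. The integrability point you flag is settled by the convention at the start of Section~\ref{sec:singular}: take the exponent $q_j$ of the differentiated factor in $(q,p^*)$ rather than $(q,2)$, so that \eqref{est_deltapin} applies with the same weight $\w$.
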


Here and in the following we have the implicit understanding that Malliavin derivatives exist whenever we write them down, i.e.~the hypothesis of Proposition~\ref{prop1bc} contains that $\pi_{xy\beta'}^{(\n)}$ is Malliavin differentiable, 
and the conclusion contains that $(\Gamma_{xy})_\beta^\gamma$ is Malliavin differentiable. 
For algebraic-, reconstruction-, and three-point arguments this conclusion is a
simple consequence of the product rule for the Malliavin derivative.
For the integration arguments, we refer to \cite[Section~7]{LOTT24} for details. 

\begin{proof}
The proof is analogous to the proof of Proposition~\ref{prop_1}, applying additionally the Malliavin derivative to the representation \eqref{eq_gamma_prod_rep}.
\end{proof}

\begin{proposition}[Algebraic argument III, Step 1.e)]\label{prop:alg3}
Let $[\beta]\geq0$ and assume that for all populated $\beta'\prec\beta$ and all populated $\gamma$ we have $\eqref{est_Gamma}_{\beta'}^\gamma$ and 
$(\Gamma_{xy}\Gamma_{yz})_{\beta'}^\gamma 
= (\Gamma_{xz})_{\beta'}^\gamma$ , 
and that for all $\n$ and $\ve\leq r$
\begin{equation}\label{est_dGamma_pp_inc}
\Big(\int_{B_r(x)} dz \, \EE^\frac{\p}{q}\big| \big(d\Gamma_{x\,y+z}-d\Gamma_{xz}\Gamma_{z\,y+z}\big)_{\beta'}^{\delta_\n}\big|^q \Big)^\frac{1}{\p}
\lesssim \ve^{|\beta'|-\langle\beta'\rangle} 
\begin{cases} 
|y|^{\alpha+D/\p-|\n|} (|y|+r)^{\langle\beta'\rangle-\alpha} \w \, , \\
(|y|+r)^{\langle\beta'\rangle-|\n|+D/\p} \w \,.
 \end{cases} 
\end{equation}
Then for all $\gamma$ populated and not purely polynomial, $\bar
n\coloneqq\max\{|\n|\colon\gamma(\n)\neq0\}$, and $\ve\leq r$
\begin{align}
&\Big(\int_{B_r(x)} dz \, \EE^\frac{\p}{q}\big| \big(d\Gamma_{x\,y+z}-d\Gamma_{xz}\Gamma_{z\,y+z}\big)_{\beta}^{\gamma}\big|^q \Big)^\frac{1}{\p} \\
&\lesssim 
\ve^{|\beta|-\langle\beta\rangle-|\gamma|+\langle\gamma\rangle} 
\begin{cases} 
|y|^{\alpha+D/\p-\bar n} (|y|+r)^{\langle\beta\rangle-\langle\gamma\rangle+\bar n-\alpha} \w \, , \\
(|y|+r)^{\langle\beta\rangle-\langle\gamma\rangle+D/\p} \w \,.
 \end{cases} \label{est_dGamma_inc}
\end{align}
\end{proposition}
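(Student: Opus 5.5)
The plan is to reduce everything to the purely polynomial components, which are controlled by the hypothesis \eqref{est_dGamma_pp_inc}, by exploiting the algebraic structure of
\begin{equation}
A_{xzy}\coloneqq d\Gamma_{xy}-d\Gamma_{xz}\Gamma_{zy} \, .
\end{equation}
\textbf{Step 1 (derivation property).} First I will show that $A_{xzy}$ is a derivation relative to $\Gamma_{xy}$, namely
\begin{equation}
A_{xzy}(\pi_1\pi_2)=(A_{xzy}\pi_1)(\Gamma_{xy}\pi_2)+(\Gamma_{xy}\pi_1)(A_{xzy}\pi_2) \, ,
\end{equation}
and that $A_{xzy}z_k=0$. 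Both follow from \eqref{e_def_dgamma} and the multiplicativity of $\Gamma_{zy}$ in \eqref{e_def_gamma}. Expanding $d\Gamma_{xz}\Gamma_{zy}(\pi_1\pi_2)$ by the Leibniz rule for $d\Gamma_{xz}$ produces the factors $\Gamma_{xz}\Gamma_{zy}\pi_i$, and these equal $\Gamma_{xy}\pi_i$ by transitivity. Since the estimate is only needed at level $\beta$, transitivity is only required for components $\beta'\prec\beta$, and that is exactly what the hypothesis $(\Gamma_{xy}\Gamma_{yz})_{\beta'}^\gamma=(\Gamma_{xz})_{\beta'}^\gamma$ provides.

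\textbf{Step 2 (product representation).} Applying Step 1 to $z^\gamma$ and taking the $\beta$-component gives
\begin{equation}
(A_{xzy})_\beta^\gamma=\sum_{\n\colon\gamma(\n)\neq0}\gamma(\n)\sum_{\beta_1+\beta_2=\beta}(A_{xzy})_{\beta_1}^{\delta_\n}\,(\Gamma_{xy})_{\beta_2}^{\gamma-\delta_\n} \, .
\end{equation}
This is the analogue of \eqref{eq_gamma_prod_rep} used in Proposition~\ref{prop_1}.
\begin{itemize}
\item The term $\beta_1=0$ vanishes, because $d\pi^{(\n)}_0=0$ by \eqref{eq_pop_dpi}, since $[0]<0$.
\item For the remaining terms, triangularity (Lemmas~\ref{lem_def_gamma}, \ref{lem_dGamma}, \ref{lem_triang}) together with Lemma~\ref{lem_ind_legal} makes the sum finite and puts $\beta_1,\beta_2\prec\beta$ whenever the summand is non-zero.
\item Population of the relevant indices should follow from Lemma~\ref{lem_sets}.
\end{itemize}
Consequently both \eqref{est_dGamma_pp_inc} and $\eqref{est_Gamma}_{\beta_2}^{\gamma-\delta_\n}$ are available for every non-vanishing summand.

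\textbf{Step 3 (estimate).} I then substitute $y\to y+z$ with $z\in B_r(x)$ and estimate each summand.
\begin{itemize}
\item The $\Gamma$-factor is estimated pointwise in $z$ by \eqref{est_Gamma}, using $|x-(y+z)|\leq|y|+r$. This gives $\ve^{|\beta_2|-\langle\beta_2\rangle-|\gamma-\delta_\n|+\langle\gamma-\delta_\n\rangle}(|y|+r)^{\langle\beta_2\rangle-\langle\gamma-\delta_\n\rangle}$, with a constant uniform in $z$.
\item I combine this with the $A$-factor by Hölder in probability, choosing exponents $q'\in(q,2)$ and $p'$ large. The $L^{\p}(B_r(x))$-norm then falls entirely on the $A$-factor, which is bounded by \eqref{est_dGamma_pp_inc}.
\item The exponents are collected using additivity of $|\cdot|-|0|$ and $\langle\cdot\rangle-|0|$ (Lemma~\ref{lem_hom_props}) together with $|\delta_\n|=\langle\delta_\n\rangle=|\n|$, exactly as in Proposition~\ref{prop_1}. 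This yields $\langle\beta_1\rangle+\langle\beta_2\rangle-\langle\gamma-\delta_\n\rangle=\langle\beta\rangle-\langle\gamma\rangle+|\n|$. The $\ve$-powers add to $\ve^{|\beta|-\langle\beta\rangle-|\gamma|+\langle\gamma\rangle}$, since $\delta_\n$ carries no $k$-components.
\item In the first case this gives $|y|^{\alpha+D/\p-|\n|}(|y|+r)^{\langle\beta\rangle-\langle\gamma\rangle+|\n|-\alpha}\w$. Because $|\n|\leq\bar n$ and $|y|\leq|y|+r$, this is bounded by the same expression with $|\n|$ replaced by $\bar n$.
\item In the second case the exponents combine directly to $(|y|+r)^{\langle\beta\rangle-\langle\gamma\rangle+D/\p}\w$.
\end{itemize}

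The main obstacle I expect is the bookkeeping in Step 2. One has to verify that every factor really lies strictly below $\beta$ in the order $\prec$, and that $\gamma-\delta_\n$ is populated whenever the $\Gamma$-factor is non-zero, so that the induction hypotheses apply. The same check must confirm that transitivity is only ever invoked for components $\beta'\prec\beta$. If the population of $\gamma-\delta_\n$ causes trouble, I would first try to use the last item of \eqref{model_pop}, which forces the $\Gamma$-factor to vanish otherwise.
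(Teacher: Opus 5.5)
\textbf{Gap in Step~2: the $\beta_1=0$ summand does not vanish.} Your overall route is the paper's: a Leibniz-type product formula for $d\Gamma_{x\,y+z}-d\Gamma_{xz}\Gamma_{z\,y+z}$, H\"older with the $L^{\p}(B_r(x))$-norm on the $d\Gamma$-factor, and additivity of $|\cdot|-|0|$ and $\langle\cdot\rangle-|0|$. Your exponent bookkeeping in Step~3 is also correct. The problem is the first bullet of Step~2, which is false: $[0]=0$, not $[0]<0$, so \eqref{eq_pop_dpi} does not force $d\pi^{(\n)}_0=0$.

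Row $0$ of $d\Gamma_{xz}$ only sees purely polynomial columns, so \eqref{choice_dpin} gives $d\pi^{(\n)}_{xz0}=\frac{1}{\n!}\partial^\n\delta\Pi_{x0}(z)$. For example, for $\n=\0$ (admissible since $\alpha+D/\p>0$) one gets $(d\Gamma_{x\,y+z}-d\Gamma_{xz}\Gamma_{z\,y+z})_0^{\delta_\0}=(\mathrm{id}-T_z^{<\alpha+D/\p})\delta\Pi_{x0}(y+z)$. This is a Taylor remainder of the Malliavin derivative of $\Pi_{x0}=Z_\ve$, and it is not zero. Hence your formula has a genuine summand with $\beta_1=0$ and $\beta_2=\beta$, and your claim that $\beta_1,\beta_2\prec\beta$ for every non-vanishing summand fails exactly there. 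In that summand the factor $(\Gamma_{x\,y+z})_\beta^{\gamma-\delta_\n}$ lives at the current level, where your hypotheses provide neither \eqref{est_Gamma} nor transitivity. In particular, your Step~1 derivation identity would need $(\Gamma_{xz}\Gamma_{z\,y+z})_\beta^{\gamma-\delta_\n}=(\Gamma_{x\,y+z})_\beta^{\gamma-\delta_\n}$, which is not assumed.

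\textbf{How to repair it.} The gap is fixable, and the missing observation is the one the paper exploits: $\gamma-\delta_\n$ still contains some $z_k$.

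The cleanest fix is to expand $\Gamma z^{\gamma-\delta_\n}=z_{k_1}\cdots z_{k_m}\prod_j\Gamma z_{\n_j}$ one step further, as the paper does. Then every $\Gamma$-factor is a purely polynomial column $(\Gamma_{x\,y+z})_{\beta_j}^{\delta_{\n_j}}$ at a level $\beta_j\prec\beta$; this is where $m\geq1$ enters, cf.~\eqref{eq_supp1_singStep1}--\eqref{eq_supp2_singStep1}. The $d\Gamma$-factor also sits strictly below $\beta$, possibly at level $0$. That level is covered by \eqref{est_dGamma_pp_inc}, since $0\prec\beta$ for $\beta\neq0$. For $\beta=0$ the left-hand side vanishes trivially, because every component of $d\Gamma z^\gamma$ contains $\delta_{k_1}$.

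Alternatively, you can keep your grouping and treat the $\beta_1=0$ term separately. Obtain transitivity at level $\beta$ in the not purely polynomial column $\gamma-\delta_\n$ by the same multiplicativity argument, which reduces it to lower levels. Obtain $\eqref{est_Gamma}_\beta^{\gamma-\delta_\n}$ from Proposition~\ref{prop_1} applied at level $\beta$; its input \eqref{est_pin} is the $\delta_\n$-column case of your hypothesis. With $\langle0\rangle=|0|=\alpha$, your Step~3 computation then goes through unchanged for this term.

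Separately, your worry about the population of $\gamma-\delta_\n$ is unfounded, since $[\gamma-\delta_\n]=[\gamma]+1\geq0$.
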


\begin{proof}
Since $\gamma$ is populated and not purely polynomial we can write
$z^\gamma=z_{k_1}\cdots z_{k_m} z_{\n_1}\cdots z_{\n_l}$
for some $k_1,\dots,k_m$ where $m\geq1$, and $\n_1,\dots,\n_l$ where $l\geq0$.
Applying iteratively \eqref{e_def_gamma} and \eqref{e_def_dgamma} yields
\begin{equation}
d\Gamma_{xz}\Gamma_{z\, y+z}z^\gamma
= z_{k_1}\cdots z_{k_m}
\sum_{i=1}^l d\Gamma_{xz}\Gamma_{z\, y+z}z_{\n_i}
\prod_{\substack{j=1\\ j\neq i}}^l
\Gamma_{xz}\Gamma_{z\, y+z}z_{\n_j} \, ,
\end{equation}
and 
\begin{equation}
d\Gamma_{x\,y+z} z^\gamma
= z_{k_1}\cdots z_{k_m}
\sum_{i=1}^l d\Gamma_{x\,y+z}z_{\n_i}
\prod_{\substack{j=1\\ j\neq i}}^l
\Gamma_{x\,y+z}z_{\n_j} \, ,
\end{equation}
and thus componentwise
\begin{align}
&(d\Gamma_{x\,y+z}-d\Gamma_{xz}\Gamma_{z\,y+z})_\beta^\gamma \\
&= \sum_{\delta_{k_1}+\dots+\delta_{k_m}+\beta_1+\dots+\beta_l=\beta}
\sum_{i=1}^l (d\Gamma_{x\,y+z}-d\Gamma_{xz}\Gamma_{z\,y+z})_{\beta_i}^{\delta_{\n_i}}
\prod_{\substack{j=1\\ j\neq i}}^l (\Gamma_{x\,y+z})_{\beta_j}^{\delta_{\n_j}} \, .
\end{align}
Here we directly used $(\Gamma_{xz}\Gamma_{z\,y+z})_{\beta_j} = (\Gamma_{x\,y+z})_{\beta_j}$, which follows by assumption since $\beta_1,\dots,\beta_l\prec\beta$ as we argue now.
From Lemma~\ref{lem_hom_props} and $m\geq1$ follows that 
\begin{equation}\label{eq_supp1_singStep1}
|\beta|-|0| 
= \sum_{i=1}^m(|\delta_{k_i}|-|0|) + \sum_{i=1}^l(|\beta_i|-|0|)
> |\beta_j|-|0| \quad \textnormal{for all }j=1,\dots,l \, .
\end{equation}
The condition $[\gamma]\geq-1$ (which holds since $\gamma$ is populated) translates into $k_1+\dots+k_m-m-l\geq-1$,
hence 
\begin{equation}\label{eq_supp2_singStep1}
[\beta]
=k_1+\dots+k_m-m+[\beta_1]+\dots+[\beta_l]
\geq k_1+\dots+k_m-m-(l-1)+[\beta_j] 
\geq [\beta_j] \quad\textnormal{for all }j=1,\dots,l \, ,
\end{equation}
where in the first inequality we used that $\beta_1,\dots,\beta_l$ are populated (see Lemma~\ref{lem_sets}) and thus $[\beta_1],\dots,[\beta_l]\geq-1$.
Altogether \eqref{eq_supp1_singStep1} and \eqref{eq_supp2_singStep1} yield the desired $|\beta|_{\prec}>|\beta_j|_\prec$ for all $j=1,\dots,l$.

Turning to the estimate \eqref{est_dGamma_inc}, 
we apply $(\int_{B_r(x)}dz \, \EE^\frac{\bar p}{q}|\cdot|^q)^\frac{1}{\bar p}$ to the above representation, 
use the triangle inequality and Hölder's inequality both
with respect to $\EE^\frac{1}{q}|\cdot|^q$.
We then use \eqref{est_Gamma} (together with $|x-y-z|\leq|y|+r$) and
\eqref{est_dGamma_pp_inc} to see that the left-hand side of \eqref{est_dGamma_inc} is up to a constant bounded by 
\begin{align}
&\sum_{i=1}^l \ve^{\sum_{j=1,j\neq i}^m (|\beta_j|-\langle\beta_j\rangle)}
(|y|+r)^{\sum_{j=1,j\neq i}^m (\langle\beta_j\rangle-|\n_j|)}
\ve^{|\beta_i|-\langle\beta_i\rangle} 
\begin{cases} 
|y|^{\alpha+D/\p-|\n_i|} (|y|+r)^{\langle\beta_i\rangle-\alpha} \w \\
(|y|+r)^{\langle\beta_i\rangle-|\n_i|+D/\p} \w 
\end{cases} \\
&\lesssim
\ve^{\sum_{j=1}^m (|\beta_j|-\langle\beta_j\rangle)}
\begin{cases} 
|y|^{\alpha+D/\p-\bar n}
(|y|+r)^{\sum_{j=1}^m (\langle\beta_j\rangle-|\n_j|)+\bar n-\alpha} \w \\
(|y|+r)^{\sum_{j=1}^m (\langle\beta_j\rangle-|\n_j|)+D/\p} \w \, ,
\end{cases} 
\end{align}
where we also used Lemma~\ref{lem_def_gamma}~(2) for $\langle\cdot\rangle$.
The desired estimate follows from additivity of $\langle\cdot\rangle-\langle0\rangle$ (see Lemma~\ref{lem_hom_props}) in the form of 
\begin{equation}
\langle\beta\rangle-\langle0\rangle 
= \sum_{i=1}^m(\langle\delta_{k_i}\rangle-\langle0\rangle) 
+ \sum_{i=1}^l(\langle\beta_i\rangle-\langle0\rangle)
= \langle\gamma\rangle-\langle0\rangle
-\sum_{i=1}^l (\langle\n_i\rangle-\langle0\rangle)
+ \sum_{i=1}^l(\langle\beta_i\rangle-\langle0\rangle)
\end{equation}
and the same identity with $\langle\cdot\rangle$ replaced by the homogeneity $|\cdot|$. 
\end{proof}

\begin{proposition}[Algebraic argument IV, Step 1.d)]\label{prop:alg4}
Let $[\beta]\geq0$ and assume that for all populated $\beta'\prec\beta$ and all 
$\gamma$ populated and not purely polynomial we have $\eqref{est_Gamma}_{\beta'}^\gamma$, 
and that for all populated $\beta'\prec\beta$ and $|\n|<\alpha+D/\p$  
\begin{equation}\label{est_dpin}
\Big(\int_{B_r(x)} dz \, \EE^\frac{\p}{q}\big| d\pi_{xz\beta'}^{(\n)}\big|^q \Big)^\frac{1}{\p}
\lesssim 
\ve^{|\beta'|-\langle\beta'\rangle}
r^{\langle\beta'\rangle-|\n|+D/\p} \w 
\quad\textnormal{for }\ve\leq r \, .
\end{equation}
Then for all $\gamma$ populated and not purely polynomial
\begin{equation}\label{est_dGamma}
\Big(\int_{B_r(x)} dz \, \EE^\frac{\p}{q}\big| \big(d\Gamma_{xz}\big)_\beta^\gamma \big|^q \Big)^\frac{1}{\p}
\lesssim 
\ve^{|\beta|-\langle\beta\rangle-|\gamma|+\langle\gamma\rangle}
r^{\langle\beta\rangle-\langle\gamma\rangle+D/\p} \w 
\quad\textnormal{for }\ve\leq r \,.
\end{equation}
\end{proposition}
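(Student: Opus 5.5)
The plan is to follow Proposition~\ref{prop:alg3}, replacing the increment $d\Gamma_{x\,y+z}-d\Gamma_{xz}\Gamma_{z\,y+z}$ by $d\Gamma_{xz}$ itself. Since $\gamma$ is populated and not purely polynomial, write $z^\gamma=z_{k_1}\cdots z_{k_m}z_{\n_1}\cdots z_{\n_l}$ with $m\geq1$. By \eqref{e_def_dgamma}, $d\Gamma_{xz}z_k=0$, so the Leibniz rule for $d\Gamma$ gives
\begin{equation}
(d\Gamma_{xz})_\beta^\gamma
=\sum_{\delta_{k_1}+\dots+\delta_{k_m}+\beta_1+\dots+\beta_l=\beta}\,
\sum_{i=1}^l d\pi^{(\n_i)}_{xz\beta_i}\prod_{j\neq i}(\Gamma_{xz})_{\beta_j}^{\delta_{\n_j}} .
\end{equation}
Here $(d\Gamma_{xz})_{\beta_i}^{\delta_{\n_i}}=d\pi^{(\n_i)}_{xz\beta_i}$ and $(\Gamma_{xz})_{\beta_j}^{\delta_{\n_j}}=\pi^{(\n_j)}_{xz\beta_j}$ for $\beta_j\neq\delta_{\n_j}$, while for $\beta_j=\delta_{\n_j}$ this factor equals $1$. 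As in the proof of Proposition~\ref{prop:alg3}, using \eqref{eq_supp1_singStep1}, \eqref{eq_supp2_singStep1} and $m\geq1$, every $\beta_j$ satisfies $\beta_j\prec\beta$. Moreover $d\pi^{(\n_i)}_{\beta_i}$ vanishes unless $\beta_i$ is populated with $[\beta_i]\geq0$ and $|\n_i|<\alpha+D/\p$, by \eqref{eq_pop_dpi}. The sum is finite by Lemma~\ref{lem_ind_legal}.

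For the estimate, apply $(\int_{B_r(x)}dz\,\EE^{\p/q}|\cdot|^q)^{1/\p}$ and the triangle inequality. Then use Hölder in probability, putting the $d\pi$ factor in $L^{q'}$ with $q<q'<p^*$ and the $\pi$ factors in high $L^{p}$.

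1. The $\pi^{(\n_j)}_{xz\beta_j}$ factors are estimated pointwise in $z\in B_r(x)$ by \eqref{est_pin}, that is, by $\ve^{|\beta_j|-\langle\beta_j\rangle}|x-z|^{\langle\beta_j\rangle-|\n_j|}$. Since $\langle\beta_j\rangle-|\n_j|\geq0$ (by \eqref{eq_pop_pi} and the choice of $\kappa$), this is at most $\ve^{|\beta_j|-\langle\beta_j\rangle}r^{\langle\beta_j\rangle-|\n_j|}$.
2. These factors are pulled out of the $z$-integral as a supremum. The remaining $d\pi$ factor is controlled by \eqref{est_dpin}, giving $\ve^{|\beta_i|-\langle\beta_i\rangle}r^{\langle\beta_i\rangle-|\n_i|+D/\p}\w$.
3. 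The exponents are collected using additivity of $\langle\cdot\rangle-\langle0\rangle$ and $|\cdot|-|0|$ (Lemma~\ref{lem_hom_props}), exactly as at the end of the proof of Proposition~\ref{prop:alg3}:
\begin{equation}
\langle\beta\rangle-\langle\gamma\rangle=\sum_{j}\big(\langle\beta_j\rangle-|\n_j|\big),
\end{equation}
and analogously for $|\cdot|$. This yields $\ve^{|\beta|-\langle\beta\rangle-|\gamma|+\langle\gamma\rangle}r^{\langle\beta\rangle-\langle\gamma\rangle+D/\p}\w$.

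The main obstacle is the bookkeeping, not any analytic difficulty. One needs to check that Hölder is compatible with the hypotheses' moment exponents: \eqref{est_dpin} must be available at $q'$ slightly larger than $q$, and the implicit constants depend on $q$, so this is allowed. One also needs that each $\pi$ factor has a nonnegative exponent of $|x-z|$, so that it can be bounded by the corresponding power of $r$. For $\beta_j=\delta_{\n_j}$ the factor is $1$ and the exponent is $0$, so that case is trivial. Otherwise $|\n_j|<|\beta_j|$ gives $|\n_j|<\langle\beta_j\rangle$ by the choice of $\kappa$. The hypothesis $\ve\leq r$ is only needed to invoke \eqref{est_dpin}.
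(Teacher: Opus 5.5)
Your proposal is correct and follows the route the paper indicates. The paper omits the details and says the proof follows that of Proposition~\ref{prop:alg3} via the representation $d\Gamma_{xz}z^\gamma=z_{k_1}\cdots z_{k_m}\sum_{i=1}^l d\pi^{(\n_i)}_{xz}\prod_{j\neq i}\Gamma_{xz}z_{\n_j}$, followed by H\"older's inequality and additivity of $|\cdot|$ and $\langle\cdot\rangle$, exactly as you do. One point to flag: bounding the factors $(\Gamma_{xz})_{\beta_j}^{\delta_{\n_j}}$ uses \eqref{est_pin} for $\beta_j\prec\beta$, i.e.\ the purely polynomial case of \eqref{est_Gamma}. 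This is not literally among the proposition's stated hypotheses, but it is part of the induction hypothesis, and the paper's argument needs it equally.
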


We omit the proof of
Proposition~\ref{prop:alg4}, since it follows the same lines as the one of
Proposition~\ref{prop:alg3} using the representation, for 
 $ \gamma$ of the form $ \delta_{k_{1}}+ \cdots \delta_{k_{m}} +
\delta_{\n_{1}}+ \cdots + \delta_{\n_{l}} $, 
\begin{equation}
	\begin{aligned}
		d \Gamma_{xz} z^{\gamma} = 
		z_{k_{1}} \cdots z_{k_{m}}
		\sum_{i =1}^{l} d \pi^{(\n_{i})}_{xz} \prod_{j \neq i} \Gamma_{xz} z_{ \n_{j}}\,.
	\end{aligned}
\end{equation}

\subsection{Block 2: Properties of \texorpdfstring{$\Pi^-$}{Pi-}}\label{sec:block2}

Once more, we turn to estimates on $ \Pi^{-}$ at the current induction
level, beginning with several preliminary properties.
We notice that the proofs of Propositions~\ref{prop_covariance_Pi-}
and~\ref{prop_recentre_Pi-} apply generally, both for singular and regular indices $
\beta$. 
Indeed, 
\begin{itemize}
\item Step 2.a) is identical to Proposition~\ref{prop_recentre_Pi-} and yields $\eqref{Pi-_recentre}_\beta$, 
	i.e.~the recentering formula for $ \Pi^{-}$ holds.
\item Step 2.b) is identical to Proposition~\ref{prop_covariance_Pi-} and yields
	$\eqref{Pi-_shift}_\beta$, $\eqref{Pi-_parity}_\beta$, and
	$\eqref{Pi-_reflection}_\beta$, i.e.~the various covariances translate from $ \Pi
	_{ \beta' }$ for $\beta'\prec\beta$ to $ \Pi^{-}_{ \beta}$.
\end{itemize}

The goal of the next block (Block 3 below) is the choice of suitable counterterms, that allow $
\Pi_{x \beta t}^{-} $ to be uniformly (in $ \varepsilon$) well-defined.
To this end, we use the fundamental theorem of calculus in the form of 
$\lim_{t\to\infty} \EE\Pi^-_{x\beta t}(x) = \EE\Pi^-_{x\beta T}(x) + \int_T^\infty dt\,\frac{d}{dt} \EE\Pi^-_{x\beta t}(x)$ 
which is justified by the following estimate:

\begin{proposition}[Annealed space-time average exists, Step
	2.c)]\label{prop:lim_exists}
	Let $[\beta]\geq0$ and $ | \beta| < 2$.
	Assume that $\eqref{Pi-_shift}_\beta$ and $\eqref{Pi-_recentre}_\beta$ hold,
that $\eqref{est_Gamma}_\beta^\gamma$ holds for all $\gamma$ populated and not purely polynomial, 
and that $\eqref{est_Pi-}_{\beta'}$ holds for all populated $\beta'\prec\beta$. Then for all $T>0$
\begin{equation}\label{lim_exists}
\int_T^\infty dt\, \Big| \frac{d}{dt}\EE \Pi^-_{x\beta t}(y) \Big|
\lesssim \ve^{|\beta|-\langle\beta\rangle} (\sqrt[4]{T})^{\alpha-2} (\sqrt[4]{T}+|x-y|)^{\langle\beta\rangle-\alpha} \, .
\end{equation}
\end{proposition}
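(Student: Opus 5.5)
Since $\Psi_t$ solves \eqref{eq_Psi} and $\Pi^-_{x\beta t}=\Pi^-_{x\beta}*\Psi_t$, we have $\frac{d}{dt}\Pi^-_{x\beta t}=-\mathcal{L}\Pi^-_{x\beta t}$ with $\mathcal{L}\coloneqq(\partial_{x_0}-\Delta)(-\partial_{x_0}-\Delta)$. By the semigroup property we write $\mathcal{L}\Pi^-_{x\beta t}=(\mathcal{L}\Psi_{t/2})*\Pi^-_{x\beta\,t/2}$. The operator $\mathcal{L}$ annihilates polynomials of degree $<4$. More importantly, $\int\mathcal{L}\Psi_{t/2}\,(\cdot)^\n=0$ for every $\n$, since $\mathcal{L}\Psi$ has vanishing Fourier transform to infinite order at the origin. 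So applying $\mathcal{L}\Psi_{t/2}$ kills all polynomials.

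\textbf{Step 1: change the base point.} We exploit stationarity. By $\eqref{Pi-_shift}_\beta$ and stationarity of $\xi_\ve$, $\EE\Pi^-_{x\beta}(z)=\EE\Pi^-_{0\beta}(z-x)$. We now insert the recentering $\eqref{Pi-_recentre}_\beta$, which gives $\Pi^-_{x\beta}=(\Gamma_{xz}\Pi^-_z)_\beta+\text{polynomial}$ of degree $<|\beta|-2<0$. Since $|\beta|<2$, the Taylor polynomials $T^{<|\beta|-2}$ are empty for $\beta$. For the components $\gamma$ appearing in $\Gamma_{xz}$ the polynomial parts are also killed by the convolution with $\mathcal{L}\Psi$. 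Hence
\begin{equation}
\tfrac{d}{dt}\Pi^-_{x\beta t}(y)=-\int dz\,\mathcal{L}\Psi_{t/2}(y-z)\Big(\Pi^-_{z\beta\,t/2}(z)+\sum_{\gamma\neq\beta}(\Gamma_{xz})_\beta^\gamma\Pi^-_{z\gamma\,t/2}(z)\Big),
\end{equation}
up to the polynomial terms, which vanish after integration against $\mathcal{L}\Psi_{t/2}$. We use the variant where the base point is moved to the convolution variable $z$, i.e.\ we evaluate $\Pi^-_{x\beta\,t/2}(z)=(\Gamma_{xz}\Pi^-_z)_{\beta\,t/2}(z)$ modulo polynomials.

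\textbf{Step 2: the diagonal term and the remainder.} For the diagonal term $\gamma=\beta$ we take expectations. Stationarity gives that $\EE\Pi^-_{z\beta\,t/2}(z)$ is independent of $z$, and $\int\mathcal{L}\Psi_{t/2}=0$, so this term drops out. The remaining terms involve only $\gamma\prec\beta$ by Lemma~\ref{lem_def_gamma}~(3); by Lemma~\ref{lem_ind_legal} there are finitely many of them, and they satisfy $[\gamma]\ge0$ since $\Pi^-\in\widetilde T$. For these we use H\"older's inequality together with $\eqref{est_Gamma}_\beta^\gamma$ and $\eqref{est_Pi-}_\gamma$ at the diagonal. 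This yields
\begin{equation}
\Big|\tfrac{d}{dt}\EE\Pi^-_{x\beta t}(y)\Big|\lesssim\sum_{\gamma\prec\beta}\int dz\,|\mathcal{L}\Psi_{t/2}(y-z)|\,\ve^{|\beta|-\langle\beta\rangle-|\gamma|+\langle\gamma\rangle}|x-z|^{\langle\beta\rangle-\langle\gamma\rangle}\ve^{|\gamma|-\langle\gamma\rangle}(\sqrt[4]{t})^{\langle\gamma\rangle-2}.
\end{equation}
The moment bound \eqref{eq_momentbound_semigroup} turns this into $\ve^{|\beta|-\langle\beta\rangle}(\sqrt[4]{t})^{-4}\sum_\gamma(\sqrt[4]{t})^{\langle\gamma\rangle-2}(\sqrt[4]{t}+|x-y|)^{\langle\beta\rangle-\langle\gamma\rangle}$. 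Since $\langle\gamma\rangle-\alpha\ge0$, this is bounded by $\ve^{|\beta|-\langle\beta\rangle}(\sqrt[4]{t})^{\alpha-6}(\sqrt[4]{t}+|x-y|)^{\langle\beta\rangle-\alpha}$.

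\textbf{Step 3: integrate in $t$.} The integrand behaves like $t^{(\alpha-6)/4}$ times $(\sqrt[4]{t}+|x-y|)^{\langle\beta\rangle-\alpha}$. For $t$ large the total exponent is $(\langle\beta\rangle-6)/4<-1$, because $\langle\beta\rangle<2$. Splitting the integral at $t=|x-y|^4$ (when $T<|x-y|^4$) therefore gives $\int_T^\infty\lesssim\ve^{|\beta|-\langle\beta\rangle}(\sqrt[4]{T})^{\alpha-2}(\sqrt[4]{T}+|x-y|)^{\langle\beta\rangle-\alpha}$, as claimed.

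\textbf{Main obstacle.} The delicate step is the justification in Step 1 that the polynomial corrections in \eqref{Pi-_recentre} are annihilated. This requires that they are genuinely polynomial in the active variable: the Taylor terms $T^{<|\gamma|-2}_z$ are polynomials in $\cdot-z$, but the base point $z$ is also integrated over. I would handle this by working at a fixed auxiliary base point rather than at $z$, and then using translation invariance, since $\EE\big(\Gamma_{xw}\Pi^-_w\big)_{\beta\,t/2}(w)$ is expressible through stationarity. Alternatively, I would note that at $|\beta|<2$ these corrections are present only for $\gamma$ with $|\gamma|>2$; for such $\gamma$, Proposition~\ref{prop:regrec} already covers $\Pi^-_{x\gamma}$ including its Taylor part. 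I expect this bookkeeping, rather than the estimates themselves, to be where care is needed.
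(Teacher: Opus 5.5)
Your argument is the one the paper uses, via its reference to \cite[Proposition~4.6]{LOTT24}. You split the semigroup as $\frac{d}{dt}\Pi^-_{x\beta t}=-(\mathcal{L}\Psi_{t/2})*\Pi^-_{x\beta\,t/2}$ and recentre at the convolution variable $z$. The diagonal term is removed by stationarity together with $\int\mathcal{L}\Psi_{t/2}=0$. The terms with $\gamma\prec\beta$ are estimated by H\"older with $\eqref{est_Gamma}_\beta^\gamma$ and $\eqref{est_Pi-}_\gamma$ at the diagonal, and then you integrate in $t$. The estimates in your Steps~2 and~3 are correct.

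Your handling of the polynomial corrections in \eqref{Pi-_recentre} is wrong, although the errors are harmless here.

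First, $\mathcal{L}\Psi_t$ does not have vanishing moments of all orders. Since $\mathcal{L}=-\partial_{x_0}^2+\Delta^2$ is self-adjoint and $\int\Psi_t=1$, one gets $\int dz\,\mathcal{L}\Psi_t(z)\,z_0^2=\int dz\,\Psi_t(z)\,\mathcal{L}z_0^2=-2\neq0$. Its Fourier transform vanishes only to parabolic order $4$ at the origin. Moreover, as you note yourself, the coefficients of such polynomials depend on the base point $z$, which is being integrated. So even polynomials of degree $<4$ would not be annihilated after evaluation at $z$.

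Second, your fallback claim that corrections arise only from $\gamma$ with $|\gamma|>2$ is also false. By Lemma~\ref{lem_def_gamma}~(2), $(\Gamma_{xz})_\beta^\gamma\neq0$ forces $\gamma=\beta$ or $|\gamma|<|\beta|$, hence $|\gamma|\leq|\beta|<2$. Consequently every Taylor operator $T^{<|\beta|-2}$ and $T^{<|\gamma|-2}$ appearing in \eqref{Pi-_recentre} is empty. For $|\beta|<2$ the recentring is therefore exact, $\Pi^-_{x\beta}=(\Gamma_{xz}\Pi^-_z)_\beta$; the paper uses the same fact in the proof of Proposition~\ref{prop_sobolev}.

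So your ``main obstacle'' does not exist. Replace the Fourier-moment justification in Step~1 by this one-line triangularity observation and the proof is complete.
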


The proof to Proposition~\ref{prop:lim_exists} is identical to that of
\cite[Proposition~4.6]{LOTT24}.

\subsection{Block 3: Choice of the counterterm}\label{sec:block3}

As the terminology suggests, singular multiindices are less well-behaved than regular ones and require the renormalisation characteristic of the underlying singular SPDE. The following proposition establishes the existence of the corresponding counterterms:

\begin{proposition}[BPHZ renormalisation, Steps 3.a) and 3.b)]\label{prop_bphz_choice}
Assume that $[\beta]\geq0$ and $|\beta|<2$, that the assumption of Proposition~\ref{prop_2} holds, 
and that $\eqref{Pi-_shift}_\beta$, $\eqref{Pi-_parity}_\beta$, $\eqref{Pi-_reflection}_\beta$, and $\eqref{lim_exists}_\beta$ 
hold. 
Then there exists $c^{(\beta(\0))}_{\beta-\beta(\0)\delta_\0}\in\RR$ such that 
\begin{equation}\label{eq_bphz}
\lim_{t\to\infty} \EE \Pi^-_{x\beta t}(x)=0.
\end{equation}
For this choice, \eqref{est_d_bounded} also holds (with
$\beta'=\beta-\beta(\0)\delta_\0$ and $m=\beta(\0)$).
\end{proposition}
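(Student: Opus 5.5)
Write $\beta=\beta'+m\delta_\0$ with $\beta'\coloneqq\beta-\beta(\0)\delta_\0$ and $m\coloneqq\beta(\0)$. The plan is to exploit that the counterterm $c^{(m)}_{\beta'}$ enters $\Pi^-_{x\beta}$ affinely and with a deterministic, nonvanishing coefficient.

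\textbf{Isolating the counterterm.} In the hierarchy \eqref{hierarchy}, the only summand containing $c^{(m)}_{\beta'}$ is the one with $k=m$, $l=0$, $\beta_0=\beta'$. Indeed, a summand with $\beta_0=\beta'$ and $l\ge1$ would need $\beta_1+\dots+\beta_l=m\delta_\0$ with all $\beta_i\neq0$. Such $\beta_i$ are of the form $j\delta_\0$, which are not populated, so $\Pi_{x\beta_i}=0$. Lemma~\ref{lem_dep_pi}~(2) shows that all other counterterms appearing in $\Pi^-_{x\beta}+c^{(m)}_{\beta'}W_{m,\ve}(\Pi_{x0})$ carry strictly smaller order, so they were fixed in earlier induction steps (Remark~\ref{rem:bphz_inductive}). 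Hence
\begin{equation}
\Pi^-_{x\beta}=R_{x\beta}-c^{(m)}_{\beta'}\,W_{m,\ve}(\Pi_{x0}),
\end{equation}
where $R_{x\beta}$ is already determined. If $m\ge1$ we have $\EE[W_{m,\ve}(\Pi_{x0})(y)]=0$, since $\Pi_{x0}(y)$ has the law of $\ve^{\alpha}Z$ by stationarity and scaling, and $W_m$ is centred under the law of $Z$. That would make the counterterm useless. So I expect the relevant case to be $m=0$, where $W_{0,\ve}=1$ and $\Pi^-_{x\beta}=R_{x\beta}-c^{(0)}_{\beta'}$. I do need to recheck this convention. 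The statement indexes $c^{(\beta(\0))}$, and for $m\ge1$ the recentering in the definition of $W_{k,\ve}$ plays the role of the counterterm. Both situations reduce to the same argument, because $\EE\Pi^-_{x\beta t}(x)$ is affine in $c^{(m)}_{\beta'}$ with a coefficient that is either $-1$ or can be arranged.

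\textbf{Existence of the limit and the choice.} By \eqref{lim_exists}, $L\coloneqq\lim_{t\to\infty}\EE R_{x\beta t}(x)$ exists and is finite. By \eqref{Pi-_shift} and stationarity of $\xi_\ve$, $L$ does not depend on $x$. We then set $c^{(m)}_{\beta'}\coloneqq L$, which gives \eqref{eq_bphz}. This value is deterministic and unique. The symmetries \eqref{Pi-_parity} and \eqref{Pi-_reflection} show that $L=0$ unless $[\beta]$ is odd and $\sum_\n|\n|\beta(\n)$ is even. This is consistent with the support conditions on $c$ stated in Theorem~\ref{prop_main_reg}, together with $\sum_\n\beta'(\n)=0$ from Remark~\ref{rem:bphz_inductive}.

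\textbf{Bound \eqref{est_d_bounded}.} This is the main step. We write
\begin{equation}
L=\EE\Pi^-_{x\beta T}(x)+c^{(m)}_{\beta'}\cdot(\text{coefficient})+\int_T^\infty dt\,\tfrac{d}{dt}\EE R_{x\beta t}(x),
\end{equation}
and choose $T=\ve^4$. Estimating $R_{x\beta}$ through Proposition~\ref{prop_2} applied to its constituents gives
\begin{equation}
|\EE R_{x\beta T}(x)|\lesssim \ve^{\alpha-2}(\ve+\sqrt[4]{T})^{|\beta|-\alpha}\lesssim\ve^{|\beta|-2}.
\end{equation}
This uses \eqref{est_Pi-_bounded}, for which the current counterterm is not needed, together with \eqref{eq_momentbound_semigroup}. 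The tail is controlled by \eqref{lim_exists} at $T=\ve^4$, $x=y$:
\begin{equation}
\int_{\ve^4}^\infty dt\,\Big|\tfrac{d}{dt}\EE R_{x\beta t}(x)\Big|\lesssim\ve^{|\beta|-\langle\beta\rangle}\ve^{\langle\beta\rangle-2}=\ve^{|\beta|-2}.
\end{equation}
Here the derivative only sees $R$, because the constant counterterm has vanishing time derivative. Combining the two bounds gives $|c^{(m)}_{\beta'}|\lesssim\ve^{|\beta|-2}=\ve^{|\beta'|-m\alpha-2}$, which is exactly \eqref{est_d_bounded}. The delicate point I anticipate is checking that applying \eqref{lim_exists} to $R$ rather than to $\Pi^-$ is legitimate. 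This holds for the two reasons already used: the counterterm is constant in $t$, and $\EE W_{m,\ve}=0$ for $m\ge1$.
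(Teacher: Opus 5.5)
There is a genuine gap in the step where you isolate the counterterm, and it carries over into the choice $c^{(m)}_{\beta'}\coloneqq L$.

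\textbf{How the counterterm actually enters.} In \eqref{hierarchy}, a summand with $l=0$ forces $\beta_0=\beta=\beta'+m\delta_\0$. For $m\geq1$ this $\beta_0$ has a polynomial component, so $c^{(m)}_{\beta_0}=0$. The term $-c^{(m)}_{\beta'}W_{m,\ve}(\Pi_{x0})$ you write down therefore belongs to $\Pi^-_{x\beta'}$, not to $\Pi^-_{x\beta}$. The summands with $k=m$ and $\beta_0=\beta'$ require $\beta_1+\dots+\beta_l=m\delta_\0$ with all $\beta_i\neq0$. Your claim that every $j\delta_\0$ is unpopulated is false for $j=1$: $\delta_\0$ is purely polynomial, and $\Pi_{x\delta_\0}=1$ by \eqref{model_poly}. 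Only $j\geq2$ gives $\Pi_{x\,j\delta_\0}=0$. So the unique surviving term is $l=m$, $\beta_1=\dots=\beta_m=\delta_\0$, which contributes $-\tbinom{m}{m}c^{(m)}_{\beta'}W_{0,\ve}(\Pi_{x0})=-c^{(m)}_{\beta'}$. Hence $\Pi^-_{x\beta}=\widetilde\Pi^-_{x\beta}-c^{(m)}_{\beta'}$, with a constant coefficient $-1$. This is exactly the combination $\Pi^-_{x\,\beta'+m\delta_\0}+c^{(m)}_{\beta'}$ in Lemma~\ref{lem_dep_pi}~(2); compare $\Pi^-_{x\,2\delta_k+\delta_\0}$ with $\Pi^-_{x\,2\delta_k}$ in Example~\ref{ex:hierarchy}.

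\textbf{Consequences for your case split.} Your conclusion that the counterterm is useless for $m\geq1$ and that the relevant case is $m=0$ is inverted. There is no $c^{(0)}$: only odd $\beta(\0)$ with $0<\beta(\0)<\kk$ carry a counterterm. For $\beta(\0)=0$, the parity covariance \eqref{Pi-_parity} already forces the mean to vanish. The case that needs renormalisation is $\beta(\0)$ odd, and there the coefficient is exactly $-1$. The hedge ``either $-1$ or can be arranged'' does not replace this computation. Under your own decomposition, setting $c^{(m)}_{\beta'}\coloneqq L$ would not change $\EE\Pi^-_{x\beta t}(x)$ at all.

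\textbf{A second, smaller gap.} You must also show that the limit vanishes by itself for those $\beta$ where no counterterm is available. These are the cases where $\beta'$ has polynomial components (then $c^{(m)}_{\beta'}=0$ by Remark~\ref{rem:bphz_inductive}), where $\beta(\0)$ is even, or where $\beta(\0)\geq\kk$. Your symmetry remark gives $L=0$ unless $[\beta]$ is odd and $\sum_\n|\n|\beta(\n)$ is even. What closes the case analysis is the hypothesis $|\beta|<2$, which you never use:
\begin{itemize}
\item By \eqref{eq_supp1_lem_homProp}, $|\beta|<2$ gives $\sum_\n|\n|\beta(\n)\leq1$, so reflection leaves only $\sum_\n|\n|\beta(\n)=0$.
\item Then, for $\beta\neq0$, $2>|\beta|\geq2+(\kk-\beta(\0))\alpha$, so $\beta(\0)<\kk$.
\item Since all $k$ are odd, $[\beta]$ odd is then equivalent to $\beta(\0)$ odd.
\end{itemize}

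\textbf{What is correct.} Once $\Pi^-_{x\beta}=\widetilde\Pi^-_{x\beta}-c^{(m)}_{\beta'}$ is established, your derivation of \eqref{est_d_bounded} coincides with the paper's. That is: bound $\EE\widetilde\Pi^-_{x\beta T}(x)$ as in Proposition~\ref{prop_2}, control the tail by \eqref{lim_exists} using that a constant has zero $t$-derivative, and take $T=\ve^4$.
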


Before we proceed to the proof, and the choice\footnote{
	The choice of $ c$ is dictated since we demand the estimate \eqref{est_Pi-} of $\Pi^-_{x\beta t}(y)$ to hold for $t\in(0,\infty)$.
For $|\beta|<2$, \eqref{est_Pi-} implies \eqref{eq_bphz}, which in turn determines the counterterm $c$.} of $c$, we recall that $ c^{(k')} \in
\RR[\![ z_{k}]\!]$ does not contain any polynomial components.  
Since the choice of $c$ reflects the cancellation of expected values in the hierarchy,
we show that the symmetry in law of $ \xi$ yields that $ c^{(k')}_{ \beta}$ first
appears (and is defined) in the hierarchy representation of $ \Pi^{-}_{x \beta + k'
\delta_{ \bf{0}}}$, provided $ \beta + k' \delta_{\bf { 0}}$ is
singular.
Lastly, we notice that $ c^{(k')}_{\delta_{k }}= 0 $ for every $ k  $, since these
counterterms are already included in the definition of the $ W_{\bullet ,
	\varepsilon}$--polynomials:
\begin{equation}\label{eq_counterterm_deltak}
	\begin{aligned}
		\Pi^{-}_{x \delta_{k}+ k' \delta_{\bf 0}}
		= 
		 \ve^{(\kk-k)\alpha} \tbinom{k}{k'}W_{k-k',\ve}(\Pi_{x0}) 
		 -
		  \sum_{k' < k''<\kk} 
		  \tbinom{k''}{k'} c^{(k'')}_{\delta_{k }} W_{k''-k',\ve}(\Pi_{x0})
		  - c^{(k')}_{\delta_{k }}\,.	
	  \end{aligned}
\end{equation}
Hence, using that 
\begin{equation}
	\begin{aligned}
		\EE \int dz \  \Psi_{t} (x-z) W_{k - k', \varepsilon}( \Pi_{x0})(z)
		= \EE W_{k - k', \varepsilon}( \Pi_{x0})=0\,,
	\end{aligned}
\end{equation}
which is a consequence of the shift invariance \eqref{Pi-_shift},
centredness of the $ W_{\bullet ,
	\varepsilon}$--polynomials (see Lemma~\ref{lem_zeromean}),
	and the fact that $ \Pi_{x0}$ is independent of $ x$, \eqref{eq_bphz} is already satisfied with $ c^{(k')}_{\delta_{k }}= 0 $. 

\begin{proof}
First, we separate from the hierarchy \eqref{hierarchy} the term we want to
determine, and define 
\begin{equation}
\widetilde\Pi^-_{x\beta}\coloneqq \Pi^-_{x\beta} + c^{(\beta(\0))}_{\beta-\beta(\0)\delta_\0} \, ,
\end{equation}
with the understanding that $c^{(\beta(\0))}_{\beta-\beta(\0)\delta_\0}$ vanishes unless $\beta(\0)$ is odd and $0<\beta(\0)<\kk$.
Using Lemma~\ref{lem_dep_pi} 2., $\widetilde\Pi^-_{x\beta}$ only depends on
$\Pi_{x\beta'}$ for $\beta'\prec\beta$ and on $c^{(k')}_{\beta'}$ for
$\beta'+k'\delta_\0\prec\beta$. Hence, all the counterterms appearing int $
\widetilde{\Pi}_{x \beta}^{-}$ have been chosen
already at this stage.
Consequently, we can follow the same steps in the proof of \eqref{est_Pi-_bounded} in Proposition~\ref{prop_2} to see that 
\begin{equation}\label{est_Pi-tilde_bounded}
\begin{aligned}
\EE^{\frac{1}{p}} \left| 
\partial^\n \widetilde\Pi^-_{x\beta}(y) 
\right|^{ p} 
\lesssim 
\ve^{\alpha-2-|\n|}
( \ve+ | x-y|)^{| \beta| - \alpha} \, .
\end{aligned}
\end{equation}

Next, we turn to the choice of $c$. 
From \eqref{eq_supp1_lem_homProp} we read off that any $\beta$ satisfies either $\sum_\n\beta(\n)=0$ or $|\beta|\geq\sum_\n|\n|\beta(\n)$.
In particular, $|\beta|<2$ implies that $\sum_\n|\n|\beta(\n)\leq1$.
For $\beta$ with $\sum_{\n}|\n|\beta(\n)=1$ we deduce 
\begin{equation}
	\begin{aligned}
		\EE\Pi^-_{x\beta t}(x)
		&=
		\int dz \Psi_{t} (x-z) \EE\Pi^-_{x\beta }(z)
		=
		-
		\int dz \Psi_{t} (x-z) \EE\Pi^-_{Rx\beta }(Rz)\\
		&=
		-
		\int dz \Psi_{t} (Rx-z) \EE\Pi^-_{Rx\beta }(z)
		= - \EE\Pi^-_{Rx\beta t}(Rx)= -\EE\Pi^-_{x\beta t}(x)\,,
	\end{aligned}
\end{equation}
which implies $\EE\Pi^-_{x\beta t}(x)=0$. 
Here we used, \eqref{Pi-_reflection}, the
invariance under spatial reflections of $\Psi_t$ defined in \eqref{eq_Psi}, the
invariance of the
law of $\xi$
under spatial shifts and reflection, and \eqref{Pi-_shift}.

We turn to $\beta$ with $\sum_\n|\n|\beta(\n)=0$. 
Here we first note that $|\beta|<2$ and $\beta$ not purely polynomial imply by \eqref{eq_supp1_lem_homProp} that 
$2>\alpha+2+(\kk-1)\alpha-\alpha\beta(\0)$, and hence $\beta(\0)<\kk$.
Furthermore, because $\beta$ is a multiindex over odd $k$ and $\sum_\n|\n|\beta(\n)=0$ we have $(-1)^{1+[\beta]}=(-1)^{1-\beta(\0)}$, so that \eqref{Pi-_parity} implies by the invariance of the law of $\xi$ under sign change that $\EE\Pi^-_{x\beta t}(x)=0$ for any choice of $c$ unless $\beta(\0)$ is odd.

Finally, we are only left with $|\beta|<2$ that satisfy $\sum|\n|\beta(\n)=0$ with odd
$\beta(\0)< \kk $.
For such $\beta$ we have $c^{(\beta(\0))}_{\beta-\beta(\0)\delta_\0}$ at our disposal,
which we now choose:
\begin{equation}
c^{(\beta(\0))}_{\beta-\beta(\0)\delta_\0}
\coloneqq \lim_{t\to\infty} \EE\widetilde\Pi^-_{x\beta t}(x)
= \EE\widetilde\Pi^-_{x\beta T}(x)
+ \int_T^\infty dt\, \frac{d}{dt}\EE\widetilde\Pi^-_{x\beta t}(x) \, , 
\end{equation}
where the second equality holds for all $T>0$.
Note that the integral on the right-hand side is well-defined by \eqref{lim_exists}.
Moreover, we emphasize that $c^{(\beta(\0))}_{\beta-\beta(\0)\delta_\0}$ is independent of $x$ by \eqref{Pi-_shift} and the invariance of the law of $\xi$ under translations, and is indeed a constant as desired. 
Furthermore, \eqref{eq_bphz} holds. 

It remains to establish \eqref{est_d_bounded}. 
First, we notice that \eqref{est_Pi-tilde_bounded} and \eqref{eq_momentbound_semigroup} imply
\begin{equation}
|\EE\widetilde\Pi^-_{x\beta T}(x)|
\lesssim \ve^{\alpha-2} (\ve+\sqrt[4]{T})^{|\beta|-\alpha} \, .
\end{equation}
Combining the above upper bound with \eqref{lim_exists}, we see that the choice $T=\ve^4$
yields \eqref{est_d_bounded}. 
\end{proof}

Having chosen the counterterms appropriately, we can proceed with estimating the expectation of $\Pi^-_{x\beta}$ as is required for the application of the spectral gap inequality. 

\begin{proposition}[Expectation, Step 3.c)]\label{prop:expectation}
Let $[\beta]\geq0$ and assume that $\eqref{Pi-_recentre}_\beta$ and $\eqref{lim_exists}_\beta$ hold,
that $\eqref{est_Gamma}_\beta^\gamma$ holds for all $\gamma$ populated and not purely polynomial, 
and that $\eqref{est_Pi-}_{\beta'}$ holds for all populated $\beta'\prec\beta$. 
Then 
\begin{equation}\label{est_expectation}
|\EE \Pi^-_{x\beta t}(y)| 
\lesssim \ve^{|\beta|-\langle\beta\rangle} (\sqrt[4]{t})^{\alpha-2} (\sqrt[4]{t}+|x-y|)^{\langle\beta\rangle-\alpha} \, .
\end{equation}
\end{proposition}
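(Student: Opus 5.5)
The plan follows the argument of \cite[Proposition~4.7]{LOTT24}, adapted to the polynomial defect in \eqref{Pi-_recentre}. We split $|\EE\Pi^-_{x\beta t}(y)|$ into a ``diagonal'' part, controlled by the space-time average, and a recentering part, controlled by lower multiindices. The first step is to reduce to the diagonal $y=x$. Using $\eqref{Pi-_recentre}_\beta$ with the base points $x$ and $y$, we write
\begin{equation}
\Pi^-_{x\beta}=\Pi^-_{y\beta}+\big((\Gamma_{xy}-\mathrm{id})\Pi^-_y\big)_\beta+P_{xy\beta},
\end{equation}
where $P_{xy\beta}$ denotes the polynomial of degree $<|\beta|-2$ from Proposition~\ref{prop_recentre_Pi-}. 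For singular $\beta$ we have $|\beta|<2$, so the Taylor polynomials $T^{<|\beta|-2}$ are empty. Moreover every $\gamma$ with $(\Gamma_{xy})_\beta^\gamma\neq0$ has $|\gamma|\leq|\beta|<2$, so $T^{<|\gamma|-2}$ vanishes as well. Hence $P_{xy\beta}=0$ and $\Gamma_{xy}$ recentres $\Pi^-$ exactly.

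The middle term is handled exactly as in Step~2a of Proposition~\ref{prop:regrec}. We apply H\"older's inequality, use $\eqref{est_Gamma}_\beta^\gamma$ (recall $\Pi^-_y\in\widetilde T$, so $\gamma$ is not purely polynomial), and use $\eqref{est_Pi-}_\gamma$ for $\gamma\prec\beta$; only finitely many $\gamma$ occur by Lemmas~\ref{lem_def_gamma} and~\ref{lem_ind_legal}. This gives the bound
\begin{equation}
\ve^{|\beta|-\langle\beta\rangle}(\sqrt[4]{t})^{\alpha-2}(\sqrt[4]{t}+|x-y|)^{\langle\beta\rangle-\alpha}
\end{equation}
in $L^1(\mathbb P)$. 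It therefore suffices to bound $|\EE\Pi^-_{y\beta t}(y)|$. By translation invariance of the law together with $\eqref{Pi-_shift}_\beta$, this quantity does not depend on $y$; write it as $\EE\Pi^-_{x\beta t}(x)$.

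The next step estimates the diagonal term via the fundamental theorem of calculus:
\begin{equation}
\EE\Pi^-_{x\beta t}(x)=\lim_{T\to\infty}\EE\Pi^-_{x\beta T}(x)-\int_t^\infty ds\,\tfrac{d}{ds}\EE\Pi^-_{x\beta s}(x).
\end{equation}
The limit vanishes by the choice of counterterm \eqref{eq_bphz}, in the singular case. For non-singular $\beta$ the limit also vanishes: the bound \eqref{lim_exists} holds by hypothesis and still yields existence of the limit, which is then zero by the decay argument. The integral is bounded by $\eqref{lim_exists}_\beta$ at $y=x$ and $T=t$, giving $\ve^{|\beta|-\langle\beta\rangle}(\sqrt[4]{t})^{\langle\beta\rangle-2}$. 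This is dominated by the target, since $\langle\beta\rangle-\alpha\geq0$.

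The main obstacle is to justify that the limit term vanishes in every case covered by the hypotheses, including the non-singular one, where no counterterm was chosen. There I would first try to show that the recentering in the first step makes $\EE\Pi^-_{x\beta T}(x)$ decay, using that $\langle\beta\rangle-2$ is positive. If instead the statement is only invoked for $|\beta|<2$, Proposition~\ref{prop_bphz_choice} settles this directly. A secondary check is that the polynomial $P_{xy\beta}$ indeed drops out when $|\beta|<2$, as argued in the first step.
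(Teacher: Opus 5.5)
Your argument is correct and is essentially the paper's: the diagonal term is controlled by $\EE\Pi^-_{x\beta t}(x)=-\int_t^\infty ds\,\frac{d}{ds}\EE\Pi^-_{x\beta s}(x)$, using the BPHZ condition \eqref{eq_bphz} together with \eqref{lim_exists}, and the off-diagonal case is reduced to it by reexpanding with $\Gamma_{xy}$ (exact for $|\beta|<2$) and applying \eqref{est_Gamma} and \eqref{est_Pi-} for $\gamma\prec\beta$. Your worry about non-singular $\beta$ is moot, and you should drop the unjustified ``decay argument'' for the limit: the proposition only lives in the singular section, where $|\beta|<2$ is standing, \eqref{lim_exists} is only established there, and \eqref{eq_bphz} is supplied by Proposition~\ref{prop_bphz_choice}.
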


The proof of Proposition~\ref{prop:expectation} makes use of $\EE\Pi^-_{x\beta t}(x) = -
\int_t^\infty ds\,\frac{d}{ds}\EE\Pi^-_{x\beta s}(x)$, which is a consequence of
\eqref{eq_bphz}.
The estimate obtained in \eqref{lim_exists}  can then be transferred to another active variable $y$ using reexpansion with $\Gamma_{xy}$. The details can be found e.g.~in \cite[Proposition~3.9]{GT}. 

\subsection{Block 4: Malliavin derivative of \texorpdfstring{$\Pi^-$}{Pi-}}\label{sec:block4}

The main goal of this section is to prove moment bounds of $ \delta \Pi^{-}$ which we
then lift to bounds in terms of $ \Pi^{-}$ in the subsequent section using the spectral
gap inequality. 
To this end, we establish a reconstruction-like estimate in Proposition~\ref{prop_recIII} below, which uses the following qualitative smoothness of $\delta\Pi^-$. 

\begin{proposition}[Boundedness and continuity of $\delta\Pi^-$, Steps 4.a) and 4.b)]\label{prop:bdd_cont_delta_Pi-}
Assume that the hypothesis of Proposition~\ref{prop_2} is satisfied, and that additionally 
for all populated $\beta'\prec\beta$ and all~$\n$ 
\begin{equation}\label{est_deltaPi_bounded}
\begin{aligned}
\EE^{\frac{1}{q}} \left|
\partial^\n \delta \Pi_{x \beta'}(y)
\right|^{ q} \lesssim 
\ve^{\alpha-|\n|}
( \ve + |x-y|)^{| \beta' |- \alpha}\w \,.
\end{aligned}
\end{equation}
Then for every $\n$ and $l\geq0$ 
\begin{equation}\label{est_deltaPi-_bounded}
\begin{aligned}
\EE^{\frac{1}{q}} \left| 
\partial^\n \delta\Pi^-_{x\beta}(y) 
\right|^{ q} 
\lesssim 
\ve^{\alpha-2-|\n|}
( \ve+ | x-y|)^{| \beta| - \alpha} \w \, ,
\end{aligned}
\end{equation}
and
\begin{equation}\label{est_deltaPi-_taylorremainder}
\begin{aligned}
\EE^{\frac{1}{q}} \left|
(\mathrm{id}-T_z^{\leq l}) \partial^\n \delta\Pi_{x \beta}^{-}(y) 
\right|^{ q} 
\lesssim 
\ve^{\alpha-2-|\n|-1-l}
|y-z|^{1+l}
( \ve+ | x-y| + |x-z|)^{| \beta| - \alpha} \w \,.
\end{aligned}
\end{equation}
\end{proposition}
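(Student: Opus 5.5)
The proof repeats Steps~1--4 of the proof of Proposition~\ref{prop_2}, with the Malliavin derivative $\delta$ applied throughout and the product rule used. Exactly one factor in each product carries $\delta$, and that factor is estimated in $\EE^{\frac1{q'}}|\cdot|^{q'}$ for some $q'\in(q,p^*)$. All the other factors are estimated in high moments $\EE^{\frac1{p'}}|\cdot|^{p'}$ by the hypotheses of Proposition~\ref{prop_2}. The exponents are glued together with Hölder's inequality, $1/q=1/q'+\sum 1/p'_i$.

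\textbf{Step 1: the noise and the Appell factor.} Since $\delta$ is linear, $\delta\xi_\ve=\delta\zeta_\ve$. By the Appell property $W_{k,\ve}'=kW_{k-1,\ve}$, the chain rule gives
\begin{equation}
\delta\big(W_{k,\ve}(\Pi_{x0})\big)=kW_{k-1,\ve}(\Pi_{x0})\,\delta\Pi_{x0} .
\end{equation}
With \eqref{eq_est_W} and \eqref{est_deltaPi_bounded} for $\beta'=0$ (after Faà di Bruno, as in Step~1 of Proposition~\ref{prop_2}), this yields
\begin{equation}
\EE^{\frac1q}\big|\partial^\n\delta W_{k,\ve}(\Pi_{x0}(y))\big|^q\lesssim\ve^{k\alpha-|\n|}\w .
\end{equation}

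\textbf{Step 2: the nonlinear and counterterm contributions.} Apply $\delta$ to the expansion \eqref{tmp09}. Each resulting summand is bounded exactly as in \eqref{eq_supp1_reg2a}--\eqref{eq_supp3_reg2a}, with one extra factor $\w$. The bookkeeping of powers of $\ve$ and of $(\ve+|x-y|)$ is unchanged, because the $\delta$-bounds have the same exponents as the undifferentiated ones. This gives the $\delta$-version of \eqref{est_Pi-_bounded_preliminary}.

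The counterterms $c^{(k)}_{\beta_0}$ are deterministic, so $\delta$ acts only on $W_{k,\ve}(\Pi_x)$. The second bullet of Step~3 of Proposition~\ref{prop_2} then carries over verbatim. The triangularity remark \eqref{eq_supp10_regStep2} guarantees that only $\beta'\prec\beta$ enter.

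\textbf{Step 3: the Taylor polynomial and the noise term.} The operator $\delta$ commutes with $T_x^{<|\beta|-2-|\n|}$. The Taylor-polynomial term is therefore handled as in the first bullet of Step~3 of Proposition~\ref{prop_2}.

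I expect the only genuinely new point, and the main obstacle, to be the noise term $1_{\beta=0}\,\partial^\n\delta\zeta_\ve$. Here $\delta\zeta_\ve$ is not pointwise bounded by $\ve^{\alpha-2-|\n|}\w$ for general $\delta\zeta\in\dot H^s$. I would argue that it is not needed: the proposition concerns $[\beta]\ge0$ with $|\beta|<2$ in the induction, where $\beta=0$ is excluded. If it is needed, one interprets \eqref{est_deltaPi-_bounded} for $\beta=0$ with $\delta\zeta$ restricted to smooth directions. A cleaner alternative is to note that $\beta=0$ is covered by the induction hypothesis at the base of Section~\ref{sec:inductionstart}. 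In the latter case the assertion reduces to the hypothesis \eqref{est_deltaPi_bounded} applied to $\delta\Pi_{x0}=(\partial_{x_0}-\Delta)^{-1}\delta\zeta_\ve$ together with the equation.

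\textbf{Step 4: the Taylor-remainder bound.} Finally, \eqref{est_deltaPi-_taylorremainder} follows from \eqref{est_deltaPi-_bounded}, applied to all derivatives, by Lemma~\ref{lem_interpolation}. This is exactly as in Step~4 of Proposition~\ref{prop_2}, with $\EE^{\frac1p}$ replaced by $\EE^{\frac1q}$ and the factor $\w$ carried along linearly.
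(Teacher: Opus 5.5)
Your overall route is the paper's own, since its proof is just ``analogous to Proposition~\ref{prop_2}''. The ingredients match: the product rule with exactly one $\delta$-factor in $\EE^{1/q'}$, $q'\in(q,p^*)$, and Hölder for the rest; $\delta W_{k,\ve}(\Pi_{x0})=kW_{k-1,\ve}(\Pi_{x0})\,\delta\Pi_{x0}$ via the Appell property; deterministic counterterms; $\delta$ commuting with the deterministic Taylor polynomial; and Lemma~\ref{lem_interpolation} for the remainder.

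The gap is the noise term. The multiindex $\beta=0$ is \emph{not} excluded: $[0]=0\geq0$ and $|0|=\alpha<2$. So $0$ is the $\prec$-smallest multiindex with $[\beta]\geq0$ and is treated in Section~\ref{sec:singular} (cf.\ the $1_{\beta=0}$ terms in the proof of Proposition~\ref{prop_recIII}). Section~\ref{sec:inductionstart} only covers $[\beta]<0$; its base case is $\delta_\0$ (with $\Pi_{x\delta_\0}=1$), not $0$ (with $\Pi_{x0}=Z_\ve$).

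Your reduction to \eqref{est_deltaPi_bounded} for $\delta\Pi_{x0}$ is circular. That hypothesis is only available for $\beta'\prec\beta$, and $0\not\prec0$. In the induction, \eqref{est_deltaPi_bounded} at $\beta'=0$ is \emph{produced} from \eqref{est_deltaPi-_bounded} at $\beta=0$ by Proposition~\ref{prop_qualitative_deltaPi}. Your Step~1 uses \eqref{est_deltaPi_bounded} at $\beta'=0$ for every $\beta\neq0$, so the case $\beta=0$ is the seed of all $\delta$-bounds and cannot be argued away. Restricting to smooth directions does not help either: for smooth $\delta\zeta$ the value $\delta\zeta(y)$ is still not controlled by $\|\delta\zeta\|_{\dot H^s}$ when $s<D/2$.

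What is needed at $\beta=0$ is the direct analogue of the third bullet in Step~3 of Proposition~\ref{prop_2}. By scaling, $\partial^\n\delta\zeta_\ve(y)=\ve^{\alpha-2-|\n|}(\partial^\n\delta\zeta)(y_0/\ve^2,y_{1,\dots,d}/\ve)$. Hence \eqref{est_deltaPi-_bounded} at $\beta=0$ amounts to $\sup_y\EE^{1/q}|\partial^\n\delta\zeta(y)|^q\lesssim\w$. You are right that this does not follow from $\delta\zeta\in\dot H^s$ alone. So the paper's one-line ``analogous'' argument tacitly requires admissible directions that are smooth on the unit scale, with quantitative control.

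This holds, for example, if $\zeta=\rho*\tilde\zeta$ with $\tilde\zeta$ satisfying \eqref{eq_sg_L2}, as in Example~\ref{exmpl_long_range_ng}. Then the directions are $\delta\zeta=\rho*\delta\tilde\zeta$, and $|\partial^\n\delta\zeta(y)|\le\|\partial^\n\rho\|_{\dot H^{-s}}\|\delta\tilde\zeta\|_{\dot H^s}$, which is finite since $2s<D$. If $\w$ is defined through $\delta\tilde\zeta$, the other uses of $\w$ survive because $\|\rho*\delta\tilde\zeta\|_{\dot H^s}\lesssim\|\delta\tilde\zeta\|_{\dot H^s}$. State such an input explicitly and use it at $\beta=0$; with that, the rest of your argument goes through as written.
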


The proof of Proposition~\ref{prop:bdd_cont_delta_Pi-} is analogous to the proof of \eqref{est_Pi-_bounded} and \eqref{est_Pi-_taylorremainder} in Proposition~\ref{prop_2}.
\begin{proposition}[Reconstruction III, Step 4.c)]\label{prop_recIII} 
Let $[\beta]\geq0$ and assume that $\eqref{est_deltaPi-_taylorremainder}_\beta$ holds, that $\eqref{est_dGamma_inc}_\beta^\gamma$ and 
$\eqref{est_dGamma}_\beta^\gamma$ hold for all $\gamma$ populated and not purely polynomial, 
and that $\eqref{est_Pi-_bounded_preliminary}_{\beta'}$, $\eqref{est_Pi-}_{\beta'}$, $\eqref{est_Pi}_{\beta'}$, $\eqref{eq_recentre_Pi}_{\beta'}$, 
$\eqref{est_Pi_taylorremainder}_{\beta'}$, and $\eqref{eq:intIII_qualitative}_{\beta'}$ hold for all populated $\beta'\prec\beta$. 
Then
\begin{align}
&\Big( \int_{B_r(x)} dz\, \EE^\frac{\p}{q}\big| \big( \delta\Pi^-_x - d\Gamma_{xz}\Pi^-_z\big)_{\beta t}(y+z) \big|^q\Big)^\frac{1}{\p} \\
&\lesssim \ve^{|\beta|-\langle\beta\rangle}
(\sqrt[4]{t})^{\alpha-2} (\sqrt[4]{t}+|y|)^{D/\p} 
(\sqrt[4]{t}+|y|+r)^{\langle\beta\rangle-\alpha} \w 
\quad\textnormal{for }\ve\leq r \, . \label{est_deltaPi-_inc}
\end{align}
\end{proposition}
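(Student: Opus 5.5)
We follow the structure of the regular reconstruction (Proposition~\ref{prop:regrec}), now in the $L^{\p}$-localised annealed sense. Setting $F_z\coloneqq(\delta\Pi^-_x-d\Gamma_{xz}\Pi^-_z)_\beta$, we establish two ingredients, ``vanishing at the base point'' and ``continuity in the base point'', and then feed them into a reconstruction lemma of the type \cite[Lemma~4.8]{LO22}. That lemma requires the effective order $\langle\beta\rangle-2+D/\p$ to be positive. This is where \eqref{p_small} and the smallness of $\kappa$ enter, since $\langle\beta\rangle\geq\kk\alpha+\langle\beta\rangle-|\beta|+\dots$ for singular $\beta$.

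\emph{Vanishing at the base point.} We show that $F_z$ vanishes to order $\lceil\langle\beta\rangle-2+D/\p\rceil$ at $y+z=z$, i.e.\ at $y=0$; the paper's $\eqref{eq:intIII_qualitative}$ supplies exactly this for $d\Gamma_{xz}\Pi_z$ modelling $\delta\Pi_x$ at lower orders. First expand $\delta\Pi^-_{x\beta}$ by the product rule in \eqref{hierarchy}. Each factor $\delta\Pi_{x\beta_i}$ is then replaced by $d\Gamma_{xz}\Pi_z$, which is possible by $\eqref{eq:intIII_qualitative}_{\beta'}$ for $\beta'\prec\beta$. Each factor $\Pi_{x\beta_j}$ is replaced by $\Gamma_{xz}\Pi_z$ using $\eqref{eq_recentre_Pi}$. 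Multiplicativity of $\Gamma$ together with the Leibniz rule \eqref{e_def_dgamma} of $d\Gamma$ reassembles the leading parts into $(d\Gamma_{xz}\Pi^-_z)_\beta$. The Taylor polynomials $T^{<|\cdot|-2}$ generate extra terms, exactly as in Proposition~\ref{prop_recentre_Pi-}, and these must be tracked. The remainder is a sum of products in which one factor is a remainder vanishing at $z$ to order $|\beta_i|+D/\p$. With the qualitative Taylor estimates $\eqref{est_Pi_taylorremainder}$ and $\eqref{est_deltaPi-_taylorremainder}$, this yields $\lim_{t\to0}$ of the localised norm of $F_{z,t}(z)$ equal to zero.

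\emph{Continuity in the base point.} We write $F_z-F_{z'}=\big((d\Gamma_{xz'}-d\Gamma_{xz}\Gamma_{zz'})\Pi^-_{z'}\big)_\beta+\big(d\Gamma_{xz}(\Gamma_{zz'}\Pi^-_{z'}-\Pi^-_z)\big)_\beta$. The first term is bounded by Hölder's inequality, combining $\eqref{est_dGamma_inc}_\beta^\gamma$ with $\eqref{est_Pi-}_\gamma$ for $\gamma\prec\beta$; the sum over $\gamma$ is finite by Lemma~\ref{lem_dGamma}(3). By Proposition~\ref{prop_recentre_Pi-}, the second term consists of $d\Gamma_{xz}$ applied to Taylor-polynomial differences of $\ve^{(\kk-k)\alpha}z_kW_{k,\ve}(\Pi)$. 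For these we split the two regimes $\ve\leq\sqrt[4]{t}+|y|$ and $\ve>\sqrt[4]{t}+|y|$ as in Steps~2b and~2c of Proposition~\ref{prop:regrec}. In the first regime we use the bound \eqref{est_Pi-_bounded_preliminary} at the base point. In the second regime we use the Taylor-remainder form \eqref{est_Pi-_taylorremainder_preliminary}, together with $\eqref{est_dGamma}$ in place of \eqref{est_Gamma}. The $\ve$-powers then combine to $\ve^{|\beta|-\langle\beta\rangle}$ through the identity $|\cdot|-\langle\cdot\rangle=\kappa\sum_{k>\kk}(\cdot)(k)$.

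\emph{Main obstacle.} The hardest step is making the reconstruction run uniformly in $\ve$ while only the localised $L^{\p}$ control with $r\geq\ve$ is available. For scales $\sqrt[4]{t}\leq\ve$ the modelledness bounds degenerate. There we intend to bypass reconstruction and bound $F_{z,t}$ directly: $\delta\Pi^-$ is handled by \eqref{est_deltaPi-_bounded}, and $d\Gamma_{xz}\Pi^-_z$ by \eqref{est_dGamma} combined with \eqref{est_Pi-_bounded}. After integrating over the ball $B_r(x)$, the gain $D/\p$ must be recovered from $\ve^{D/\p}\leq(\sqrt[4]{t}+|y|)^{D/\p}$, which is only legitimate in the appropriate regime. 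It is this regime bookkeeping that forces the restriction $\ve\leq r$.
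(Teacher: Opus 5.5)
\textbf{First gap: your germ does not vanish at the base point, and the noise term is never estimated.} The germ you reconstruct, $F_z=(\delta\Pi^-_x-d\Gamma_{xz}\Pi^-_z)_\beta$, does \emph{not} vanish at $z$. Carry out the Leibniz/Appell computation you sketch, using $\eqref{eq_recentre_Pi}_{\beta'}$ and $d\Gamma_{xz}z_kW_{k,\ve}(\Pi_z)=z_k\,kW_{k-1,\ve}(\Pi_x)\,d\Gamma_{xz}\Pi_z$, followed by $\eqref{eq:intIII_qualitative}_{\beta'}$. For $|\beta|<2$ this gives
\[
F_z(z)=1_{\beta=0}\,\delta\zeta_\ve(z)+\Big(d\Gamma_{xz}\sum_{k\geq\kk}\ve^{(\kk-k)\alpha}T_z^{<|\cdot|-2}\big(z_kW_{k,\ve}(\Pi_z)\big)(z)\Big)_\beta ,
\]
and neither term is zero in general. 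For $\beta=0$ one even has $F_z\equiv\delta\zeta_\ve$: $(d\Gamma_{xz})_0^\gamma$ is supported on purely polynomial $\gamma$, where $\Pi^-_{z\gamma}=0$. You say the Taylor terms ``must be tracked'', but then claim $F_{z,t}(z)\to0$, which is false.

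The paper moves both terms into the germ \eqref{eq_germ}. That germ vanishes at $z$ only to order zero; vanishing of order $\lceil\langle\beta\rangle-2+D/\p\rceil$ is neither available nor needed. The paper reconstructs this corrected germ and bounds the two subtracted pieces separately. The Taylor piece is handled by \eqref{est_dGamma} together with \eqref{est_Pi-_bounded_preliminary}. The noise piece needs an ingredient for which your proposal has no substitute: none of the hypotheses controls $\delta\zeta_\ve$, and this is where $\w$ enters the whole $\delta$-hierarchy. The paper combines Minkowski ($q<2<\p$), Young's inequality $\|\Psi_t*\delta\zeta_\ve\|_{L^{\p}}\leq\|\Psi_t\|_{L^{p_1}}\|\delta\zeta_\ve\|_{L^{p_2}}$ with $p_2=D/(2-\alpha)$, the Sobolev embedding $\dot H^s\hookrightarrow L^{p_2}$, and the scale invariance $\|\delta\zeta_\ve\|_{\dot H^s}=\|\delta\zeta\|_{\dot H^s}$. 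This yields $(\sqrt[4]{t})^{\alpha-2+D/\p}\w$, with \eqref{p_large2} ensuring $p_1>1$. Without this step, already the case $\beta=0$ is unproved.

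\textbf{Second gap: the small-scale bypass fails.} Bounding $\delta\Pi^-_x$ and $d\Gamma_{xz}\Pi^-_z$ separately throws away their cancellation. You propose to use \eqref{est_deltaPi-_bounded} (which is not among the hypotheses) for the first, and \eqref{est_dGamma} with \eqref{est_Pi-_bounded} for the second. The $L^{\p}(B_r(x))$ integration then produces $r^{D/\p}$ instead of $(\sqrt[4]{t}+|y|)^{D/\p}$. For instance, for $y=0$ and $\sqrt[4]{t}<\ve\leq r$, the $\delta\Pi^-$ contribution exceeds the right-hand side of \eqref{est_deltaPi-_inc} by the factor $(\ve/\sqrt[4]{t})^{\alpha-2+D/\p}(r/\ve)^{D/\p+|\beta|-\langle\beta\rangle}$, which is unbounded as $r/\ve\to\infty$. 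Moreover, $\ve^{D/\p}\leq(\sqrt[4]{t}+|y|)^{D/\p}$ fails precisely when $\ve>\sqrt[4]{t}+|y|$, which is where you need it.

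The paper never bypasses reconstruction. The split $\ve\gtrless\sqrt[4]{t}+|y|$ is done \emph{inside} the continuity estimate for the corrected germ, using:
\begin{itemize}
\item \eqref{est_Pi-_bounded_preliminary} for the Taylor terms;
\item the two alternatives of \eqref{est_dGamma_inc}, chosen by the sign of $\alpha+D/\p-\bar n$;
\item the bound $|\gamma|\geq2+(\kk-1)\alpha+\bar n$.
\end{itemize}
This yields \eqref{eq_supp6_singRec} at all scales. Its small-scale exponent $\kk\alpha+D/\p+\langle\gamma\rangle-|\gamma|+\langle\beta\rangle-|\beta|$ is positive by \eqref{p_small} and the choice of $\kappa$. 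It is this exponent, not $\langle\beta\rangle-2+D/\p$, that makes the dyadic telescoping sum converge.
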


As in the case of regular indices, the core ingredient to obtain \eqref{est_deltaPi-_inc} is  the reconstruction of the
zero function, see \eqref{eq_supp13_singRec} below. We also refer to the paragraph below
Proposition~\ref{prop:regrec}.

\begin{proof}
We define a family of distributions $F_{xz}$ by 
\begin{equation}\label{eq_germ}
F_{xz}
\coloneqq \delta\Pi^-_x - d\Gamma_{xz}\Pi^-_z - \delta\zeta_\ve \, 1 
- d\Gamma_{xz}\sum_{k \geq \kk} \ve^{(\kk-k)\alpha} T_z^{<|\cdot|-2}\big(z_k W_{k, \varepsilon}(\Pi_z)\big) \, ,
\end{equation}
where $\delta\zeta_\ve(x)\coloneqq\ve^{s-D/2}\delta\zeta(x_0/\ve^2,x_{1,\dots,d}/\ve)$.
In Step~1 we prove ``\emph{vanishing at the base point}'', i.e.~$F_{xz}(z)=0$.
In Step~2 we show ``\emph{continuity in the base point}'' of $z\mapsto F_{xz}$.
Both are combined in Step~3 by a general reconstruction argument, estimating $F_{xz}$ at $z$. 
In Step~4 we post-process the obtained estimate to deduce an off-diagonal estimate of $F_{xz}$ at $y$.
In Steps~5 and 6 we estimate $(\delta\Pi^-_x-d\Gamma_{xz}\Pi^-_z)-F_{xz}$, 
so that we can conclude in Step~7 with the triangle inequality.

\textbf{Step 1 \textnormal{(vanishing at the base point)}.} 
We show that
\begin{align}\label{eq_supp0_singRec}
\big(\delta\Pi^-_x-d\Gamma_{xz}\Pi^-_z\big)_\beta(z)
= 1_{\beta=0} \, \delta\zeta_\ve(z) 
+ \big( d\Gamma_{xz}\sum_k \ve^{(\kk-k)\alpha} T_z^{<|\cdot|-2}\big(z_k W_{k, \varepsilon}(\Pi_z)\big)(z)\big)_\beta \, .
\end{align}
For brevity we are going to drop the multiindex $\beta$ from the following identities, but emphasize that (some of) these identities only hold for their respective $\beta$-components with $|\beta|<2$. 
First note that by Leibniz' rule and the definition \eqref{eq_piminus} of $\Pi^-_{x}$ it holds for components of homogeneity less than $2$ 
\begin{equation}\label{eq_supp1_singRec}
\delta\Pi^-_x
= \sum_{k \geq \kk} \ve^{(\kk-k)\alpha} z_k k W_{k-1, \varepsilon}(\Pi_x)\delta\Pi_x 
- \sum_{k<\kk} c^{(k)} k W_{k-1, \varepsilon}(\Pi_x) \delta\Pi_x 
+ \delta\zeta_\ve \, 1 \, .
\end{equation}
Here we also used for $F(\zeta)=\xi_\ve(\varphi)=\zeta(\ve^{s+D/2}\varphi(\ve^2\cdot,\ve\cdot))$ for a test function $\varphi$ that $\partial F/\partial\zeta = \ve^{s+D/2}\varphi(\ve^2\cdot,\ve\cdot)$ by \eqref{frechet}, 
and hence $\delta F(\zeta,\delta\zeta) = (\ve^{s+D/2}\varphi(\ve^2\cdot,\ve\cdot),\delta\zeta)$ by \eqref{gateaux}, 
which coincides with $(\varphi,\delta\zeta_\ve)$, i.e.~we informally write in a connoisseur's notation $\delta(\xi_\ve) = \delta\zeta_\ve$.
Furthermore, $W_{k, \varepsilon}(\Pi_x)$ is a polynomial in $\Pi_x$ of degree $k$ with
coefficients $ ( a_{l , \varepsilon})_{l =0}^{k} $ that only depend on the law of $\xi_\ve$, so that 
\begin{equation}\label{eq_supp2_singRec}
	\begin{aligned}
		&d\Gamma_{xz} z_k W_{k, \varepsilon} (\Pi_z)
= d\Gamma_{xz} z_k \sum_{l=0}^k a_{l, \varepsilon} \Pi_z^l 
\stackrel{\eqref{e_def_dgamma}}{=} z_k \sum_{l=0}^k a_{l, \varepsilon} l(\Gamma_{xz}\Pi_z)^{l-1} d\Gamma_{xz}\Pi_z \\
&\stackrel{\eqref{eq_recentre_Pi}}{=} z_k \sum_{l=0}^k a_{l, \varepsilon} l(\Pi_x)^{l-1} d\Gamma_{xz}\Pi_z 
= z_k W_{k, \varepsilon}'(\Pi_x) d\Gamma_{xz}\Pi_z
\stackrel{\text{Lemma }\ref{lem_appell}}{=} z_k k W_{k-1, \varepsilon}(\Pi_x) d\Gamma_{xz}\Pi_z \, .
	\end{aligned}
\end{equation}
We point out that we only need the $\beta$-component of this identity, 
so that by Lemma~\ref{lem_dep_pi} only $\eqref{eq_recentre_Pi}_{\beta'}$ for $\beta'\prec\beta$ is required. 
By the same argumentation we obtain
\begin{equation}\label{eq_supp3_singRec}
	d\Gamma_{xz} c^{(k)} W_{k, \varepsilon}(\Pi_z)
= c^{(k)} k W_{k-1, \varepsilon}(\Pi_x) d\Gamma_{xz}\Pi_z \, .
\end{equation}
Hence, combining \eqref{eq_supp1_singRec} with \eqref{eq_supp2_singRec} and
\eqref{eq_supp3_singRec} inserted into  \eqref{eq_piminus} 
yields for components of homogeneity less than $2$ 
\begin{align}
\delta\Pi^-_x - d\Gamma_{xz}\Pi^-_z
&= \sum_{k \geqslant \kk} \ve^{(\kk-k)\alpha} z_k k W_{k-1, \varepsilon}(\Pi_x) (\delta\Pi_x - d\Gamma_{xz}\Pi_z) \\
&\,- \sum_{k<\kk} c^{(k)} k W_{k-1, \varepsilon}(\Pi_x) (\delta\Pi_x - d\Gamma_{xz}\Pi_z) \\
&\,+ \delta\zeta_\ve \, 1
+ d\Gamma_{xz}\sum_{k \geq \kk} \ve^{(\kk-k)\alpha} T_z^{<|\cdot|-2} \big(z_k W_{k, \varepsilon}(\Pi_z)\big) \, .
\end{align}
Evaluating the $\beta$-component of this identity at $z$ yields \eqref{eq_supp0_singRec}
as a consequence of \eqref{eq:intIII_qualitative},
provided only $\beta'$ components of the latter are required for $\beta'\prec\beta$ and $|\beta'|<2$.
This is indeed the case: $\beta'\prec\beta$ follows analogous to Lemma~\ref{lem_dep_pi} (Item~1), 
while for $|\beta'|<2$ we need to argue in addition that
$\beta_0+\dots+\beta_l=\beta$ for $\beta_0$ with $\sum_k\beta_0(k)>0$ and $|\beta|<2$ implies $|\beta_i|<2$ for all $i=1,\dots,l$.
This is a consequence of 
\begin{align}
2&>|\beta|
\stackrel{|\cdot|-\alpha\textnormal{ is additive}}{=} |\beta_0|+|\beta_1|+\dots+|\beta_l|-l\alpha
\stackrel{\eqref{eq_supp1_lem_homProp}}{\geq} 2+\kk\alpha+|\beta_1|+\dots+|\beta_l|-l\alpha \\
&\stackrel{|\cdot|\geq\alpha}{\geq} 2+\kk\alpha+|\beta_1|-\alpha
= a+|\beta_1|
>|\beta_1| \, ,
\end{align}
with the properties of the homogeneity $ |\cdot | $ being consequences of
Lemma~\ref{lem_hom_props}.

\textbf{Step 2 \textnormal{(continuity in the base point)}.}
For $F_{xz}$ defined in \eqref{eq_germ} we prove for all $ \varepsilon \leqslant r $ that
\begin{equation}\label{eq_supp6_singRec}
	\begin{aligned}
		&\Big(\int_{B_r(x)}dz\, \EE^\frac{\p}{q}\big| \big(F_{xz} - F_{x\,z+y}\big)_{\beta t}(z+y)\big|^q\Big)^\frac{1}{\p} \\
&\lesssim 
\sum_{\gamma\prec\beta}
\ve^{|\beta|-\langle\beta\rangle}
(\sqrt[4]{t})^{(\kk-1)\alpha+\langle\gamma\rangle-|\gamma|} 
(\sqrt[4]{t}+|y|)^{\alpha+D/\p+\langle\beta\rangle-|\beta|} 
(\sqrt[4]{t}+|y|+r)^{|\beta|-2-\kk\alpha+|\gamma|-\langle\gamma\rangle} \w 
\, .
	\end{aligned}
\end{equation}%
First, we notice that 
\begin{align}
F_{xz} - F_{x\,z+y}
&= (d\Gamma_{z\,z+y} - d\Gamma_{xz}\Gamma_{x\,z+y}) \Pi^-_{z+y} \label{tmp12} \\
&\,+(d\Gamma_{z\,z+y} - d\Gamma_{xz}\Gamma_{x\,z+y})
\sum_{ k \geq \kk} \ve^{(\kk-k)\alpha} T^{<|\cdot|-2}_{z+y} z_k W_{k, \varepsilon}(\Pi_{z+y}) \, . \label{tmp13}
\end{align}
To see this, we use the definition \eqref{eq_piminus} of $\Pi^-_z$ and rewrite the
difference in terms of
\begin{align}
F_{xz} - F_{x\,z+y}
&= d\Gamma_{x\,z+y} \Big( \sum_{k \geq \kk} \ve^{(\kk-k)\alpha} z_k W_{k, \varepsilon}(\Pi_{z+y}) - \sum_{k<\kk}
c^{(k)} W_{k, \varepsilon}(\Pi_{z+y}) +\xi_\ve 1\Big) \\
&\,- d\Gamma_{xz} \Big( \sum_{k \geq \kk} \ve^{(\kk-k)\alpha} z_k W_{k, \varepsilon}
(\Pi_{z}) - \sum_{k<\kk} c^{(k)} W_{k, \varepsilon}(\Pi_{z}) +\xi_\ve 1\Big) \, ,
\end{align}
which by \eqref{eq_recentre_Pi} and \eqref{e_def_gamma} can be further rewritten as (recall that $W_{k,\ve}(\Pi_{z+y})$ is a polynomial in $\Pi_{z+y}$, cf.~\eqref{eq_supp2_singRec}, and that $c\in\RR[\![z_k]\!]$)
\begin{align}
F_{xz} - F_{x\,z+y}
&= d\Gamma_{x\,z+y} \Big( \sum_{ k \geq \kk} \ve^{(\kk-k)\alpha} z_k W_{k, \varepsilon}(\Pi_{z+y}) -
\sum_{k<\kk} c^{(k)} W_{k, \varepsilon} (\Pi_{z+y}) +\xi_\ve 1\Big) \\
&\,- d\Gamma_{xz}\Gamma_{z\,z+y} \Big( \sum_{k \geq \kk} \ve^{(\kk-k)\alpha} z_k W_{k,
\varepsilon}(\Pi_{z+y}) - \sum_{k<\kk} c^{(k)} W_{k, \varepsilon}(\Pi_{z+y}) +\xi_\ve 1\Big) \, .
\end{align}
Since we are only interested in the $\beta$-component of this identity, \eqref{eq_recentre_Pi} is only required for multiindices $\beta'\prec\beta$ by Lemma~\ref{lem_dep_pi}.
Using once more the definition \eqref{eq_piminus} of $\Pi^-_{z+y}$ we obtain the desired
representation in terms of~\eqref{tmp12} and~\eqref{tmp13}.
We proceed estimating the contributions of~\eqref{tmp12} and~\eqref{tmp13} separately in the following Steps~2a and 2b.

\textbf{Step 2a.}
To estimate the $(\int_{B_r(x)}dz\,\EE^\frac{\p}{q}|(\cdot)_{\beta t}(z+y)|^q)^\frac{1}{\p}$-norm of \eqref{tmp12} we expand 
\begin{equation}
\big((d\Gamma_{x\,z+y} - d\Gamma_{xz}\Gamma_{z\,z+y})\Pi^-_{z+y}\big)_\beta
=\sum_{\gamma\prec\beta} (d\Gamma_{x\,z+y} - d\Gamma_{xz}\Gamma_{z\,z+y})_\beta^\gamma
\Pi^-_{z+y\gamma}\,,
\end{equation}
and distinguish two cases. 
When $2+\kk\alpha+D/\p-|\gamma|>|\beta|-\langle\beta\rangle$, we use
\eqref{est_Pi-} and
the first alternative of \eqref{est_dGamma_inc} (after an application of H\"older's
inequality) 
to estimate these summands for $\ve\leq r$ by 
\begin{equation}\label{eq_supp5_singRec}
	\begin{aligned}
	&	(\int_{B_r(x)}dz\,\EE^\frac{\p}{q}|
		(d\Gamma_{x\,z+y} - d\Gamma_{xz}\Gamma_{z\,z+y})_\beta^\gamma
\Pi^-_{z+y\gamma t}
(z+y)|^q)^\frac{1}{\p}\\
	&\lesssim
		\ve^{|\beta|-\langle\beta\rangle-|\gamma|+\langle\gamma\rangle}
|y|^{\alpha+D/\p-\bar n}(|y|+r)^{\langle\beta\rangle-\langle\gamma\rangle+\bar n-\alpha} \w \,
\ve^{|\gamma|-\langle\gamma\rangle} (\sqrt[4]{t})^{\langle\gamma\rangle-2} \\
&\leq \ve^{|\beta|-\langle\beta\rangle}
(\sqrt[4]{t})^{(\kk-1)\alpha+\langle\gamma\rangle-|\gamma|} 
(\sqrt[4]{t})^{|\gamma|-2-(\kk-1)\alpha} \\
&\quad \times
|y|^{2+\kk\alpha+D/\p-|\gamma|+\langle\beta\rangle-|\beta|}
|y|^{|\gamma|-2-(\kk-1)\alpha-\bar n}
(|y|+r)^{|\beta|-\langle\gamma\rangle+\bar n-\alpha} \w \, ,
	\end{aligned}
\end{equation}
where in the last inequality we used $\langle\beta\rangle-|\beta|\leq0$, and recall that $\bar n=\max\{|\n|\colon\gamma(\n)\neq0\}$ is introduced in Proposition~\ref{prop:alg3}.
Since $\gamma$ is not purely polynomial by \eqref{model_poly}, and $\sum_\n\gamma(\n)>0$ by
\eqref{e_def_dgamma},
we deduce from \eqref{eq_supp1_lem_homProp} that $|\gamma|\geq2+(\kk-1)\alpha+\bar n$, 
hence, \eqref{eq_supp5_singRec} is further bounded by 
\begin{equation}
\ve^{|\beta|-\langle\beta\rangle}
(\sqrt[4]{t})^{(\kk-1)\alpha+\langle\gamma\rangle-|\gamma|}
(\sqrt[4]{t}+|y|)^{\alpha+D/\p+\langle\beta\rangle-|\beta|} 
(|y|+r)^{|\beta|-2-\kk\alpha+|\gamma|-\langle\gamma\rangle} \w \, .
\end{equation}
Indeed, this yields the desired bound in \eqref{eq_supp6_singRec} since the last exponent
in the previous display is non-negative, because $|\gamma|-\langle\gamma\rangle\geq0$ 
and $|\beta|\geq2+\kk\alpha$. 
The latter can be argued as follows:
Lemma~\ref{lem_sets} (Items~2 and 3) shows that $\beta=0$ or $\sum_k\beta(k)>0$;
if $\beta=0$ then one can read off \eqref{eq_dgamma_prod_rep} 
and Lemma~\ref{lem_def_gamma} (Items~1 and 2) that 
$(d\Gamma_{x\,z+y}-d\Gamma_{xz}\Gamma_{z\,z+y})_0^\gamma$ is only non vanishing 
if $\gamma=\delta_\n$ is purely polynomial, 
in which case $\Pi^-_{z+y\gamma}$ vanishes;
hence $\sum_k\beta(k)>0$ and \eqref{eq_supp1_lem_homProp} shows that $|\beta|\geq2+\kk\alpha$.

In the case of $2+\kk\alpha+D/\p-|\gamma|\leq|\beta|-\langle\beta\rangle$
we use \eqref{est_Pi-} and the second alternative of \eqref{est_dGamma_inc} 
which yields a bound analogous to \eqref{eq_supp5_singRec},
for $\ve\leq r$, in terms of 
\begin{align}
&\ve^{|\beta|-\langle\beta\rangle-|\gamma|+\langle\gamma\rangle}
(|y|+r)^{\langle\beta\rangle-\langle\gamma\rangle+D/\p} \w \, 
\ve^{|\gamma|-\langle\gamma\rangle}
(\sqrt[4]{t})^{\langle\gamma\rangle-2} \\
&= \ve^{|\beta|-\langle\beta\rangle}
(\sqrt[4]{t})^{(\kk-1)\alpha+\langle\gamma\rangle-|\gamma|+\alpha+D/\p+\langle\beta\rangle-|\beta|}\\
&\quad \times
(\sqrt[4]{t})^{|\beta|-\langle\beta\rangle-D/\p-\alpha+|\gamma|-2-(\kk-1)\alpha}
(|y|+r)^{\langle\beta\rangle-\langle\gamma\rangle+D/\p} \w \, .
\end{align}
Using that $\langle\beta\rangle-\langle\gamma\rangle+D/\p\geq0$ by
Lemma~\ref{lem_dGamma} Item 2, 
and that $\alpha+D/\p+\langle\beta\rangle-|\beta|>0$, 
the previous display is further bounded by 
\begin{equation}
\ve^{|\beta|-\langle\beta\rangle}
(\sqrt[4]{t})^{(\kk-1)\alpha+\langle\gamma\rangle-|\gamma|}
(\sqrt[4]{t}+|y|)^{\alpha+D/\p+\langle\beta\rangle-|\beta|}
(\sqrt[4]{t}+|y|+r)^{|\beta|-2-\kk\alpha+|\gamma|-\langle\gamma\rangle} \w \, .
\end{equation}
Overall, we conclude that \eqref{eq_supp6_singRec} holds for $ \varepsilon \leqslant r $
as desired. 

\textbf{Step 2b.}
To estimate the $(\int_{B_r(x)}dz\,\EE^\frac{\p}{q}|(\cdot)_{\beta t}(z+y)|^q)^\frac{1}{\p}$-norm of \eqref{tmp13} we expand as in Step~2a
\begin{align}
&\big((d\Gamma_{z\,z+y} - d\Gamma_{xz}\Gamma_{x\,z+y})
\sum_{k \geq \kk} \ve^{(\kk-k)\alpha} T^{<|\cdot|-2}_{z+y} z_k W_{k, \varepsilon}(\Pi_{z+y})\big)_\beta \\
&= \sum_{\gamma\prec\beta}
\big(d\Gamma_{z\,z+y} - d\Gamma_{xz}\Gamma_{x\,z+y}\big)_\beta^\gamma
\sum_{k \geq \kk} \ve^{(\kk-k)\alpha}
\sum_{|\n|<|\gamma|-2}
\tfrac{1}{\n!}\partial^\n\big(z_k W_{k, \varepsilon}(\Pi_{z+y})\big)_\gamma(z+y) (\cdot-z-y)^\n \, . \label{tmp14}
\end{align}
Preparing for the estimates later, we first prove that the sum over $\n$ in the previous
display can be restricted to $\bar n\leq|\n|$, 
where $\bar n=\max\{|\m|\colon\gamma(\m)\neq0\}$ as in the previous step.
To see this, we expand
\begin{align}
&\partial^\n\big(z_k W_{k, \varepsilon}(\Pi_{z+y})\big)_\gamma(z+y) \\
&= \sum
\tbinom{\n}{\n_0,\dots,\n_l}
\partial^{\n_0} W_{k-l, \varepsilon}(\Pi_{z+y\,0})(z+y) \partial^{\n_1}\Pi_{z+y\gamma_1}(z+y)\cdots\partial^{\n_l}\Pi_{z+y\gamma_l}(z+y) \, ,
\end{align}
where the sum is taken over $l\geq0$, $\gamma_1,\dots,\gamma_l\neq0$ such that $\delta_k+\gamma_1+\cdots+\gamma_l=\gamma$, and $\n_0,\dots,\n_l$ such that $\n_0+\dots+\n_l=\n$.
Since $\gamma(\m)\neq0$ for some $\m$ with $|\m|=\bar n$, the same must hold true for at
least one of $\gamma_1,\dots,\gamma_l$, w.l.o.g.~say $\gamma_1$, hence, 
from \eqref{eq_supp1_lem_homProp} and $2+(\kk-1)\alpha-\kappa=a-\kappa\geq0$ 
we read off $\bar n\leq\langle\gamma_1\rangle$.
Furthermore, we notice that
\begin{equation}
	\begin{aligned}
		\partial^{\n_{1}}\Pi_{z+y \gamma_{1} } (z+y) = \lim_{ t \to 0} \partial
		^{\n_{1}}\Pi_{z+y \gamma_{1} t} (z+y)
		=0\,,
	\end{aligned}
\end{equation}
where we used qualitative smoothness
\eqref{est_Pi_taylorremainder} in the first identity. The second equality holds by
\eqref{est_Pi} (which we can upgrade again to include derivatives, see
Remark~\ref{rem_improve_n}) as long as $ |\n_{1}| < \langle \gamma_{1}\rangle $, hence the sum
effectively restricts to $| \n_{1}| \geqslant  \langle \gamma_{1} \rangle$.
On the other hand, we have $| \n_{1}| \leqslant | \n|$. Consequently, $\bar n\leq|\n|$ as desired.  

\textbf{Step 2b(i).}
We estimate the contribution of \eqref{tmp13} in the case $\ve>\sqrt[4]{t}+|y|$.
\begin{itemize}
	\item 
If $\alpha+D/\p-\bar n\geq0$, 
we use \eqref{est_Pi-_bounded_preliminary}, the first alternative of
\eqref{est_dGamma_inc}, and the moment bound \eqref{eq_momentbound_semigroup} to
estimate the summands of \eqref{tmp14} for $\ve\leq r$ by
\begin{equation}\label{tmp17}
	\begin{aligned}
		&	(\int_{B_r(x)}dz\,\EE^\frac{\p}{q}|
\big(d\Gamma_{z\,z+y} - d\Gamma_{xz}\Gamma_{x\,z+y}\big)_\beta^\gamma
\ve^{(\kk-k)\alpha}
\sum_{|\n|<|\gamma|-2}
\tfrac{1}{\n!}\partial^\n\big(z_k W_{k, \varepsilon}(\Pi_{z+y})\big)_\gamma(z+y)
(\cdot^{\n})_{t}(0) |^q)^\frac{1}{\p}\\
	&\lesssim
\ve^{|\beta|-\langle\beta\rangle-|\gamma|+\langle\gamma\rangle}
|y|^{\alpha+D/\p-\bar n}
(|y|+r)^{\langle\beta\rangle-\langle\gamma\rangle+\bar n-\alpha} \w \, 
\sum_{|\n|<|\gamma|-2} \ve^{|\gamma|-2-|\n|} (\sqrt[4]{t})^{|\n|} \, .
	\end{aligned}
\end{equation}
Next, we notice that 
\begin{equation}
	\begin{aligned}
		\ve^{|\gamma|-2-|\n|}&=\ve^{|\gamma|-2-(\kk-1)\alpha-\bar n+|\beta|-\langle\beta\rangle}
\ve^{\bar n-|\n|+(\kk-1)\alpha+\langle\beta\rangle-|\beta|}\\
&\leqslant 
(|y|+r)^{|\gamma|-2-(\kk-1)\alpha-\bar n+|\beta|-\langle\beta\rangle}
(\sqrt[4]{t})^{(\kk-1)\alpha} (\sqrt[4]{t}+|y|)^{\bar
n-|\n|+\langle\beta\rangle-|\beta|}\,.
	\end{aligned}
\end{equation}
Here we used $|\gamma|\geq2+(\kk-1)\alpha+\bar n$ (see Step~2a), 
$|\cdot|-\langle\cdot\rangle\geq0$, and $\ve\leq r\leq |y|+r$ to estimate the first
factor, as well as $\bar n-|\n|\leq0$ (see Step~2b), 
$(\kk-1)\alpha<0$, 
$\langle\cdot\rangle-|\cdot|\leq0$, and $\ve>\sqrt[4]{t}+|y|\geq \sqrt[4]{t}$ to estimate the second factor. 
Moreover, using $|y|^{\alpha+D/\p-\bar n}\leq(\sqrt[4]{t}+|y|)^{\alpha+D/\p-\bar n}$ and 
$(\sqrt[4]{t})^{|\n|}\leq(\sqrt[4]{t}+|y|)^{|\n|}$, we see that \eqref{tmp17} is further bounded by 
\begin{equation}
\ve^{|\beta|-\langle\beta\rangle-|\gamma|+\langle\gamma\rangle}
(\sqrt[4]{t})^{(\kk-1)\alpha}
(\sqrt[4]{t}+|y|)^{\alpha+D/\p+\langle\beta\rangle-|\beta|}
(|y|+r)^{|\beta|-2-\kk\alpha+|\gamma|-\langle\gamma\rangle} \w \, .
\end{equation}
Lastly, we establish the desired bound of \eqref{eq_supp6_singRec} using that $\langle\cdot\rangle-|\cdot|\leq0$, $\ve>\sqrt[4]{t}$, and $|\beta|-2-\kk\alpha\geq0$ (see Step~2a and note that also here purely polynomial $\gamma$ are not contributing). 

\item
Next, we turn to the case $\alpha+D/\p-\bar n<0$.
Here we use \eqref{est_Pi-_bounded_preliminary}, the second alternative of
\eqref{est_dGamma_inc}, and the moment bound \eqref{eq_momentbound_semigroup} to estimate
the summands of \eqref{tmp14} for $\ve\leq r$ by 
\begin{equation}\label{eq_supp8_singRec}
	\begin{aligned}
				&(\int_{B_r(x)}dz\,\EE^\frac{\p}{q}|
\big(d\Gamma_{z\,z+y} - d\Gamma_{xz}\Gamma_{x\,z+y}\big)_\beta^\gamma
\ve^{(\kk-k)\alpha}
\sum_{|\n|<|\gamma|-2}
\tfrac{1}{\n!}\partial^\n\big(z_k W_{k, \varepsilon}(\Pi_{z+y})\big)_\gamma(z+y)
(\cdot^{\n} )_{t}(0) |^q)^\frac{1}{\p}
	\\
& \lesssim \ve^{|\beta|-\langle\beta\rangle-|\gamma|+\langle\gamma\rangle}
(|y|+r)^{\langle\beta\rangle-\langle\gamma\rangle+D/\p} \w \,
\sum_{|\n|<|\gamma|-2}
\ve^{|\gamma|-2-|\n|}
(\sqrt[4]{t})^{|\n|} \\
&=\ve^{|\beta|-\langle\beta\rangle-|\gamma|+\langle\gamma\rangle}
(\sqrt[4]{t})^{(\kk-1)\alpha}
(\sqrt[4]{t}+|y|)^{\alpha+D/\p+\langle\beta\rangle-|\beta|} \\
&\quad\times
\sum_{|\n|<|\gamma|-2}
(\sqrt[4]{t})^{|\n|-(\kk-1)\alpha}
(\sqrt[4]{t}+|y|)^{-\alpha-D/\p-\langle\beta\rangle+|\beta|}
\ve^{|\gamma|-2-|\n|}
(|y|+r)^{\langle\beta\rangle-\langle\gamma\rangle+D/\p} \w \\
&\leq \ve^{|\beta|-\langle\beta\rangle-|\gamma|+\langle\gamma\rangle}
(\sqrt[4]{t})^{(\kk-1)\alpha}
(\sqrt[4]{t}+|y|)^{\alpha+D/\p+\langle\beta\rangle-|\beta|} \\
&\quad\times
\sum_{|\n|<|\gamma|-2}
(\sqrt[4]{t}+|y|)^{|\n|-\kk\alpha-D/\p-\langle\beta\rangle+|\beta|}
\ve^{|\gamma|-2-|\n|}
(|y|+r)^{\langle\beta\rangle-\langle\gamma\rangle+D/\p} \w \, .
	\end{aligned}
\end{equation}%
Since $\bar n-\alpha-D/\p>0$ by assumption,
$\alpha<0$,
and $|\n|-\bar n\geq0$ by Step~2b, we see that
$$|\n|-\kk\alpha-\tfrac{D}{\p}=\bar n -\alpha-\tfrac{D}{\p} - (\kk-1)\alpha+|\n|-\bar n\geq0\,.$$
Furthermore, $|\cdot|-\langle\cdot\rangle\geq0$, 
$|\gamma|-2-|\n|>0$, $\ve\leq r$,
and $\langle\beta\rangle-\langle\gamma\rangle+D/\p\geq0$ (see Lemma~\ref{lem_dGamma}), 
so that \eqref{eq_supp8_singRec} is further bounded by 
\begin{equation}
\ve^{|\beta|-\langle\beta\rangle-|\gamma|+\langle\gamma\rangle}
(\sqrt[4]{t})^{(\kk-1)\alpha}
(\sqrt[4]{t}+|y|)^{\alpha+D/\p+\langle\beta\rangle-|\beta|} 
(\sqrt[4]{t}+|y|+r)^{|\beta|-2-\kk\alpha+|\gamma|-\langle\gamma\rangle} \w \, .
\end{equation}
As before, this is the desired bound as in \eqref{eq_supp6_singRec}.
\end{itemize}

\textbf{Step 2b(ii).}
We estimate the contribution of \eqref{tmp13} in the case $\ve\leq\sqrt[4]{t}+|y|$. 
\begin{itemize}
	\item 
If $\alpha+D/\p-\bar n\geq0$
we use \eqref{est_Pi-_bounded_preliminary}, the first alternative of
\eqref{est_dGamma_inc}, and the moment estimate \eqref{eq_momentbound_semigroup} to
estimate the summands of \eqref{tmp14} for $\ve\leq r$ by 
\begin{align}
					&(\int_{B_r(x)}dz\,\EE^\frac{\p}{q}|
\big(d\Gamma_{z\,z+y} - d\Gamma_{xz}\Gamma_{x\,z+y}\big)_\beta^\gamma
\ve^{(\kk-k)\alpha}
\sum_{|\n|<|\gamma|-2}
\tfrac{1}{\n!}\partial^\n\big(z_k W_{k, \varepsilon}(\Pi_{z+y})\big)_\gamma(z+y)
(\cdot^{\n})_{t} (0) |^q)^\frac{1}{\p}
	\\
&\lesssim \ve^{|\beta|-\langle\beta\rangle-|\gamma|+\langle\gamma\rangle}
|y|^{\alpha+D/\p-\bar n}
(|y|+r)^{\langle\beta\rangle-\langle\gamma\rangle+\bar n-\alpha} \w \, 
\sum_{|\n|<|\gamma|-2} \ve^{|\gamma|-2-|\n|} (\sqrt[4]{t})^{|\n|} \\
&\lesssim \ve^{|\beta|-\langle\beta\rangle}
|y|^{\alpha+D/\p-\bar n}
(\sqrt[4]{t}+|y|)^{\langle\gamma\rangle-2}
(|y|+r)^{\langle\beta\rangle-\langle\gamma\rangle+\bar n-\alpha} \w \, , \label{tmp18}
\end{align}
where we directly used that $|\n|<|\gamma|-2$ implies $|\n|<\langle\gamma\rangle-2$. 
Moreover, using that $|\cdot|-\langle\cdot\rangle\geq0$, $\alpha+D/\p-\bar n\geq0$ by assumption, and $\alpha<0$, 
\eqref{tmp18} is further bounded by 
\begin{align}\label{eq_supp9_singRec}
\ve^{|\beta|-\langle\beta\rangle}
(\sqrt[4]{t})^{(\kk-1)\alpha+\langle\gamma\rangle-|\gamma|}
(\sqrt[4]{t}+|y|)^{|\gamma|-2-(\kk-1)\alpha-\bar n+\alpha+D/\p}
(|y|+r)^{\langle\beta\rangle-\langle\gamma\rangle+\bar n-\alpha} \w \, .
\end{align}
\begin{itemize}
	\item[($\bullet$)] If $\langle\beta\rangle-\langle\gamma\rangle+\bar n-\alpha\geq0$, then
\eqref{eq_supp9_singRec} is bounded as desired,
since also $|\gamma|-2-(\kk-1)\alpha-\bar n\geq0$ (see Step~2a) and $|\cdot|-\langle\cdot\rangle\geq0$.

\item[($\bullet$)] If $\langle\beta\rangle-\langle\gamma\rangle+\bar n-\alpha<0$, we bound \eqref{tmp18} instead by 
\begin{equation}
\ve^{|\beta|-\langle\beta\rangle}
|y|^{\langle\beta\rangle-\langle\gamma\rangle+D/\p}
(\sqrt[4]{t}+|y|)^{\langle\gamma\rangle-2} \w \, .
\end{equation}
Then, using that $\langle\beta\rangle-\langle\gamma\rangle+D/\p\geq0$, this is further bounded by 
\begin{align}
&\ve^{|\beta|-\langle\beta\rangle}
(\sqrt[4]{t}+|y|)^{\langle\beta\rangle-2+D/\p} \w \\
&= \ve^{|\beta|-\langle\beta\rangle}
(\sqrt[4]{t})^{(\kk-1)\alpha+\langle\gamma\rangle-|\gamma|}
(\sqrt[4]{t}+|y|)^{\alpha+D/\p+\langle\beta\rangle-|\beta|}
(\sqrt[4]{t})^{-(\kk-1)\alpha-\langle\gamma\rangle+|\gamma|}
(\sqrt[4]{t}+|y|)^{|\beta|-\alpha-2} \w \, .
\end{align}
Now, because $\alpha<0$ and $|\cdot|-\langle\cdot\rangle\geq0$, we can first bound 
$(\sqrt[4]{t})^{-(\kk-1)\alpha-\langle\gamma\rangle+|\gamma|}
\leq (\sqrt[4]{t}+|y|)^{-(\kk-1)\alpha-\langle\gamma\rangle+|\gamma|}$, 
and using $|\beta|\geq2+\kk\alpha$ (see Step~2a) and once more $|\cdot|-\langle\cdot\rangle\geq0$ we can then bound
$(\sqrt[4]{t}+|y|)^{|\beta|-2-\kk\alpha-\langle\gamma\rangle+|\gamma|}
\leq (\sqrt[4]{t}+|y|+r)^{|\beta|-2-\kk\alpha-\langle\gamma\rangle+|\gamma|}$, 
yielding the desired bound as in \eqref{eq_supp6_singRec}. 
\end{itemize}

\item
We turn to the case $\alpha+D/\p-\bar n<0$.
Here we use \eqref{est_Pi-_bounded_preliminary}, the second alternative of
\eqref{est_dGamma_inc}, and the moment estimate \eqref{eq_momentbound_semigroup} to
estimate the summands of \eqref{tmp14} for $\ve\leq r$ by 
\begin{align}
&\ve^{|\beta|-\langle\beta\rangle-|\gamma|+\langle\gamma\rangle}
(|y|+r)^{\langle\beta\rangle-\langle\gamma\rangle+D/\p} \w \, 
\sum_{|\n|<|\gamma|-2} \ve^{|\gamma|-2-|\n|} (\sqrt[4]{t})^{|\n|} \\
&\lesssim \ve^{|\beta|-\langle\beta\rangle}
(|y|+r)^{\langle\beta\rangle-\langle\gamma\rangle+D/\p} \w \, 
(\sqrt[4]{t}+|y|)^{\langle\gamma\rangle-2} \\
&= \ve^{|\beta|-\langle\beta\rangle}
(\sqrt[4]{t}+|y|)^{(\kk-1)\alpha+\langle\gamma\rangle-|\gamma|}
(\sqrt[4]{t}+|y|)^{\alpha+D/\p+\langle\beta\rangle-|\beta|}\\
& \quad \times
(\sqrt[4]{t}+|y|)^{|\gamma|-2-\kk\alpha-D/\p-\langle\beta\rangle+|\beta|}
(|y|+r)^{\langle\beta\rangle-\langle\gamma\rangle+D/\p} \w \, ,
\end{align}
where in the inequality we used $\langle\gamma\rangle-2-|\n|>0$ and $ \ve\leq\sqrt[4]{t}+|y| $.
By $\alpha<0$ and $|\cdot|-\langle\cdot\rangle\geq0$ we estimate
$(\sqrt[4]{t}+|y|)^{(\kk-1)\alpha+\langle\gamma\rangle-|\gamma|}
\leq (\sqrt[4]{t})^{(\kk-1)\alpha+\langle\gamma\rangle-|\gamma|}$.
Furthermore, by $|\gamma|\geq2+(\kk-1)\alpha+\bar n$ (see Step 2a) and hence 
$|\gamma|-2-\kk\alpha-D/\p\geq \bar n-\alpha-D/\p\geq0$, 
by $|\cdot|-\langle\cdot\rangle\geq0$, and by $\langle\beta\rangle-\langle\gamma\rangle+D/\p\geq0$ (see Lemma~\ref{lem_dGamma} (Item 2)), 
we have
\begin{equation}
(\sqrt[4]{t}+|y|)^{|\gamma|-2-\kk\alpha-D/\p-\langle\beta\rangle+|\beta|}
(|y|+r)^{\langle\beta\rangle-\langle\gamma\rangle+D/\p} 
\lesssim (\sqrt[4]{t}+|y|+r)^{|\beta|-2-\kk\alpha+|\gamma|-\langle\gamma\rangle} \, ,
\end{equation}
and the desired estimate follows. 
\end{itemize}

\textbf{Step 3 \textnormal{(reconstruction)}.}
We prove for $\ve\leq r$
\begin{equation}\label{eq_supp13_singRec}
	\begin{aligned}
&\Big(\int_{B_r(x)}dz \, 
\EE^\frac{\p}{q}\big|F_{x\,z+y\,\beta t}(z+y)\big|^q\Big)^\frac{1}{\p} \\
&\lesssim \sum_{\gamma\prec\beta}
\ve^{|\beta|-\langle\beta\rangle}
(\sqrt[4]{t})^{\kk\alpha+D/\p+\langle\gamma\rangle-|\gamma|+\langle\beta\rangle-|\beta|} 
(\sqrt[4]{t}+|y|+r)^{|\beta|-2-\kk\alpha+|\gamma|-\langle\gamma\rangle} \w \, .
	\end{aligned}
\end{equation}
This is a consequence of Steps~1 and 2 and reconstruction.
For completeness, we recall the argument from
\cite[Section~3.2]{BOT} in the 
$(\int_{B_r(x)}dz\,\EE^\frac{\p}{q}|(\cdot)_{\beta t}|^q)^\frac{1}{\p}$-setting.
It is based on the semigroup property, continuity \eqref{est_deltaPi-_taylorremainder}, and vanishing at the base-point established in
Step~1.
First, we introduce the representation through a telescoping sum:
\begin{equation}
	\begin{aligned}
	F_{x\,z+y\,t}(z+y) 
&= \sum_{s<t} \big( 
(F_{x\,\bullet\,2s}(\cdot))_{t-2s}(z+y)
- (F_{x\,\bullet\,s}(\cdot))_{t-s}(z+y) \big) \\
&= \sum_{s<t} 
\int d\bar z\, \Psi_{t-2s}(z+y-\bar z)
\int d\bar y\, \Psi_s(\bar z-\bar y) 
\big(F_{x\bar z s}(\bar y) - F_{x\bar y s}(\bar y)\big) \, ,
	\end{aligned}
\end{equation}
where the sum is restricted to dyadic fractions $s$ of $t$, i.e.~there exists a $k
\in \NN $ such that $ s= 2^{-k} t$.
Changing variables $\bar z\mapsto z+y-\bar z$ and then $\bar y\mapsto z+y-\bar z-\bar y$ shows 
\begin{equation}\label{eq_supp11_singRec}
F_{x\,z+y\,t}(z+y) 
= \sum_{s<t} 
\int d\bar z\, \Psi_{t-2s}(\bar z)
\int d\bar y\, \Psi_s(\bar y) 
\big(F_{x\,z+y-\bar z\, s}(z+y-\bar z-\bar y) - F_{x\,z+y-\bar z-\bar y\, s}(z+y-\bar
z-\bar y)\big) \, .
\end{equation}
Next, we use the continuity in the base-point \eqref{eq_supp6_singRec} (established in
Step~2) in combination with the identity \eqref{eq_supp11_singRec} and Minkowski's inequality (we change
the variables  according to $z\mapsto z-y+\bar z$ and also use that $ {B_r(x+y-\bar z)
	\subset B_{r+|y|+|\bar z|}(x)} $), to derive the bound
	\begin{equation}\label{eq_supp12_singRec}
\begin{aligned}
	&\bigg(\int_{B_r(x)}dz\,\EE^\frac{\p}{q}|F_{x\,z+y\, \beta t}(z+y)
			|^q\bigg)^\frac{1}{\p}
	\\
&\lesssim \sum_{s<t} \int d\bar z\,|\Psi_{t-2s}(\bar z)|
\int d\bar y\,|\Psi_s(\bar y)|
\sum_{\gamma\prec\beta}
\ve^{|\beta|-\langle\beta\rangle}
(\sqrt[4]{s})^{(\kk-1)\alpha+\langle\gamma\rangle-|\gamma|} 
(\sqrt[4]{s}+|\bar y|)^{\alpha+D/\p+\langle\beta\rangle-|\beta|} \\
& \qquad \qquad \times
(\sqrt[4]{s}+|\bar y|+r+|y|+|\bar z|)^{|\beta|-2-\kk\alpha+|\gamma|-\langle\gamma\rangle} \w \\
&\lesssim \sum_{\gamma\prec\beta}
\ve^{|\beta|-\langle\beta\rangle}
\sum_{s<t} 
(\sqrt[4]{s})^{\kk\alpha+D/\p+\langle\gamma\rangle-|\gamma|+\langle\beta\rangle-|\beta|} 
(\sqrt[4]{t}+|y|+r)^{|\beta|-2-\kk\alpha+|\gamma|-\langle\gamma\rangle} \w \, .
\end{aligned}
\end{equation}
Since the exponent
$\kk\alpha+D/\p+\langle\gamma\rangle-|\gamma|+\langle\beta\rangle-|\beta|$ on
$\sqrt[4]{s}$ is positive, 
the geometric series converges and
\eqref{eq_supp12_singRec} is further bounded by 
\begin{equation}
\sum_{\gamma\prec\beta}
\ve^{|\beta|-\langle\beta\rangle}
(\sqrt[4]{t})^{\kk\alpha+D/\p+\langle\gamma\rangle-|\gamma|+\langle\beta\rangle-|\beta|} 
(\sqrt[4]{t}+|y|+r)^{|\beta|-2-\kk\alpha+|\gamma|-\langle\gamma\rangle} \w \, .
\end{equation}
Thus, \eqref{eq_supp13_singRec} holds.

\textbf{Step 4.}
As an immediate consequence, we prove for $\ve\leq r$ 
\begin{equation}\label{eq_supp16_singRec}
\begin{aligned}
&\Big(\int_{B_r(x)}dz \, 
\EE^\frac{p}{q}\big|F_{xz\beta t}(z+y)\big|^q\Big)^\frac{1}{p} \\
&\lesssim \sum_{\gamma\prec\beta}
\ve^{|\beta|-\langle\beta\rangle}
(\sqrt[4]{t})^{(\kk-1)\alpha+\langle\gamma\rangle-|\gamma|}
(\sqrt[4]{t}+|y|)^{\alpha+D/\p+\langle\beta\rangle-|\beta|} 
(\sqrt[4]{t}+|y|+r)^{|\beta|-2-\kk\alpha+|\gamma|-\langle\gamma\rangle} \w \, .
\end{aligned}
\end{equation}
Indeed, writing
\begin{equation}
F_{xz\beta t}(z+y)
= (F_{xz}-F_{x\,z+y})_{\beta t}(z+y) + F_{x\,z+y\,\beta t}(z+y)\,,
\end{equation}
this is an immediate consequence of the triangle inequality and \eqref{eq_supp6_singRec} and
\eqref{eq_supp13_singRec} from Steps~2 and~3.

\textbf{Step 5.}
We prove
\begin{equation}\label{eq_supp14_singRec}
\Big(\int_{B_r(x)}dz \, 
\EE^\frac{\p}{q}\big|(\delta\zeta_\ve)_t(z+y)\big|^q\Big)^\frac{1}{\p}
\lesssim (\sqrt[4]{t})^{\alpha-2+D/\p} \w \, .
\end{equation}
Since we always consider $\p>2$ and $q<2$ (see the beginning of
Section~\ref{sec:singular}), we have by Minkowski's integral inequality
\begin{equation}
\Big(\int_{B_r(x)}dz \, 
\EE^\frac{\p}{q}\big|(\delta\zeta_\ve)_t(z+y)\big|^q\Big)^\frac{1}{\p}
\leq \EE^\frac{1}{q}\Big| \int_{B_r(x)}dz\, |(\delta\zeta_\ve)_t(z+y)|^{\p} \Big|^\frac{q}{\p}
\leq \EE^\frac{1}{q}\|\delta\zeta_\ve * \Psi_t\|_{L^{\p}(\RR^{1+d})}^q \, .
\end{equation}
Applying Young's convolution inequality with $1+1/\p=1/p_1+1/p_2$ and Sobolev's inequality (see \cite[Theorem~1.38]{BCD11}) for $p_2\geq2$ yields
\begin{equation}
\|\delta\zeta_\ve * \Psi_t\|_{L^{\p}(\RR^{1+d})}
\leq \|\Psi_t\|_{L^{p_1}} \|\delta\zeta_\ve\|_{L^{p_2}}
\leq \|\Psi_t\|_{L^{p_1}} \|\delta\zeta_\ve\|_{\dot H^{D/2-D/p_2}} \, .
\end{equation}
We choose 
\begin{equation}
p_1=\frac{D\p}{D\p+\p(\alpha-2+D/\p)}
\quad\textnormal{and}\quad
p_2=\frac{D}{2-\alpha} \,.
\end{equation}
Notice that $p_1>1$ because $\alpha-2+D/\p<0$ (see \eqref{p_large2}), 
and $p_2>2$ since $\alpha-2+D/2=s>0$ (by definition of $ \alpha$, see definition below~\eqref{rescaled_noise}).
Hence, 
\begin{equation}
	\begin{aligned}
		\EE^\frac{1}{q}\|\delta\zeta_\ve * \Psi_t\|_{L^{\p}(\RR^{1+d})}^q
		\lesssim  
		 (\sqrt[4]{t})^{D/p_1-D}
		 \EE^{ \frac{1}{q}} \|\delta\zeta_\ve\|_{\dot H^{D/2-D/p_2}}^{q}
		 =(\sqrt[4]{t})^{\alpha-2+D/\p}
		 \EE^{ \frac{1}{q}} \|\delta\zeta_\ve\|_{\dot H^{D/2-D/p_2}}^{q}\,,
	\end{aligned}
\end{equation}
where we used 
 $\|\Psi_t\|_{L^{p_1}} = (\sqrt[4]{t})^{D/p_1-D}\|\Psi_{t=1}\|_{L^{p_1}}$
(see~\eqref{eq_Psi_scale}) in the inequality,
and $D/p_1-D=\alpha-2+D/\p$ thereafter.
Since $D/2-D/p_2=s$ and $\|\delta\zeta_\ve\|_{\dot H^s} = \|\delta\zeta\|_{\dot H^s}$ we conclude \eqref{eq_supp14_singRec} by noting that
$\EE^\frac{1}{q}|\cdot|^q\leq\EE^\frac{1}{p^*}|\cdot|^{p^*}$ for $q<p^*$,
and recalling the definition of $\w$ in \eqref{eq:weight}.

\textbf{Step 6.}
We prove for $\ve\leq r$ 
\begin{equation}\label{eq_supp15_singRec}
\begin{aligned}
&\Big(\int_{B_r(x)}dz \, 
\EE^\frac{\p}{q}\big|\big(
d\Gamma_{xz}\sum_k \ve^{(\kk-k)\alpha} T_z^{\leq\langle\cdot\rangle-2}\big(z_k W_{k,
\varepsilon}(\Pi_z)\big) \big)_{\beta t}(z+y)\big|^q\Big)^\frac{1}{\p} \\
&\lesssim \ve^{|\beta|-\langle\beta\rangle} 
(\sqrt[4]{t})^{\alpha-2}
(\sqrt[4]{t}+|y|)^{D/\p} 
(\sqrt[4]{t}+|y|+r)^{\langle\beta\rangle-\alpha} \w \, .
\end{aligned}
\end{equation}
The argument is similar to the estimates of \eqref{tmp13} performed in Step~2b, we just
replace \eqref{est_dGamma_inc} by \eqref{est_dGamma}, which yields for $\ve\leq r$ that
the left-hand side of \eqref{eq_supp15_singRec} is bounded by
\begin{equation}
\sum_{\gamma\prec\beta}
\ve^{|\beta|-\langle\beta\rangle-|\gamma|+\langle\gamma\rangle} r^{\langle\beta\rangle-\langle\gamma\rangle+D/\p} \w \, 
\sum_{|\n|<|\gamma|-2} \ve^{|\gamma|-2-|\n|} (\sqrt[4]{t}+|y|)^{|\n|} \, .
\end{equation}
Using $\langle\beta\rangle-\langle\gamma\rangle+D/\p>0$, $\ve\leq r$, and that $|\n|<|\gamma|-2$ implies $|\n|<\langle\gamma\rangle-2$, this is bounded by 
\begin{align}
&\ve^{|\beta|-\langle\beta\rangle} 
(\sqrt[4]{t}+|y|+r)^{\langle\beta\rangle-2+D/\p} \w \\
&= \ve^{|\beta|-\langle\beta\rangle} 
(\sqrt[4]{t})^{\alpha-2+D/\p} (\sqrt[4]{t})^{-(\alpha-2+D/\p)} 
(\sqrt[4]{t}+|y|+r)^{\langle\beta\rangle-2+D/\p} \w \, , 
\end{align}
which in turn by $\alpha-2+D/\p<0$, see \eqref{p_large2}, is bounded by 
\begin{equation}
\ve^{|\beta|-\langle\beta\rangle} 
(\sqrt[4]{t})^{\alpha-2+D/\p} 
(\sqrt[4]{t}+|y|+r)^{\langle\beta\rangle-\alpha} \w \, .
\end{equation}
Hence, \eqref{eq_supp15_singRec} holds.

\textbf{Step 7 \textnormal{(conclusion)}.}
Using the representation \eqref{eq_germ} 
$$\delta\Pi^-_x-d\Gamma_{xz}\Pi^-_z 
= F_{xz} 
+ \delta\zeta_\ve \, 1
+ d\Gamma_{xz}\sum_{k \geq \kk}\ve^{(\kk-k)\alpha} T_z^{<|\cdot|-2}(z_kW_{k, \varepsilon}(\Pi_z))\,,$$
we obtain \eqref{est_deltaPi-_inc} 
by combining \eqref{eq_supp16_singRec}, \eqref{eq_supp14_singRec}, and \eqref{eq_supp15_singRec} from Steps~4, 5, and 6, respectively, and noting that 
\begin{align}
&(\sqrt[4]{t})^{(\kk-1)\alpha+\langle\gamma\rangle-|\gamma|}
(\sqrt[4]{t}+|y|)^{\alpha+D/\p+\langle\beta\rangle-|\beta|} 
(\sqrt[4]{t}+|y|+r)^{|\beta|-2-\kk\alpha+|\gamma|-\langle\gamma\rangle} \\
&= (\sqrt[4]{t})^{\alpha-2} (\sqrt[4]{t})^{-\alpha+|\beta|-\langle\beta\rangle} 
(\sqrt[4]{t}+|y|)^{\alpha+D/\p+\langle\beta\rangle-|\beta|} \\
&\quad
(\sqrt[4]{t})^{2+(\kk-1)\alpha+\langle\gamma\rangle-|\gamma|+\langle\beta\rangle-|\beta|}
(\sqrt[4]{t}+|y|+r)^{|\beta|-2-\kk\alpha+|\gamma|-\langle\gamma\rangle} \\
&\leq (\sqrt[4]{t})^{\alpha-2} 
(\sqrt[4]{t}+|y|)^{D/\p} 
(\sqrt[4]{t}+|y|+r)^{\langle\beta\rangle-\alpha} \, ,
\end{align}
where in the inequality we used $\alpha<0$, $|\cdot|-\langle\cdot\rangle\geq0$, and that
$2+(\kk-1)\alpha+\langle\gamma\rangle-|\gamma|+\langle\beta\rangle-|\beta|>0$. 
\end{proof}

Having established the reconstruction bound in Proposition~\ref{prop_recIII}, we can retrieve
a moment bound on the Malliavin derivative, which is the main input of the spectral gap
inequality.

\begin{proposition}[Sobolev, Step 4.d)]\label{prop_sobolev}
Let $[\beta]\geq0$ and assume that $\eqref{Pi-_recentre}_\beta$, $\eqref{est_deltaPi-_bounded}_\beta$ and $\eqref{est_deltaPi-_inc}_\beta$ hold,
that $\eqref{est_Gamma}_\beta^\gamma$, $\eqref{est_deltaGamma}_\beta^\gamma$, and $\eqref{est_dGamma}_\beta^\gamma$ hold for all $\gamma$ populated and not purely polynomial, 
and that $\eqref{est_Pi-}_{\beta'}$ and $\eqref{est_deltaPi-}_{\beta'}$ hold for all populated $\beta'\prec\beta$.
Then 
\begin{equation}\label{est_deltaPi-}
\EE^\frac{1}{q} \left|
\delta\Pi_{x\beta t}^-(y) \right|^q
\lesssim \ve^{|\beta|-\langle\beta\rangle}
\big(\sqrt[4]{t}\big)^{\alpha-2} (\sqrt[4]{t}+|x-y|)^{\langle\beta\rangle-\alpha} \w \, .
\end{equation}
\end{proposition}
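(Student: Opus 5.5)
The plan is to turn the $L^{\p}$-averaged modelledness estimate $\eqref{est_deltaPi-_inc}_\beta$ into a pointwise annealed bound. The key observation is that $\delta\Pi^-_{x\beta t}(y)$ does not depend on an auxiliary base point $z$. I will therefore split into two regimes according to the size of $\ve$ compared to $\sqrt[4]{t}+|x-y|$, exactly as in Step~2 of Proposition~\ref{prop:regrec}.

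\emph{Regime $\ve>\sqrt[4]{t}+|x-y|$.} Here I use only the qualitative bound $\eqref{est_deltaPi-_bounded}_\beta$ with $\n=\0$. Combined with the moment bound \eqref{eq_momentbound_semigroup}, it gives
\begin{equation}
\EE^\frac1q|\delta\Pi^-_{x\beta t}(y)|^q\lesssim \ve^{\alpha-2}(\ve+\sqrt[4]{t}+|x-y|)^{|\beta|-\alpha}\w\lesssim \ve^{|\beta|-\langle\beta\rangle}\ve^{\langle\beta\rangle-2}\w .
\end{equation}
Since $\langle\beta\rangle-2<0$ for singular $\beta$ and $\ve>\sqrt[4]{t}+|x-y|$, we get $\ve^{\langle\beta\rangle-2}\le(\sqrt[4]{t}+|x-y|)^{\langle\beta\rangle-2}$. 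This is in turn at most $(\sqrt[4]{t})^{\alpha-2}(\sqrt[4]{t}+|x-y|)^{\langle\beta\rangle-\alpha}$, because $\alpha-2<0$. This yields \eqref{est_deltaPi-} in this regime.

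\emph{Regime $\ve\le\sqrt[4]{t}+|x-y|$.} I choose $r\coloneqq\sqrt[4]{t}+|x-y|\ge\ve$, so that Proposition~\ref{prop_recIII} and \eqref{est_dGamma} are available. For every $z\in B_r(x)$ I write
\begin{equation}
\delta\Pi^-_{x\beta t}(y)=\big(\delta\Pi^-_x-d\Gamma_{xz}\Pi^-_z\big)_{\beta t}(y)+\sum_{\gamma}(d\Gamma_{xz})_\beta^\gamma\,\Pi^-_{z\gamma t}(y).
\end{equation}
I take $\EE^{1/q}|\cdot|^q$, then the $L^{\p}(B_r(x),dz)$ norm. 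The left-hand side is constant in $z$, so it picks up exactly the factor $|B_r|^{1/\p}\sim r^{D/\p}$.

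For the first term I write $y=z+(y-z)$. The shift $y-z$ depends on $z$, but $|y-z|\le|x-y|+r\lesssim r$, so I will reduce to \eqref{est_deltaPi-_inc} with the shift argument of size $\lesssim r$. This can be done either by a change of variables as in Step~3 of Proposition~\ref{prop_recIII}, or by noting that the proof of \eqref{est_deltaPi-_inc} is uniform in the shift. It gives $\ve^{|\beta|-\langle\beta\rangle}(\sqrt[4]{t})^{\alpha-2}r^{D/\p}r^{\langle\beta\rangle-\alpha}\w$.

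For the second term, only populated, not purely polynomial $\gamma\prec\beta$ contribute, since $\Pi^-_{z\delta_\n}=0$, $d\Gamma$ is strictly triangular by Lemma~\ref{lem_dGamma}, and the sum is finite by Lemma~\ref{lem_ind_legal}. I apply H\"older in probability, putting the $q'$-moment on $d\Gamma$ and the $p$-moment on $\Pi^-$, with $q<q'<p^*$. Then $\eqref{est_dGamma}_\beta^\gamma$ together with $\eqref{est_Pi-}_\gamma$ at base point $z$ and $|z-y|\lesssim r$ give
\begin{equation}
\ve^{|\beta|-\langle\beta\rangle}r^{\langle\beta\rangle-\langle\gamma\rangle+D/\p}(\sqrt[4]{t})^{\alpha-2}r^{\langle\gamma\rangle-\alpha}\w .
\end{equation}
Dividing by $r^{D/\p}$ yields \eqref{est_deltaPi-}.

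The main obstacle I expect is the bookkeeping of which objects are genuinely Malliavin-differentiated. If instead one needs to pass through the recentering identity $\eqref{Pi-_recentre}_\beta$, then $\delta$ also hits $\Gamma_{xz}$ and the recentering polynomial, and these terms must be controlled. For them I would use $\eqref{est_deltaGamma}_\beta^\gamma$ with $\eqref{est_Pi-}_{\gamma}$, and $\eqref{est_Gamma}_\beta^\gamma$ with $\eqref{est_deltaPi-}_{\gamma}$ for $\gamma\prec\beta$, in the same way as in Step~2a of Proposition~\ref{prop:regrec}. The Taylor-polynomial terms would be treated via $\eqref{est_Pi-_bounded_preliminary}$ and the two $\ve$-regimes, as in Steps~2b and~2c there.
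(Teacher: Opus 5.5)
Your regime $\ve>\sqrt[4]{t}+|x-y|$ and your bound on the $d\Gamma_{xz}\Pi^-_z$-term are fine. The first term in the regime $\ve\le\sqrt[4]{t}+|x-y|$, however, is a genuine gap.

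Estimate \eqref{est_deltaPi-_inc} controls $z\mapsto(\delta\Pi^-_x-d\Gamma_{xz}\Pi^-_z)_{\beta t}(y'+z)$ in $L^{\p}(B_r(x))$ for a \emph{fixed offset} $y'$, so base point and evaluation point move together. You need this function at a \emph{fixed evaluation point} $y$ while $z$ varies, i.e.\ along the slice $y'=y-z$. A bound on every fixed-offset slice, even one uniform in $y'$, says nothing about this anti-diagonal slice, which has measure zero in the $(z,y')$-variables.

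A change of variables does not help, because $z$ is the only integration variable. In Step~3 of Proposition~\ref{prop_recIII} the substitution works precisely because the evaluation point there is $z+y$. Nor is the proof of \eqref{est_deltaPi-_inc} ``uniform in the shift'' in your sense. It rests on \eqref{est_dGamma_inc}, again a fixed-offset $L^{\p}$-in-$z$ bound, because $d\Gamma_{xz}$ (through $d\pi^{(\n)}_{xz}$) is only controlled on average in $z$. In fact, since your second term is under control, bounding your first term is equivalent to bounding $|B_r|^{1/\p}\,\EE^{1/q}|\delta\Pi^-_{x\beta t}(y)|^q$. The reduction is therefore circular.

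What is missing is a genuine passage from $L^{\p}$ to pointwise control, and this is what the paper does. It first averages over evaluation points, bounding $\big(\int_{B_r(x)}dz\,\EE^{\p/q}|\delta\Pi^-_{x\beta t}(y+z)|^q\big)^{1/\p}$ via your decomposition; now the offset is fixed, so \eqref{est_deltaPi-_inc}, \eqref{est_dGamma} and \eqref{est_Pi-} apply. It then invokes the Sobolev Lemma~\ref{lem:sobolev} together with the derivative bounds of Remark~\ref{rem_improve_n}. Each derivative costs $(\sqrt[4]{t})^{-|\n|}$ against a gain $r^{|\n|}$, so with $r=\sqrt[4]{t}+|x-y|$ this closes only on the diagonal, giving $\EE^{1/q}|\delta\Pi^-_{y\beta t}(y)|^q\lesssim\ve^{|\beta|-\langle\beta\rangle}(\sqrt[4]{t})^{\langle\beta\rangle-2}\w$.

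The off-diagonal case follows from the exact recentering $\Pi^-_{x\beta}=(\Gamma_{xy}\Pi^-_y)_\beta$; the polynomial in \eqref{Pi-_recentre} vanishes since $|\beta|<2$. Hence $\delta\Pi^-_{x\beta}=(\delta\Gamma_{xy}\Pi^-_y)_\beta+(\Gamma_{xy}\delta\Pi^-_y)_\beta$, which is estimated with \eqref{est_deltaGamma}, \eqref{est_Gamma}, \eqref{est_Pi-}, $\eqref{est_deltaPi-}_{\beta'}$ for $\beta'\prec\beta$, and the diagonal bound for the $\gamma=\beta$ term. This is why those hypotheses appear in the statement. In your plan recentering is only a fallback, and the Taylor-polynomial terms you anticipate there do not occur for $|\beta|<2$.
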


\begin{proof}
We split the proof into two cases.

\textbf{Step 1.} Assume $\ve\geq \sqrt[4]{t}+|x-y|$.
In this case we appeal to \eqref{est_deltaPi-_bounded} and obtain 
\begin{align}
\EE^\frac{1}{q} \left|
\delta\Pi_{x\beta t}^-(y) \right|^q
&\leq \int dz\, |\Psi_t(y-z)| 
\EE^\frac{1}{q} \left|\delta\Pi_{x\beta}^-(z) \right|^q \\
&\lesssim \int dz\, |\Psi_t(y-z)| 
\ve^{\alpha-2}
( \ve+ | x-z|)^{| \beta| - \alpha} \w \\
&\lesssim \ve^{\alpha-2} (\ve+\sqrt[4]{t}+|x-y|)^{|\beta|-\alpha} \w \,,
\end{align}
where in the last inequality we used the moment bound \eqref{eq_momentbound_semigroup}.
Since $|\cdot|-\alpha\geq0$, 
this is bounded by $\ve^{|\beta|-2}$, which in turn  is bounded by
$\ve^{|\beta|-\langle\beta\rangle} (\sqrt[4]{t}+|x-y|)^{\langle\beta\rangle-2}$ using
$\langle\beta\rangle\leq|\beta|<2$.
Hence, the desired bound of \eqref{est_deltaPi-} holds in this case. 

\textbf{Step 2.} Assume $\ve<\sqrt[4]{t}+|x-y|$.
First, we apply Lemma~\ref{lem:sobolev} with $u=\delta\Pi^-_{x\beta t}$ which yields for $k>D/\p$
\begin{equation}\label{eq_supp2_sobolev}
	\begin{aligned}
\EE^\frac{1}{q}|\delta\Pi^-_{x\beta t}(y)|^q
&\lesssim \sum_{|\n|\leq 2k}
r^{|\n|-D/\p} \Big( \int_{B_r(0)} dz\, \EE^\frac{\p}{q}\big|\partial^\n
\delta\Pi^-_{x\beta t}(y+z)\big|^q\Big)^\frac{1}{\p} \\
&
=
\sum_{|\n|\leq 2k}
r^{|\n|-D/\p} \Big( \int_{B_r(x)} dz\, \EE^\frac{\p}{q}\big|\partial^\n
\delta\Pi^-_{x\beta t}(y-x+z)\big|^q\Big)^\frac{1}{\p}\, .
	\end{aligned}
\end{equation}
Next, using the bound 
(with $y$ replaced by $y-x$)
\begin{equation}\label{eq_supp1_sobolev}
\Big( \int_{B_r(x)} dz\, \EE^\frac{\p}{q}\big| \delta\Pi^-_{x\beta t}(y+z) \big|^q\Big)^\frac{1}{\p} 
\lesssim
\ve^{|\beta|-\langle\beta\rangle}
(\sqrt[4]{t})^{\alpha-2} 
(\sqrt[4]{t}+|y|+r)^{\langle\beta\rangle-\alpha+D/\p} \w 
\quad\textnormal{for }\ve\leq r \,,
\end{equation}
which we show at the end of the proof, \eqref{eq_supp2_sobolev} (improved to include $
\partial^{\n}$, see Remark~\ref{rem_improve_n}) implies for $\ve\leq r$
\begin{equation}
\EE^\frac{1}{q}|\delta\Pi^-_{x\beta t}(y)|^q
\lesssim \sum_{|\n|\leq 2k}
r^{|\n|-D/\p} 
\ve^{|\beta|-\langle\beta\rangle}
(\sqrt[4]{t})^{\alpha-2-|\n|} 
(\sqrt[4]{t}+|x-y|+r)^{\langle\beta\rangle-\alpha+D/\p} \w \, .
\end{equation}
Now, choosing $r=\sqrt[4]{t}+|x-y|>\ve$ yields
\begin{equation}
\EE^\frac{1}{q}|\delta\Pi^-_{x\beta t}(y)|^q
\lesssim \sum_{|\n|\leq 2k}
\ve^{|\beta|-\langle\beta\rangle}
(\sqrt[4]{t})^{\alpha-2-|\n|} 
(\sqrt[4]{t}+|x-y|)^{\langle\beta\rangle-\alpha+|\n|} \w \, .
\end{equation}
For $y=x$ this estimate collapses to 
\begin{equation}\label{eq_supp3_sobolev}
\EE^\frac{1}{q}|\delta\Pi^-_{y\beta t}(y)|^q
\lesssim 
\ve^{|\beta|-\langle\beta\rangle}
(\sqrt[4]{t})^{\langle\beta\rangle-2} \w \, .
\end{equation}
Moreover, because 
$\Pi^-_{x\beta} = (\Gamma_{xy}\Pi^-_y)_\beta$ (see \eqref{Pi-_recentre} since
$\langle\beta\rangle-2\leq|\beta|-2<0$), we have
\begin{equation}
	\begin{aligned}
		\delta\Pi^-_{x\beta} =
(\delta\Gamma_{xy}\Pi^-_y)_\beta+(\Gamma_{xy}\delta\Pi^-_y)_\beta\,.
	\end{aligned}
\end{equation}
Consequently, we can post-process \eqref{eq_supp3_sobolev} using additionally
$\eqref{est_deltaGamma}_\beta^{\gamma} $, $\eqref{est_Pi-}_{\prec\beta}$,
$\eqref{est_Gamma}_\beta^{\gamma}$, and $\eqref{est_deltaPi-}_{\prec\beta}$, which yields
\begin{align}
\EE^\frac{1}{q}|\delta\Pi^-_{x\beta t}(y)|^q
&\lesssim 
\sum_{\gamma\prec\beta}
\ve^{|\beta|-\langle\beta\rangle-|\gamma|+\langle\gamma\rangle}
|x-y|^{\langle\beta\rangle-\langle\gamma\rangle} \w \, 
\ve^{|\gamma|-\langle\gamma\rangle}
(\sqrt[4]{t})^{\langle\gamma\rangle-2} \\
&\,+ \sum_{\gamma\preceq\beta}
\ve^{|\beta|-\langle\beta\rangle-|\gamma|+\langle\gamma\rangle}
|x-y|^{\langle\beta\rangle-\langle\gamma\rangle} 
\ve^{|\gamma|-\langle\gamma\rangle}
(\sqrt[4]{t})^{\langle\gamma\rangle-2} \w \, .
\end{align}
Since $\langle\beta\rangle-\langle\gamma\rangle\geq0$ and
$\langle\cdot\rangle-\alpha\geq0$ the desired bound of \eqref{est_deltaPi-} follows.

It is only left to verify \eqref{eq_supp1_sobolev}. Indeed, 
writing $\delta\Pi^-_x = \delta\Pi^-_x-d\Gamma_{xz}\Pi^-_z + d\Gamma_{xz}\Pi^-_z$ and
using the triangle inequality together with \eqref{est_deltaPi-_inc}, \eqref{est_dGamma},
and \eqref{est_Pi-}, we obtain for $ \varepsilon \leqslant r $
\begin{align}
&\Big( \int_{B_r(x)} dz\, \EE^\frac{\p}{q}\big| \delta\Pi^-_{x\beta t}(y+z) \big|^q\Big)^\frac{1}{\p} \\
&\lesssim \ve^{|\beta|-\langle\beta\rangle}
(\sqrt[4]{t})^{\alpha-2} 
(\sqrt[4]{t}+|y|)^{D/\p} 
(\sqrt[4]{t}+|y|+r)^{\langle\beta\rangle-\alpha} \w \\
&\,+ \sum_{\gamma\prec\beta}
\ve^{|\beta|-\langle\beta\rangle-|\gamma|+\langle\gamma\rangle}
r^{\langle\beta\rangle-\langle\gamma\rangle+D/\p} \w \, 
\ve^{|\gamma|-\langle\gamma\rangle}
(\sqrt[4]{t})^{\alpha-2}
(\sqrt[4]{t}+|y|)^{\langle\gamma\rangle-\alpha}
\, .
\end{align}
Because $\langle\beta\rangle-\langle\gamma\rangle+D/\p>0$ and
$\langle\cdot\rangle-\alpha\geq0$ (see Lemmas~\ref{lem_dGamma} and~\ref{lem_hom_props}), this yields
\eqref{eq_supp1_sobolev}.
\end{proof}

\subsection{Block 5: Spectral gap}\label{sec:block5}

Now, we lift the bounds on $ \EE
\Pi^{-}_{ \beta} $ and $ \EE^{1/q}| \delta \Pi^{-}_{ \beta}|^{q}$ to the desired moment
estimate for
$ \Pi^{-}_{ \beta}$. 

\begin{proposition}[Spectral gap, Step 5]\label{prop_SG}
Let $[\beta]\geq0$ and assume that $\eqref{est_expectation}_\beta$ and $\eqref{est_deltaPi-}_\beta$ hold. 
Then $\eqref{est_Pi-}_\beta$ holds. 
\end{proposition}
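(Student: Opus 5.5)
The plan is to apply the spectral gap inequality in the form \eqref{eq:sg} to the cylindrical (after approximation) functional $F=\Pi^-_{x\beta t}(y)$, for fixed $x,y\in\RR^{1+d}$ and $t>0$. For $p\geq2$ this gives
\begin{equation}
\EE^\frac{1}{p}|\Pi^-_{x\beta t}(y)|^p
\lesssim |\EE\Pi^-_{x\beta t}(y)|
+ \sup_{\delta\zeta\neq0}\frac{\EE\,\delta\Pi^-_{x\beta t}(y)}{\w} \, ,
\end{equation}
with $\w=\EE^{1/p^*}\|\delta\zeta\|^{p^*}_{\dot H^s}$ as in \eqref{eq:weight}. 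For $p<2$ the claim follows from the case $p=2$ by Jensen's inequality, so it suffices to treat $p\geq2$.

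The first term on the right-hand side is controlled directly by \eqref{est_expectation}, which already has the form of the right-hand side of \eqref{est_Pi-}. For the second term, I would bound $|\EE\,\delta\Pi^-_{x\beta t}(y)|\leq\EE^{1/q}|\delta\Pi^-_{x\beta t}(y)|^q$ for any $q\in[1,p^*)$, say $q=1$, and then invoke \eqref{est_deltaPi-}. This gives $\ve^{|\beta|-\langle\beta\rangle}(\sqrt[4]{t})^{\alpha-2}(\sqrt[4]{t}+|x-y|)^{\langle\beta\rangle-\alpha}\,\w$, uniformly in $\delta\zeta$. Dividing by $\w$ and taking the supremum yields the desired bound, and adding the two contributions gives \eqref{est_Pi-}. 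The implicit constant depends only on $p$ and on the constants of the two assumed estimates; the $p$-dependence of \eqref{est_deltaPi-} enters through $p^*$ in $\w$ and is harmless.

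The main obstacle is the justification step rather than the estimate itself. First, $\Pi^-_{x\beta t}(y)$ is not literally a cylindrical functional, so one has to pass to the limit via the closability assumption in Assumption~\ref{ass_zeta}. Second, one has to identify $\delta\Pi^-_{x\beta t}(y)$ with the pairing $(\partial F/\partial\zeta,\delta\zeta)$ in \eqref{gateaux}. I would handle both points by approximating $\Pi^-_{x\beta t}(y)$ by functionals of finitely many smeared noise evaluations. Convergence would follow from the moment bounds \eqref{est_Pi-_bounded} and \eqref{est_deltaPi-_bounded}, and I would refer to \cite[Section~7]{LOTT24} for the technical details, as the paper does for integration arguments. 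Finally, the equality in \eqref{eq:sg} is a duality statement between $L^p(\Omega;\dot H^{-s})$ and $L^{p^*}(\Omega;\dot H^s)$. With that in hand, only the two displayed inputs are needed.
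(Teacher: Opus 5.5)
Your proposal is correct and follows essentially the same route as the paper. You apply \eqref{eq:sg} to $F=\Pi^-_{x\beta t}(y)$, control the mean by \eqref{est_expectation}, and control the dual term by \eqref{est_deltaPi-} after Jensen, while the paper likewise defers the justification that $F$ lies in the completion of cylindrical functionals to \cite[Section~7 and Appendix~A]{LOTT24}.
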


The proof of this proposition is an immediate consequence of an application of the spectral gap inequality \eqref{eq:sg} to $F=\Pi^-_{x\beta t}(y)$, 
which is justified since the latter is in the completion of cylindrical functionals with respect to the norm 
$\EE^{1/p}(|F|^p+\|\partial F/\partial\zeta\|_{\dot H^{-s}}^p)$, see \cite[Section~7
and Appendix~A]{LOTT24}. 

\subsection{Block 6: Estimates of \texorpdfstring{$\Pi$ and $\Gamma$}{Pi and Gamma}}\label{sec:block6}

Next, we again construct $ \Pi_{ \beta}$ from $ \Pi_{ \beta}^{-}$ through an integration
argument, and prove that the same covariances hold as for $ \Pi_{\beta}^{-}$. 
Also, we once more choose the polynomials $ \pi^{(\n)}_{ \beta}$ such that $ \Gamma$
 recentres $ \Pi_{ \beta}$, with the desired stochastic estimates.  
Indeed, this entire block is analogous to the regular case:
\begin{itemize}
\item Step 6.a) is identical to Proposition~\ref{prop_integration} and yields the
	integration estimate $\eqref{est_Pi}_\beta$, 
\item Step 6.b) is identical to Proposition~\ref{prop_covariance_Pi} and yields the
	covariances $\eqref{Pi_shift}_\beta$, $\eqref{Pi_parity}_\beta$, and $\eqref{Pi_reflection}_\beta$, 
\item Steps 6.c)-e) are identical to Proposition~\ref{prop_3pt} and yield
	$\eqref{eq_recentre_Pi}_\beta$, $\eqref{est_pin}_\beta$, and
	$\eqref{est_Gamma}_\beta^\gamma$ for all $\gamma$ through the same three-point
	argument. 
\end{itemize}

\subsection{Block 7: Malliavin derivatives of \texorpdfstring{$\Pi$ and $\Gamma$}{Pi and Gamma}}\label{sec:block7}

In the previous block we constructed and estimated $ \Pi_{ \beta}$ and $ \Gamma$, the
main quantities of interest in the model.
To close the induction, we now establish analogous estimates for $ \delta
\Pi$ and $ \delta \Gamma$,  which provide the input for the next induction level.

\begin{proposition}[Integration II, Steps 7.a) and 7.b)]\label{prop_intII}
Let $[\beta]\geq0$ and assume that $\eqref{est_deltaPi-}_\beta$ holds. 
Then 
\begin{equation}\label{est_deltaPi}
\EE^\frac{1}{q} \left|
\delta\Pi_{x\beta}(y) \right|^q
\lesssim \ve^{|\beta|-\langle\beta\rangle}
(\sqrt[4]{t})^\alpha (\sqrt[4]{t}+|x-y|)^{\langle\beta\rangle-\alpha} \w \, .
\end{equation}
\end{proposition}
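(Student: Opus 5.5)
The plan is to repeat the integration argument of Proposition~\ref{prop_integration} (following \cite[Section~3.7]{BOT}) with $\Pi^-_{x\beta}$ replaced by its Malliavin derivative. The norm $\EE^{1/p}|\cdot|^p$ becomes $\EE^{1/q}|\cdot|^q$, and the weight $\w$ is simply carried along. Since Malliavin differentiation commutes with the deterministic operations $(\mathrm{id}-T_x^{<|\beta|})$, $(-\partial_{x_0}-\Delta)$ and convolution with $\Psi_s$, differentiating \eqref{eq_supp1_regInt} gives the representation
\begin{equation}
\delta\Pi_{x\beta} = \int_0^\infty ds\, (\mathrm{id}-T_x^{<|\beta|})(-\partial_{x_0}-\Delta)\delta\Pi^-_{x\beta s} \, .
\end{equation}
Equivalently, $\delta\Pi_{x\beta}$ is the unique solution of $(\partial_{x_0}-\Delta)\delta\Pi_{x\beta}=\delta\Pi^-_{x\beta}$ that vanishes to order $\langle\beta\rangle$ at $x$. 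The interchange of $\delta$ and the $s$-integral is the qualitative point deferred to \cite[Section~7]{LOTT24}, and I would take it from there.

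Next I would test with $\Psi_t$ and use the semigroup property to write $\delta\Pi_{x\beta t}(y)=\int_0^\infty ds\,(\mathrm{id}-T_x^{<|\beta|})(-\partial_{x_0}-\Delta)\delta\Pi^-_{x\beta\, s+t}(y)$. The $s$-integral then splits at the scale $\sqrt[4]{s}\sim\sqrt[4]{t}+|x-y|$.

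\textbf{Small $s$.} I would drop the Taylor polynomial by the triangle inequality. The term without the Taylor polynomial is bounded using \eqref{est_deltaPi-} in its derivative form (Remark~\ref{rem_improve_n}), which gives $\ve^{|\beta|-\langle\beta\rangle}(\sqrt[4]{s+t})^{\alpha-4}(\sqrt[4]{s+t}+|x-y|)^{\langle\beta\rangle-\alpha}\w$. Integrating over $s$ produces $(\sqrt[4]{t})^{\alpha}$ because $\alpha<0$. Each Taylor term has the form $\partial^\m(\cdots)(x)\,(\cdot-x)^\m$ with $|\m|<|\beta|$, equivalently $|\m|<\langle\beta\rangle$ by the choice of $\kappa$. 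Evaluating \eqref{est_deltaPi-} at the base point gives $(\sqrt[4]{s})^{\langle\beta\rangle-4-|\m|}$, which is integrable at infinity but not necessarily near $0$. For this reason the Taylor terms are only treated this way in the large-$s$ regime, while in the small-$s$ regime they are recombined with the remainder.

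\textbf{Large $s$.} I would use the Taylor-remainder form, bounding $\partial^\m$ of $\delta\Pi^-_{x\beta\,s+t}$ with $|\m|>\langle\beta\rangle$ along the segment. This gives a factor $|x-y|^{|\m|}(\sqrt[4]{s})^{\langle\beta\rangle-4-|\m|}$, which is integrable at infinity since $\langle\beta\rangle-|\m|<0$. The result is $\ve^{|\beta|-\langle\beta\rangle}(\sqrt[4]{t}+|x-y|)^{\langle\beta\rangle}\w$, which is dominated by the right-hand side of \eqref{est_deltaPi}.

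The main obstacle is the bookkeeping needed to ensure the integrals converge in both regimes. This hinges on $\langle\beta\rangle\notin\NN$, so that no Taylor order is critical, and on $\alpha<0$ at small scales. Unlike Proposition~\ref{prop_qualitative_pi}, no splitting at the scale $\ve$ is needed, because \eqref{est_deltaPi-} holds for all $t>0$. Uniqueness in the class of solutions with the stated growth follows from the Liouville argument used for Proposition~\ref{prop_integration}.
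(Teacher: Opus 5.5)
Your route is the paper's: Proposition~\ref{prop_intII} is proved exactly like Proposition~\ref{prop_integration}, i.e.\ by the integration argument of \cite[Section~3.7]{BOT} applied to $\delta\Pi^-_{x\beta}$, with $\EE^{1/q}|\cdot|^q$ in place of $\EE^{1/p}|\cdot|^p$ and the factor $\w$ carried along. As you note, no splitting at scale $\ve$ is needed. Two steps of your execution are wrong as written, however.

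First, the identity $\delta\Pi_{x\beta t}(y)=\int_0^\infty ds\,(\mathrm{id}-T_x^{<|\beta|})(-\partial_{x_0}-\Delta)\delta\Pi^-_{x\beta\,s+t}(y)$ does not hold. Taylor expansion at $x$ does not commute with convolution by $\Psi_t$, whose moments do not vanish (e.g.\ $\int dz\,\Psi_t(z)\,z_0^2=2t$). The two sides therefore differ by a polynomial of degree $<|\beta|$: for $|\beta|>0$ your right-hand side vanishes at $y=x$, while $\delta\Pi_{x\beta t}(x)$ in general does not. Set $g_s\coloneqq(-\partial_{x_0}-\Delta)\delta\Pi^-_{x\beta s}$. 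The correct formula is
\begin{equation}
\delta\Pi_{x\beta t}(y)=\int_0^\infty ds\,\big((\mathrm{id}-T_x^{<|\beta|})g_s\big)_t(y)
=\int_0^\infty ds\,\Big(g_{s+t}(y)-\sum_{|\m|<|\beta|}\tfrac{1}{\m!}\,\partial^\m g_s(x)\,\big((\cdot-x)^\m\big)_t(y)\Big)\,.
\end{equation}
Here the Taylor coefficients sit at scale $s$, not $s+t$. Only the monomials are convolved, and $|((\cdot-x)^\m)_t(y)|\lesssim(\sqrt[4]{t}+|x-y|)^{|\m|}$.

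Second, your convergence bookkeeping for the Taylor terms is reversed. For $|\m|<\langle\beta\rangle$, the bound $(\sqrt[4]{s})^{\langle\beta\rangle-4-|\m|}$ on $\partial^\m g_s(x)$ is integrable at $s=0$ and \emph{not} at $s=\infty$. This is exactly why the argument is organised as follows.
\begin{itemize}
\item For $s\le(\sqrt[4]{t}+|x-y|)^4$, the Taylor terms are split off by the triangle inequality. In the corrected formula they are evaluated at scale $s\to0$, so integrability at $0$ is genuinely needed, and they contribute $(\sqrt[4]{t}+|x-y|)^{\langle\beta\rangle}$.
\item For $s\ge(\sqrt[4]{t}+|x-y|)^4$, they must stay recombined in the Taylor remainder. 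That remainder involves only $|\m|>\langle\beta\rangle$, and by \eqref{eq_momentbound_semigroup} it is bounded by $(\sqrt[4]{t}+|x-y|)^{|\m|}(\sqrt[4]{s})^{\langle\beta\rangle-4-|\m|}$.
\end{itemize}
Your two regime paragraphs actually carry this out. Your justifying sentence, however, prescribes the opposite (Taylor terms separately at large $s$, recombined at small $s$). That version fails in both regimes: the split-off terms diverge at $s=\infty$, and the recombined remainder is too singular as $s\to0$.

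This is also where $|\m|<|\beta|\iff|\m|<\langle\beta\rangle$ and $\langle\beta\rangle\notin\NN$ really enter. They give the strict inequalities $|\m|<\langle\beta\rangle$ for the split-off terms and $|\m|>\langle\beta\rangle$ in the remainder. With these two corrections your argument coincides with the paper's.
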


The proof of Proposition~\ref{prop_intII} is identical to the one of Proposition~\ref{prop_integration}.

\begin{proposition}[Three-point-argument II, Steps 7.c) and 7.d)]\label{prop_3ptII}
Let $[\beta]\geq0$ and assume that $\eqref{est_deltaPi}_{\beta'}$ holds for all populated $\beta'\preceq\beta$, 
that $\eqref{est_Gamma}_\beta^\gamma$ and $\eqref{est_deltaGamma}_\beta^\gamma$ hold for all $\gamma$ populated and not purely polynomial, 
and that $\eqref{eq_recentre_Pi}_\beta$ holds. 
Then $\eqref{est_deltapin}_\beta$ and $\eqref{est_deltaGamma}_\beta^\gamma$ hold for all $\gamma$.
\end{proposition}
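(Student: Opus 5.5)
We follow the three-point argument of Proposition~\ref{prop_3pt}, differentiated in the Malliavin sense. The first step is to apply $\delta$ to the recentering identity $\eqref{eq_recentre_Pi}_\beta$, which is multiplicative in the model components. This gives
\begin{equation}
\delta\Pi_{x\beta} = (\delta\Gamma_{xy}\Pi_y)_\beta + (\Gamma_{xy}\delta\Pi_y)_\beta \, .
\end{equation}
We split the right-hand side by whether $\gamma$ is purely polynomial. Recall that $(\Gamma_{xy})_\beta^{\delta_\n}=\pi^{(\n)}_{xy\beta}$ for $\beta$ not purely polynomial, and that $\Pi_{y\delta_\n}=(\cdot-y)^\n$ is deterministic, so $\delta\Pi_{y\delta_\n}=0$. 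The identity then becomes
\begin{equation}
\sum_{|\n|<|\beta|}\delta\pi^{(\n)}_{xy\beta}(\cdot-y)^\n
= \delta\Pi_{x\beta} - \sum_{\gamma\text{ not p.p.}}\Big((\delta\Gamma_{xy})_\beta^\gamma\Pi_{y\gamma}+(\Gamma_{xy})_\beta^\gamma\delta\Pi_{y\gamma}\Big) \, .
\end{equation}
Here the $\gamma=\beta$ term of the second sum is $\delta\Pi_{y\beta}$. The left-hand side is a polynomial of degree $<|\beta|$, equivalently of degree $<\langle\beta\rangle$ by the choice of $\kappa$. Its coefficients $\delta\pi^{(\n)}_{xy\beta}$ are therefore determined by testing the right-hand side against $\Psi_t$ at the point $y$ and at scale $\sqrt[4]{t}=|x-y|$, and applying equivalence of norms on finite-dimensional polynomial spaces. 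This is the same device as in \cite[Proposition~4.4]{LOTT24}: testing $(\cdot-y)^\n$ with $\Psi_t$ and derivatives at $y$ recovers the coefficients up to factors $(\sqrt[4]{t})^{-|\n|}$.

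The second step estimates each term on the right in $\EE^{1/q}|\cdot|^q$, using H\"older's inequality with $1/q=1/q'+1/p'$ and $q<q'<p^*$, so that the $\delta$-factor carries the weight $\w$. The terms are handled as follows.
\begin{itemize}
\item For $\delta\Pi_{x\beta t}$ and $\delta\Pi_{y\beta t}$ we use $\eqref{est_deltaPi}_\beta$.
\item For $(\delta\Gamma_{xy})_\beta^\gamma\Pi_{y\gamma t}$ we combine $\eqref{est_deltaGamma}_\beta^\gamma$ with $\eqref{est_Pi}_\gamma$.
\item For $(\Gamma_{xy})_\beta^\gamma\delta\Pi_{y\gamma t}$ we combine $\eqref{est_Gamma}_\beta^\gamma$ with $\eqref{est_deltaPi}_\gamma$.
\end{itemize}
Triangularity (Lemma~\ref{lem_def_gamma}) makes the sum over $\gamma$ finite and restricts it to $\gamma\preceq\beta$. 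In each case the $\ve$-exponents add up to $|\beta|-\langle\beta\rangle$, by the same bookkeeping as in Proposition~\ref{prop_1}. With $\sqrt[4]{t}=|x-y|$ the spatial exponents add up to $\langle\beta\rangle$. Since $(\Gamma_{xy})_\beta^\gamma$ vanishes unless $\langle\gamma\rangle\le\langle\beta\rangle$, every term is bounded by $\ve^{|\beta|-\langle\beta\rangle}|x-y|^{\langle\beta\rangle}\w$. Extracting the $\n$-th coefficient costs $|x-y|^{-|\n|}$, which yields $\eqref{est_deltapin}_\beta$.

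The third step deduces $\eqref{est_deltaGamma}_\beta^\gamma$ for all $\gamma$. For $\gamma$ not purely polynomial this is the hypothesis. For $\gamma=\delta_\n$ we differentiate the representation \eqref{eq_gamma_prod_rep}, exactly as in Proposition~\ref{prop1bc}. We then insert $\eqref{est_deltapin}_{\beta'}$ for $\beta'\preceq\beta$, now including $\beta'=\beta$, together with $\eqref{est_pin}$, and conclude by H\"older's inequality and additivity of $\langle\cdot\rangle-\langle0\rangle$.

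The main obstacle is the first step. There one must check that $\delta\pi^{(\n)}_{xy\beta}$ is indeed the only purely polynomial contribution, and that only finitely many terms with admissible exponents remain. The point requiring care is that $\gamma=\beta$ appears with $(\Gamma_{xy})_\beta^\beta=1$, and that $\eqref{est_deltaPi}$ is available at level $\beta$ itself, which the hypothesis $\beta'\preceq\beta$ provides.
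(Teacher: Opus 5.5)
Your proof is correct and follows essentially the same route as the paper. The paper only says the proof is analogous to Proposition~\ref{prop_3pt}, i.e.\ the three-point argument of \cite[Proposition~4.4]{LOTT24}, and you carry exactly this out: apply $\delta$ to $\eqref{eq_recentre_Pi}_\beta$, isolate $\sum_\n\delta\pi^{(\n)}_{xy\beta}(\cdot-y)^\n$, test at $y$ on scale $\sqrt[4]{t}=|x-y|$ (with derivatives), and feed $\eqref{est_deltapin}_\beta$ back into \eqref{eq_gamma_prod_rep}.

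Two minor remarks. You implicitly also use $\eqref{est_Pi}_\gamma$ for $\gamma\prec\beta$ and \eqref{est_pin}; these are not listed in the statement but are available from the induction. For $\gamma=\delta_\n$ one simply has $(\delta\Gamma_{xy})_\beta^{\delta_\n}=\delta\pi^{(\n)}_{xy\beta}$, so your third step is immediate.
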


Again, the proof of Proposition~\ref{prop_3ptII} is analogous to the one of Proposition~\ref{prop_3pt}.

\subsection{Block 8: Ultraviolet divergent continuity bounds}\label{sec:block8}

The first two steps in this block are identical to the regular case, namely
\begin{itemize}
\item Step 8.a) is identical to Proposition~\ref{prop_2} and yields
	$\eqref{est_Pi-_bounded}_\beta$ and $\eqref{est_Pi-_taylorremainder}_\beta$,
	which yields the required continuity estimate for $ \Pi^{-}_{ \beta}$.
\item Step 8.b) is identical to Proposition~\ref{prop_qualitative_pi} and yields
	$\eqref{est_Pi_bounded}_\beta$ and $\eqref{est_Pi_taylorremainder}_\beta$, which
	provides the continuity estimates for $ \Pi_{\beta}$.
\end{itemize}

It is only left to establish the analogous continuity estimate for $ \delta \Pi_{ \beta}$ as for $
\Pi_{ \beta}$ in
Proposition~\ref{prop_qualitative_pi}. We omit the proof, since it is identical to
the one of Proposition~\ref{prop_qualitative_pi}.

\begin{proposition}[Boundedness and continuity of $\delta\Pi$, Step 8.c)]\label{prop_qualitative_deltaPi}
Let $[\beta]\geq0$ and assume that $\eqref{est_deltaPi-_bounded}_\beta$ 
and $\eqref{est_deltaPi-}_\beta$
hold. 
Then $\eqref{est_deltaPi_bounded}_\beta$ holds as well as 
\begin{equation}\label{est_deltaPi_taylorremainder}
\begin{aligned}
\EE^{\frac{1}{q}} \left|
(\mathrm{id}-T_z^{\leq l}) \partial^\n \delta\Pi_{x \beta}(y) 
\right|^{ q} 
\lesssim 
\ve^{\alpha-|\n|-1-l}
|y-z|^{1+l}
( \ve+ | x-y| + |x-z|)^{| \beta| - \alpha} \w \,.
\end{aligned}
\end{equation}
for all $\n$ and $l\geq0$.
\end{proposition}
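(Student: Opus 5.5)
We follow the proof of Proposition~\ref{prop_qualitative_pi} verbatim, with $\Pi$ replaced by $\delta\Pi$. The moments $\EE^{\frac1p}|\cdot|^p$ are replaced by $\EE^{\frac1q}|\cdot|^q$, and every bound carries the factor $\w$. First we need an integral representation of $\delta\Pi_{x\beta}$. Since the Malliavin derivative commutes with the deterministic linear operations in \eqref{eq_supp1_regInt} (convolution with $\Psi_s$, application of $-\partial_{x_0}-\Delta$, Taylor polynomial at $x$), we have
\begin{equation}
\partial^\n \delta\Pi_{x\beta}(y) = \int_0^\infty dt\, (\mathrm{id}-T_x^{<|\beta|-|\n|})\partial^\n(-\partial_{x_0}-\Delta)\delta\Pi^-_{x\beta t}(y) \, .
\end{equation}
Convergence of this integral is part of the estimate below. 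Its justification on the level of Malliavin derivatives is as in Proposition~\ref{prop_intII} and \cite[Section~7]{LOTT24}.

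Next we split the $t$-integral into the three regimes $0\le t\le\ve^4$, $\ve^4\le t\le(\ve+|x-y|)^4$ and $t\ge(\ve+|x-y|)^4$, exactly as in Steps 1--3 of Proposition~\ref{prop_qualitative_pi}. In the first regime we use the ultraviolet-divergent bound $\eqref{est_deltaPi-_bounded}_\beta$ together with the moment bound \eqref{eq_momentbound_semigroup}. The Taylor polynomial is split off by the triangle inequality, and the factor $(\sqrt[4]{t})^{-2}$ integrates to $\ve^2$. In the second regime we use $\eqref{est_deltaPi-}_\beta$, upgraded to derivatives as in Remark~\ref{rem_improve_n}. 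This gives $\int_{\ve^4}^{(\ve+|x-y|)^4}(\sqrt[4]{t})^{\alpha-4-|\n|}dt\lesssim(\ve+|x-y|)^{\alpha-|\n|}+\ve^{\alpha-|\n|}$. We then use $\ve^{|\beta|-\langle\beta\rangle}(\ve+|x-y|)^{\langle\beta\rangle-|\beta|}\le1$. In the third regime we write the Taylor remainder as in Lemma~\ref{lem_interpolation} and use the derivative version of $\eqref{est_deltaPi-}_\beta$. The time integral converges because $\langle\beta\rangle-|\n|-|\m|<0$ for $|\m|\ge|\beta|-|\n|$, which uses $\langle\beta\rangle\notin\NN$ and the equivalence of $|\n|<|\beta|$ with $|\n|<\langle\beta\rangle$. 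Summing the three contributions yields $\eqref{est_deltaPi_bounded}_\beta$. Throughout, $\w$ is a deterministic factor that passes through Minkowski's inequality unchanged.

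Finally, \eqref{est_deltaPi_taylorremainder} follows from $\eqref{est_deltaPi_bounded}_\beta$, applied for all $\n$, by Lemma~\ref{lem_interpolation}, as in Proposition~\ref{prop_qualitative_pi}. The only step that needs care is commuting $\delta$ with the infinite $t$-integral. For that we rely on the qualitative Malliavin differentiability statements of \cite[Section~7]{LOTT24}. The quantitative bounds above show that the differentiated integrand is absolutely integrable in $t$ in $L^q$, which by closability (Assumption~\ref{ass_zeta}) justifies the interchange.
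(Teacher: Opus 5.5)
Your proposal is correct and follows essentially the paper's route. The paper omits this proof as identical to that of Proposition~\ref{prop_qualitative_pi}, which is exactly what you carry out: the integral representation of $\partial^\n\delta\Pi_{x\beta}$, the three-regime splitting of the $t$-integral using $\eqref{est_deltaPi-_bounded}_\beta$ and the derivative-upgraded $\eqref{est_deltaPi-}_\beta$, and Lemma~\ref{lem_interpolation} for the Taylor remainder, with the interchange of $\delta$ and the $t$-integral deferred to \cite[Section~7]{LOTT24}.
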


\subsection{Block 9: Modelledness}\label{sec:block9}

Finally, we arrive at the last block of arguments for singular multiindices. 
We present the last ingredients, which are continuity estimates and
bounds for $ d\Gamma $ that are necessary for the next induction level.
This requires choosing $ d \pi$, and one last iteration of integration and three-point arguments.

\begin{proposition}[Integration III, Steps 9.a) and 9.b)]\label{prop_intIII}
Let $[\beta]\geq0$ and $|\beta|<2$, and assume that $\eqref{est_deltaPi_taylorremainder}_\beta$ holds, 
and that $\eqref{est_Pi_taylorremainder}_{\beta'}$ holds for all populated $\beta'\prec\beta$. 
Then there exist $d\pi^{(\n)}_{xz\beta}$ for $|\n|<\alpha+D/\p$ such that 
\begin{equation}\label{eq:intIII_qualitative}
\partial^\n\big(\delta\Pi_x-d\Gamma_{xz}\Pi_z\big)_\beta(z) = 0 
\quad\textnormal{for }|\n|<\alpha+D/\p \, .
\end{equation}
Assume furthermore that 
$\eqref{est_Pi-_taylorremainder}_{\beta'}$,
$\eqref{est_Pi-}_{\beta'}$, and
$\eqref{est_Pi}_{\beta'}$ hold for all $\beta'\prec\beta$, that
$\eqref{est_deltaPi-_taylorremainder}_\beta$,
$\eqref{est_deltaPi-_inc}_\beta$, 
$\eqref{est_deltaPi-}_\beta$, and
$\eqref{est_deltaPi}_\beta$ hold, and that 
$\eqref{est_dGamma}_\beta^\gamma$ holds for all $\gamma$ populated and not purely polynomial. 
Then
\begin{align}
&\Big( \int_{B_r(x)} dz\, \EE^\frac{\p}{q}\big| \big( \delta\Pi_x -  d\Gamma_{xz}\Pi_z\big)_\beta(y+z) \big|^q\Big)^\frac{1}{\p} \\
&\lesssim \ve^{|\beta|-\langle\beta\rangle}
(\sqrt[4]{t})^\alpha (\sqrt[4]{t}+|y|)^{D/\p} 
(\sqrt[4]{t}+|y|+r)^{\langle\beta\rangle-\alpha} \w  
\quad\textnormal{for }\ve\leq r \, . \label{est_deltaPi_inc}
\end{align}
\end{proposition}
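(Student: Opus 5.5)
We construct $d\pi^{(\n)}_{xz\beta}$ in the standard way and then derive the estimate from the reconstruction bound \eqref{est_deltaPi-_inc} by an integration argument, following \cite[Section~4]{LOTT24} and \cite[Section~3]{BOT}.

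\textbf{Construction.} For $\gamma$ not purely polynomial, the component $(d\Gamma_{xz})_\beta^\gamma$ involves only $d\pi^{(\m)}_{xz\beta'}$ with $\beta'\prec\beta$ (Lemma~\ref{lem_triang}). For $\gamma=\delta_\n$ it equals $d\pi^{(\n)}_{xz\beta}$. Since $\Pi_{z\delta_\n}=(\cdot-z)^\n$, we have
\begin{equation}
\big(d\Gamma_{xz}\Pi_z\big)_\beta=\sum_{\n}d\pi^{(\n)}_{xz\beta}(\cdot-z)^\n+\sum_{\gamma\ \mathrm{not\ p.p.}}(d\Gamma_{xz})_\beta^\gamma\Pi_{z\gamma}.
\end{equation}
We define $d\pi^{(\n)}_{xz\beta}$ for $|\n|<\alpha+D/\p$ as $\frac1{\n!}\partial^\n$ at $z$ of $\delta\Pi_{x\beta}-\sum_{\gamma}(d\Gamma_{xz})_\beta^\gamma\Pi_{z\gamma}$. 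This is well-defined because $\delta\Pi_{x\beta}$ and $\Pi_{z\gamma}$ are smooth by \eqref{est_deltaPi_taylorremainder} and \eqref{est_Pi_taylorremainder}. With this choice \eqref{eq:intIII_qualitative} holds by construction, and \eqref{eq_pop_dpi} is respected.

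\textbf{Estimate.} Set $G_{xz}\coloneqq(\delta\Pi_x-d\Gamma_{xz}\Pi_z)_\beta$. By \eqref{pde_Pi} for all components involved, $(\partial_{x_0}-\Delta)G_{xz}=(\delta\Pi^-_x-d\Gamma_{xz}\Pi^-_z)_\beta$ up to polynomial contributions, which are removed by the Taylor remainder. By \eqref{eq:intIII_qualitative}, $G_{xz}$ vanishes at $z$ to order $\alpha+D/\p$. Uniqueness under this vanishing gives the representation
\begin{equation}
G_{xz}=\int_0^\infty ds\,(\mathrm{id}-T_z^{<\alpha+D/\p})(-\partial_{x_0}-\Delta)\big(\delta\Pi^-_x-d\Gamma_{xz}\Pi^-_z\big)_{\beta s},
\end{equation}
the analogue of \eqref{eq_supp1_regInt} with base point $z$ and order $\alpha+D/\p$. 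The order $\alpha+D/\p$ is non-integer for generic $\p$, and $\kappa$ was chosen accordingly. We apply the localised norm $(\int_{B_r(x)}dz\,\EE^{\p/q}|\cdot|^q)^{1/\p}$, evaluate at $y+z$ and convolve with $\Psi_t$.

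We split the $s$-integral at $s=(\sqrt[4]{t}+|y|)^4$ and treat the two pieces differently.
\begin{itemize}
\item For small $s$, we drop the Taylor polynomial via the triangle inequality. We then insert \eqref{est_deltaPi-_inc} with the derivative upgrade of Remark~\ref{rem_improve_n}, which supplies $(\sqrt[4]{s})^{\alpha-4}$. Integrating gives $(\sqrt[4]{t})^\alpha$ after using the semigroup property \eqref{eq_momentbound_semigroup}.
\item For large $s$, the Taylor remainder gains $|y|^{|\m|}(\sqrt[4]{s})^{-|\m|}$ with $|\m|>\alpha+D/\p$. The $s$-integral then converges at infinity and produces $(\sqrt[4]{t}+|y|)^{\alpha+D/\p}$. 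Since $\alpha<0$, this is bounded by $(\sqrt[4]{t})^\alpha(\sqrt[4]{t}+|y|)^{D/\p}$.
\end{itemize}

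\textbf{Main obstacle.} The hard part is that \eqref{est_deltaPi-_inc} is only available for $\ve\le r$, and its $y$-dependence is the mixed form $(\sqrt[4]{t}+|y|)^{D/\p}(\sqrt[4]{t}+|y|+r)^{\langle\beta\rangle-\alpha}$. Two things must be checked here.
\begin{itemize}
\item The small-scale regime $s\le\ve^4$ must not produce divergent $\ve^{\alpha-2}$ factors as in Proposition~\ref{prop_qualitative_pi}. The reconstruction bound holds for all $s$ once $\ve\le r$, so no splitting at $\ve$ should be needed. I would verify this first.
\item The Taylor terms at $z$ must be controlled, i.e.\ the large-$s$ part requires bounds on $\partial^\m$ of $\delta\Pi^-_x-d\Gamma_{xz}\Pi^-_z$ at $z$ itself. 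These come from \eqref{est_deltaPi-_inc} at $y=0$, combined with \eqref{est_dGamma} and \eqref{est_Pi-} to control the shift of evaluation points.
\end{itemize}
If direct integration fails because of the growth in $r$, I would instead integrate $\delta\Pi_x$ and $d\Gamma_{xz}\Pi_z$ separately for $|y|\gtrsim r$. For the first I would use \eqref{est_deltaPi}; for the second, \eqref{est_dGamma} together with \eqref{est_Pi}.
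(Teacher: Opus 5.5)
\textbf{Gap at large scales.} Your construction of $d\pi^{(\n)}_{xz\beta}$ is exactly the paper's \eqref{choice_dpin}. Your small-scale treatment is also fine: no splitting at $\ve$ is needed, since \eqref{est_deltaPi-_inc} holds for all $t$ once $\ve\le r$.

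The gap is at large scales. The input \eqref{est_deltaPi-_inc} carries the factor $(\sqrt[4]{s}+|y|+r)^{\langle\beta\rangle-\alpha}$, which for $\sqrt[4]{s}\ge|y|+r$ grows like $(\sqrt[4]{s})^{\langle\beta\rangle-\alpha}$. After applying $\partial^\m(-\partial_{x_0}-\Delta)$ and multiplying by $|y|^{|\m|}$, the remainder integrand is of size $(\sqrt[4]{s})^{\langle\beta\rangle+D/\p-4-|\m|}|y|^{|\m|}$. This is not integrable at $s=\infty$ for $\alpha+D/\p<|\m|\le\langle\beta\rangle+D/\p$. Such $\m$ do occur in $\mathrm{id}-T_z^{<\alpha+D/\p}$ as soon as this window contains an integer, and the window is nonempty for every $\beta\neq0$ because $\langle\beta\rangle>\alpha$. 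So your claimed output $(\sqrt[4]{t}+|y|)^{\alpha+D/\p}$ is not justified; it silently drops the $r$-factor.

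The ``uniqueness'' you invoke is not available either. $G_{xz}$ grows like $|y|^{\langle\beta\rangle+D/\p}$, so vanishing to order $\alpha+D/\p$ at $z$ fixes it only modulo caloric polynomials of intermediate degree. Relatedly, components with $|\gamma|\ge\alpha+D/\p$ are produced by a Taylor remainder of order $|\gamma|$, not $\alpha+D/\p$. This is why the paper first peels off $\sum_{|\gamma|\ge\alpha+D/\p}(d\Gamma_{xz})_\beta^\gamma\Pi^-_{z\gamma}$ (resp.\ with $\Pi_{z\gamma}$). It bounds these directly via \eqref{est_dGamma} together with \eqref{est_Pi-} (resp.\ \eqref{est_Pi}), using $\langle\gamma\rangle>\alpha+D/\p$ and $\langle\beta\rangle-\langle\gamma\rangle+D/\p\ge0$. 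Only the truncated quantity is then integrated, cf.\ \eqref{tmp15}.

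\textbf{The fallback does not close this.} Splitting in $|y|$ leaves the case $|y|\lesssim r$ untouched, since the divergence is in $s$ and occurs for every $y\neq0$. Moreover, bounding $d\Gamma_{xz}\Pi_z$ termwise needs \eqref{est_dGamma} also for purely polynomial $\gamma=\delta_\n$, i.e.\ \eqref{est_dpin}$_\beta$. That estimate is not a hypothesis: it is derived only afterwards, in Proposition~\ref{prop_3ptIV}, from \eqref{est_deltaPi_inc} itself, so the argument would be circular.

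A consistent way out is to split in scale at $\sqrt[4]{T}=\sqrt[4]{t}+|y|+r$. Since the truncated $G$ vanishes to order $\alpha+D/\p$ at $z$, one has $G=(\mathrm{id}-T_z^{<\alpha+D/\p})G_T+\int_0^T ds\,(\mathrm{id}-T_z^{<\alpha+D/\p})(-\partial_{x_0}-\Delta)(\cdots)_s$.
\begin{itemize}
\item On $(0,T)$ your computation with \eqref{tmp15} goes through, because there the $r$-factor is frozen at $(\sqrt[4]{t}+|y|+r)^{\langle\beta\rangle-\alpha}$.
\item In the first term, the Taylor remainder annihilates the polynomial part $\sum_\n d\pi^{(\n)}_{xz\beta}((\cdot-z)^\n)_T$, so no bound on $d\pi$ is needed. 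The rest is controlled by \eqref{est_deltaPi}, by \eqref{est_dGamma} for non purely polynomial $\gamma$, and by \eqref{est_Pi}, with a gain $\big((\sqrt[4]{t}+|y|)/\sqrt[4]{T}\big)^{|\m|-\alpha-D/\p}$.
\end{itemize}
This is the role of the hypothesis \eqref{est_deltaPi}$_\beta$, which your main route never uses.
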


\begin{proof}
	We choose appropriate $ d \pi_{\beta}^{(\n)}$ in Step~1 and 
	pre-process the input \eqref{est_deltaPi-_inc} in Step~2.
	This allows to perform an integration argument which is carried out in Step~4, which in turn suffices to conclude \eqref{est_deltaPi_inc} in Step~3.

\textbf{Step 1 \textnormal{(proof of \eqref{eq:intIII_qualitative})}.}
The proof is analogous to \cite[Section~5.4]{LOTT24}:
splitting $(d\Gamma_{xz}\Pi_z)_\beta$ into 
$\sum_{[\gamma]\geq0}(d\Gamma_{xz})_\beta^\gamma\Pi_{z\gamma}$
and $\sum_{\m} (d\Gamma_{xz})_\beta^{\delta_\m} \Pi_{z\delta_\m}$, 
and recalling from Lemma~\ref{lem_dGamma} that $(d\Gamma_{xz})_\beta^{\delta_\m} =
d\pi^{(\m)}_{xz\beta}$ and from \eqref{model_poly} that $\Pi_{z\delta_\m}=(\cdot-z)^\m$, 
we see that \eqref{eq:intIII_qualitative} is equivalent to 
\begin{equation}\label{choice_dpin}
d\pi^{(\n)}_{xz\beta} 
= \tfrac{1}{\n!}\partial^\n\big(\delta\Pi_{x\beta}-\sum_{[\gamma]\geq0} (d\Gamma_{xz})_\beta^\gamma \Pi_{z\gamma}\big)(z) 
\quad\textnormal{for }|\n|<\alpha+D/\p \, .
\end{equation}
By Lemma~\ref{lem_triang} (Item~2) the right-hand side only depends on $d\pi^{(\n)}_{xz\beta'}$ for $\beta'\prec\beta$, hence this identity may serve as a definition of
$d\pi^{(\n)}_{xz\beta}$.
We emphasize that the choice of $d\pi^{(\n)}_{xz}$ via \eqref{choice_dpin} satisfies \eqref{eq_pop_dpi}.

\textbf{Step 2.}
First, we prove 
\begin{align}
&\Big( \int_{B_r(x)} dz\, \EE^\frac{\p}{q}\big| 
\big( \delta\Pi^-_{x\beta} -\sum_{|\gamma|<\alpha+D/\p} (d\Gamma_{xz})_\beta^\gamma \Pi^-_{z\gamma} \big)_t(y+z) \big|^q\Big)^\frac{1}{\p} \\
&\lesssim \ve^{|\beta|-\langle\beta\rangle}
(\sqrt[4]{t})^{\alpha-2} (\sqrt[4]{t}+|y|)^{D/\p} 
(\sqrt[4]{t}+|y|+r)^{\langle\beta\rangle-\alpha} \w 
\quad\textnormal{for }\ve\leq r \, , \label{tmp15}
\end{align}
which is a consequence of assumption \eqref{est_deltaPi-_inc} and 
\begin{align}
&\Big( \int_{B_r(x)} dz\, \EE^\frac{\p}{q}\big| 
\sum_{|\gamma|\geq\alpha+D/\p} (d\Gamma_{xz})_\beta^\gamma \Pi^-_{z\gamma t}(y+z) \big|^q\Big)^\frac{1}{\p} \\
&\lesssim \ve^{|\beta|-\langle\beta\rangle}
(\sqrt[4]{t})^{\alpha-2} (\sqrt[4]{t}+|y|)^{D/\p} 
(\sqrt[4]{t}+|y|+r)^{\langle\beta\rangle-\alpha} \w 
\quad\textnormal{for }\ve\leq r \, , \label{eq_supp1_singMod}
\end{align}
which we establish now. 
After applying the triangle inequality and Hölder's inequality in combination
with \eqref{est_dGamma} and \eqref{est_Pi-}, 
we see that the left-hand side of \eqref{eq_supp1_singMod} is bounded by, for $\ve\leq
r$,
\begin{align}
&\sum_{\gamma\prec\beta,\, |\gamma|\geq\alpha+D/\p}
\ve^{|\beta|-\langle\beta\rangle-|\gamma|+\langle\gamma\rangle}
r^{\langle\beta\rangle-\langle\gamma\rangle+D/\p} \w \, 
\ve^{|\gamma|-\langle\gamma\rangle}
(\sqrt[4]{t})^{\alpha-2}
(\sqrt[4]{t}+|y|)^{\langle\gamma\rangle-\alpha} \\
&= \sum_{\gamma\prec\beta,\, |\gamma|\geq\alpha+D/\p}
\ve^{|\beta|-\langle\beta\rangle}
(\sqrt[4]{t})^{\alpha-2}
(\sqrt[4]{t}+|y|)^{D/\p} 
(\sqrt[4]{t}+|y|)^{\langle\gamma\rangle-\alpha-D/\p} 
r^{\langle\beta\rangle-\langle\gamma\rangle+D/\p} \w \, .
\end{align}
Choosing $\p$ rational such that $D/\p$ is not an integer, 
we see that $|\gamma|\geq\alpha+D/\p$ strengthens to $|\gamma|>\alpha+D/\p$, which in turn implies $\langle\gamma\rangle>\alpha+D/\p$. 
Since also $\langle\beta\rangle-\langle\gamma\rangle+D/\p\geq0$ by Lemma~\ref{lem_dGamma}, the above is bounded as desired. 

\textbf{Step 3.}
Next, we claim that \eqref{est_deltaPi_inc} follows from 
\begin{align}
&\Big( \int_{B_r(x)} dz\, \EE^\frac{\p}{q}\big| 
\big( \delta\Pi_{x\beta} -\sum_{|\gamma|<\alpha+D/\p} (d\Gamma_{xz})_\beta^\gamma \Pi_{z\gamma} \big)_t(y+z) \big|^q\Big)^\frac{1}{\p} \\
&\lesssim \ve^{|\beta|-\langle\beta\rangle}
(\sqrt[4]{t})^\alpha (\sqrt[4]{t}+|y|)^{D/\p} 
(\sqrt[4]{t}+|y|+r)^{\langle\beta\rangle-\alpha} \w 
\quad\textnormal{for }\ve\leq r \, . \label{tmp16}
\end{align}
Indeed, this is a consequence of 
\begin{align}
&\Big( \int_{B_r(x)} dz\, \EE^\frac{\p}{q}\big| 
\sum_{|\gamma|\geq\alpha+D/\p} (d\Gamma_{xz})_\beta^\gamma \Pi_{z\gamma t}(y+z) \big|^q\Big)^\frac{1}{\p} \\
&\lesssim \ve^{|\beta|-\langle\beta\rangle}
(\sqrt[4]{t})^\alpha (\sqrt[4]{t}+|y|)^{D/\p} 
(\sqrt[4]{t}+|y|+r)^{\langle\beta\rangle-\alpha} \w 
\quad\textnormal{for }\ve\leq r \, ,
\end{align}
which we establish now. 
The argument is identical to Step~2, just replacing \eqref{est_Pi-} by \eqref{est_Pi}.
Note that purely polynomial $\gamma$ are not present, 
as for $\gamma=\delta_\n$ we have $(d\Gamma_{xz})_\beta^{\delta_\n}=d\pi^{(\n)}_{xz\beta}$ by Lemma~\ref{lem_dGamma}, 
which is only non vanishing for $|\gamma|=|\n|<\alpha+D/\p$.

\textbf{Step 4.}
It is left to argue that \eqref{tmp15} implies \eqref{tmp16}. 
This is based on the representation 
\begin{equation}
\delta\Pi_{x\beta}-\sum_{|\gamma|<\alpha+D/\p} (d\Gamma_{xz})_\beta^\gamma \Pi_{z\gamma}
= \int_0^\infty ds\, (\mathrm{id}-T_z^{<\alpha+D/\p}) ( -\partial_{x_{0}}- \Delta) 
\big(\delta\Pi^-_{x\beta s} - \sum_{|\gamma|<\alpha+D/\p} (d\Gamma_{xz})_\beta^\gamma \Pi^-_{z\gamma s}\big) \, ,
\end{equation}
and is similar to the proof of Proposition~\ref{prop_qualitative_pi} presented above. 
The details can be found in \cite[Section~3.8]{BOT}.
\end{proof}

Having chosen $ d \pi^{(\n)}_{ \beta}$, we can move on to establishing the continuity
estimate for $ d \Gamma $. 

\begin{proposition}[Three-point argument III, Step 9.c)]\label{prop_3ptIII}
Let $[\beta]\geq0$ and assume that $\eqref{est_deltaPi_inc}_\beta$ holds,
that $\eqref{est_dGamma_inc}_\beta^\gamma$ holds for all $\gamma$ populated and not purely polynomial, 
and that $\eqref{est_Pi}_{\beta'}$ and $\eqref{eq_recentre_Pi}_{\beta'}$ hold for all populated $\beta'\prec\beta$. 
Then $\eqref{est_dGamma_pp_inc}_\beta^{\delta_\n}$ holds for all $\n$, 
and thus $\eqref{est_dGamma_inc}_\beta^\gamma$ holds for all $\gamma$. 
\end{proposition}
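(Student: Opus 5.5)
We need to estimate $(d\Gamma_{x\,y+z}-d\Gamma_{xz}\Gamma_{z\,y+z})_\beta^{\delta_\n}$, which is a purely polynomial component, via a three-point argument in the spirit of \cite[Proposition~4.4]{LOTT24} and \cite[Section~3.9]{BOT}. The plan is to identify these components as Taylor coefficients of a function controlled by \eqref{est_deltaPi_inc}. By \eqref{e_def_dgamma}, $(d\Gamma_{xz})_\beta^{\delta_\n}=d\pi^{(\n)}_{xz\beta}$. Moreover, $(d\Gamma_{xz}\Gamma_{z\,y+z})_\beta^{\delta_\n}$ is a sum of $d\pi^{(\m)}_{xz\beta}$ times binomial coefficients in $y$ plus products of $(d\Gamma_{xz})_\beta^{\gamma}$ and $(\Gamma_{z\,y+z})_\gamma^{\delta_\n}$ for non-polynomial $\gamma$.

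\textbf{Step 1: the algebraic identity.} Using $\Gamma_{z\,y+z}\Pi_{y+z}=\Pi_z$ (that is, \eqref{eq_recentre_Pi}$_{\beta'}$ for $\beta'\prec\beta$, which suffices by the triangularity in Lemma~\ref{lem_triang}), I would write
\begin{equation}
\big((d\Gamma_{x\,y+z}-d\Gamma_{xz}\Gamma_{z\,y+z})\Pi_{y+z}\big)_\beta
=\big(\delta\Pi_x-d\Gamma_{xz}\Pi_z\big)_\beta-\big(\delta\Pi_x-d\Gamma_{x\,y+z}\Pi_{y+z}\big)_\beta .
\end{equation}
Splitting the left-hand side into $\gamma=\delta_\n$ with $|\n|<\alpha+D/\p$ and populated, not purely polynomial $\gamma$ gives
\begin{equation}
\sum_{|\n|<\alpha+D/\p}(d\Gamma_{x\,y+z}-d\Gamma_{xz}\Gamma_{z\,y+z})_\beta^{\delta_\n}(\cdot-y-z)^\n
\end{equation}
as the difference of the two increments. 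From this we subtract the contribution $\sum_{\gamma}(d\Gamma_{x\,y+z}-d\Gamma_{xz}\Gamma_{z\,y+z})_\beta^\gamma\Pi_{y+z\,\gamma}$ over non-polynomial $\gamma$.

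\textbf{Step 2: estimating the right-hand side.} I would test this identity with $\Psi_t$ at a point $w$ near $y+z$. The two increments are bounded in the $(\int_{B_r(x)}dz\,\EE^{\p/q}|\cdot|^q)^{1/\p}$ norm by \eqref{est_deltaPi_inc}; for the second one we change variables $z\mapsto z+y$ and use $B_r(x+y)\subset B_{r+|y|}(x)$. The $\gamma$-sum is handled by the assumed \eqref{est_dGamma_inc}$_\beta^\gamma$ together with \eqref{est_Pi}$_\gamma$ and Hölder's inequality, in both alternatives. Then I would choose $\sqrt[4]{t}\sim|y|$, respectively $\sqrt[4]{t}\sim|y|+r$, to get the two alternatives.

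\textbf{Step 3: extracting coefficients.} The polynomial $P(w)=\sum_{|\n|<\alpha+D/\p}c_\n (w-y-z)^\n$ is of bounded degree. Its coefficients are recovered from $P_t$ at scale $\sqrt[4]{t}$ by equivalence of norms on finite-dimensional polynomial spaces: $|c_\n|\lesssim(\sqrt[4]{t})^{-|\n|}\sup_{|w-y-z|\le\sqrt[4]{t}}|P_t(w)|$. With $\sqrt[4]{t}=|y|$, the factor $(\sqrt[4]{t}+|y|)^{D/\p}(\sqrt[4]{t})^{\alpha}(\dots+r)^{\langle\beta\rangle-\alpha}$ becomes $|y|^{\alpha+D/\p-|\n|}(|y|+r)^{\langle\beta\rangle-\alpha}$, which is the first alternative of \eqref{est_dGamma_pp_inc}. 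With $\sqrt[4]{t}=|y|+r$ we get $(|y|+r)^{\langle\beta\rangle-|\n|+D/\p}$, the second alternative. Once \eqref{est_dGamma_pp_inc}$_\beta^{\delta_\n}$ holds for all $\n$, the statement for all $\gamma$ follows by combining with Proposition~\ref{prop:alg3}.

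\textbf{Main obstacle.} I expect the main difficulty to be the bookkeeping of exponents in the $\gamma$-sum of Step 2, in particular for the first alternative. There \eqref{est_dGamma_inc} carries the factor $|y|^{\alpha+D/\p-\bar n}$ with $\bar n$ possibly larger than $|\n|$. This has to be reconciled using $|\gamma|\ge2+(\kk-1)\alpha+\bar n$, the bound $\langle\beta\rangle-\langle\gamma\rangle+D/\p\ge0$ from Lemma~\ref{lem_dGamma}, and the $\ve$-powers $\ve^{|\gamma|-\langle\gamma\rangle}$ with $\ve\le r$, as in Step 2a of Proposition~\ref{prop_recIII}. I would first try distinguishing $\bar n\le\alpha+D/\p$ from $\bar n>\alpha+D/\p$ and using the second alternative in the latter case.
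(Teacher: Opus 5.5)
Your overall route is the paper's: the same identity coming from \eqref{eq_recentre_Pi}, the same three bounds, the same choices $\sqrt[4]{t}=|y|$ and $\sqrt[4]{t}=|y|+r$, and a coefficient extraction from a bounded-degree polynomial. The three bounds are: the two increments via \eqref{est_deltaPi_inc}, the second with $y\to0$ and $r\to r+|y|$; and the $\gamma$-sum via \eqref{est_dGamma_inc} and \eqref{est_Pi}. The paper delegates the coefficient extraction to the duality argument of \cite[Section~2.16]{BOT}.

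However, one claim in your Step~1 is false and, taken literally, breaks the conclusion. The purely polynomial part of the left-hand side is \emph{not} restricted to $|\n|<\alpha+D/\p$; only $(d\Gamma_{x\,y+z})_\beta^{\delta_\n}=d\pi^{(\n)}_{x\,y+z\,\beta}$ is. In $(d\Gamma_{xz}\Gamma_{z\,y+z})_\beta^{\delta_\n}=\sum_{\gamma'}(d\Gamma_{xz})_\beta^{\gamma'}(\Gamma_{z\,y+z})_{\gamma'}^{\delta_\n}$ one has, by \eqref{e_def_gamma}, $(\Gamma_{z\,y+z})_{\gamma'}^{\delta_\n}=\pi^{(\n)}_{z\,y+z\,\gamma'}$ for non-polynomial $\gamma'$. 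This is nonzero for $|\n|<|\gamma'|$, and $|\gamma'|$ is only bounded by $|\beta|+D/\p$ (Lemma~\ref{lem_dGamma}~(2)), typically well above $\alpha+D/\p$. These higher components cannot be dropped: the statement asks for all $\n$, and Proposition~\ref{prop:alg3} uses them at later induction levels with $\bar n$ arbitrary. The repair is immediate within your framework. Take $P$ to be the full polynomial $\sum_{|\n|<|\beta|+D/\p}(d\Gamma_{x\,y+z}-d\Gamma_{xz}\Gamma_{z\,y+z})_\beta^{\delta_\n}(\cdot-y-z)^\n$, which is still of bounded degree. Your Step~3 then yields \eqref{est_dGamma_pp_inc} for every $\n$, since the exponent bookkeeping there never uses $|\n|<\alpha+D/\p$.

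Two smaller points. First, your ``main obstacle'' dissolves once you keep the factor $(\sqrt[4]{t})^{\langle\gamma\rangle}=|y|^{\langle\gamma\rangle}$ supplied by \eqref{est_Pi}, and the $\ve$-powers cancel exactly. Since $|\gamma|\geq2+(\kk-1)\alpha+\bar n$ gives $\langle\gamma\rangle>\bar n$, the first alternative of \eqref{est_dGamma_inc} yields $|y|^{\alpha+D/\p+\langle\gamma\rangle-\bar n}(|y|+r)^{\langle\beta\rangle-\langle\gamma\rangle+\bar n-\alpha}\leq|y|^{\alpha+D/\p}(|y|+r)^{\langle\beta\rangle-\alpha}$ with no case distinction. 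Your case split also works, for the same reason. Second, in Step~3 the supremum over $w$ cannot be pulled through the annealed norm $(\int_{B_r(x)}dz\,\EE^{\p/q}|\cdot|^q)^{1/\p}$. You need one of two fixes: use duality to pass first to a deterministic polynomial, which is what the cited argument does; or write the coefficients as a bounded kernel (or finitely many point evaluations) on the ball of radius $\sqrt[4]{t}$ around $y+z$ applied to $P_t$, and then use Minkowski's inequality together with your pointwise bounds.
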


\begin{proof}
The proof is similar to \cite[Section~2.16]{BOT}, and is based on the identity
\begin{equation}
(d\Gamma_{x\,z+y}-d\Gamma_{xz}\Gamma_{z\,z+y})\Pi_{z+y}
= (\delta\Pi_x - d\Gamma_{xz}\Pi_z)
- (\delta\Pi_x - d\Gamma_{x\,z+y}\Pi_{z+y}) \, ,
\end{equation}
which follows from \eqref{eq_recentre_Pi}. 
Considering the $\beta$-component of this identity, and splitting purely polynomial from other multiindices yields
\begin{align}
\sum_\n (d\Gamma_{x\,z+y}-d\Gamma_{xz}\Gamma_{z\,z+y})_\beta^{\delta_\n} (\cdot-z-y)^\n
&= (\delta\Pi_x - d\Gamma_{xz}\Pi_z)_\beta
- (\delta\Pi_x - d\Gamma_{x\,z+y}\Pi_{z+y})_\beta \\
&- \sum_{[\gamma]\geq0} (d\Gamma_{x\,z+y}-d\Gamma_{xz}\Gamma_{z\,z+y})_\beta^\gamma \Pi_{z+y\,\gamma} \, ,
\end{align}
where we directly used $\Pi_{z+y\,\delta_\n}=(\cdot-z-y)^\n$, see \eqref{model_poly}.
We apply $(\cdot)_t(z+y)$ and the $(\int_{B_r(x)}dz\,\EE^\frac{\p}{q}|\cdot|^q)^\frac{1}{\p}$-norm to this identity, 
and split the three right-hand side terms by the triangle inequality. 
\begin{itemize}
	\item Using \eqref{est_deltaPi_inc}, the first right-hand side term is (for
		$\ve\leq r$) bounded by 
\begin{equation}\label{tmp19}
\ve^{|\beta|-\langle\beta\rangle}
(\sqrt[4]{t})^\alpha (\sqrt[4]{t}+|y|)^{D/\p} 
(\sqrt[4]{t}+|y|+r)^{\langle\beta\rangle-\alpha} \w \, .
\end{equation}
\item Similarly, using \eqref{est_deltaPi_inc} (with $y$ replaced by $0$ and $r$ replaced
	by $r+|y|$), the second right-hand side term is (for $\ve\leq r$) bounded by 
\begin{equation}\label{tmp20}
\ve^{|\beta|-\langle\beta\rangle}
(\sqrt[4]{t})^{\alpha+D/\p} 
(\sqrt[4]{t}+|y|+r)^{\langle\beta\rangle-\alpha} \w \, .
\end{equation}
\item Lastly, using \eqref{est_Pi} and \eqref{est_dGamma_inc}, the third right-hand side
	term is (for $\ve\leq r$) bounded by 
\begin{equation}\label{tmp21}
\ve^{|\beta|-\langle\beta\rangle} (\sqrt[4]{t})^{\langle\gamma\rangle}
\begin{cases} 
|y|^{\alpha+D/\p-\bar n} (|y|+r)^{\langle\beta\rangle-\langle\gamma\rangle+\bar n-\alpha} \w \, , \\
(|y|+r)^{\langle\beta\rangle-\langle\gamma\rangle+D/\p} \w \,.
\end{cases} 
\end{equation}
\end{itemize}
For the first alternative of \eqref{est_dGamma_pp_inc} we choose $\sqrt[4]{t}=|y|$, so that \eqref{tmp19} and \eqref{tmp20} are both bounded by 
\begin{equation}
\ve^{|\beta|-\langle\beta\rangle}
|y|^{\alpha+D/\p} 
(|y|+r)^{\langle\beta\rangle-\alpha} \w \, , 
\end{equation}
and the first alternative in \eqref{tmp21} is bounded by 
\begin{equation}
\ve^{|\beta|-\langle\beta\rangle}
|y|^{\alpha+D/\p+\langle\gamma\rangle-\bar n} 
(|y|+r)^{\langle\beta\rangle-\langle\gamma\rangle+\bar n-\alpha} \w \, .
\end{equation}
Since here $[\gamma]\geq0$ and $\sum_\n\gamma(\n)>0$ (see \eqref{e_def_dgamma}), we obtain from \eqref{eq_supp1_lem_homProp} that $|\gamma|\geq2+(\kk-1)\alpha+\bar n$. 
By $2+(\kk-1)\alpha=a>0$ this implies $|\gamma|>\bar n$, and hence also $\langle\gamma\rangle>\bar n$. 
Altogether we obtain
\begin{align}
&\Big(\int_{B_r(x)}dz\,\EE^\frac{\p}{q}\Big|
\sum_\n (d\Gamma_{x\,z+y}-d\Gamma_{xz}\Gamma_{z\,z+y})_\beta^{\delta_\n} ((\cdot-z-y)^\n)_{t=|y|^4}(z+y)
\Big|^q\Big)^\frac{1}{\p} \\
&\lesssim \ve^{|\beta|-\langle\beta\rangle}
|y|^{\alpha+D/\p} 
(|y|+r)^{\langle\beta\rangle-\alpha} \w 
\quad\textnormal{for }\ve\leq r \, . 
\end{align}
Now, the first alternative of \eqref{est_dGamma_pp_inc} follows from a duality argument, the details of which can be found in \cite[Section~2.16]{BOT}.

To prove the second alternative of \eqref{est_dGamma_pp_inc}, we choose instead
$\sqrt[4]{t}=|y|+r$, so that \eqref{tmp19}, \eqref{tmp20}, and the second alternative of \eqref{tmp21} are all bounded by 
\begin{equation}
\ve^{|\beta|-\langle\beta\rangle}
(|y|+r)^{\langle\beta\rangle+D/\p} \w \, .
\end{equation}
The desired estimate follows again from the duality argument mentioned above. 
\end{proof}

It is only left to prove the corresponding boundedness estimate for $ d \Gamma$. 

\begin{proposition}[Three-point argument IV, Step 9.d)]\label{prop_3ptIV}
Let $[\beta]\geq0$ and assume that $\eqref{est_deltaPi}_\beta$ and $\eqref{est_deltaPi_inc}_\beta$ hold, 
that $\eqref{est_dGamma}_\beta^\gamma$ holds for all $\gamma$ populated and not purely polynomial, 
and that $\eqref{est_Pi}_{\beta'}$ holds for all populated $\beta'\prec\beta$. 
Then $\eqref{est_dpin}_\beta$ holds and thus $\eqref{est_dGamma}_\beta^\gamma$ holds for all $\gamma$. 
\end{proposition}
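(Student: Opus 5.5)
We need to prove the estimate \eqref{est_dpin} for $d\pi^{(\n)}_{xz\beta}$, $|\n|<\alpha+D/\p$, and then deduce \eqref{est_dGamma} for all $\gamma$. The second part is immediate: once \eqref{est_dpin} holds for $\beta$, the purely polynomial components satisfy $(d\Gamma_{xz})_\beta^{\delta_\n}=d\pi^{(\n)}_{xz\beta}$ by \eqref{e_def_dgamma}. This matches \eqref{est_dGamma} with $\gamma=\delta_\n$, since $|\delta_\n|=\langle\delta_\n\rangle=|\n|$. Non-polynomial $\gamma$ are covered by assumption.

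For \eqref{est_dpin} I would follow the three-point argument of \cite[Section~2.16]{BOT}, in the same spirit as Proposition~\ref{prop_3ptIII}. Start from the $\beta$-component of $\delta\Pi_x-d\Gamma_{xz}\Pi_z$ and split off the polynomial part:
\begin{equation}
\sum_{|\n|<\alpha+D/\p} d\pi^{(\n)}_{xz\beta}(\cdot-z)^\n
= \delta\Pi_{x\beta} - (\delta\Pi_x-d\Gamma_{xz}\Pi_z)_\beta - \sum_{[\gamma]\geq0}(d\Gamma_{xz})_\beta^\gamma\Pi_{z\gamma} \, .
\end{equation}
Apply $(\cdot)_t(z+y)$ and the $(\int_{B_r(x)}dz\,\EE^{\p/q}|\cdot|^q)^{1/\p}$-norm, choosing the scale $\sqrt[4]{t}=r$ with $y=0$ (or $|y|\le r$). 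The three terms are then bounded as follows.
\begin{enumerate}
\item The first term: by \eqref{est_deltaPi}, together with $|x-z|\leq r$ and integration over $B_r(x)$ (which gives $r^{D/\p}$), it is bounded by $\ve^{|\beta|-\langle\beta\rangle}r^{\langle\beta\rangle+D/\p}\w$.
\item The second term: by \eqref{est_deltaPi_inc} with $\sqrt[4]{t}=r$, it gives the same bound.
\item The third term: by \eqref{est_dGamma} for non-polynomial $\gamma$ and \eqref{est_Pi} with $\gamma\prec\beta$, Hölder gives $\ve^{|\beta|-\langle\beta\rangle}r^{\langle\beta\rangle-\langle\gamma\rangle+D/\p}r^{\langle\gamma\rangle}\w$, which again has the same form.
\end{enumerate}
Hence
\begin{equation}
\Big(\int_{B_r(x)}dz\,\EE^{\p/q}\Big|\sum_{|\n|<\alpha+D/\p} d\pi^{(\n)}_{xz\beta}\big((\cdot-z)^\n\big)_t(z+y)\Big|^q\Big)^{1/\p}\lesssim \ve^{|\beta|-\langle\beta\rangle}r^{\langle\beta\rangle+D/\p}\w \, ,
\end{equation}
for $\sqrt[4]{t}=r$, $|y|\le r$, and $\ve\leq r$.

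The remaining step is to extract the individual coefficients $d\pi^{(\n)}_{xz\beta}$ from this bound on the polynomial, with the scaling $r^{-|\n|}$. I expect this to be the main (though standard) obstacle. Since $((\cdot-z)^\n)_t(z+y)$ is a polynomial in $y$ whose top-degree part is $y^\n$, I would use equivalence of norms on the finite-dimensional space of polynomials of degree $<\alpha+D/\p$, rescaled to $B_r$. Concretely, test against finitely many points $y$ in $B_r(0)$ or a dual basis, which is the duality argument of \cite[Section~2.16]{BOT}. The triangle inequality in the mixed $L^{\p}_zL^q_\omega$ norm then yields
\begin{equation}
\Big(\int_{B_r(x)}dz\,\EE^{\p/q}|d\pi^{(\n)}_{xz\beta}|^q\Big)^{1/\p}\lesssim\ve^{|\beta|-\langle\beta\rangle}r^{\langle\beta\rangle-|\n|+D/\p}\w \, ,
\end{equation}
which is \eqref{est_dpin}.

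A subtlety in this last step is that the lower-order coefficients mix after smoothing by $\Psi_t$, because $((\cdot-z)^\n)_t$ picks up moments of $\Psi_t$. I would handle this by triangular (downward) induction in $|\n|$. Alternatively, I would note that the moment terms scale exactly like $r^{|\n|}$ and are absorbed into the norm equivalence.
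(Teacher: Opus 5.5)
Your proposal is correct and follows the paper's proof. You use the same identity for $\sum_\n (d\Gamma_{xz})_\beta^{\delta_\n}(\cdot-z)^\n$ and the same three bounds at the scale $\sqrt[4]{t}=r$: via \eqref{est_deltaPi}, via \eqref{est_deltaPi_inc}, and via \eqref{est_dGamma} combined with \eqref{est_Pi}. Your extraction step (evaluating at finitely many points $z+y$ with $|y|\le r$ and rescaling via $\big((\cdot)^\n\big)_t(r\hat y)=r^{|\n|}\big((\cdot)^\n\big)_{t=1}(\hat y)$) is a concrete implementation of the duality argument the paper cites from \cite[Section~2.16]{BOT}.
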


\begin{proof}
Similarly to the proof of Proposition~\ref{prop_3ptIII}, 
following \cite[Section~2.18]{BOT}, the proof is based on the identity 
\begin{equation}
\sum_\n (d\Gamma_{xz})_\beta^{\delta_\n} (\cdot-z)^\n
= \delta\Pi_x 
- (\delta\Pi_x-d\Gamma_{xz}\Pi_z)_\beta
- \sum_{[\gamma]\geq0} (d\Gamma_{xz})_\beta^\gamma \Pi_{z\gamma} \, .
\end{equation}
Applying $(\cdot)_t(z)$ and the
$(\int_{B_r(x)}dz\,\EE^\frac{\p}{q}|\cdot|^q)^\frac{1}{\p}$-norm to this identity and
splitting the right-hand side terms via the triangle inequality, 
its first right-hand side term is by bounded by 
\begin{equation}
\Big(\int_{B_r(x)}dz\,\big(
\ve^{|\beta|-\langle\beta\rangle}
(\sqrt[4]{t})^\alpha (\sqrt[4]{t}+|x-z|)^{\langle\beta\rangle-\alpha} \w )^{\p}\Big)^\frac{1}{\p}
\lesssim \ve^{|\beta|-\langle\beta\rangle}
(\sqrt[4]{t})^\alpha (\sqrt[4]{t}+r)^{\langle\beta\rangle-\alpha} \w r^{D/\p} \, ,
\end{equation}
as a consequence of \eqref{est_deltaPi}. 
On the other hand, using \eqref{est_deltaPi_inc}, \eqref{est_dGamma}, and \eqref{est_Pi},
the second and third right-hand side terms are (for $\ve\leq r$) bounded by 
\begin{equation}
\ve^{|\beta|-\langle\beta\rangle}
(\sqrt[4]{t})^{\alpha+D/\p} 
(\sqrt[4]{t}+r)^{\langle\beta\rangle-\alpha} \w 
+\sum_{\gamma\prec\beta}
\ve^{|\beta|-\langle\beta\rangle}
(\sqrt[4]{t})^{\langle\gamma\rangle} 
r^{\langle\beta\rangle-\langle\gamma\rangle+D/\p} \w \, .
\end{equation}
Choosing $\sqrt[4]{t}=r$, all three right-hand side terms are bounded by
$\ve^{|\beta|-\langle\beta\rangle} r^{\langle\beta\rangle+D/\p}\w$, 
and the claim follows from a duality argument, 
the details of which can be found in \cite[Section~2.16]{BOT}.
\end{proof}

\appendix

\section{Proofs of the algebraic framework}\label{sec:alg_proofs}

We collect here all proofs related to the algebraic skeleton introduced in Section~\ref{sec:setup}.
Most proofs are similar to existing arguments in the literature, 
see in particular \cite{OST23} for a similar point of view. 
However, since we are in a setting of an equation which would be formally supercritical without the damping of the nonlinearity with suitable powers of $\ve$, 
which affects the notion of homogeneity, we provide all details for completeness. 

In the following proofs we frequently appeal to the \emph{length} of a multiindex defined by 
\begin{equation}\label{eq_def_len}
\begin{aligned}
\ell( \beta)\coloneqq \sum_{k} \beta(k) + \sum_{\n} \beta (\n)\,.
\end{aligned}
\end{equation}

\begin{proof}[Proof of Lemma \ref{lem_hom_props}]
We state the proofs of the properties in the same order as stated above: 
\begin{itemize}
\item 
We have 
\begin{equation}\label{eq_supp1_lem_homProp}
 | \beta| -|0| 
= | \beta| - \alpha
=
\big( 2 + (\kk-1)\alpha \big)
\sum_{k} \beta (k) 
+ \sum_{\n} (|\n| - \alpha) \beta (\n)\,,
\end{equation}
which is clearly additive.

\item Because $2+(\kk-1)\alpha=a>0$ and $\alpha<0$, 
the display  \eqref{eq_supp1_lem_homProp} is non-negative 
since the coefficients in both sums are positive.

\item If $ | \beta| = |0| $, then by \eqref{eq_supp1_lem_homProp}
\begin{equation}
\begin{aligned}
0
=
\big(2+(\kk-1)\alpha\big)
\sum_{k} \beta (k) 
+ \sum_{\n} (|\n| - \alpha) \beta (\n)\,.
\end{aligned}
\end{equation}
However, as mentioned above, the coefficients in both sums are positive. Thus,
the equality can only hold if $ \beta =0$.
The reverse direction is immediate.
\end{itemize}
The proof can be repeated with $\langle\cdot\rangle$ replacing the homogeneity $|\cdot|$, 
just noting that $a-\kappa>0$ by assumption. 
\end{proof}

\begin{proof}[Proof of Lemma \ref{lem_ord_props}]
We prove the properties in the same order as stated in the lemma. 
\begin{itemize}
\item We have
\begin{equation}\label{eq_supp1_lem_ordProps}
\begin{aligned}
| \beta|_{\prec} - | 0 |_{\prec}
= | \beta| - | 0| + \tfrac{D}{2} [\beta]\,,
\end{aligned}
\end{equation}
where we used that $[0]=0$. We already proved in Lemma~\ref{lem_hom_props},
that $ \beta \mapsto | \beta| - | 0|$ is additive. Also $[\beta] $ is additive by
definition, which concludes additivity of $| \beta|_{\prec} - | 0 |_{\prec}$.

\item If $ [\beta ] \geqslant 0$, then 
\begin{equation}
\begin{aligned}
 | \beta|_{\prec}- | 0|_{\prec}
\geqslant  | \beta| -|0| \geqslant 0\,,
\end{aligned}
\end{equation}
where we used \eqref{eq_supp1_lem_ordProps} in the first inequality and
Lemma~\ref{lem_hom_props} in the second one.

\item 
If $ [ \beta] \geqslant 0$, then
\begin{equation}
\begin{aligned}
0=
| \beta|_{\prec} - | 0 |_{\prec}
=(| \beta| - | 0|) + \tfrac{D}{2} [\beta]  
\end{aligned}
\end{equation}
implies $| \beta| - | 0|=0$. Together with Lemma~\ref{lem_hom_props}, we see that $\beta =0$.
The reverse direction is immediate.

\item If $[\beta]\geq0$, then 
\begin{equation}
\begin{aligned}
| \beta|_{\prec} \geqslant  | \beta| + \tfrac{D}{2}  \geq \alpha+D/2 \,,
\end{aligned}
\end{equation}
where in the second inequality we used Lemma~\ref{lem_hom_props}.

\item In the remainder of the proof we assume that $ \beta$ is populated. 
 If $ \beta = \delta_{\n}$ or $\beta=\delta_{\kk}+\delta_{\n_1}+\dots+\delta_{\n_{\kk}}$, then 
$[\beta]=-1$ and thus $|\beta|_\prec=|\beta|$. 
On the other hand, if  $ [ \beta]\geqslant 0$ we have just shown in the previous bullet that $|\beta|_\prec\geq|\beta|+D/2$, which is larger than $|\beta|$.\qedhere
\end{itemize}
\end{proof}

\begin{proof}[Proof of Lemma \ref{lem_ind_legal}]
\textbf{Step 1 \textnormal{(proof of Item 1)}.}
Fix an arbitrary threshold $C>0$ and let $ \gamma$ be populated with $ |
\gamma|_{\prec}< C$. 
First, consider $ \gamma =\delta_\n$ purely polynomial. In this case
\begin{equation}
\begin{aligned}
|\n|
=
| \delta_{\n}|_{\prec}< C\,,
\end{aligned}
\end{equation}
which only holds for finitely many $\n$.

Similarly, if $\gamma=\delta_{\kk}+\delta_{\n_1}+\dots+\delta_{\n_{\kk}}$, then 
\begin{equation}
2+|\n_1|+\dots+|\n_{\kk}|=|\gamma|_\prec<C \, ,
\end{equation}
which holds for finitely many $\n_1,\dots,\n_{\kk}$. 

To conclude that there are only finitely many $ \gamma$ satisfying the given
order bound
with $[ \gamma] \geqslant 0$, we argue as follows:
\begin{itemize}
\item First, $[\gamma]\geq0$ implies by Lemma~\ref{lem_ord_props}
\begin{equation}
 \alpha 
+ \big(2+(\kk-1)\alpha\big)\sum_{k} \gamma (k) 
+ 
 \sum_{ {\n}}(|\n| - \alpha ) \gamma ({\n})
+ \tfrac{D}{2} 
=
| \gamma| + \tfrac{D}{2}
\leqslant 
| \gamma|_{\prec} 
 <C\,.
\end{equation}
Lower bounding the left-hand side using $2+(\kk-1)\alpha=a>0$ yields
\begin{equation}
\begin{aligned}
\alpha + \sum_{\n} (|\n| - \alpha)  \gamma (\n) 
\leqslant 
| \gamma|_{\prec}
< C\,.
\end{aligned}
\end{equation}
In particular, this bound can only hold for finitely many polynomial components
of
$ \gamma$.
\item On the other hand, for the $k$-components we use the lower bound
\begin{equation}
\begin{aligned}
\alpha + \tfrac{D}{2} (1+ [ \gamma])
\leqslant 
| \gamma|_{\prec} < C\,,
\end{aligned}
\end{equation}
where we took advantage of $ | \gamma | \geqslant \alpha$, cf.
Lemma~\ref{lem_hom_props}. 
Rearranging this inequality, we see that
\begin{equation}
\begin{aligned}
\tfrac{D}{2}\sum_{k}(k-1) \gamma(k) < C - \alpha -\tfrac{D}{2} \left( 1 - \sum_{\n}
\gamma(\n) \right)\,,
\end{aligned}
\end{equation}
which can only be satisfied for finitely many $ \gamma$ using that $
\sum_{\n} \gamma(\n)$ is bounded by a $C$ dependent constant, from the previous
bullet.
\end{itemize}

\textbf{Step 2 \textnormal{(proof of Item 2)}.}
Consider first the case of $ \beta= \delta_{\n}$ purely polynomial. Then 
\begin{equation}
\begin{aligned}
| \beta|_{\prec} 
=|\delta_{\n}|_{\prec} 
= |\n| 
\geq 0
=| \delta_{\0}|_{\prec}
\end{aligned}
\end{equation}
with equality if and only if $ \n =\0$.
On the other hand, if $\beta=\delta_{\kk}+\delta_{\n_1}+\dots+\delta_{\n_{\kk}}$ then 
\begin{equation}
|\beta|_\prec
=2 +|\n_1|+\dots+|\n_{\kk}|
\geq 2>0=|\delta_\0|_\prec \, .
\end{equation}
Finally, if $[ \beta] \geqslant 0 $, by Lemma~\ref{lem_ord_props}, 
\begin{equation}
\begin{aligned}
| \beta |_{\prec}
\geq |0 |_{\prec}
= \alpha + \tfrac{D}{2} >0 = | \delta_{\0} |_{\prec}\,.
\end{aligned}
\end{equation}
This concludes the proof.
\end{proof}

\begin{proof}[Proof of Lemma \ref{lem_def_gamma}]
~
\begin{enumerate}
\item Let $ \pi \in  \RR [\![ z_{k}, z_{\n} ]\!]$, then $ \Gamma$ acts
naturally by
extension on $ \pi = \sum_{\gamma} \pi_{\gamma} z^{ \gamma}$:
\begin{equation}\label{e_1}
\begin{aligned}
\Gamma \pi
= \sum_{ \gamma} \pi_{ \gamma} \Gamma z^{ \gamma} 
= \sum_{ \gamma} \pi_{ \gamma} \sum_{ \beta} (\Gamma z^{ \gamma} )_{\beta}\, 
z^{ \beta}
=
\sum_{ \beta} \left( \sum_{ \gamma} \pi_{ \gamma} \Gamma^{\gamma}_{\beta}
\right) z^{ \beta}\,,
\end{aligned}
\end{equation}
where again we used the notation $ \Gamma^{ \gamma}_{ \beta}= (\Gamma
z^{ \gamma})_{\beta}$.
Thus, for the map $ \Gamma$ to be well-defined on  $\RR [\![ z_{k}, z_{\n}
]\!]$, we have to show that the sum 
\begin{equation}\label{eq_supp1_wd_gamma}
\begin{aligned}
 \sum_{ \gamma} \pi_{ \gamma} \Gamma^{\gamma}_{\beta} 
\end{aligned}
\end{equation}
is finite for every $ \beta$. In fact, we prove that $\Gamma^{\gamma}_{\beta} \neq 0$ for
only finitely many $ \gamma$. 

For this purpose, let $ \beta$ be arbitrary and consider first multiindices of the
simplest form: 
\begin{itemize}
\item $ \gamma =0 $, for which $ \Gamma z^{\gamma}= \Gamma 1 =1$,

\item $ \gamma = \delta_{k}$, then $ (\Gamma z^{\gamma})_{ \beta}  =
( \Gamma z_{k})_{ \beta}= (  z_{k})_{ \beta}  = \delta_{\beta}^{\delta_{k}}$,

\item $ \gamma = \delta_{\n}$, then 
$$  (\Gamma z^{\gamma})_{ \beta}  =
( \Gamma z_{\n})_{ \beta}= (z_{\n})_{ \beta} + \pi^{(\n)}_{ \beta}=
\delta_{ \beta}^{\delta_{\n}} +\pi^{(\n)}_{ \beta}\,, $$ 
which  does not vanish if and only if  $ \gamma = \delta_{\n}= \beta $ or $ |\n|<
| \beta|$.
\end{itemize}
Hence, there are only finitely many $ \gamma$ of this simple form such that $
\Gamma^{ \gamma}_{ \beta}\neq 0 $, for any $ \beta$.
Now proceed by induction over the lengths $\ell (\gamma) \leqslant \ell ( \beta)$, recall
\eqref{eq_def_len}, and assume that 
\begin{equation}
\begin{aligned}
| \{ \gamma \,: \, \Gamma^{ \gamma}_{ \beta}\neq 0 \ \text{and } \ \ell(\gamma
)=l \}| < \infty \qquad \forall \beta\,, \ l \leqslant \overline{l}\,.
\end{aligned}
\end{equation}
We split $ \gamma = \gamma_{1} + \gamma_{2}$, with $
\ell ( \gamma_{1}) , \ell( \gamma_{2}) < \ell ( \gamma) = \overline{l}+1$. Using the multiplicity of $ \Gamma$ we
have 
\begin{equation}
\begin{aligned}
\Gamma_{\beta}^{\gamma} 
= 
\big(  (\Gamma z^{\gamma_{1}}) (  \Gamma z^{\gamma_{2}}) \big)_{\beta}
= \sum_{\beta_{1}+ \beta_{2}= \beta}
\Gamma^{ \gamma_{1}}_{ \beta_{1}} \Gamma^{ \gamma_{2}}_{ \beta_{2}}\,,
\end{aligned}
\end{equation}
with the sum on the right-hand side being over finitely many multiindices $
\beta_{1}, \beta_{2}$.
Consequently, if 
$\Gamma_{\beta}^{\gamma} \neq 0$ for infinitely many $ \gamma$, then for at
least one pair $( \beta_{1}, \beta_{2})$ there must be infinitely many $
\gamma'$, $\ell ( \gamma ') \leqslant \overline{l}$, such that $ \Gamma_{
\beta_{1}}^{ \gamma'} \neq 0$. However, this contradicts the induction hypothesis.

Lastly, we notice that whenever $ \ell( \gamma) > \ell( \beta)$, we can write $
\gamma = \gamma_{1}+ \cdots + \gamma_{\ell ( \gamma)}$ with $ \ell (
\gamma_{i})=1$. As a consequence, 
\begin{equation}
\begin{aligned}
\Gamma_{\beta}^{\gamma} 
=
\Big( \prod_{i =1}^{\ell ( \gamma)} (\Gamma z^{\gamma_{i}}) \Big)_{\beta}
=
\sum_{ \beta_{1}+ \cdots +\beta_{\ell ( \gamma )} = \beta } \prod_{i
=1}^{\ell ( \gamma )} \Gamma_{ \beta_{i}}^{\gamma_{i}} =0\,,
\end{aligned}
\end{equation} 
since $ \beta_{j}=0$ for some $j$, for which $ \Gamma_{
\beta_{j}}^{\gamma_{j} } = \Gamma^{ \gamma_{j}}_{0}=0$ (cf.~with multiindices
of simplest form above). 

Overall there are only finitely many $ \gamma$ such that $ \Gamma_{
\beta}^{ \gamma} \neq 0 $.
 In particular, the sum in \eqref{eq_supp1_wd_gamma} is finite for every $ \beta$
 and therefore the map $ \Gamma$ is well-defined on $ \RR [\![ z_{k}, z_{\n}
]\!]$.

\item Again we proceed by induction over the length of $ \gamma$, starting with multiindices of the
simplest form:
\begin{itemize}
\item $ \gamma =0 $ or $ \gamma = \delta_{k}$, then $ (\Gamma - \mathrm{id})_{
\beta}^{ \gamma} = 0$ by definition of $ \Gamma$,

\item $ \gamma = \delta_{\n}$, then $  (\Gamma - \mathrm{id})_{
\beta}^{ \gamma} = \pi^{ (\n)}_{ \beta} $ which is only non-zero if $|
\delta_{\n}| = |\n| < |
\beta|$. 
Since $\langle\delta_\n\rangle=|\n|$ and $\kappa$ is chosen small enough such that $|\n|<|\beta|$ implies $|\n|<\langle\beta\rangle$, we also obtain $\langle\delta_\n\rangle<\langle\beta\rangle$.
\end{itemize}
Assume that $  (\Gamma - \mathrm{id})_{
\beta}^{ \gamma} \neq  0$ only if $ | \gamma| < | \beta|$ for all $ \gamma $
with $ \ell ( \gamma) \leqslant \overline{l} $.
Then for $ \gamma$ with $ \ell ( \gamma) = \overline{l}+1 $, we again decompose the
multiindex into $ \gamma = \gamma_{1}+ \gamma_{2}$ with $ \ell (
\gamma_{1}) , \ell ( \gamma_{2}) \leqslant \overline{l}$. Now, if 
\begin{equation}\label{eq_supp1_gamma_nontriv}
\begin{aligned} 
 \Gamma_{
\beta}^{ \gamma} = 
\sum_{ \beta_{1}+ \beta_{2}= \beta}\Gamma_{\beta_{1}}^{ \gamma_{1}}
 \Gamma_{\beta_{2}}^{ \gamma_{2}}
\neq 0
\,,
\end{aligned}
\end{equation}
there must exist a decomposition $ \beta_{1}+ \beta_{2}= \beta$ such that for $
i=1,2 $
\begin{equation}
\begin{aligned}
\Gamma_{ \beta_{i}}^{ \gamma_{i}}
\begin{cases} 
=1 \quad &\text{with } \gamma_{i} = \beta_{i} \text{ or}\\
\neq 0 \quad  &\text{with } | \gamma_{i}| < | \beta_{i}|\,,
 \end{cases} 
\end{aligned}
\end{equation}
by  the induction hypothesis. If at least one $i$ satisfies $| \gamma_{i}| < |
\beta_{i}| $, we  conclude 
\begin{equation}
\begin{aligned}
| \gamma| = | \gamma_{1}| + | \gamma_{2}| - |0| < | \beta_{1}| + |
\beta_{2}| - |0| = | \beta| \,,
\end{aligned}
\end{equation}
using the additivity of $ | \cdot | - |0|$.
The remaining case, in which both $ \gamma_{1}=
\beta_{1}$ and $ \gamma_{2}= \beta_{2}$, yields 
$ \gamma = \gamma_{1}+ \gamma_{ 2}= \beta_{1}+ \beta_{2}= \beta$ and 
\begin{equation}
\begin{aligned}
 \Gamma_{
\beta}^{ \gamma} =  
 \Gamma_{
\beta}^{ \beta}
=
1
+
\sum_{\substack{ \beta_{1}+ \beta_{2}= \beta\\ \beta_{i}\neq \gamma_{i}}}\Gamma_{\beta_{1}}^{ \gamma_{1}}
 \Gamma_{\beta_{2}}^{ \gamma_{2}}
=1\,,
\end{aligned}
\end{equation}
because it requires $ |
\gamma_{i}| < |
\beta_{i}|$, $i=1,2$, 
for one of the latter summands to be non-zero, but this contradicts $ \gamma= \beta $.
The same argument can be repeated with $\langle\cdot\rangle$ replacing the homogeneity $|\cdot|$. 

\item
The statement follows almost verbatim along the lines of the previous
induction. First, note that for $ \gamma = \delta_{\n}$, we have
\begin{equation}
\begin{aligned}
0 \neq (\Gamma - \mathrm{id})_{
\beta}^{ \gamma} = \pi^{ (\n)}_{ \beta} \quad \implies \quad \beta
\text{ is populated, and } | \delta_{\n}|_{\prec}= | \n| < | \beta|\,,
\end{aligned}
\end{equation}
with the second property being a consequence of Item 2~above. Because $ \beta $ is
populated,  Lemma~\ref{lem_ord_props} implies $ | \beta| \leqslant  | \beta|_{
\prec} $ and the base case holds.

Because the induction in the previous proof only used additivity
of $ | \cdot | - |0|$, we can proceed exactly the same when replacing $|
\cdot|$ by $| \cdot |_{\prec}$. 
\end{enumerate}
\end{proof}

\begin{proof}[Proof of Lemma \ref{lem_dGamma}]
~
\begin{enumerate}
\item As in the proof of Lemma~\ref{lem_def_gamma} for $ \Gamma$ to be well-defined, we have to show that 
\begin{equation}
\begin{aligned}
\sum_{ \gamma} \pi_{ \gamma} d\Gamma^{\gamma}_{\beta} < \infty \qquad 
\textnormal{for all }\beta \textnormal{ and }\pi \in  \RR [\![ z_{k}, z_{\n} ]\!] \,.
\end{aligned}
\end{equation}
Let $ \beta$ be arbitrary and consider $
\gamma$ to be a multiindex of the simplest form: 
\begin{itemize}
\item if $ \gamma =0$ or $ \gamma = \delta_{k}$, then $
d\Gamma_{\beta}^{\gamma}=0$,

\item if $ \gamma= \delta_{\n}$, then $ d\Gamma_{ \beta}^{\gamma} =
d\pi^{(\n)}_{\beta}\neq 0 $ only if $ |\n| < \alpha + D/\p$, which is the
case for finitely many $\n$ only.
\end{itemize}
Thus, there are only finitely many $ \gamma$ of this simple form such that $
d \Gamma_{ \beta}^{ \gamma} \neq 0$. 
On the other hand, for any $ \gamma$ larger, we may decompose it into two
non-trivial parts $ \gamma
= \gamma_{1}+ \gamma_{2}$ such that
\begin{equation}\label{eq_supp1_dGamma}
\begin{aligned}
d \Gamma_{ \beta}^{\gamma}=
\sum_{ \beta_{1}+ \beta_{2}= \beta } \Gamma_{ \beta_{1}}^{\gamma_{1}}
d \Gamma_{\beta_{2}}^{ \gamma_{2}}
+
\sum_{ \beta_{1}+ \beta_{2}= \beta } 
d \Gamma_{\beta_{1}}^{ \gamma_{1}}
\Gamma_{ \beta_{2}}^{\gamma_{2}}\,.
\end{aligned}
\end{equation}
From the proof of Lemma~\ref{lem_def_gamma}, we know for every $ \beta$ that $ \Gamma_{ \beta}^{ \gamma} \neq
0$ for only finitely many $ \gamma$. Because both sums on the right-hand side
are finite, there can only be finitely many $ \gamma = \gamma_{1} +
\gamma_{2}$ such that $d \Gamma_{ \beta}^{\gamma}\neq 0$.

\item Once more, we proceed by induction over the length $ \ell ( \gamma)$. First, when $ \gamma$ is of the
simplest form $ \gamma = 0, \delta_{k}, \delta_{\n}$, we have 
\begin{equation}
\begin{aligned}
d\Gamma_{ \beta}^{ \gamma} \neq 0 \quad \implies \quad
\gamma = \delta_{\n} \text{ with } 
\begin{rcases}
|\delta_\n| \\
\langle\delta_\n\rangle
\end{rcases}
= |\n| 
< \alpha + \tfrac{D}{\p} 
\leq 
\begin{cases}
|\beta| + \tfrac{D}{\p} \\ 
\langle\beta\rangle + \tfrac{D}{\p} 
\end{cases}
,
\end{aligned}
\end{equation}
concluding the base case. For any larger $ \gamma$, we again use the
representation
\eqref{eq_supp1_dGamma} to see that $
d \Gamma_{ \beta}^{\gamma}\neq 0$ implies $
\Gamma_{ \beta_{1}}^{\gamma_{1}}
d \Gamma_{\beta_{2}}^{ \gamma_{2}}\neq 0$
(or $
\Gamma_{ \beta_{2}}^{\gamma_{2}}
d \Gamma_{\beta_{1}}^{ \gamma_{1}}\neq 0$)
for at least one of the summands. In this case, Lemma~\ref{lem_def_gamma}.2
yields that $ | \gamma_{1}| \leqslant | \beta_{1}|$, whereas the induction
hypothesis gives $ | \gamma_{2}| < | \beta_{2}| + \tfrac{D}{\p}$.
Together this implies 
\begin{equation}
\begin{aligned}
| \gamma| = | \gamma_{1}| + | \gamma_{2}| - |0| 
<
| \beta_{1}| + | \beta_{2}| +  \tfrac{D}{\p} -|0|
= | \beta| +  \tfrac{D}{\p}\,.
\end{aligned}
\end{equation}
The same argument can be repeated with $\langle\cdot\rangle$ replacing the homogeneity $|\cdot|$.

\item Assuming that $ d\pi^{(\n)}_{ \beta} \neq 0$ only if $
[\beta] \geqslant 0$, the base case in the previous item reads that for $ \gamma = 0, \delta_{k}, \delta_{\n}$
\begin{equation}
\begin{aligned}
d\Gamma_{ \beta}^{ \gamma} \neq 0 \quad \implies \quad
\gamma = \delta_{\n} \text{ with }
|\delta_\n|_{\prec} = |\n| < \alpha + \tfrac{D}{\p} \leq |\beta| + \tfrac{D}{\p} \leq | \beta|_{ \prec} \,,
\end{aligned}
\end{equation}
where the last inequality used that $[\beta]\geq0$ and $2\leq\p$.
The remainder of the induction remains the same, replacing $ | \cdot| $ with $|
\cdot |_{ \prec }$.
\end{enumerate}
\end{proof}

\begin{remark}[Exponential formulae]\label{rem_exp_form}
At this point it is worth noting that by multiplicity of $ \Gamma$, we have
\begin{equation}
\begin{aligned}
\Gamma z^{ \gamma}
= 
\prod_{k\geq\kk} z_{k}^{\gamma(k)}
\prod_{\n} \big( z_{\n} + \pi^{(\n)} \big)^{\gamma (\n)}\,.
\end{aligned}
\end{equation}
The latter product can be written in terms of 
\begin{equation}
\begin{aligned}
\prod_{\n} \big( z_{\n} + \pi^{(\n)} \big)^{\gamma (\n)}
&=
\prod_{\n} \sum_{m=0}^{\gamma(\n)} \frac{ \gamma(\n)!}{ m! ( \gamma(\n)-m)!} 
( \pi^{(\n)})^{m}z_{\n}^{ \gamma(\n)-m} \\
&= 
\prod_{\n} \sum_{m=0}^{ \infty} \frac{ 1}{ m! } 
( \pi^{(\n)})^{m}
\partial_{z_{\n}}^{m}
\big(z_{\n}^{ \gamma(\n)}\big)\\ 
&=
\sum_{ \substack{m (\bar\n ) \geqslant 0 \\ \bar\n \in \NN^{1+d}}}
\prod_{\bf{n}}
\frac{1}{m(\n)!} 
( \pi^{(\n)})^{m(\n)}
\partial_{z_{\n}}^{m(\n)}
\big(z_{\n}^{ \gamma(\n)}\big) \\
&=
\sum_{M=0}^{\infty}
\sum_{ \substack{m (\bar\n ) \geqslant 0 \\ \bar\n \in \NN^{1+d}}}
\mathds{1}_{ \sum_{\bar\n} m(\bar\n) = M}
\bigg(
\prod_{\bf{n}}
\frac{1}{m(\n)!} 
( \pi^{(\n)})^{m(\n)}
\partial_{z_{\n}}^{m(\n)}
\bigg)
\prod_{\n}
z_{\n}^{ \gamma(\n)} \\
& = 
\sum_{M=0}^{\infty}
\frac{1}{M!} 
\sum_{\n_{1}, \ldots, \n_{M} \in \NN^{1+d}}
\bigg(
\prod_{i =1}^{M} 
 \pi^{(\n_{i})}
 \partial_{z_{\n_{i}}}
 \bigg) 
\prod_{\n}
z_{\n}^{ \gamma(\n)} \,,
\end{aligned}
\end{equation}
where in the last step we went from (indistinguishable to distinguishable multiindex
assignments): for every $  m : \NN^{1+d} \to \NN$ such that $ \sum_{\n} m
(\n) = M $, there are precisely $ M!/ \prod_{\n} m(\n)!$ many multiindices such that
\begin{equation}
	\begin{aligned}
		|\{ i =1,\dots,M \,: \, \n_{i}= \n  \}| = m( \n) \,.
	\end{aligned}
\end{equation}
Hence, with the notation $ D^{\n} = \partial_{z_{\n}}$, we have 
\begin{equation}
\begin{aligned}
\Gamma z^{ \gamma}
=
\sum_{M =0}^{ \infty}
\frac{1}{M!} 
\sum_{\n_{1}, \ldots, \n_{M}}
\pi^{( \n_{1 })}\cdots \pi^{ ( \n_{M})}
D^{\n_{1}} \cdots D^{\n_{M}} z^{ \gamma} \,.
\end{aligned}
\end{equation}
In particular, evaluating at a multiindex $\beta$, we have
\begin{equation}\label{eq_gamma_prod_rep}
\begin{aligned}
\Gamma^{ \gamma}_{ \beta}
=
\big(
\Gamma z^{ \gamma}
\big)_{ \beta}
= 
\sum_{M =0}^{ \infty}\frac{1}{M!}
\sum_{\n_{1}, \ldots, \n_{M}}
\sum_{ \beta_{1}+ \cdots + \beta_{M+1}= \beta}
\pi^{( \n_{1 })}_{ \beta_{1}}\cdots \pi^{ ( \n_{M})}_{ \beta_{M}}
\big(D^{\n_{1}} \cdots D^{\n_{M}} z^{ \gamma}\big)_{ \beta_{M+1}}\,.
\end{aligned}
\end{equation}

For $d\Gamma$ notice that $(D^{\n})^{\gamma}_{ \beta} = \gamma(\n) 1_{\beta=\gamma -
\delta_{\n}} $, which implies that 
\begin{equation}\label{eq_supp1_expform_dGamma}
\begin{aligned}
d \Gamma z_{\n}^{ \gamma (\n)}
= \gamma(\n) \big(d \Gamma z_{\n}\big) \big( \Gamma z_{\n}^{ \gamma(\n)-1}\big)
= 
 d \pi^{(\n)} \,  \big( \Gamma D^{\n} z_{\n}^{ \gamma(\n)}\big)
\,.
\end{aligned}
\end{equation}
Thus, for $ \gamma $ with $\sum_k\gamma(k)=0$ 
\begin{equation}
	\begin{aligned}
d \Gamma z^{\gamma}
=
d \Gamma \prod_{ \n} z^{ \gamma ( \n)}_{\n}
= \sum_{\n} (d \Gamma z^{\gamma ( \n )}_{\n} ) \Gamma \prod_{ \n ' \neq \n } z^{\gamma ( \n'
)}_{\n'} 
=
\sum_{\n}
 d \pi^{(\n)} \,  \big( \Gamma D^{\n} z_{\n}^{ \gamma(\n)}\big)
 ( \Gamma z^{ \gamma - \gamma( \n ) \delta_{\n}})
	\end{aligned}
\end{equation}
where we inserted \eqref{eq_supp1_expform_dGamma} in the last step.
Because $ d \Gamma$ vanishes on $z_k$
and $\Gamma z_k=z_k$, the identity holds for all
multiindices $ \gamma$.
Lastly, we notice that we can simply rewrite 
\begin{equation}
	\begin{aligned}
		\big( \Gamma D^{\n} z_{\n}^{ \gamma(\n)}\big)
 ( \Gamma z^{ \gamma - \gamma( \n ) \delta_{\n}})
=
\Gamma ( D^{\n} z^{\gamma}) \,,
	\end{aligned}
\end{equation}
such that 
\begin{equation}
\begin{aligned}
d \Gamma z^{\gamma}
=
\sum_{\n} 
d \pi^{(\n)} \, 
\big(\Gamma D^{\n} z^{ \gamma }\big)\,.
\end{aligned}
\end{equation}
In particular, evaluating at a multiindex $ \beta$, we have
\begin{equation}\label{eq_dgamma_prod_rep}
	\begin{aligned}
		d \Gamma^{\gamma}_{\beta}=
		(d \Gamma z^{\gamma})_{\beta}
		= \sum_{ \beta_{1} + \beta_{2} = \beta}
		\sum_{\n} 
		d \pi^{(\n)}_{ \beta_{1}} \, 
		\big(\Gamma D^{\n} z^{ \gamma }\big)_{\beta_{2}}\,.
	\end{aligned}
\end{equation}
Notice that the sum over $\n$ in the above display can be restricted to $ |\n| < \alpha+
D/ \overline{p}$, by assumption in Lemma~\ref{lem_dGamma}, which is a finite sum. 
\end{remark}

\begin{proof}[Proof of Lemma \ref{lem_sets}]
~
\begin{enumerate}
\item Let $ \pi = \sum_{ \gamma } \pi_{ \gamma } z^{\gamma}  \in T $, then
assuming that 
$0 \neq \pi_{ \gamma} \, (D^{\n} z^{ \gamma} )_{ \beta} =  \gamma(\n)\pi_{\gamma} \, 1_{\beta=\gamma -
\delta_{\n}}$ yields
\begin{equation}
\begin{aligned}
[ \beta] = [ \gamma - \delta_{\n}]
= [ \gamma] - [\delta_{\n}] = [\gamma]+1 \geqslant 0\,.
\end{aligned}
\end{equation}
The last inequality holds 
because $ \pi_{ \gamma}$ vanishes if $ \gamma$ is not populated, in particular
implying that 
$ [ \gamma] \geqslant -1$.
Therefore, $ D^{\n} \pi \in \widetilde{T}$.

\item Let  $ \pi =\sum_{ \gamma } \pi_{ \gamma } z^{\gamma} $, then representation  \eqref{eq_gamma_prod_rep}
(recalled here)
\begin{equation}
\begin{aligned}
0 \neq 
\Gamma^{ \gamma}_{ \beta}
=
\big(
\Gamma z^{ \gamma}
\big)_{ \beta}
= 
\sum_{m =0}^{ \infty}
\sum_{\n_{1}, \ldots, \n_{m}}
\sum_{ \beta_{1}+ \cdots + \beta_{m+1}= \beta}
\pi^{( \n_{1 })}_{ \beta_{1}}\cdots \pi^{ ( \n_{m})}_{ \beta_{m}}
\big(D^{\n_{1}} \cdots D^{\n_{m}} z^{ \gamma}\big)_{ \beta_{m+1}}\,,
\end{aligned}
\end{equation}
implies that
\begin{equation}
\begin{aligned}
[ \beta ]
=
[\beta_{1}]+ \cdots + [\beta_{m +1}] 
\geqslant 
-m + [\beta_{m +1}] 
\end{aligned}
\end{equation}
where we used that $ [ \beta_{1}], \ldots , [ \beta_{m}] \geqslant -1 $ since $
\beta_{i}$ must be populated otherwise $ \pi^{(\n_{i})}_{ \beta_{i}} =0$.
Together with 
\begin{equation}\label{eq_tri_supp2}
\begin{aligned}
\big(D^{\n_{1}}
\cdots D^{\n_{m}}\big)_{ \beta_{m+1}}^{ \gamma}
=
\gamma ( \n_{1}) ( \gamma - \delta_{\n_{1}}) (\n_{2}) 
\cdots ( \gamma - \delta_{\n_{1}} - \cdots - \delta_{\n_{m-1}} )(\n_{m})
\delta_{ \beta_{m+1}}^{\gamma - \delta_{\n_{1}}- \cdots - \delta_{\n_{m}}}\,,
\end{aligned}
\end{equation}
this allows us to write
\begin{equation}\label{eq_set_supp1}
\begin{aligned}
[ \beta]
\geqslant 
-m + [\gamma]- ([\n_{1}] + \cdots+ [\n_{m}])
= [\gamma] \,.
\end{aligned}
\end{equation}
Now, if $ \pi \in \widetilde{T}$, then $ (\Gamma \pi)_{ \beta} =
\sum_{\gamma,\, [ \gamma] \geqslant 0} \pi_{ \gamma} \, \Gamma^{ \gamma}_{
\beta}$. Thus, \eqref{eq_set_supp1}
yields that $ [ \beta] \geqslant 0 $, implying that $ \Gamma \pi \in
\widetilde{T}$.

On the other hand, if $ \pi \in T $ we only have to take additionally into account
\begin{itemize}
\item $ \gamma= \delta_{\n}$, for which we simply have
\begin{equation}
\begin{aligned}
0 \neq  \Gamma^{ \gamma}_{
\beta} =
1_{\beta=\delta_{\n}} +
\pi_{\beta}^{(\n)}
\quad\text{ implies that }\quad
[\beta]\geq0 \text{ or } \beta \textnormal{ is purely polynomial}
\, ,
\end{aligned}
\end{equation}

\item $\gamma=\delta_{\kk}+\delta_{\n_1}+\dots+\delta_{\n_{\kk}}$, 
where by multiplicativity of $\Gamma$ and $\Gamma z_k=z_k$ (see Lemma~\ref{lem_def_gamma})
\begin{equation}
\Gamma_\beta^{\delta_{\kk}+\delta_{\n_1}+\dots+\delta_{\n_{\kk}}}
= \sum_{\beta_0+\dots+\beta_{\kk}=\beta} 
\Gamma_{\beta_0}^{\delta_{\kk}} \Gamma_{\beta_1}^{\delta_{\n_1}} \cdots \Gamma_{\beta_{\kk}}^{\delta_{\n_{\kk}}} 
= \sum_{\delta_{\kk}+\beta_1+\dots+\beta_{\kk}=\beta} 
\Gamma_{\beta_1}^{\delta_{\n_1}} \cdots \Gamma_{\beta_{\kk}}^{\delta_{\n_{\kk}}} \, .
\end{equation}
By the just established first bullet, either $[\beta_i]\geq0$ or $\beta_i$ is purely polynomial for all $i=1,\dots,\kk$.
If all $\beta_i$ are purely polynomial, then $\beta$ is also of the form $\delta_{\kk}+\delta_{\m_1}+\dots+\delta_{\m_{\kk}}$;
if at least one $\beta_i$ satisfies $[\beta_i]\geq0$, then 
\begin{equation}
[\beta]=[\delta_{\kk}]+[\beta_1]+\dots+[\beta_{\kk}]
\geq \kk-1 + (\kk-1)(-1)
=0 \, ,
\end{equation}
so that in any case $\beta$ is populated. 
\end{itemize}
Thus, if $ (\Gamma \pi)_{ \beta} =
\sum_{\gamma \text{ pop.}} \pi_{ \gamma} \, \Gamma^{ \gamma}_{
\beta} \neq 0 $ we have that $ [ \beta] \geqslant 0$ by the first
part or $ \beta$ populated by the two bullets above.

\item Let $ \pi = \sum_{ \gamma } \pi_{ \gamma } z^{\gamma}  \in T $, then with
\eqref{eq_dgamma_prod_rep} we can write
\begin{equation}
\begin{aligned}
(d \Gamma \pi)_{ \beta} = \sum_{ \gamma \text{ pop.}} \pi_{\gamma} d
\Gamma^{ \gamma}_{\beta}
=  \sum_{ \gamma \text{ pop.}} \pi_{\gamma} 
\sum_{\substack{\n \\ | \n| < \alpha + \frac{D}{\p}}}
\sum_{\beta_{1}+ \beta_{2} = \beta}(d \pi^{(\n)})_{ \beta_{1}} ( \Gamma
D^{\n} )^{ \gamma}_{\beta_{2}}\,.
\end{aligned}
\end{equation}
If $(d \Gamma \pi)_{ \beta} \neq 0  $ then there exists a populated $ \gamma$ and a
decomposition $ \beta=  \beta_{1}+ \beta_{2} $, with $ [ \beta_{1}] \geqslant 0$ (as
otherwise $ d \pi^{(\n)}_{\beta_{1}} =0$). Hence, 
\begin{equation}
\begin{aligned}
[ \beta] = [ \beta_{1}]+ [ \beta_{2}] \geqslant 0\,,
\end{aligned}
\end{equation}
where we used additionally that $[ \beta_{2}] \geqslant 0 $ since $ \Gamma
D^{\n} T \subseteq \widetilde{T}$ by the first two statements of
the lemma. This concludes that $ d \Gamma \pi \in \widetilde{T}$.
\end{enumerate}
\end{proof}

\begin{proof}[Proof of Lemma \ref{lem_triang}]
~
\begin{enumerate}
\item 
We recall representation \eqref{eq_gamma_prod_rep} and consider a single term
\begin{equation}
\begin{aligned}
\pi^{( \n_{1 })}_{ \beta_{1}}\cdots \pi^{ ( \n_{m})}_{ \beta_{m}}
\big(D^{\n_{1}} \cdots D^{\n_{m}} z^{ \gamma}\big)_{ \beta_{m+1}}
\end{aligned}
\end{equation}
in the expansion. First, we note that for 
$ \beta = \beta_{1}+ \cdots + \beta_{m+1}$
\begin{equation}\label{eq_tri_supp1}
\begin{aligned}
| \beta |_{\prec}
&= 
| \beta_{1} |_{\prec}
+\cdots + 
| \beta_{m+1} |_{\prec}
- m |0|_{\prec}\\
&=
| \beta_{1} |_{\prec}
+\cdots + 
| \beta_{m} |_{\prec}
+ | \gamma|_{\prec} - (| \delta_{\n_{1}}|_{\prec} + \cdots + |
\delta_{\n_{m}}|_{ \prec} )
\end{aligned}
\end{equation}
where in the last step we used that $ \beta_{m+1} = \gamma -
\delta_{\n_{1}} - \cdots - \delta_{\n_{m}}$ by \eqref{eq_tri_supp2}.
Because $ | \delta_{\n_{i}}|_{\prec} = |\n_{i}| < | \beta_{i}| \leqslant |
\beta_{i}|_{\prec}$, by assumption on $ \pi_{ \beta}^{(\n)}$ and
Lemma~\ref{lem_ord_props} (note that $ \beta_{i}$ is populated as otherwise $
\pi_{ \beta_{i}}^{( \n_{i})} =0$), identity \eqref{eq_tri_supp1} then implies 
\begin{equation}\label{eq_tri_supp20}
\begin{aligned}
| \beta |_{\prec}
\geqslant | \beta_{i}|_{\prec} - | \delta_{\n_{i}}|_\prec + | \gamma|_{\prec}\,,
\qquad \forall i=1, \ldots, m \,.
\end{aligned}
\end{equation}

\begin{enumerate}
\item Now, if $\gamma$ is populated and not purely polynomial, then 
\begin{equation}
| \gamma |_{\prec} 
\geqslant | \gamma | 
= 
\alpha
+ (2+(\kk-1) \alpha) \sum_{k} \gamma(k) + \sum_{\n} (|\n| - \alpha) \gamma(\n)  >
\alpha + |\n_{i}| - \alpha 
= |\delta_{\n_{i}}|_{\prec}\,,
\end{equation}
where we used Lemma~\ref{lem_ord_props} in the first inequality. In the
following equality we used the
definition of the homogeneity \eqref{e_def_homo}, before lower bounding
$2+(\kk-1)\alpha=a>0$ and $\sum_k \gamma(k)\geq1$ (because $\gamma$ is populated and not purely polynomial), 
and all
terms of the last sum by zero except $ (|\n_{i}| - \alpha)$.
Note that $ \gamma( \n_{i}) \geqslant 1 $ as otherwise \eqref{eq_tri_supp2}
would vanish. 
Together with \eqref{eq_tri_supp20}, this implies 
\begin{equation}
\begin{aligned}
| \beta |_{\prec} \geqslant 
 | \beta_{i}|_{\prec} - | \delta_{\n_{i}}|_\prec + | \gamma|_{\prec} > |
\beta_{i}| \qquad \forall i =1, \ldots, m \,.
\end{aligned}
\end{equation}

\item On the other hand, if $ \gamma = \delta_{\n}$ is purely polynomial then 
\begin{equation}
\begin{aligned}
 \Gamma_{ \beta}^{ \delta_{\n}} = \delta_{\beta}^{\delta_{\n}} + \pi_{
\beta}^{(\n)}\,,
\end{aligned}
\end{equation}
which 
depends on $ \pi_{ \beta}^{(\n)}$.
\end{enumerate}

\item For the second part of the proof, we keep in mind the identity
\eqref{eq_dgamma_prod_rep} spelled out as
\begin{equation}
\begin{aligned}
d \Gamma^{ \gamma}_{ \beta}
=
\sum_{\substack{\n\\ |\n| < \alpha+ \frac{D}{\p}}} \sum_{ \beta_{1}+ \beta_{2}= \beta}
d \pi^{(\n)}_{ \beta_{1}}
\sum_{ \beta'} \Gamma_{\beta_{2}}^{ \beta '}\  \gamma(\n)
1_{\beta'=\gamma- \delta_{\n}}  \,.
\end{aligned}
\end{equation}
\begin{enumerate}
\item 
We begin with populated $ \gamma $ which is not purely polynomial. 
First, we note that $ \beta_{2} \neq 0 $, since otherwise the triangularity of
$ \Gamma$ (see Lemma~\ref{lem_def_gamma}) implies $ \beta' =0 $ and thus $ \gamma =
\delta_{\n}$, contradicting the assumption on $\gamma$.

In Lemma~\ref{lem_sets} we show that $ \Gamma D^{\n} T \subseteq \widetilde{T}$,
i.e.~$ [ \beta_{2}] \geqslant 0 $, hence, $ |
\beta_{2} |_{\prec }> |0 |_{\prec}$ by Lemma~\ref{lem_ord_props} and in
particular
\begin{equation}
\begin{aligned}
| \beta |_{\prec} = | \beta_{1}|_{\prec} + | \beta_{2}|_{\prec} -
|0|_{\prec} > | \beta_{1}|_{\prec}\,.
\end{aligned}
\end{equation}

\item If $ \gamma = \delta_{\n}$ is purely polynomial, then $d \Gamma_{
\beta}^{\delta_{\n}} = d \pi_{ \beta}^{(\n)}$ which depends at most on $ d \pi_{
\beta}^{(\n)}$ (if it does not vanish).
\end{enumerate}
\end{enumerate}
\end{proof}

\begin{proof}[Proof of Lemma~\ref{lem_dep_pi}]
We recall definitions \eqref{hierarchy} of $ \Pi_{x \beta}^{-}$ and
\eqref{e_def_ord} of the order $| \cdot |_{\prec}$.
\begin{enumerate}
\item For the first contribution to $\Pi^-_{x\beta}$ we note that by additivity of $| \cdot |_{\prec}  -
|0|_{\prec} $, see Lemma~\ref{lem_ord_props}, 
$ \beta = \delta_{k} + \beta_{1} + \cdots+
\beta_l$ implies
\begin{equation}
\begin{aligned}
| \beta |_{\prec} 
&= 
| \delta_{k}|_{\prec}
+ | \beta_{1} |_{\prec} 
+ \cdots + | \beta_l |_{\prec}  
- l | 0 |_{\prec}\\
&\geq
 2+ \kk \alpha + \tfrac{D}{2}k
+ | \beta_{1} |_{\prec} 
+ (l-1)\alpha
 - l | 0 |_{\prec} \,,
\end{aligned}
\end{equation}
where we used $ | \delta_{k}|_{\prec} = 2+ \kk \alpha + \tfrac{D}{2}k$ 
and $|\cdot|_\prec\geq|\cdot|\geq\alpha$ by Lemma~\ref{lem_ord_props} and Lemma~\ref{lem_hom_props}. 
From $|0|_\prec=\alpha+D/2$, $(k-l)D/2\geq0$, and $2+(\kk-1)\alpha=a>0$, we deduce $|\beta|_\prec>|\beta_1|_\prec$.
By symmetry we conclude that also $\beta_2,\dots,\beta_l\prec\beta$.

For the second contribution to $\Pi^-_{x\beta}$ we use again additivity of $|\cdot|_\prec-|0|_\prec$ to see
\begin{equation}
|\beta|_\prec
= |\beta_0|_\prec+\dots+|\beta_l|_\prec - l|0|_\prec \, .
\end{equation}
Since $\beta_0(\n)=0$ for all $\n$ (see Remark~\ref{rem:bphz_inductive}) it holds 
\begin{equation}
|\beta_0|_\prec 
= \alpha\big(1+(\kk-1)\sum_{k\geq\kk}\beta_0(k)\big)
+ 2\sum_{k\geq\kk}\beta_0(k)
+ \tfrac{D}{2}\big(1+\sum_{k\geq\kk}(k-1)\beta_0(k)\big) \, ,
\end{equation}
which since $\beta_0\neq0$ (see Remark~\ref{rem:bphz_inductive}) is bounded below by $2+\kk\alpha+\kk D/2$.
From here we conclude as above, noting that also $(\kk-l)D/2\geq0$. 

\item Looking at the definition \eqref{hierarchy} of $ \Pi_{x \beta}^{-}$ we see that the relevant contribution to $\Pi^-_{x\,\beta+k\delta_\0}+c^{(k)}_\beta$ is given by 
\begin{equation}\label{tmp11}
 - \sum_{k'<\kk} \sum_{l\geq0} \sum_{\substack{\beta_0+\dots+\beta_l=\beta+k\delta_\0 \\ \beta_i\neq0\forall i}} 
\tbinom{k'}{l} c^{(k')}_{\beta_0} W_{k'-l,\ve}(\Pi_{x0}) \Pi_{x\beta_1}\cdots\Pi_{x\beta_l}
+c^{(k)}_\beta \, .
\end{equation}
By the additivity of $|\cdot|_\prec-|0|_\prec$  we have for $\beta_0+\dots+\beta_l = \beta+k\delta_\0$ that 
\begin{equation}
|\beta_0|_\prec + \dots + |\beta_l|_\prec - (l+1)|0|_\prec 
= |\beta|_\prec + k|\delta_\0|_\prec - (k+1)|0|_\prec \, .
\end{equation}
By $|\delta_\0|_\prec=0$ therefore 
\begin{align}
|\beta_0+k'\delta_\0|_\prec
&= |\beta_0|_\prec - k' |0|_\prec \\
&= |\beta|_\prec - |\beta_1|_\prec-\dots-|\beta_l|_\prec - (k-l+k')|0|_\prec \\
&= |\beta+k\delta_\0|_\prec - |\beta_1|_\prec-\dots-|\beta_l|_\prec - (k'-l)|0|_\prec \, .
\end{align}
Note that $k'-l\geq0$ and $|0|_\prec=\alpha+D/2>0$. 
Furthermore, $|\beta_i|_\prec$ for $i=1,\dots,l$ coincides with $|\n|$ if $\beta_i=\delta_\n$ or is strictly positive if $[\beta_i]\geq0$. 
Thus 
\begin{equation}
|\beta_0+k'\delta_\0|_\prec
\leq |\beta+k\delta_\0|_\prec \, ,
\end{equation}
with equality if and only if $l=k'$ and $\beta_1=\dots=\beta_l=\delta_\0$. 
Hence the contributions to \eqref{tmp11} where $\beta_0+k'\delta_\0$ is not strictly smaller (with respect to $\prec$) than $\beta+k\delta_\0$ are of the form 
\begin{equation}
 - \sum_{k'<\kk} \sum_{\beta_0+k'\delta_\0=\beta+k\delta_\0} 
 c^{(k')}_{\beta_0} 
 +c^{(k)}_\beta \, .
\end{equation}
Since $\beta_0(\0)=0$ by Remark~\ref{rem:bphz_inductive} and $\beta(\0)=0$ by assumption, 
the restriction under the second sum implies that $k'=k$, 
which in turn yields $\beta_0=\beta$ and the whole expression vanishes. 
\end{enumerate}
\end{proof}

\begin{proof}[Proof of Lemma \ref{lem_pop}]
	If $\Pi^-_{x\beta}\neq0$ then at least one of the three contributions in \eqref{hierarchy} does not vanish. 
If the second or third contributions do not vanish, then $[\beta]\geq0$:
For the second contribution this follows from $[\beta]=[\beta_0]+[\beta_1]+\dots+[\beta_l]\geq\kk-1-l>-1$, 
where we used that $\beta_0$ has no polynomial components and at least one $k'\geq\kk$ component (see Remark~\ref{rem:bphz_inductive}), and that $l\leq k<\kk$.
For the third contribution this follows from $[\beta=0]=0$. 
Assume now that the first contribution does not vanish, and that $[\beta]<0$. 
Then it follows from $[\beta]=[\delta_k]+[\beta_1]+\dots+[\beta_l]\geq k-1-l\geq-1$, 
that $l=k$ and $\beta_1,\dots,\beta_l$ are purely polynomial, i.e.~$\beta=\delta_k+\delta_{\n_1}+\dots+\delta_{\n_k}$.
In this case $W_{k-l,\ve}(\Pi_{x0})\Pi_{x\beta_1}\cdots\Pi_{x\beta_{l}} = (\cdot-x)^{\n_1+\cdots+\n_k}$, 
which is a polynomial of degree $|\n_1|+\cdots+|\n_k|=|\beta|-2+(k-\kk)\alpha$.
Hence if $k=\kk$ then $T_x^{<|\beta|-2}$ vanishes, 
while for $k>\kk$, $T_x^{<|\beta|-2}$ acts as the identity and $(\mathrm{id}-T_x^{<|\beta|-2})$ vanishes.
\end{proof}

\begin{lemma}\label{lem:no_taylor_for_phi4}
Let $\beta$ satisfy $\sum_{k>\kk}\beta(k)=0$ and $|\n|<|\beta|-2$. 
Then for $0\leq l\leq \kk$, and $\delta_{\kk}+\beta_1+\dots+\beta_l=\beta$ 
\begin{equation}
\partial^\n \big(W_{\kk-l,\ve}(\Pi_{x 0})
\Pi_{x \beta_1}\cdots\Pi_{x \beta_{l}}\big)(x) =0 \, .
\end{equation}
In particular, the Taylor polynomial in \eqref{hierarchy} can be dropped for such multiindices $\beta$.
\end{lemma}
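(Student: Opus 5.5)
We expand the derivative at the base point by the Leibniz rule, exactly as in \eqref{tmp09}:
\begin{equation}
\partial^\n \big(W_{\kk-l,\ve}(\Pi_{x 0})\Pi_{x \beta_1}\cdots\Pi_{x \beta_{l}}\big)(x)
=\sum_{\n_0+\dots+\n_l=\n}\tbinom{\n}{\n_0,\dots,\n_l}\,
\partial^{\n_0}W_{\kk-l,\ve}(\Pi_{x0})(x)\prod_{i=1}^l\partial^{\n_i}\Pi_{x\beta_i}(x) \, .
\end{equation}
It then suffices to show that every summand has at least one factor $\partial^{\n_i}\Pi_{x\beta_i}(x)$ with $i\geq1$ that vanishes. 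The mechanism is the same as in Step~1 of the proof of Proposition~\ref{prop:regrec}. The estimate \eqref{est_Pi}, upgraded to derivatives via Remark~\ref{rem_improve_n}, together with qualitative smoothness \eqref{est_Pi_taylorremainder}, gives $\partial^{\n_i}\Pi_{x\beta_i}(x)=\lim_{t\to0}\partial^{\n_i}\Pi_{x\beta_i t}(x)=0$ whenever $|\n_i|<\langle\beta_i\rangle$. For purely polynomial $\beta_i=\delta_\m$ this is elementary, since $\partial^{\n_i}(\cdot-x)^\m(x)=0$ unless $\n_i=\m$, i.e.\ unless $|\n_i|=|\m|=\langle\beta_i\rangle$. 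For $\beta_i$ of the form $\delta_{\kk}+\delta_{\n_1}+\dots$ we have $\Pi_{x\beta_i}=0$. So a nonvanishing summand needs $|\n_i|\geq\langle\beta_i\rangle$ for all $i=1,\dots,l$.

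The key step is a homogeneity count showing this is impossible. Because $\sum_{k>\kk}\beta(k)=0$, all $\beta_i$ also have no $k>\kk$ components. Hence $\langle\beta_i\rangle=|\beta_i|$ and $\langle\beta\rangle=|\beta|$, so no $\kappa$-discount interferes. Additivity of $|\cdot|-\alpha$ (Lemma~\ref{lem_hom_props}) with $|\delta_{\kk}|-\alpha=2+(\kk-1)\alpha$ gives
\begin{equation}
|\beta|-2=\kk\alpha+\sum_{i=1}^l(|\beta_i|-\alpha)=(\kk-l)\alpha+\sum_{i=1}^l|\beta_i|\leq\sum_{i=1}^l|\beta_i| \, ,
\end{equation}
using $\alpha<0$ and $l\leq\kk$. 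Therefore $|\n|<|\beta|-2\leq\sum_i|\beta_i|\leq\sum_i|\n_i|\leq|\n|$, which is a contradiction.

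The one point that needs care is the availability of the vanishing statement for the $\beta_i$. The $\beta_i$ satisfy $\beta_i\prec\beta$ (Lemma~\ref{lem_dep_pi}), so the induction hypotheses \eqref{est_Pi} and \eqref{est_Pi_taylorremainder} apply to them. Since $\kappa$ is irrelevant here, one only needs the sharp threshold $|\n_i|<|\beta_i|$, with $|\beta_i|\notin\NN$ for $[\beta_i]\geq0$. The final claim then follows: the Taylor polynomial $T_x^{<|\beta|-2}$ in \eqref{hierarchy} involves only these derivatives at $x$ with $|\n|<|\beta|-2$, and only the $k=\kk$ term contributes for such $\beta$. Hence the Taylor polynomial is zero and can be dropped.
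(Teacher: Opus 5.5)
Your proposal is correct and follows the paper's proof essentially verbatim. The common route is: expand by the Leibniz rule; note that each factor $\partial^{\n_i}\Pi_{x\beta_i}(x)$ vanishes unless $|\n_i|\geq\langle\beta_i\rangle$, by \eqref{est_Pi} and \eqref{est_Pi_taylorremainder}; then reach a contradiction through the homogeneity count $\sum_i\langle\beta_i\rangle=\langle\beta\rangle-2+(l-\kk)\alpha\geq\langle\beta\rangle-2$.

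Your observation that $\langle\cdot\rangle=|\cdot|$ on these multiindices is a harmless simplification of the paper's wording. The remark on non-integrality of $|\beta_i|$ is not needed, since the strict inequality $|\n|<|\beta|-2$ already yields the contradiction.
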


\begin{proof}
By Leibniz rule we obtain linear combinations of the form 
\begin{equation}
\partial^{\n_0}W_{\kk-l,\ve}(\Pi_{x0})(x) \partial^{\n_1}\Pi_{x\beta_1}(x) \cdots \partial^{\n_l}\Pi_{x\beta_l}(x)
\end{equation}
where $\n_0+\cdots+\n_l=\n$ with $|\n|<|\beta|-2$, which implies $|\n|<\langle\beta\rangle-2$. 
This is only non-vanishing by continuity \eqref{est_Pi_taylorremainder} and the model estimate \eqref{est_Pi} 
if $|\n_i|\geq\langle\beta_i\rangle$ for all $i=1,\dots,l$. 
We obtain the contradiction 
$|\n|\geq\langle\beta_1\rangle+\dots+\langle\beta_l\rangle
=\langle\beta\rangle-2+(l-\kk)\alpha
\geq\langle\beta\rangle-2$.
\end{proof}

\section{Analytic ingredients}\label{sec_analytic}

In this section we collect a few results which are standard, but where we couldn't find a precise reference in the literature. For completeness we provide the proofs.

\begin{lemma}[Taylor remainder]\label{lem_interpolation}
Assume that for a family of random smooth functions $f_x$ there exist $\alpha\in\RR$ and $\eta\geq0$ such that for all $\n$ 
\begin{equation}
\EE^\frac1p|\partial^\n f_x(y) |^p 
\lesssim \ve^{\alpha-|\n|} (\ve+|x-y|)^\eta \, .
\end{equation}
Then for all $\n$ and $l\geq0$ 
\begin{equation}
\EE^\frac1p| (\mathrm{id}-T_z^{\leq l}) \partial^\n f_x (y) |^p
\lesssim \ve^{\alpha-|\n|-1-l} |y-z|^{1+l} (\ve+|x-y|+|x-z|)^\eta \, .
\end{equation}
\end{lemma}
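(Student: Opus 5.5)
The plan is a two-regime argument. We compare $|y-z|$ with $\ve$ and use a different estimate in each regime. In both regimes only the hypothesis with finitely many additional derivatives is needed, together with the Taylor formula with integral remainder. Throughout, $T_z^{\leq l}$ is the parabolic Taylor polynomial of order $\leq l$, i.e.\ it keeps monomials $(\cdot-z)^\m$ with $|\m|\leq l$.

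\textbf{Regime $|y-z|\geq\ve$.} Here we avoid any Taylor argument and use the triangle inequality,
\[
\EE^{\frac1p}\big|(\mathrm{id}-T_z^{\leq l})\partial^\n f_x(y)\big|^p
\leq \EE^{\frac1p}|\partial^\n f_x(y)|^p+\sum_{|\m|\leq l}\tfrac{1}{\m!}\,\EE^{\frac1p}|\partial^{\n+\m}f_x(z)|^p\,|y-z|^{|\m|}.
\]
By the hypothesis, the right-hand side is bounded by
\[
\sum_{|\m|\leq l}\ve^{\alpha-|\n|-|\m|}|y-z|^{|\m|}(\ve+|x-y|+|x-z|)^\eta .
\]
Since $\ve\leq|y-z|$ and $|\m|\leq l<1+l$, we have $\ve^{-|\m|}|y-z|^{|\m|}\leq\ve^{-1-l}|y-z|^{1+l}$. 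This gives the claimed bound in this regime.

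\textbf{Regime $|y-z|<\ve$.} Here we use the parabolic Taylor remainder formula. In this formula $(\mathrm{id}-T_z^{\leq l})g(y)$ is a finite sum of terms of the form
\[
(y-z)^{\m}\int_0^1 \rho_\m(\theta)\,\partial^{\m}g\big(z+\theta_\m(y-z)\big)\,d\theta ,
\]
with bounded weights $\rho_\m$. The multiindices satisfy $|\m|>l$, and their components are bounded, e.g.\ $m_0+\dots+m_d\leq l+1$. The intermediate points lie on anisotropic segments, obtained by expanding first in time and then in space, as in the proof of Step~3 of Proposition~\ref{prop_qualitative_pi}.

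We apply this formula to $g=\partial^\n f_x$ and take $\EE^{\frac1p}|\cdot|^p$ inside the integral by Minkowski's inequality. The hypothesis, applied with $\n+\m$ at a point $w$ with $|w-z|\lesssim|y-z|<\ve$, gives
\[
\ve^{\alpha-|\n|-|\m|}(\ve+|x-w|)^\eta\lesssim\ve^{\alpha-|\n|-|\m|}(\ve+|x-y|+|x-z|)^\eta .
\]
This uses $|x-w|\leq|x-z|+|w-z|$ and $\eta\geq0$. Since $|y-z|<\ve$ and $|\m|\geq 1+l$ (or at least $|\m|>l$), we get $|y-z|^{|\m|}\ve^{-|\m|}\leq|y-z|^{1+l}\ve^{-1-l}$, which concludes this regime.

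The main obstacle is the anisotropic Taylor formula in the second regime. The parabolic degrees $|\m|$ jump by $2$ in time, so the remainder monomials need not have degree exactly $1+l$. One must therefore check that every remainder monomial has $|\m|\geq 1+l$ and that $|(y-z)^\m|\leq|y-z|^{|\m|}$ under the parabolic distance; both hold by definition of $|\cdot|$. One must also check that the intermediate points stay within distance $\lesssim|y-z|$ of $z$. The rest is routine bookkeeping.
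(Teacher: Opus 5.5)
Your proof is correct. The regime $\ve\leq|y-z|$ is exactly the paper's argument. In the regime $|y-z|<\ve$ you take a genuinely different route.

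\emph{Your route.} You build a Taylor formula adapted to the parabolic scaling. You expand in $y_0$ to order $\lfloor l/2\rfloor$, then expand each coefficient $\partial_0^j g(z_0,\cdot)$ in space to order $l-2j$. The polynomial part is then exactly $T_z^{\leq l}$. Every remainder monomial has parabolic degree $l+1$ or $2\lfloor l/2\rfloor+2$, and the intermediate points lie within parabolic distance $|y-z|$ of $z$.

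\emph{The paper's route.} It never constructs an anisotropic formula. It applies the textbook isotropic Taylor formula of Euclidean order $l$, whose polynomial runs over $m_0+\dots+m_d\leq l$, with integral remainder along the straight segment $sy+(1-s)z$. Since $\{|\m|\leq l\}\subset\{m_0+\dots+m_d\leq l\}$, it then adds back the finitely many terms $\frac{1}{\m!}\partial^{\m+\n}f_x(z)(y-z)^\m$ with $|\m|>l$ but $m_0+\dots+m_d\leq l$. The integral remainder has $m_0+\dots+m_d=l+1$, hence $|\m|\geq l+1$. The correction terms have $|\m|>l$ and are evaluated at $z$ itself. Both are then closed by $\ve^{1+l-|\m|}\leq|y-z|^{1+l-|\m|}$, exactly as in your argument.

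\emph{Comparison.} The paper's trick is shorter, because only the standard Taylor theorem is needed. Yours gives a remainder that is natively homogeneous in the parabolic degree, at the cost of the two-step construction you identify as the main obstacle.

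One correction to your citation: Step~3 of Proposition~\ref{prop_qualitative_pi} does not use a time-then-space expansion. It explicitly invokes this lemma's Euclidean-plus-correction form, as shown by its sum over $|\m|\geq|\beta|-|\n|$ with $m_0+\dots+m_d\leq|\beta|-|\n|+1$. That reference therefore does not support your formula, though your formula is valid on its own.
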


\begin{proof}
We distinguish the two regimes $\ve\leq|y-z|$ and $\ve>|y-z|$. 
In the former case we break up the Taylor remainder and estimate by the triangle inequality 
\begin{equation}
\EE^\frac1p| (\mathrm{id}-T_z^{\leq l}) \partial^\n f_x (y) |^p
\lesssim \ve^{\alpha-|\n|}(\ve+|x-y|)^\eta
+ \sum_{|\m|\leq l} \ve^{\alpha-|\n|-|\m|} (\ve+|x-z|)^\eta |y-z|^{|\m|} \, ,
\end{equation}
and conclude by noting that in this regime $\ve^{1+l-|\m|}\leq|y-z|^{1+l-|\m|}$ for $|\m|\leq l$.
In the latter case we use the Taylor remainder in the form of 
\begin{align}
&(\mathrm{id}-T_z^{\leq l})\partial^\n f_x(y) \\
&= \partial^\n f_x(y) 
- \sum_{m_0+\dots+m_d\leq l} \tfrac{1}{\m!} \partial^{\m+\n} f_x(z) (y-z)^\m
+ \sum_{|\m|>l\colon m_0+\dots+m_d\leq l} \tfrac{1}{\m!} \partial^{\m+\n} f_x(z) (y-z)^\m \\
&= \sum_{m_0+\dots+m_d= l+1} \tfrac{l+1}{\m!} \int_0^1 ds \, (1-s)^l \partial^{\m+\n} f_x(sy+(1-s)z) (y-z)^\m \\
&\,+ \sum_{|\m|>l\colon m_0+\dots+m_d\leq l} \tfrac{1}{\m!} \partial^{\m+\n} f_x(z) (y-z)^\m \, .
\end{align}
By assumption we thus obtain 
\begin{align}
\EE^\frac1p| (\mathrm{id}-T_z^{\leq l}) \partial^\n f_x (y) |^p
&\lesssim \sum_{m_0+\dots+m_d=l+1} \ve^{\alpha-|\n|-|\m|}(\ve+|x-y|+|x-z|)^\eta |y-z|^{|\m|} \\
&\,+ \sum_{|\m|>l\colon m_0+\dots+m_d\leq l} \ve^{\alpha-|\n|-|\m|}(\ve+|x-z|)^\eta |y-z|^{|\m|} \, .
\end{align}
In both sums $|\m|\geq1+l$, so that we can bound $\ve^{1+l-|\m|}\leq |y-z|^{1+l-|\m|}$ and conclude. 
\end{proof}

\begin{lemma}[Sobolev]\label{lem:sobolev}
Let $u$ be a random smooth function, and let $1<q<2$, $\p>2$, $k>(1+d)/\p$. 
Then
\begin{equation}
\EE^\frac{1}{q}|u(y)|^q
\lesssim \sum_{n_0+\dots+n_d\leq k}
r^{|\n|-D/\p} \Big( \int_{B_r} dz\, \EE^\frac{\p}{q}\big|\partial^\n u(y+z)\big|^q\Big)^\frac{1}{\p} \, .
\end{equation}
\end{lemma}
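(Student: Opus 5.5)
We reduce the statement to a one-dimensional-in-scale Sobolev embedding at unit scale and then rescale. Throughout we work with the Banach-space valued function $z\mapsto u(y+z)$ taking values in $L^q(\Omega)$, so that $\EE^{1/q}|\cdot|^q$ plays the role of a norm. Because $q<\p$, this norm is compatible with the outer $L^{\p}(B_r)$ integration. The estimate we need is then a pointwise-versus-$W^{k,\p}(B_r)$ bound for a vector-valued smooth function.

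\textbf{Step 1 (unit scale).} Take $r=1$ and $y=0$. Write $X\coloneqq L^q(\Omega)$ and $v(z)\coloneqq u(z)\in X$. We claim
\begin{equation}
\|v(0)\|_X \lesssim \sum_{n_0+\dots+n_d\leq k}\Big(\int_{B_1} dz\,\|\partial^\n v(z)\|_X^{\p}\Big)^{1/\p} \, .
\end{equation}
This is the Morrey/Sobolev embedding $W^{k,\p}(B_1;X)\hookrightarrow C(\bar B_1;X)$, which holds since $k\p>1+d$. The embedding is valid for arbitrary Banach spaces $X$, because its standard proof is a representation formula and uses no Hilbert structure.

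The representation goes as follows. Fix a cutoff $\chi\in C_c^\infty(B_1)$ with $\chi(0)=1$. Using $k$-fold integration along rays from $0$, in Euclidean coordinates, we write
\begin{equation}
v(0)=\sum_{n_0+\dots+n_d= k}\int_{B_1} K_\n(z)\,\partial^\n(\chi v)(z)\,dz \, ,
\end{equation}
where the kernels satisfy $|K_\n(z)|\lesssim |z|_{\mathrm{eucl}}^{k-(1+d)}$. Since $k>(1+d)/\p$, these kernels lie in $L^{\p'}(B_1)$. We then apply Minkowski's inequality in $X$, followed by Hölder's inequality in $z$. Finally, the Leibniz rule lets us replace $\partial^\n(\chi v)$ by derivatives of $v$ of Euclidean order at most $k$.

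Note that the condition is phrased with the Euclidean order $n_0+\dots+n_d\leq k$ and the Euclidean dimension $1+d$. This is exactly what the ray-integration argument gives. The ball $B_1$ is the parabolic unit ball, but it contains and is contained in Euclidean balls of comparable size, so this mismatch is harmless.

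\textbf{Step 2 (rescaling).} For general $r$ we apply Step 1 to $w(z)\coloneqq u(y+S_r z)$, where $S_r z\coloneqq (r^2z_0,rz_1,\dots,rz_d)$ is the parabolic dilation. This map sends $B_1$ onto $B_r$. The chain rule gives $\partial^\n w = r^{|\n|}(\partial^\n u)(y+S_r\cdot)$, and the change of variables has Jacobian $r^{-D}$. Together these produce the factor $r^{|\n|-D/\p}$ in front of each term, which is exactly the claimed bound.

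\textbf{Main obstacle.} The only point needing care is the vector-valued character of the estimate, that is, the fact that $\EE^{1/q}$ sits inside the $L^{\p}$ integral rather than outside. This is the reason for phrasing Step 1 for $X$-valued functions. It causes no real difficulty, since every step there consists of linear integral representations followed by Minkowski's inequality. The assumptions $q<2<\p$ are not used beyond $q\geq1$, which is needed for $X$ to be a Banach space.
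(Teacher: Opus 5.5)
Your argument is correct, but it takes a different route from the paper at the unit-scale step.

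The paper does not prove a vector-valued embedding. It fixes a test random variable $F$ with $\EE^{1/q'}|F|^{q'}<\infty$ and applies the standard scalar Sobolev inequality to the deterministic function $\bar u(z)=\EE[u(z)F]$. It then bounds $|\partial^\n\bar u(z)|\le \EE^{1/q'}|F|^{q'}\,\EE^{1/q}|\partial^\n u(z)|^q$ by H\"older in probability, and recovers $\EE^{1/q}|u(0)|^q$ by taking the supremum over $F$. This is shorter, since everything is outsourced to the textbook scalar estimate. However, it tacitly differentiates under the expectation ($\partial^\n\bar u=\EE[F\,\partial^\n u]$) and relies on $L^q$--$L^{q'}$ duality.

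You instead write down the ray-integration kernel $K_\n(z)\propto z^\n/|z|_{\mathrm{eucl}}^{1+d}$ and apply the representation to $\chi u$. You then use Minkowski's integral inequality in $L^q(\Omega)$ and H\"older in $z$. This is self-contained and needs no interchange of derivative and expectation, provided you apply the representation pathwise for each realisation. It should not be read as a statement about strong derivatives of $z\mapsto u(z)\in L^q(\Omega)$, which need not exist for a merely pathwise smooth field. Your approach also makes transparent why the Euclidean order $n_0+\dots+n_d\le k$ and the dimension $1+d$ appear.

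Your rescaling via the parabolic dilation $S_r$ is the precise form of what the paper abbreviates as ``replacing $u$ by $u(y+r\cdot)$''. Only with $S_r$ does one obtain the factor $r^{|\n|-D/\p}$ with parabolic $|\n|$ and $D$. As you observe at the end, neither argument uses $q<2<\p$, so your opening remark that $q<\p$ makes the norms compatible is not needed.
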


The proof is a simple adaptation of the one given in \cite[Section~3.5]{BOT}, 
which we provide for completeness.

\begin{proof}
It suffices to prove
\begin{equation}
\EE^\frac{1}{q}|u(0)|^q
\lesssim \sum_{n_0+\dots+n_d\leq k}
\Big( \int_{B_1} dz\, \EE^\frac{\p}{q}\big|\partial^\n u(z)\big|^q\Big)^\frac{1}{\p} \, ,
\end{equation}
as replacing $u$ by $u(y+r\cdot)$ yields the desired estimate. 
For this consider a random variable $F$ such that $\EE^\frac{1}{q'}|F|^{q'}<\infty$ for $1/q'+1/q=1$, 
and define $\bar u(z)\coloneqq\EE[u(z)F]$.
By the standard Sobolev inequality (e.g.~\cite[Chapter~5.6.3, Theorem~6]{Evans}) we obtain
\begin{equation}
|\bar u(0)|
\lesssim \sum_{n_0+\dots+n_d\leq k}
\Big(\int_{B_1}dz\, \big|\partial^\n\bar u(z)\big|^{\p}\Big)^\frac{1}{\p} \, .
\end{equation}
By Hölder's inequality in probability we thus obtain
\begin{equation}
|\EE[u(0)F]|
\lesssim \EE^\frac{1}{q'}|F|^{q'}
\sum_{n_0+\dots+n_d\leq k}
\Big(\int_{B_1}dz\, \EE^\frac{\p}{q}\big|\partial^\n\bar u(z)\big|^{q}\Big)^\frac{1}{\p} \, .
\end{equation}
Since $F$ was arbitrary, we conclude by the duality 
$\EE^\frac{1}{q}|u(0)|^q = \sup_{F\neq0}|\EE[u(0)F]/\EE^\frac{1}{q'}|F|^{q'}$.
\end{proof}

\begin{lemma}[Fa\`a di Bruno]\label{lem:bruno}
Let $g\colon\RR^N\to\RR$ and $f\colon\RR\to\RR$ be sufficiently smooth functions. 
Then 
\begin{equation}
\tfrac{1}{\beta!}\partial^\beta f(g(x)) 
= \sum_{l\geq0} \tfrac{1}{l!} f^{(l)}(g(x))
\sum_{\substack{\beta_1+\dots+\beta_l = \beta \\ \beta_i\neq0\forall i}}
\tfrac{1}{\beta_1!}\partial^{\beta_1}g(x)\cdots\tfrac{1}{\beta_l!}\partial^{\beta_l}g(x) \, .
\end{equation}
Here $\beta$ is a multiindex on $\{1,\dots,N\}$, 
$\beta!\coloneqq\beta(1)!\cdots\beta(N)!$, and 
$\partial^\beta = \partial_{x_1}^{\beta(1)}\cdots\partial_{x_N}^{\beta(N)}$.
\end{lemma}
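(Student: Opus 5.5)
This is the multivariate Faà di Bruno formula. I would prove it by induction on the order $|\beta|=\beta(1)+\dots+\beta(N)$, or more cleanly through generating functions. The generating-function route seems shortest. Fix $x$ and expand $g(x+h)=g(x)+G(h)$, where
\[
G(h)\coloneqq\sum_{\gamma\neq0}\tfrac{1}{\gamma!}\partial^\gamma g(x)\,h^\gamma
\]
is understood as a formal power series in $h\in\RR^N$, i.e.\ a Taylor polynomial of sufficiently high order, since only finitely many coefficients matter. The left-hand side $\tfrac{1}{\beta!}\partial^\beta f(g(x))$ is the coefficient of $h^\beta$ in the Taylor expansion of $h\mapsto f(g(x+h))$.

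\textbf{Step 1 (expand $f$).} Write
\[
f(g(x)+G(h))=\sum_{l\geq0}\tfrac{1}{l!}f^{(l)}(g(x))\,G(h)^l+\text{remainder}.
\]
Because $G$ has no constant term, $G(h)^l=O(|h|^l)$. Hence for $l>|\beta|$ the terms, and the Taylor remainder of $f$ of order $|\beta|+1$, do not contribute to the coefficient of $h^\beta$. This uses only that $f$ is $C^{|\beta|}$ and $g$ is $C^{|\beta|}$ near $x$, with the standard Taylor theorem with remainder.

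\textbf{Step 2 (extract the coefficient).} Multiplying out $G(h)^l$ gives
\[
G(h)^l=\sum_{\beta_1,\dots,\beta_l\neq0}\prod_{i=1}^l\tfrac{1}{\beta_i!}\partial^{\beta_i}g(x)\,h^{\beta_1+\dots+\beta_l}.
\]
The sum runs over ordered tuples, so the coefficient of $h^\beta$ is exactly the inner sum over $\beta_1+\dots+\beta_l=\beta$ with $\beta_i\neq0$. Collecting the contributions for $l=0,\dots,|\beta|$ gives the claim. The $l=0$ term contributes only when $\beta=0$, consistent with the empty sum convention.

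\textbf{Step 3 (identify coefficients).} It remains to justify that the coefficient of $h^\beta$ in the Taylor polynomial of $f\circ g$ at $x$ equals the coefficient obtained by composing the Taylor polynomials. Composition of Taylor expansions is valid modulo $o(|h|^{|\beta|})$, and Taylor polynomials are unique. This is the only point requiring care; I expect no real obstacle, just bookkeeping of remainder orders. As an alternative I could verify the formula by induction, differentiating once more in $x_j$ and using the product rule together with $\partial_j f^{(l)}(g)=f^{(l+1)}(g)\partial_jg$, but the combinatorics of the ordered tuples is cleaner with the generating-function argument above.
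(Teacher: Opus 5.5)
Your proof is correct, but it follows a different route from the paper. The paper argues by induction on the length of $\beta$. Assuming the formula for $\beta$, it writes $\tfrac{1}{(\beta+\delta_k)!}\partial^{\beta+\delta_k}=\tfrac{1}{\beta(k)+1}\partial_{x_k}\tfrac{1}{\beta!}\partial^\beta$. By the product and chain rule it then splits the result into the term where $\partial_{x_k}$ hits $f^{(l)}(g)$, which creates a new entry $\beta_{l+1}=\delta_k$, and the terms where it hits one factor $\partial^{\beta_j}g$. It recombines these using $1/l!=(l+1)/(l+1)!$, symmetry in the ordered tuple, $1/\beta_j!=(\beta_j(k)+1)/(\beta_j+\delta_k)!$, and finally $\sum_j\beta_j(k)=\beta(k)+1$, which cancels the prefactor. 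This is exactly the alternative you mention and set aside.

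Your generating-function argument replaces that combinatorial bookkeeping with an analytic input: the Peano form of Taylor's theorem, uniqueness of Taylor polynomials, and the composition estimate you rightly single out. Concretely, with $n=|\beta|$, $u(h)=g(x+h)-g(x)=G_n(h)+o(|h|^n)$ and $u=O(|h|)$, one has $u^l=G_n(h)^l+o(|h|^n)$ and $o(|u|^n)=o(|h|^n)$. Hence $f(g(x+h))$ agrees with $\sum_{l\leq n}\tfrac{1}{l!}f^{(l)}(g(x))G_n(h)^l$ up to $o(|h|^n)$. After that, the ordered tuples and the weights $1/\beta_i!$ come out automatically from multiplying out $G_n^l$.

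The paper's induction is purely algebraic and needs nothing beyond the product and chain rules. Yours is shorter and also fits how the lemma is used immediately afterwards. For a formal power series $\Pi$ and the polynomial $W_k$, the identity \eqref{eq_faadibruno_W} is just coefficient extraction in the exact expansion $W_k(\Pi_0+\widetilde\Pi)=\sum_{l}\tbinom{k}{l}W_{k-l}(\Pi_0)\widetilde\Pi^{\,l}$, with $\widetilde\Pi\coloneqq\Pi-\Pi_0$ and using the Appell property. So in that formal setting your approach needs no remainder estimates at all.
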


As an immediate consequence, we obtain for a formal power series $\Pi\in\RR [\![ z_{k}, z_{\n} ]\!]$ with coefficients $\Pi_\beta$, 
and a sequence of polynomials $W_k$ satisfying $W_k'=kW_{k-1}$, 
that $W_k(\Pi)$ is a formal power series with coefficients given by 
\begin{equation}\label{eq_faadibruno_W}
\big(W_k(\Pi)\big)_\beta
= \sum_{l\geq0} \tbinom{k}{l} W_{k-l}(\Pi_0)
\sum_{\substack{\beta_1+\dots+\beta_l = \beta \\ \beta_i\neq0\forall i}}
\Pi_{\beta_1}\cdots\Pi_{\beta_l} \, .
\end{equation}
Indeed, this follows from the previous lemma with the identification $\Pi_\beta = \frac{1}{\beta!} \partial^\beta \Pi|_{z=0}$.

\begin{proof}
We proceed by induction in the length of the multiindex $\beta$.
For $\beta=0$ the statement is clear.
In the induction step we assume the claim holds for a multiindex $\beta$, 
and prove it for the multiindex $\beta+\delta_k$.
Using the induction hypothesis, and the product and chain rule we obtain
\begin{align}
&\tfrac{1}{(\beta+\delta_k)!} \partial^{\beta+\delta_k} f(g(x)) \\
&= \tfrac{1}{\beta(k)+1} \partial_{x_k}
\tfrac{1}{\beta!} \partial^\beta f(g(x)) \\
&= \tfrac{1}{\beta(k)+1} \partial_{x_k}
\sum_{l\geq0} \tfrac{1}{l!} f^{(l)}(g(x))
\sum_{\substack{\beta_1+\dots+\beta_l = \beta \\ \beta_i\neq0\forall i}}
\tfrac{1}{\beta_1!}\partial^{\beta_1}g(x)\cdots\tfrac{1}{\beta_l!}\partial^{\beta_l}g(x) \\
&= \tfrac{1}{\beta(k)+1} \sum_{l\geq0} \tfrac{1}{l!} f^{(l+1)}(g(x)) \partial_{x_k}g(x)
\sum_{\substack{\beta_1+\dots+\beta_l = \beta \\ \beta_i\neq0\forall i}}
\tfrac{1}{\beta_1!}\partial^{\beta_1}g(x)\cdots\tfrac{1}{\beta_l!}\partial^{\beta_l}g(x) \label{tmp22} \\
&\,+ \tfrac{1}{\beta(k)+1} \sum_{l\geq0} \tfrac{1}{l!} f^{(l)}(g(x))
\sum_{j=1}^l
\sum_{\substack{\beta_1+\dots+\beta_l = \beta \\ \beta_i\neq0\forall i}}
\tfrac{1}{\beta_1!}\partial^{\beta_1}g(x)\cdots
\tfrac{1}{\beta_j!} \partial^{\beta_j+\delta_k}g(x)\cdots
\tfrac{1}{\beta_l!}\partial^{\beta_l}g(x) \, . \label{tmp23}
\end{align}
Using $1/l!=(l+1)/(l+1)!$, we can rewrite \eqref{tmp22} as 
\begin{equation}
\tfrac{1}{\beta(k)+1} \sum_{l\geq0} \tfrac{l+1}{(l+1)!} f^{(l+1)}(g(x)) 
\sum_{\substack{\beta_1+\dots+\beta_{l+1} = \beta+\delta_k \\ \beta_i\neq0\forall i,\,\beta_{l+1}=\delta_k}}
\tfrac{1}{\beta_1!}\partial^{\beta_1}g(x)\cdots\tfrac{1}{\beta_{l+1}!}\partial^{\beta_{l+1}}g(x) \, ,
\end{equation}
and by relabelling $l+1\mapsto l$ this equals
\begin{equation}
\tfrac{1}{\beta(k)+1} \sum_{l\geq1} \tfrac{l}{l!} f^{(l)}(g(x)) 
\sum_{\substack{\beta_1+\dots+\beta_{l} = \beta+\delta_k \\ \beta_i\neq0\forall i,\,\beta_{l}=\delta_k}}
\tfrac{1}{\beta_1!}\partial^{\beta_1}g(x)\cdots\tfrac{1}{\beta_{l}!}\partial^{\beta_{l}}g(x) \, .
\end{equation}
Note that for $l=0$ the condition $\beta_1+\dots+\beta_l=\beta+\delta_k$ can not be satisfied, so that we may start the first sum with $l=0$.
Furthermore, by symmetry this can be rewritten as
\begin{equation}
\tfrac{1}{\beta(k)+1} \sum_{l\geq0} \tfrac{1}{l!} f^{(l)}(g(x)) \sum_{j=1}^l
\sum_{\substack{\beta_1+\dots+\beta_{l} = \beta+\delta_k \\ \beta_i\neq0\forall i,\,\beta_{j}=\delta_k}}
\tfrac{1}{\beta_1!}\partial^{\beta_1}g(x)\cdots\tfrac{1}{\beta_{l}!}\partial^{\beta_{l}}g(x) \, .
\end{equation}
Using $1/\beta_j!=(\beta_j(k)+1)/(\beta_j+\delta_k)!$ and relabelling $\beta_j+\delta_k\mapsto\beta_j$, we can rewrite \eqref{tmp23} as
\begin{align}
\tfrac{1}{\beta(k)+1} \sum_{l\geq0} \tfrac{1}{l!} f^{(l)}(g(x))
\sum_{j=1}^l
\sum_{\substack{\beta_1+\dots+\beta_l = \beta+\delta_k \\ \beta_i\neq0\forall i,\,\beta_j\neq\delta_k}} \beta_j(k)
\tfrac{1}{\beta_1!}\partial^{\beta_1}g(x)\cdots
\tfrac{1}{\beta_l!}\partial^{\beta_l}g(x) \, .
\end{align}
Altogether, this yields
\begin{align}
\tfrac{1}{(\beta+\delta_k)!} \partial^{\beta+\delta_k} f(g(x)) 
= \tfrac{1}{\beta(k)+1} \sum_{l\geq0} \tfrac{1}{l!} f^{(l)}(g(x))
\sum_{j=1}^l
\sum_{\substack{\beta_1+\dots+\beta_l = \beta+\delta_k \\ \beta_i\neq0\forall i}} \beta_j(k)
\tfrac{1}{\beta_1!}\partial^{\beta_1}g(x)\cdots
\tfrac{1}{\beta_l!}\partial^{\beta_l}g(x) \, .
\end{align}
Swapping the last two sums and using that 
$\sum_{j=1}^l\beta_j(k) = (\beta+\delta_k)(k)=\beta(k)+1$, the claim follows.
\end{proof}

\section{Properties of the Appell polynomials}\label{sec_propAppell}

Recall from \eqref{eq:poly} that $ W_{k}$ is defined in terms of 
\begin{equation}\label{eq_supp1_W_def}
	W_{k}(\phi) \coloneqq \frac{\partial^{k}}{\partial \tau^{k}} \frac{e^{\tau \phi}}{\EE[
	e^{\tau Z}]} \bigg\vert_{\tau = 0}\,,
\end{equation}
where $ Z$ is the stationary solution of $ ( \partial_{t}- \Delta ) Z = \zeta$, hence, we
interpret \eqref{eq_supp1_W_def} evaluated at $ Z = Z (0)$.
Furthermore, we defined $ W_{k , \varepsilon}( \cdot ) \coloneqq \varepsilon^{\alpha k}
W_{k} ( \varepsilon^{- \alpha} \cdot ) $, which can be written alternatively in terms of 
\begin{equation}
	\begin{aligned}
		W_{k, \varepsilon}(\phi) \coloneqq \frac{\partial^{k}}{\partial \tau^{k}}
		\frac{e^{\tau \phi}}{\EE[
		e^{\tau Z_{\varepsilon}}]} \bigg\vert_{\tau = 0}\,,
	\end{aligned}
\end{equation}
where $ Z_{\varepsilon} (x) = \varepsilon^{ \alpha } Z ( x_{0}/ \varepsilon^{2},
x_{1, \ldots, d}/ \varepsilon) $ is the stationary solution of $ (
\partial_{t}- \Delta ) Z_{\varepsilon} = \xi_{\varepsilon}$.
Indeed, we have 
\begin{equation}
	\begin{aligned}
		W_{k} ( \varepsilon^{- \alpha} \phi ) 
		&= 
		\frac{\partial^{k}}{\partial \tau^{k}} \frac{e^{\tau
		\varepsilon^{- \alpha}\phi}}{\EE[
	e^{\tau Z}]} \bigg\vert_{\tau = 0}
	=
	\varepsilon^{- k  \alpha}
		\frac{\partial^{k}}{\partial ( \varepsilon^{- \alpha} \tau)^{k}} 
		\frac{e^{ \tau \varepsilon^{- \alpha} \phi }}{\EE [ e^{ \tau Z}]} \bigg\vert_{ \tau =
		0}\\
		& = 
		\varepsilon^{- k  \alpha}
		\frac{\partial^{k}}{\partial \overline{\tau}^{k}} 
		\frac{e^{ \overline{\tau}  \phi }}{\EE [ e^{ \overline{\tau}
		\varepsilon^{\alpha} Z}]} \bigg\vert_{
		\overline{\tau}=0}
		= 
		\varepsilon^{- k  \alpha}
		\frac{\partial^{k}}{\partial \overline{\tau}^{k}} 
		\frac{e^{ \overline{\tau}  \phi }}{\EE[ e^{ \overline{ \tau}
		Z_{\varepsilon}}]} \bigg\vert_{
		\overline{ \tau}=0}
		=\ve^{-k\alpha}W_{k,\ve}(\phi)\,,
	\end{aligned}
\end{equation}
where we performed the change of variable $ \overline{ \tau}= \varepsilon^{- \alpha}
\tau $. It is useful to keep in mind that $ W_{k, \varepsilon}$ is generated through the
formal power series 
\begin{equation}\label{eq_formalgeneratingseries}
	\begin{aligned}
		\frac{e^{ \tau \phi}}{ \EE [ e^{ \tau Z_{\varepsilon}}]} 
		= \sum_{k = 0}^{\infty} \frac{ \tau^{k}}{k!} W_{k, \varepsilon}( \phi)\,.
	\end{aligned}
\end{equation}

The following two lemmas are then an immediate consequence of the representation
\eqref{eq_formalgeneratingseries} by comparison of coefficients. 

\begin{lemma}\label{lem_appell}
	The sequence $( W_{k , \varepsilon})_{k \in \NN}$ forms an Appell sequence, i.e.~$W_{k , \varepsilon}' = k W_{k-1 , \varepsilon}$. 
\end{lemma}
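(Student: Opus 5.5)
The plan is to work entirely with the generating series \eqref{eq_formalgeneratingseries},
\begin{equation}
G_\ve(\tau,\phi)\coloneqq\frac{e^{\tau\phi}}{\EE[e^{\tau Z_\ve}]}=\sum_{k\geq0}\frac{\tau^k}{k!}W_{k,\ve}(\phi)\,,
\end{equation}
and compare coefficients. The only analytic point is to make sense of this identity. Since $Z_\ve$ has all moments (Remark~\ref{rem_momentsZ}), I will read $\EE[e^{\tau Z_\ve}]=\sum_j \tau^j\EE[Z_\ve^j]/j!$ as a formal power series in $\tau$. Its constant term is $1$, so its reciprocal $\sum_j b_j\tau^j/j!$ exists in $\RR[\![\tau]\!]$, and each $b_j$ is a polynomial in finitely many moments. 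Then $G_\ve$ is a well-defined element of $\RR[\phi][\![\tau]\!]$. Cauchy multiplication gives the explicit formula $W_{k,\ve}(\phi)=\sum_{j=0}^k\binom{k}{j}b_j\phi^{k-j}$, which is consistent with \eqref{eq_supp1_W_def} since the $k$-th $\tau$-derivative at $0$ just extracts the coefficient of $\tau^k/k!$.

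With this in hand, I differentiate $G_\ve$ in $\phi$. Because the denominator does not depend on $\phi$, I get $\partial_\phi G_\ve=\tau G_\ve$. Differentiation in $\phi$ acts coefficientwise on $\RR[\phi][\![\tau]\!]$, so the left-hand side equals $\sum_k \frac{\tau^k}{k!}W_{k,\ve}'(\phi)$. The right-hand side is $\sum_{k\geq1}\frac{\tau^{k}}{(k-1)!}W_{k-1,\ve}(\phi)$. Comparing the coefficients of $\tau^k$ gives $W_{k,\ve}'/k!=W_{k-1,\ve}/(k-1)!$, that is, $W_{k,\ve}'=kW_{k-1,\ve}$ for $k\geq1$. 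For $k=0$, $W_{0,\ve}=b_0=1$ is constant, so $W_{0,\ve}'=0$. Alternatively, the identity follows directly by differentiating the explicit binomial formula term by term, using $(k-j)\binom{k}{j}=k\binom{k-1}{j}$.

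I expect no real obstacle. The only step that needs care is justifying the interchange of $\partial_\phi$ with the series, and this is trivial once everything is understood formally in $\tau$: for each power of $\tau$ only finitely many terms contribute. So no convergence of the moment generating function is needed, which matters because for a general non-Gaussian $\zeta$ it may fail to converge.
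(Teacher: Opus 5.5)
Your proposal is correct and follows the paper's own argument, which obtains the Appell property by comparing coefficients in the generating series \eqref{eq_formalgeneratingseries}. Your relation $\partial_\phi G_\ve=\tau G_\ve$, read in $\RR[\phi][\![\tau]\!]$, spells out that comparison carefully. Your remark that only the moments of $Z_\ve$ are needed, and not convergence of its moment generating function, is a reasonable way to make the paper's formal manipulation rigorous.
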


\begin{lemma}\label{lem_zeromean}
	For every $k \in \NN \setminus \{0 \} $, we have  
	$ \EE [ W_{k , \varepsilon}( Z_{\varepsilon}
	) ]= 0 $.
\end{lemma}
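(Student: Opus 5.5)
The plan is to use the generating function \eqref{eq_formalgeneratingseries}, namely
\begin{equation}
\frac{e^{\tau\phi}}{\EE[e^{\tau Z_\ve}]}=\sum_{k\geq0}\frac{\tau^k}{k!}W_{k,\ve}(\phi) \, ,
\end{equation}
and to compare coefficients after taking the expectation at $\phi=Z_\ve$. This is legitimate at the level of formal power series in $\tau$. Since $Z_\ve$ has moments of all orders (Remark~\ref{rem_momentsZ}), $\EE[e^{\tau Z_\ve}]=\sum_j \tau^j\EE[Z_\ve^j]/j!$ is a well-defined formal power series with constant term $1$. It is therefore invertible in $\RR[\![\tau]\!]$, and each $W_{k,\ve}$ is a polynomial whose coefficients are finite combinations of moments of $Z_\ve$.

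First, I replace $\phi$ by the random variable $Z_\ve=Z_\ve(0)$ and take expectations coefficientwise. The $\tau^k$-coefficient of $e^{\tau Z_\ve}$ is $Z_\ve^k/k!$, which is integrable, and the inverse series $1/\EE[e^{\tau Z_\ve}]$ is deterministic. Linearity of expectation applied to each $\tau$-coefficient, which is a finite sum, then gives
\begin{equation}
\sum_{k\geq0}\frac{\tau^k}{k!}\EE[W_{k,\ve}(Z_\ve)]=\frac{\EE[e^{\tau Z_\ve}]}{\EE[e^{\tau Z_\ve}]}=1 \, .
\end{equation}
Comparing the coefficients of $\tau^k$ for $k\geq1$ yields $\EE[W_{k,\ve}(Z_\ve)]=0$. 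The coefficient of $\tau^0$ gives $W_{0,\ve}=1$, which is consistent.

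The only point needing care is justifying the interchange of expectation with the formal series. This causes no real difficulty: one never needs analytic convergence in $\tau$, since everything is an identity between coefficients. Explicitly, writing $\EE[e^{\tau Z_\ve}]^{-1}=\sum_j b_j\tau^j$, one has $W_{k,\ve}(\phi)=\sum_{i+j=k}\binom{k}{i}\, i!\, j!\,\frac{\phi^i}{i!}\, b_j$. The identity $\sum_{i+j=k}\frac{\EE[Z_\ve^i]}{i!}b_j=\delta_{k0}$ is precisely the defining property of the inverse series.
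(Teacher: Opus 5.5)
Your proof is correct and follows the paper's argument, which also derives the lemma from the generating series \eqref{eq_formalgeneratingseries} by taking expectations at $\phi=Z_\ve$ and comparing coefficients of $\tau^k$. The paper leaves implicit that everything is an identity of formal power series in $\tau$, with $\EE[e^{\tau Z_\ve}]$ invertible because its constant term is $1$; your proposal makes this explicit.
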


\bibliography{cites}
\bibliographystyle{alpha}

\end{document}